\documentclass[reqno,
12pt]{amsart}

\parindent = 20 pt
\parskip = 1 pt
\setlength{\textheight}{8.7in}\setlength{\textwidth}{6.5in}\setlength{\oddsidemargin}{0cm}\setlength{\evensidemargin}{0cm}\setlength{\topmargin}{0mm}

\usepackage{
amssymb,
amsmath,
amsthm,
eucal,
dsfont,
multicol,
mathrsfs,
tikz,
graphicx,
hyperref
}
\usepackage{lipsum}
\makeatletter\renewcommand*{\eqref}[1]{%
\hyperref[{#1}]{\textup{\tagform@{\!\!\ref*{#1}}}}%
}\makeatother 

\makeatletter
 
  \@addtoreset{equation}{section}
 \makeatother
\theoremstyle{plain}
\newtheorem{theorem}{Theorem}[section]
\newtheorem{lemma}[theorem]{Lemma}
\newtheorem{proposition}[theorem]{Proposition}

\theoremstyle{definition}
\newtheorem{definition}[theorem]{Definition}
\newtheorem{remark}[theorem]{Remark}



\newcommand{\norm}[1]{{\|#1\|}}

\def\supp{\mathop{\mathrm{supp}}\nolimits}

\def\Id{\mathop{\mathrm{Id}}\nolimits}

\def\ac{\mathop{\mathrm{ac}}\nolimits}

\def\loc{\mathop{\mathrm{loc}}\nolimits}

\def\sgn{\mathop{\mathrm{sgn}}\nolimits}
\def\BMO{{\mathop{\mathrm{BMO}}}}
\def\R{{\mathbb{R}}}
\def\Z{{\mathbb{Z}}}
\def\N{{\mathbb{N}}}
\def\C{{\mathbb{C}}}

\def\S{{\mathcal{S}}}

\def\H{{\mathcal{H}}}

\def\<{{\langle}}
\def\>{{\rangle}}

\def\ep{{\varepsilon}}
\def\ds{\displaystyle}
\DeclareMathOperator*{\slim}{s-lim}

\title[$L^p$-boundedness of wave operators]{$L^p$-boundedness of wave operators for bi-Schr\"odinger operators on the line}

\author{Haruya Mizutani}
\address[H. Mizutani]{Department of Mathematics, Graduate School of Science, Osaka University, Toyonaka, Osaka 560-0043, Japan}
\email{haruya@math.sci.osaka-u.ac.jp}
\author{Zijun Wan}
\address[Z. Wan]{Department of Mathematics, Central China Normal University, Wuhan, 430079, P.R. China}
\email{zijunwan@mails.ccnu.edu.cn}
\author{Xiaohua Yao\textsuperscript{$\dag$}}
\address[X. Yao]{Department of Mathematics and Key Laboratory of Nonlinear Analysis and Applications(Ministry of Education), Central China Normal University, Wuhan, 430079, P.R. China}
\email{yaoxiaohua@ccnu.edu.cn}
\keywords{$L^p$-boundedness, Wave operator, bi-Schr\"odinger operators, Zero resonances}
\begin{document}
\date{\today}
\thanks{ \textsuperscript{$\dag$}Corresponding author}

\begin{abstract}
This paper is devoted to establishing several types of $L^p$-boundedness of wave operators $W_\pm=W_\pm(H, \Delta^2)$ associated with the bi-Schr\"odinger operators  $H=\Delta^{2}+V(x)$ on the line $\mathbb{R}$.  Given suitable decay potentials $V$, we firstly prove that the wave and dual wave operators are bounded on $L^p(\mathbb{R})$ for all $1<p<\infty$:
$$
\|W_\pm f\|_{L^p(\mathbb{R})}+\|W_\pm^* f\|_{L^p(\mathbb{R})}\lesssim \|f\|_{L^p(\mathbb{R})},
$$
 which are further extended to the $L^p$-boundedness on the weighted spaces $L^p(\mathbb{R},w)$ with general even $A_p$-weights $w$ and to the boundedness on the Sobolev spaces $W^{s,p}(\mathbb{R})$. For the limiting case, we prove  that $W_\pm$ are bounded from $L^1(\R)$ to $L^{1,\infty}(\R)$ as well as bounded from the Hardy space $\H^1(\R)$ to $L^1(\R)$. These results especially hold whatever the zero energy is a regular point or a resonance of $H$. We also obtain that $W_\pm$ are bounded from $L^\infty(\R)$ to $\BMO(\R)$ if zero is a regular point or a first kind resonance of $H$.  Next, we show that $W_\pm$ are neither bounded on $L^1(\mathbb{R})$ nor on $L^\infty(\mathbb{R})$ even if zero is a regular point of $H$. Moreover, if zero is a second kind resonance of $H$, then $W_\pm$ are shown to be even not bounded from $L^\infty(\R)$ to $\BMO(\R)$ in general. In particular, we remark that our results give a complete picture of the validity of $L^p$-boundedness of the wave operators for all $1\le p\le \infty$ in the regular case. Finally, as applications, we deduce the $L^p$-$L^q$ decay estimates for the propagator $e^{-itH}P_{\mathrm{ac}}(H)$ with pairs $(1/p,1/q)$ belonging to a certain region of $\mathbb{R}^2$, as well as establish the H\"ormander-type $L^p$-boundedness theorem for the spectral multiplier $f(H)$.
\end{abstract}

\maketitle

\section{Introduction and main results}
\subsection{Introduction}
Let $\Delta^2=\frac{d^4}{dx^4}$ be the bi-Laplacian and $H=\Delta^2+V(x)$ be the (fourth-order) bi-Schr\"odinger operator on $\mathbb R$, where $V(x)$ is a real-valued potential satisfying
$$
|V(x)|\lesssim \<x\>^{-\mu}
$$
with some $\mu>0$ specified later and $\<x\>=\sqrt{1+x^2}$. By the Kato--Rellich theorem, $\Delta^2$ and $H$ are realized as self-adjoint operators on $L^2(\mathbb R)$ with domain $D(\Delta^2)=D(H)=H^4(\R)$, generating the associated unitary groups $e^{-it\Delta^2}$ and $e^{-itH}$ on $L^2(\R)$, respectively, where $H^4(\R)$ is the $L^2$-Sobolev space of order $4$.

For $\mu>1$, it is well-known  (see {\it e.g.} \cite{Agmon,Kuroda,ReSi}) that the {\it wave operators}
\begin{align}\label{def-wave}
W_\pm=W_\pm(H,\Delta^2) :=\slim_{t\to\pm\infty}e^{itH}e^{-it\Delta^2}
\end{align}
exist as partial isometries from $L^2(\mathbb{R})$ to $\mathcal H_{\ac}(H)$ and are asymptotically complete, {\it i.e.} $\mathop{\mathrm{Ran}} (W_\pm) =\mathcal H_{\ac}(H)$, where $\mathcal H_{\ac}(H)$ is the absolutely continuous spectral subspace of $H$. Moreover, the absolutely continuous spectrum $\sigma_{\ac}(H)$ coincides with $[0, \infty)$ and the singular continuous spectrum $\sigma_{\mathrm{sc}}(H)$ is absent. In particular, the {\it inverse (dual) wave operators}
$$
W_\pm(\Delta^2,H):=\slim_{t\to\pm\infty}e^{it\Delta^2}e^{-itH}P_{\mathrm{ac}}(H)
$$ also exist and satisfy $W_\pm(\Delta^2,H)=W_\pm(H,\Delta^2)^*$, where $P_{\ac}(H)$ is the projection onto $\mathcal H_{\ac}(H)$. The point spectrum $\sigma_{\mathrm{p}}(H)$ consists of finitely many negative eigenvalues and possible embedded eigenvalues in $[0,\infty)$. Throughout the paper, we always assume that $H$ has no embedded eigenvalue in $(0,\infty)$ (see Subsection \ref{subsection_remark} below for some sufficient conditions to ensure the absence of embedded eigenvalues of $H$).

$W_\pm$ and $W_\pm^*$ are clearly bounded on $L^2(\R)$. Then the main purpose of this paper is the following $L^p$-bounds of $W_\pm$ and $W_\pm^*$ for $p\neq2$:
\begin{align}
\label{Lp-bound}\|W_\pm \phi\|_{L^p(\mathbb{R})}\lesssim \|\phi\|_{L^p(\mathbb{R})},\quad \|W_\pm^* \phi\|_{L^p(\mathbb{R})}\lesssim \|\phi\|_{L^p(\mathbb{R})}.
\end{align}
To explain the importance of these bounds, we recall that $W_\pm$ satisfy the following identities
\begin{align*}
W_\pm W_\pm^* =P_{\ac}(H),\ \ W_\pm^*W_\pm=I,
\end{align*}
and {\it the intertwining property} $f(H)W_\pm=W_\pm f(\Delta^2)$,
where $f$ is any Borel measurable function on $\mathbb{R}$. These formulas especially imply
\begin{align}
\label{intertwining_1}
f(H)P_{\ac}(H)=W_\pm f(\Delta^2)W_\pm^*,
\end{align}
 which we also call the intertwining property.  By virtue of \eqref{intertwining_1}, the $L^p$-boundedness of $W_\pm, W_\pm^*$ can immediately be used to reduce the $L^p$-$L^q$ estimates for the perturbed operator $f(H)$ to the same estimates for the free operator $f(\Delta^2)$ as follows:
\begin{align}
\label{Lp-bound of f(H)}
\|f(H)P_{\ac}(H)\|_{L^p\to L^q}\le \|W_\pm\|_{L^q\to L^q}\ \|f(\Delta^2)\|_{L^p\to L^q}\ \|W_\pm^*\|_{L^{p}\to L^{p}}.
\end{align}
For many cases, under suitable conditions on $f$, it is accessible to establish the $L^p$-$L^q$ bounds of  $f(\Delta^2)$ by Fourier multiplier methods. Thus, in order to obtain the inequality \eqref{Lp-bound of f(H)},  it is a key problem to prove the $L^p$-bounds \eqref{Lp-bound} of $W_\pm$ and $W_\pm^*$. Note that this observation applies to not only the $L^p$-$L^q$ bounds, but also general $X$-$Y$ bounds, namely one has
\begin{align}
\label{Lp-bound of f(H)2}
\|f(H)P_{\ac}(H)\|_{X\to Y}\le \|W_\pm\|_{Y\to Y}\ \|f(\Delta^2)\|_{X\to Y}\ \|W_\pm^*\|_{X\to X}.
\end{align}

Because of such a useful feature, the $L^p$-boundedness of the wave operators has been extensively studied for the Schr\"odinger operator $-\Delta+V(x)$  on $\R^n$ and widely recognized as a fundamental tool for studying various nonlinear dispersive equations, such as the nonlinear Schr\"odinger and Klein--Gordon equations with potentials (see {\it e.g.} \cite{Cuccagna, Deng_Soffer_Yao, Schlag_1, Schlag_2, Soffer_Wu}). Therefore, it is natural and seems to be very important to try extending the $L^p$-boundedness of the wave operators to more general Hamiltonians, especially to the higher-order elliptic operator $P(D)+V(x)$ which has attracted increasing attention in the mathematical and mathematical physics literatures. Since the fourth-order Schr\"odinger operator $\Delta^2+V(x)$ can be considered as one of primal models of such higher-order operators, it thus is natural to ask whether the $L^p$-boundedness \eqref{Lp-bound}  for $W_\pm$ and $W_\pm^*$ holds or not.
For the higher-order Schr\"odinger operator $(-\Delta)^m+V(x)$ on $\R^n$ with $m\in \N$ and $m>1$,  there were significant progress  made   in recent years by Goldberg--Green \cite{GoGr21},  Erdo\u{g}an--Green \cite{Erdogan-Green21,Erdogan-Green23}, Erdo\u{g}an--Goldberg--Green \cite{EGG23} and Galtbayar--Yajima \cite{Galtbayar_Yajima_ArXiv23} (see also our recent works \cite{MWY_ArXiv23_2,MWY_ArXiv23_1}). Nevertheless, there are still many problems not addressed in the literature compared with  Schr\"odinger operator $-\Delta+V(x)$. In particular, there seems to be no results  in low dimensions $n=1,2$ for the higher-order case $m>1$. We refer to Subsection \ref{subsection_back_ground} below for more elaborations and existing literature.

In light of those observations, the main purpose of the paper is to show that the wave operators $W_\pm$ and $W_\pm^*$ for $H=\Delta^2+V(x)$ on $\R$ are bounded on $L^p(\R)$ for all $1<p<\infty$, whatever zero is a regular point or a resonance of $H$  (see Definition \ref{definition_resonance} below). Moreover, we also establish several related interesting results in both positive and negative directions, complementing to or improving upon this result, which specifically include:
\begin{itemize}
\item Several weak-boundedness in the limiting cases $p=1,\infty$;
\item Weighted $L^p$-boundedness for any even Muckenhoupt weights $w\in A_p$ and $1<p<\infty$ without assuming any additional condition on $V$;
\item $W^{s,p}$-boundedness, where $W^{s,p}$ is the $L^p$-Sobolev space of order $s$;
\item Counterexamples of the $L^1$- and $L^\infty$-boundedness.
\end{itemize}
These results particularly give a complete classification for the validity of $L^p$-boundedness of $W_\pm, W_\pm^*$ if $H$ has no non-negative eigenvalue nor zero resonance. Furthermore, we apply our main theorem to show the $L^p$-$L^q$ decay estimates for the propagator $e^{-itH}P_{\ac}(H)$ and the   H\"ormander-type theorem of the $L^p$-boundedness for the spectral multiplier $f(H)$.

\subsection{Main results}
\label{subsection_main_results}
To state our results, we need to recall the notion of the {\it zero resonances} for the operator $H=\Delta^2+V(x)$ on $\R$ due to Soffer--Wu--Yao \cite{SWY21}.  For $s\in \R$, we set $L^2_s(\R)=\{f\in L^2_{\mathrm{loc}}(\mathbb R)\ |\ \<x\>^sf\in L^2(\mathbb R)\}$, which is decreasing in $s$. Then we define
$$
W_\sigma(\mathbb R)=\bigcap_{s>\sigma}L^2_{-s}(\mathbb R),
$$
which is increasing in $\sigma$ and satisfies $L^2_{-\sigma}(\mathbb R)\subset W_\sigma(\mathbb R)$. Note that $(1+|x|)^\alpha\in W_\sigma(\R)$ if $\sigma\ge \alpha+1/2$. In particular, $f\in W_{1/2}(\R)$ and $\<x\>f\in W_{3/2}(\R)$ for any $f\in L^\infty(\R)$.

\begin{definition}
\label{definition_resonance} Let $H=\Delta^2+V(x)$ and $|V(x)|\lesssim \<x\>^{-\mu}$ for some $\mu>0$.
We say that
\begin{itemize}
\item zero is a {\it first kind resonance  of $H$} if there exists some nonzero $\phi\in W_{3/2}(\R)$ but no non-zero $\phi\in W_{1/2}(\R)$ such that $H\phi=0$ in the distributional sense;
\item zero is a {\it second kind resonance of $H$} if there exists some nonzero $\phi\in W_{1/2}(\R)$ but no non-zero $\phi\in L^2(\R)$ such that $H\phi=0$ in the distributional sense;
\item  zero is an {\it eigenvalue of $H$} if there exists  some nonzero $\phi\in L^2(\R)$ such that $H\phi=0$ in the distributional sense;
\item  zero is a {\it regular point of $H$} if $H$ has neither zero eigenvalue nor zero resonances.
\end{itemize}
\end{definition}
The case when zero is a regular point of $H$ is also called the {\it generic case} and the case when zero is a resonance or an eigenvalue of $H$ is called the {\it exceptional case} in the literature.

\begin{remark}
\label{remark_eigenvalue}
It was observed by Goldberg \cite{Goldberg} (see also \cite[Remark 1.2]{SWY21}) that if $|V(x)|\lesssim \<x\>^{-\mu}$ with some $\mu$ satisfying a weaker condition than \eqref{mu}, then $H$ has no zero eigenvalue. Hence in the following theorems of this paper, we do not need to consider the zero eigenvalue case (see also Subsection \ref{subsection_remark} below for more related comments).
\end{remark}

Let $\mathbb B(X,Y)$ be the space of bounded operators from $X$ to $Y$, namely $A\in \mathbb B(X,Y)$ if
$$
\|Af\|_{Y}\lesssim \|f\|_X,\quad f\in X.
$$
We also set $\mathbb B(X)=\mathbb B(X,X)$. We now state the main result of this paper as follows.

\begin{theorem}
\label{theorem_1}
Let $H=\Delta^2+V(x)$ and $V$ satisfy $|V(x)|\lesssim \<x\>^{-\mu}$ for some $\mu>0$ depending on the following types:
\begin{align}
\label{mu}
\mu>\begin{cases}
15&\text{if zero is a regular point of $H$},\\
21&\text{if zero is a first kind resonance of $H$},\\
29&\text{if zero is a  second kind resonance of $H$}.
\end{cases}
\end{align}
Assume also $H$ has no embedded eigenvalue in $(0,\infty)$. Let $W_\pm, W_\pm^*$ be the wave and inverse (or dual) wave operators defined by \eqref{def-wave}. Then the following statements hold:
\begin{itemize}
\item[(1)] $W_\pm,W_\pm^*\in \mathbb B(L^p(\R))$ for all $1<p<\infty$. Moreover, if $V$ is compactly supported, then $W_\pm,W_\pm^*\in \mathbb B(L^1(\R),L^{1,\infty}(\R))$.
  \vskip0.1cm
\item[(2)] $W_\pm \in \mathbb B(\mathcal H^1(\R),L^1(\R))$ and $W_\pm^*\in \mathbb B(L^\infty(\R),\BMO(\R))$. Moreover, if in addition zero is either a regular point or a first kind resonance of $H$, then $W_\pm\in \mathbb B(L^\infty(\R),\BMO(\R))$ and $W_\pm^*\in \mathbb B(\mathcal H^1(\R),L^1(\R))$.
  \vskip0.1cm
\item[(3)] Let $1<p<\infty$, $w_p\in A_p$ and set $\tau f(x)=f(-x)$. Then
\begin{align*}
\norm{W_\pm f}_{L^p(w_p)}+\norm{W_\pm^*f}_{L^p(w_p)}&\lesssim \norm{f}_{L^p(w_p)}+\norm{\tau f}_{L^p(w_p)}.
\end{align*}
In particular, $W_\pm, W_\pm^*\in \mathbb B(L^p(w_p))$ if $w_p$ is even. Moreover, if zero is a regular point of $H$ and the operator $Q_1A_1^0Q_1$ appeared in Lemma \ref{lemma_3_3} below is finite rank, then $W_\pm, W_\pm^*\in \mathbb B(L^1(w_1),L^{1,\infty}(w_1))$ for any even $w_1\in A_1$.
\end{itemize}
\end{theorem}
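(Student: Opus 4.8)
The plan is to obtain all three parts from a single structural ingredient: a \emph{high-energy / low-energy decomposition} of $W_\pm$ together with an explicit representation of the low-energy part as a finite sum of operators built from the resolvent expansion of $H$ near zero. Concretely, one fixes a smooth cutoff $\chi$ supported near the origin and writes $W_\pm = W_\pm\chi(\sqrt[4]{H}) + W_\pm(1-\chi)(\sqrt[4]{H})$ via the stationary representation $W_\pm f = f - \frac{1}{2\pi i}\int_0^\infty R_H^\mp(\lambda^4)V\,[\,\cdot\,]\,d\lambda$ of the wave operator and its conjugate by the Fourier transform. The high-energy piece is handled by a Littlewood--Paley/oscillatory-integral argument: on each dyadic scale the kernel of $W_\pm$ is shown to be bounded by (a multiple of) the kernel of a Calderón--Zygmund operator, uniformly in the scale, giving $W_\pm,W_\pm^*\in\mathbb B(L^p)$ for $1<p<\infty$, weak $(1,1)$, $\mathcal H^1\to L^1$, and $L^\infty\to\BMO$ bounds for the high-energy part regardless of the zero-energy type, and the $A_p$-weighted and $W^{s,p}$ statements follow automatically once the relevant operators are exhibited as Calderón--Zygmund operators with the appropriate cancellation. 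The low-energy piece is where the resonance classification enters: using the expansion of $(M(\lambda))^{-1}$ near $\lambda=0$ (with $M(\lambda)=U+v R_0(\lambda^4)v$ in the Jensen--Nenciu scheme adapted to $\Delta^2$ by Soffer--Wu--Yao), one expands $W_\pm\chi(\sqrt[4]{H})$ as a \emph{finite} sum of terms of the form $g(x)\,T\,h(y)$ with $T$ either a Fourier integral operator of the free type (bounded on all the spaces in question) plus, in the exceptional cases, lower-order but non-decaying terms governed by the finite-rank projections $Q$, $Q_1$ onto the resonance spaces. This is exactly the structure encoded in Lemma \ref{lemma_3_3}, to which parts (1)--(3) refer.

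For part (1) I would first prove the $L^p$ bound, $1<p<\infty$: each term in the low-energy decomposition is either an $L^p$-bounded Fourier multiplier composed with multiplication operators in $L^\infty$, or — in the resonance cases — an operator whose integral kernel $K(x,y)$ satisfies $|K(x,y)|\lesssim \langle x\rangle^{-a}\langle y\rangle^{-b}$ with $a,b$ large enough (coming from the decay of the resonance functions in $W_{3/2}$ or $W_{1/2}$, which is why the thresholds on $\mu$ increase from $15$ to $21$ to $29$) that Schur's test and interpolation give $L^p$-boundedness for all $1<p<\infty$. For the weak-$(1,1)$ claim under compact support of $V$, I would note that compact support upgrades the off-diagonal decay of the low-energy kernels to genuine Calderón--Zygmund-type kernels (no logarithmic obstructions), so the high- plus low-energy operator is weak $(1,1)$. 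Part (2) is the refinement at the endpoints: the generic $\mathcal H^1\to L^1$ and $L^\infty\to\BMO$ statements follow because the only potentially bad low-energy terms are those involving the projection onto the second-kind-resonance space, whose kernels fail to have the cancellation needed at $L^\infty\to\BMO$ for $W_\pm$ but retain it for the adjoint; when zero is regular or a first-kind resonance these bad terms are absent (the corresponding $Q$ projection vanishes or the associated operator is smoothing), so the full $\mathcal H^1$-$L^1$ and $L^\infty$-$\BMO$ duality pair is recovered.

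Part (3) I would deduce as a corollary of the proof of part (1) rather than re-doing the analysis: every operator appearing in the decomposition of $W_\pm$ is, after a reflection $\tau$, either a standard Calderón--Zygmund operator on $\R$ (hence bounded on every $L^p(w)$, $w\in A_p$, by the Coifman--Fefferman theorem and weak $(1,1)$ weighted bounds for $w\in A_1$) or a multiplication-sandwiched bounded operator; the only point requiring care is that some terms are \emph{not} translation-invariant and naturally pair $f(x)$ with $f(-x)$ (this is the origin of the $\norm{\tau f}$ term), which disappears when $w$ is even since then $\norm{\tau f}_{L^p(w)}=\norm{f}_{L^p(w)}$. The weak-type weighted statement in the regular case needs the extra hypothesis that $Q_1 A_1^0 Q_1$ is finite rank precisely so that the single low-energy term which is merely weak-$(1,1)$ (not strong) is a finite-rank perturbation, hence still weak $(1,1)$ on $L^1(w_1)$.

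I expect the \textbf{main obstacle} to be the low-energy analysis in the second-kind-resonance case: controlling the inverse $(M(\lambda))^{-1}$ requires a multi-step Jensen--Nenciu inversion, and tracking which resulting terms are genuinely Calderón--Zygmund, which are merely Schur-bounded with polynomial weights, and which (involving $Q_1$) must be isolated as finite-rank, is delicate and is exactly what forces the large decay exponent $\mu>29$ and what obstructs the naive $L^1$/$L^\infty$ bounds. The high-energy part, by contrast, should be routine once the oscillatory-integral estimates for $e^{it\Delta^2}$ are in place, and the passage from the scalar $L^p$ bounds to the weighted and Sobolev statements is essentially formal given the Calderón--Zygmund structure.
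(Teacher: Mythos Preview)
Your high-level architecture is right---stationary formula, low/high energy split, expansion of $M^{-1}(\lambda)$ via Soffer--Wu--Yao, and reduction to Calder\'on--Zygmund theory---and this is exactly the paper's route. But two of your key technical claims about the low-energy part are wrong, and without fixing them the argument does not close.

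First, your assertion that the low-energy kernels satisfy $|K(x,y)|\lesssim \langle x\rangle^{-a}\langle y\rangle^{-b}$, with decay inherited from resonance functions, is not how it works and would not suffice: the kernels have no separated decay in $x$ and $y$. What actually happens is that the projections $Q_\alpha$ in the expansion of $M^{-1}(\lambda)$ satisfy $Q_\alpha(x^k v)=0$ for $k<\alpha$, and combining this with a Taylor expansion of the free resolvent kernel $F_\pm(\lambda|x-y|)$ about $y=0$ yields $Q_\alpha v R_0^\pm(\lambda^4)=O(\lambda^{-3+\alpha})$, gaining $\alpha$ powers of $\lambda$. After this cancellation, every term in $W_-^L$ has an integrand that is $O(\lambda)$ or $O(1)$ as $\lambda\to 0$. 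The $O(\lambda)$ terms give, via two integrations by parts in $\lambda$, kernels bounded by $\langle|x|-|y|\rangle^{-2}$ (not $\langle x\rangle^{-a}\langle y\rangle^{-b}$); Schur then gives $L^p$ for all $1\le p\le\infty$. This mechanism---not decay of resonance functions---is the reason the $\mu$-threshold increases: more singular terms in $M^{-1}$ require higher-order Taylor expansion and hence more moments of $v$.

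Second, for the $O(1)$ terms (the genuinely non-Schur ones, e.g.\ those coming from $\widetilde P$ and $Q_1A_1^0Q_1$), integration by parts leaves an explicit leading kernel of the form $\psi(||x|\pm|y||^2)/(|x|\pm|y|)$ or $\psi(||x|-|y||^2)/(|x|\pm i|y|)$. These are \emph{not} Calder\'on--Zygmund kernels as they stand; the paper's trick is to write $K=(\chi_++\chi_-)K(\chi_++\chi_-)$ and observe that each of the four pieces becomes, after possibly composing with the reflection $\tau$, a genuine Calder\'on--Zygmund operator (essentially a truncated Hilbert transform or its complex analogue). This is the precise origin of the $\|\tau f\|$ term, and your proposal does not supply it. For the $\mathcal H^1\to L^1$ bounds one cannot use this $\chi_\pm$ decomposition (multiplication by $\chi_\pm$ destroys $\mathcal H^1$), so the paper instead exploits translation invariance of the $\mathcal H^1$ and $L^1$ norms to pull the $u$-integrations outside and reduce to a model kernel of the form $\sgn x\,\sgn y\,g(x,y)$, handled directly via the atomic decomposition. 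The obstruction in the second-kind case is concrete: the terms with $(\alpha,\beta)=(3,3)$ and $(3,1)$ produce a model kernel whose adjoint lacks the required structure for the atom argument, so only $\mathcal H^1\to L^1$ (not $L^\infty\to\BMO$) survives for $W_-$.

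Finally, your description of the finite-rank hypothesis is off: it is not that a ``finite-rank perturbation'' is automatically weak-type. The point is that finite rank of $Q_1A_1^0Q_1$ lets one separate variables in the coefficient $\widetilde m_1(x,y)$ appearing in front of the bad kernel $g_1^-$, after which the weighted weak-$(1,1)$ bound follows from the Calder\'on--Zygmund theory already in place. Without this separation one only gets the unweighted $L^1\to L^{1,\infty}$ bound, and even that needs the compact-support assumption to freeze $\sgn(x-\theta u)$ to $\sgn x$ for $|x|$ large.
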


Here $A_p$ is the Muckenhoupt class (see Appendix \ref{appendix_CZ} below for more details and some examples), $L^{p}(w)$, $L^{1,\infty}(w)$, $\mathcal H^1(\R)$ and $\BMO(\R)$ are the weighted $L^p$, weighted weak $L^1$, Hardy and Bounded Mean Oscillation spaces on $\R$, respectively (see Subsection \ref{subsection_notation} below). 

\begin{remark}
\label{remark_theorem_1} We here make a few remarks  (see Subsection \ref{subsection_remark} for more remarks).

\begin{itemize}
\item[(1)] In Theorem \ref{theorem_1}, the presence of zero resonances has no effect on the $p$-range of the $L^p$-boundedness of wave operators $W_\pm, W_\pm^*$, and only require that the potentials $V$ satisfy stronger decay conditions than the regular case.
  \vskip0.1cm
\item[(2)] We in fact prove the following bounds with an explicit dependence on the weights:
\begin{align}
\label{theorem_1_1}
\norm{W_\pm f}_{L^p(w_p)}&\lesssim [w_p]_{A_p}^{\max\{1,1/(p-1)\}}(\norm{f}_{L^p(w_p)}+\norm{\tau f}_{L^p(w_p)}),\quad 1<p<\infty,\\
\label{theorem_1_2}
\norm{W_\pm f}_{L^{1,\infty}(w_1)}&\lesssim [w_1]_{A_1}(1+\log [w]_{A_1})(\norm{f}_{L^1(w_1)}+\norm{\tau f}_{L^1(w_1)}),
\end{align}
where $[w]_{A_p}$ is the $A_p$-characteristic constant of $w$ (see Appendix \ref{appendix_CZ}) and the implicit constants are independent of $w_p,w_1$. Moreover, the same bounds also hold for $W_\pm^*$. The estimates of type \eqref{theorem_1_1} (without $\norm{\tau f}_{L^p(w_p)}$) are known as the $A_p$-estimates in the theory of Calder\'on--Zygmund operators and known to be sharp (see \cite{Hytonen}). We also refer to \cite{LOP} for the estimates of type \eqref{theorem_1_2} for Calder\'on--Zygmund operators.
  \vskip0.1cm
\item[(3)] For the Schr\"odinger operator $-\Delta+V(x)$ on $\R^3$, Beceanu \cite{Be} proved a weighted $L^p$-boundedness of the wave operators with a specific weight $\<x\>^a$ for $|a|<1$ under a suitable assumption on $V$ depending on $a$. Compared with his result, the interesting point of Theorem \ref{theorem_1} (2) is that we can take general even  ({\it i.e.} radial) weight $w_p\in A_p$. Moreover, our assumption on $V$ is independent of the choice of weights.
\end{itemize}
 \end{remark}

In Theorem \ref{theorem_1}, we have obtained the desired $L^p$ (or even weighted $L^p$) boundedness of $W_\pm$ for non-endpoint cases $1<p<\infty$ and some weak-boundedness for the limiting cases $p=1,\infty$. Then it is natural to ask whether $W_\pm$ are bounded on $L^1(\R)$ and $L^\infty(\R)$ or not. The next theorem answers this question negatively in the regular case, which shows that Theorem \ref{theorem_1} is sharp (in general) in terms of the $p$-range of the $L^p$-boundedness.

\begin{theorem}
\label{theorem_2}
Suppose that $|V(x)|\lesssim \<x\>^{-\mu}$ with $\mu>15$, $V\not\equiv0$ and that $H$ has no embedded eigenvalue in $(0,\infty)$. Then we have the following statements:
\begin{itemize}
\item[(1)] Suppose that zero is a regular point of $H$. Then $W_\pm, W_\pm^*\notin \mathbb B(L^1(\R))\cup \mathbb B(L^\infty(\R))$.
  \vskip0.1cm
\item[(2)] Suppose that zero is a second kind resonance of $H$ and $V$ is compactly supported. If $D_*\neq0$, then  $W_\pm \notin \mathbb B(L^\infty(\R),\BMO(\R))$ and $W_\pm^*\notin \mathbb B(\H^1(\R),L^1(\R))$, where the constant $D_*$ is defined in Proposition \ref{proposition_example_3}.
\end{itemize}
\end{theorem}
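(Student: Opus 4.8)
The plan is to locate, within the stationary representation of $W_\pm$ established in the proof of Theorem \ref{theorem_1}, a single explicitly computable ``leading term'' which is the only source of bad behaviour at the endpoints, every other piece being either the identity operator or an operator that is harmless at the endpoint in question. Throughout I would use the elementary duality relations
\[
W_\pm\in\mathbb B(L^1)\Longleftrightarrow W_\pm^*\in\mathbb B(L^\infty),\qquad W_\pm\in\mathbb B(L^\infty)\Longleftrightarrow W_\pm^*\in\mathbb B(L^1),
\]
\[
W_\pm\in\mathbb B(L^\infty,\BMO)\Longleftrightarrow W_\pm^*\in\mathbb B(\H^1,L^1),
\]
which follow from the adjointness of $W_\pm^*$ with respect to the $L^2$ pairing together with the dualities $(L^1)^*=L^\infty$ and $(\H^1)^*=\BMO$; in this way each assertion of the theorem reduces to an unboundedness statement for one operator, the companion statement being automatic.

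For part (1), I would write $W_\pm=\Id+\Gamma_\pm+R_\pm$, where $\Gamma_\pm$ is obtained by inserting the low-energy expansion of the free resolvent $R_0(\lambda^4)$ — whose kernel carries a universal $\lambda^{-3}$ head followed by lower-order contributions of the form $\lambda^{-1}|x-y|^2$ and $|x-y|^3$ — into the Born series for $W_\pm$ and using that zero is a regular point of $H$ to control $(\Id+R_0(\lambda^4)V)^{-1}$. The operator $\Gamma_\pm$ is an explicit singular integral (Fourier-multiplier--type) operator, possibly composed with the reflection $\tau$ and with bounded weights built from $V$; it enjoys exactly the positive bounds of Theorem \ref{theorem_1}, namely $\Gamma_\pm\in\mathbb B(L^p)$ for $1<p<\infty$ and $\Gamma_\pm\in\mathbb B(L^1,L^{1,\infty})$, but it is \emph{not} bounded on $L^1$ nor on $L^\infty$. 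This is checked by hand: the kernel $K_\pm(x,y)$ of $\Gamma_\pm$ carries a non-integrable, singular-integral--type tail, so that $\int|K_\pm(x,y_0)|\,dx=\infty$ and $\int|K_\pm(x_0,y)|\,dy=\infty$ for suitable $x_0,y_0$; testing $\Gamma_\pm$ (respectively $\Gamma_\pm^*$) on approximate identities concentrated near $y_0$ (respectively $x_0$) then gives functions whose $L^1$-norms are not uniformly bounded, which yields $\Gamma_\pm\notin\mathbb B(L^1)$ and, by duality, $\Gamma_\pm\notin\mathbb B(L^\infty)$. Here $V\not\equiv0$ is used to guarantee that the coefficient of $\Gamma_\pm$, which is expressed through the low-energy data of $H$ and degenerates only in the free case, is nonzero. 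Since $R_\pm$ is shown along the proof of Theorem \ref{theorem_1} to have a kernel obeying a Schur-type bound, hence to be bounded on both $L^1$ and $L^\infty$, the unboundedness of $\Gamma_\pm$ passes to $W_\pm$; the statements for $W_\pm^*$ then follow by duality.

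For part (2), the relevant model is supplied by Proposition \ref{proposition_example_3}, which constructs a compactly supported potential for which zero is a second kind resonance and isolates the corresponding leading term of $W_\pm$ with its coefficient $D_*$: one has $W_\pm=G_\pm+D_*\mathcal K_*$, where $G_\pm\in\mathbb B(L^\infty,\BMO)$ is controlled exactly as in Theorem \ref{theorem_1}(2), while $\mathcal K_*$ is an explicit finite-rank operator whose range involves the zero-resonance function $\phi\in W_{1/2}(\R)$ and the associated low-energy kernel data. The key point is that, unlike a genuine Calder\'on--Zygmund operator, $\mathcal K_*$ violates the endpoint estimate: evaluating $\mathcal K_*$ on an explicit bounded function on which its defining linear functionals do not vanish produces an element of $W_\pm L^\infty$ with infinite $\BMO$-seminorm. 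Since $D_*\neq0$ and $G_\pm\in\mathbb B(L^\infty,\BMO)$, this forces $W_\pm\notin\mathbb B(L^\infty,\BMO)$, and dualizing gives $W_\pm^*\notin\mathbb B(\H^1,L^1)$.

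I expect the main obstacle, in both parts, to be the isolation of the leading term: one must carry the low-energy resolvent expansion far enough, and keep track of enough cancellations coming from the ``regular point'' hypothesis (in part (1)) or the ``second kind resonance'' hypothesis (in part (2)), to be certain that (i) the candidate bad term $\Gamma_\pm$ (respectively $\mathcal K_*$) really has the claimed elementary structure, and (ii) every remaining term is bounded at the relevant endpoint, so that no hidden cancellation rescues $W_\pm$. A second, more technical difficulty is the explicit production of the test functions witnessing $\Gamma_\pm\notin\mathbb B(L^1)\cup\mathbb B(L^\infty)$ and $\mathcal K_*\notin\mathbb B(L^\infty,\BMO)$, together with the verification in Proposition \ref{proposition_example_3} that the potential can be chosen so that $D_*\neq0$.
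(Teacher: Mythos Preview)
Your overall strategy---isolate a bad term and show every remainder is harmless at the endpoint---is the right one, and it is the paper's strategy too. But for part~(1) there is a genuine gap in your decomposition $W_\pm=\Id+\Gamma_\pm+R_\pm$.

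You claim that ``$R_\pm$ is shown along the proof of Theorem~\ref{theorem_1} to have a kernel obeying a Schur-type bound, hence to be bounded on both $L^1$ and $L^\infty$.'' This is false. In the regular case the low-energy expansion produces \emph{two} operators whose kernels decay only like $\langle|x|-|y|\rangle^{-1}$ and therefore fail the Schur test: the term coming from $\widetilde P$ (called $T_{K^0_{33}}$ in the paper) and the term coming from $Q_1A_1^0Q_1$ (called $T_{K^0_1}$). Neither is bounded on $L^1$ or $L^\infty$, so neither can be put into $R_\pm$, and a priori they could cancel in the sum. Your approximate-identity test would detect that each kernel fails to be integrable, but that says nothing about their sum inside $W_\pm$.

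The paper resolves this by choosing a test function $f_R=\chi_{[-R,R]}$ on which the two bad terms \emph{behave differently}: one shows by direct computation that $|(T_{K^0_{33}}f_R)(R+2)|\sim\log R\to\infty$ while $\sup_{R>0}\|T_{K^0_1}f_R\|_{L^\infty}<\infty$, and similarly $T_{K^0_{33}}f_1\notin L^1$ but $T_{K^0_1}f_1\in L^1$. The second computation exploits a specific parity property of the kernel of $T_{K^0_1}$ (it carries a factor $\sgn y$, so it is odd in $y$) which produces a cancellation against the even test function $f_R$; this is the step you are missing.

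For part~(2), your description of $\mathcal K_*$ as ``finite-rank'' and involving the resonance function $\phi$ is not accurate; in the paper it is the sum $T_{K^2_{-3}}+T_{K^2_{-12}}$, an explicit singular integral. More importantly, the paper does \emph{not} test on $L^\infty$ to get infinite $\BMO$ norm; it works on the dual side directly, exhibiting $g_R=\sgn(x)\chi_{\{R\le|x|\le 2R\}}\in\H^1$ with $T_K^*g_R\notin L^1$ (the image behaves like $|x|^{-1}$ at infinity, with coefficient proportional to $\overline{D_*}$). Finally, Proposition~\ref{proposition_example_3} does not construct a potential with $D_*\neq0$; it only proves the conditional statement, and the theorem is stated under that hypothesis.
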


\begin{remark}
One can also obtain some results on the unboundedness in $L^1$ and $L^\infty$ for the resonant cases. We refer to Remark \ref{remark_counterexample_1} in Section \ref{section_counterexample} for more details.
\end{remark}

Finally, we also obtain the $W^{s,p}$-boundedness of $W_\pm$, where $W^{s,p}=W^{s,p}(\R)$ is the $L^p$-Sobolev space of order $s$. For $N\in \N$, we set
\begin{align}
\label{B^N}
B^N(\R)=\{V\in C^N(\R)\ |\ V^{(k)}\in L^\infty(\R)\ \text{for all $k=0,1,...,N$}\}.
\end{align}

\begin{theorem}
\label{theorem_3}
Let $1<p<\infty$ and $H=\Delta^2+V(x)$ satisfy the same assumption in Theorem \ref{theorem_1}. Then $W_\pm, W_\pm^*\in \mathbb B(W^{s,p}(\R))$ for all $0\le s\le4$. Moreover, if in addition $V\in B^{4N}(\R)$ with some $N\in \N$, then $W_\pm, W_\pm^*\in \mathbb B(W^{s,p}(\R))$ for all $0\le s\le4(N+1)$.
\end{theorem}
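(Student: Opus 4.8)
The plan is to derive Theorem~\ref{theorem_3} from the $L^p$-boundedness in Theorem~\ref{theorem_1}(1) by exploiting the intertwining property $f(H)W_\pm=W_\pm f(\Delta^2)$ and its adjoint $f(\Delta^2)W_\pm^*=W_\pm^*f(H)$. The key reduction is that, for $1<p<\infty$ and $m\in\N$, the norm on $W^{4m,p}(\R)$ is equivalent to $\norm{f}_{L^p(\R)}+\norm{\Delta^{2m}f}_{L^p(\R)}$ (this is the classical fact that an integer-order $L^p$-Sobolev norm in one dimension is controlled by the sum of the top- and zeroth-order terms, a consequence of the Mikhlin multiplier theorem applied to $(i\xi)^j(1+\xi^{4m})^{-1}$, $0\le j\le 4m$, together with $\Delta^{2m}=\partial^{4m}$), so it suffices to bound $\norm{\Delta^{2m}W_\pm f}_{L^p}$ and $\norm{\Delta^{2m}W_\pm^*f}_{L^p}$ by $\norm{f}_{W^{4m,p}}$. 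The intertwining relation, applied to the unbounded function $\lambda\mapsto\lambda^m$, is valid on $D(\Delta^{2m})=H^{4m}(\R)$ and gives $H^mW_\pm=W_\pm\Delta^{2m}$ and $\Delta^{2m}W_\pm^*=W_\pm^*H^m$ there; we will use it on the Schwartz class $\S(\R)$, which is dense in $W^{4m,p}(\R)$, and then extend the resulting bound by density. Finally, the values of $s$ that are not multiples of $4$ are recovered from complex interpolation $[W^{4(m-1),p},W^{4m,p}]_\theta=W^{4(m-1)+4\theta,p}$, since $W_\pm,W_\pm^*$ are fixed operators bounded at both endpoints.

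First consider the range $0\le s\le4$, where only $|V(x)|\lesssim\<x\>^{-\mu}$ is needed. For $f\in\S(\R)$ one has $W_\pm f\in D(H)=H^4(\R)$, and substituting $H=\Delta^2+V$ into the $m=1$ intertwining identities $HW_\pm f=W_\pm\Delta^2 f$ and $\Delta^2W_\pm^*f=W_\pm^*Hf$ yields
\begin{align*}
\Delta^2 W_\pm f=W_\pm\Delta^2 f-V\,W_\pm f,\qquad \Delta^2 W_\pm^* f=W_\pm^*\Delta^2 f+W_\pm^*(Vf).
\end{align*}
Since $V\in L^\infty(\R)$ and $W_\pm,W_\pm^*\in\mathbb B(L^p(\R))$ by Theorem~\ref{theorem_1}(1), both right-hand sides have $L^p$-norm $\lesssim\norm{\Delta^2 f}_{L^p}+\norm{f}_{L^p}\lesssim\norm{f}_{W^{4,p}}$; together with $\norm{W_\pm f}_{L^p}+\norm{W_\pm^* f}_{L^p}\lesssim\norm{f}_{L^p}$ this gives $W_\pm,W_\pm^*\in\mathbb B(W^{4,p}(\R))$, and interpolation with $L^p=W^{0,p}$ completes the range $0\le s\le4$.

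For the higher-order statement assume $V\in B^{4N}(\R)$. Expanding $H^m=(\Delta^2+V)^m$ and moving all derivatives to the right shows that $E_m:=H^m-\Delta^{2m}$ is a linear differential operator of order at most $4(m-1)$ whose coefficients are polynomials in $V,V',\dots,V^{(4(m-1))}$; hence, for $V\in B^{4N}(\R)$ and every $1\le m\le N+1$, $E_m$ maps $W^{4(m-1),p}(\R)$ boundedly into $L^p(\R)$, and a standard elliptic bootstrap (using the same derivative hypothesis on $V$) shows $D(H^m)=H^{4m}(\R)$, so in particular $W_\pm f,W_\pm^* f\in H^{4m}(\R)$ for $f\in\S(\R)$. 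For the dual operator, $\Delta^{2m}W_\pm^* f=W_\pm^*H^mf=W_\pm^*\Delta^{2m}f+W_\pm^*(E_m f)$ and Theorem~\ref{theorem_1}(1) give $\norm{\Delta^{2m}W_\pm^* f}_{L^p}\lesssim\norm{\Delta^{2m}f}_{L^p}+\norm{E_m f}_{L^p}\lesssim\norm{f}_{W^{4m,p}}$, hence $W_\pm^*\in\mathbb B(W^{4m,p}(\R))$ for all $1\le m\le N+1$. For $W_\pm$ the analogous identity $\Delta^{2m}W_\pm f=W_\pm\Delta^{2m}f-E_mW_\pm f$ has $W_\pm f$ inside $E_m$, so one proceeds by induction on $m$: assuming $W_\pm\in\mathbb B(W^{4(m-1),p}(\R))$ one gets $\norm{E_mW_\pm f}_{L^p}\lesssim\norm{W_\pm f}_{W^{4(m-1),p}}\lesssim\norm{f}_{W^{4(m-1),p}}\lesssim\norm{f}_{W^{4m,p}}$, and hence $W_\pm\in\mathbb B(W^{4m,p}(\R))$. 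Interpolating between consecutive endpoints $W^{4(m-1),p}$ and $W^{4m,p}$ for $m=1,\dots,N+1$ then yields the full range $0\le s\le4(N+1)$.

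The only genuinely technical ingredient is the structural claim about $E_m=H^m-\Delta^{2m}$: that it has order $\le4(m-1)$ and that its coefficients are controlled by $\norm{V}_{B^{4(m-1)}}$. This follows from iterating the Leibniz-rule commutator identity $[\Delta^2,V]=\sum_{j=1}^{4}\binom{4}{j}V^{(j)}\partial^{4-j}$, and it is exactly the point at which the $B^{4N}$-hypothesis enters, since $m$ factors of $\Delta^2$ hitting a single $V$ to their right produce at most $V^{(4(m-1))}$. Everything else is soft: the extension of the intertwining relation to $\lambda\mapsto\lambda^m$ on $H^{4m}(\R)$, the density of $\S(\R)$ in $W^{s,p}(\R)$, the Mikhlin-type characterisation of integer-order $L^p$-Sobolev norms, and complex interpolation of the Sobolev scale.
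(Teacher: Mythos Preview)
Your proof is correct and takes a genuinely different route from the paper. The paper proves an intermediate lemma (Lemma~\ref{lemma_6_2}) asserting that $(\Delta^2+E)^{s/4}(H+E)^{-s/4}$ and its inverse are bounded on $L^p$; this relies on the generalized Gaussian bound \eqref{lemma_6_2_proof_2} for $e^{-t(H+E)}$ (from Deng--Ding--Yao), Blunck's abstract spectral multiplier theorem to control imaginary powers $(H+E)^{i\beta}$ on $L^p$, and Stein's analytic interpolation between $s=0$ and $s=4(N+1)$. The $W^{s,p}$-boundedness of $W_\pm$ then follows in one line from the intertwining identity $(H+E)^{s/4}W_\pm=W_\pm(\Delta^2+E)^{s/4}$.

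By contrast, you bypass heat kernel machinery entirely: you work at the integer endpoints $s=4m$ via the purely algebraic decomposition $H^m=\Delta^{2m}+E_m$ with $E_m$ of order $\le 4(m-1)$, combine this with the intertwining relation and Theorem~\ref{theorem_1}(1), and induct in $m$ for $W_\pm$ (the dual $W_\pm^*$ needs no induction). Complex interpolation of the Sobolev scale then fills in the intermediate $s$. Your argument is more elementary and self-contained; the paper's approach is heavier but produces the norm equivalence $\|(\Delta^2+E)^{s/4}f\|_{L^p}\sim\|(H+E)^{s/4}f\|_{L^p}$ as an independent byproduct, which can be useful elsewhere (e.g.\ for fractional powers of $H$).
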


Here we summarize the above results in the following Table 1, from which it is clear that, for the case when zero is a regular point of $H$, our results give a complete classification of the validity of the $L^p$-boundedness for all $1\le p\le \infty$ and weak-boundedness in the framework of $L^{1,\infty}$, $\H^1$ and $\BMO$ for the limiting cases $p=1,\infty$.

\begin{table}[htb]
\centering
  \begin{tabular}{|c||c||c|}  \hline
  Boundedness & $W_\pm(H,\Delta^2)$ & $W_\pm(H,\Delta^2)^*$ \\ \hline
  $L^p(\R)$, $L^p(w_p)$, $W^{s,p}(\R)$\ \  ($1<p<\infty$) & True & True  \\ \cline{1-1} \cline{1-3}
  $L^1(\R)\to L^{1,\infty}(\R)$ & True & True  \\ \cline{1-1} \cline{1-3}
  $L^1(\R)$, $L^\infty(\R)$ & False (R)& False (R)  \\ \cline{1-1} \cline{1-3}
  $\H^1(\R)\to L^1(\R)$ & True & True (R, 1st) \\ \hline
  $L^\infty(\R)\to \BMO(\R)$ & True (R, 1st)  & True \\ \hline
  \end{tabular}\\
  (R=regular case,\ 1st=first kind case)
  \vspace{0.5cm}
  \caption{Boundedness of $W_\pm(H,\Delta^2)$ and $W_\pm(H,\Delta^2)^*$}
\end{table}

\subsection{Further remarks on eigenvalues and potentials}
\label{subsection_remark}
Here we make further comments on the above theorems, especially the spectral assumptions and the decay condition on $V$.
\subsubsection{Zero resonance and zero eigenvalue} We first give two simple examples of $V$ such that $H$ has a zero resonance. On one hand, zero is a second kind resonance for the free case $H=\Delta^2,V\equiv0$ since any constant function $\phi_0\in W_{1/2}(\R)$ satisfies $\Delta^2\phi_0=0$. On the other hand, it is also easy to construct $V\not\equiv0$ such that $H$ has a zero resonance. Indeed, let $\phi_1\in C^\infty(\mathbb{R})$ be a positive function such that  $\phi_1(x)=c|x|+d$ for $|x|>1$ with some constants $c,d\ge0$ satisfying $(c,d)\neq(0,0)$.  Then $H\phi_1=0$ if taking $$ V(x)=-(\Delta^2\phi_1)/\phi_1, \ \ x\in \mathbb{R}.$$  Note that $V\in C_0^\infty(\mathbb{R})$ and $\phi_1\in W_{3/2}(\R)\setminus W_{1/2}(\R)$ if $c>0$ and $\phi_1\in W_{1/2}(\R)$ if $c=0$. These examples indicate that zero resonances may occur even for compactly supported potentials.

We next discuss on the zero eigenvalue of $H$. It is again easy to construct an example of $H$ having zero eigenvalue if $V$ decays sufficiently slowly.
In fact, let $\phi=(1+|x |^2)^{-s/2}$ and $V(x)=-(\Delta^2\phi)/\phi$. Then $\phi\in H^4(\mathbb{R})$ for any $s>1$ and $(\Delta^2+V)\phi=0$, which means $|V(x)|\lesssim \<x\>^{-4}$ and zero is an eigenvalue of $H $. However, as already mentioned in Remark \ref{remark_eigenvalue}, if $|V(x)|\lesssim \<x\>^{-\mu}$ with some $\mu$ satisfying \eqref{mu}, then zero cannot be an eigenvalue of $H$ in dimension one. We believe such a decay condition on $V$ may not be sharp, expecting that the decay rate $\mu>4$ is optimal to ensure the absence of zero eigenvalue for $\Delta^2+V$ on $\R$.

Based on these remarks, and in view of the the fast decay conditions of potential $V$ in our theorems, we remark that zero eigenvalue can be actually excluded, while zero resonances must be taken into account. However, we again emphasize that the presence of zero resonances has no effect on the validity of $L^p$-boundedness of $W_\pm, W_\pm^*$ at least for $1<p<\infty$.
\subsubsection{Embedded positive eigenvalue} In contrast with the zero energy case, the absence of positive eigenvalues of $H$ are more subtle than that of zero resonance or zero eigenvalue.

It is well-known as Kato's theorem \cite{K} that if $V$ is bounded and $V=o(|x|^{-1})$ as $|x|\rightarrow\infty$ then the Schr\"odinger operator $-\Delta+V$ has no positive eigenvalues  (also see \cite{FHHH, IJ, KoTa} for more related results and references). By contraries, such a criterion cannot hold for the fourth-order Schr\"odinger operator  $H=\Delta^2+V$, so the assumption on the absence of positive eigenvalues seems to be indispensable. Indeed, it is easy to construct a Schwartz function $V(x)$ so that $H$ on $\R$ has an eigenvalue $1$.\footnote{In fact, $V(x)=20/\cosh^2(x)-24/\cosh^4(x)\in \S(\R)$ satisfies $\frac{d^4\psi_0}{dx^4}+ V(x)\psi_0=\psi_0$ where $\psi_0=1/\cosh(x)=2/(e^x+e^{-x})\in L^2(\mathbb{R})$.} Moreover, in any dimensions $n\ge1$, one can also easily construct $V\in C_0^\infty(\R^n)$ so that $H$ has positive eigenvalues (see Feng et al.\cite[Section 7.1]{FSWY}). These results clearly indicate that the absence of positive eigenvalues for the fourth-order Schr\"odinger operator would be more  subtle and unstable than the second order cases under the potential  perturbation $V$. 

We however stress that if $V\in C^1(\R)\cap L^\infty(\R)$ is repulsive, {\it i.e.}, $xV'(x)\le0$, then $H$ has no eigenvalues (see \cite[Theorem 1.11]{FSWY}). Note that such a criterion also works for the general higher-order elliptic operator $P(D)+V$ in any dimensions $n\ge1$.
Besides, we also notice that for a general selfadjoint operator $\mathcal{H}$ on $L^{2}(\mathbb R^n)$, even if $\mathcal{H}$ has a simple embedded eigenvalue, Costin--Soffer in \cite{CoSo} have proved that $\mathcal{H}+\ep W$ can kick off the eigenvalue located in a small interval under certain small perturbation of the potential $\ep W$.

\subsubsection{Decay condition on the potential}
The rather fast decay condition \eqref{mu} on the potential $V$ in our theorems is due to the use of low energy expansions of the resolvent $(H-\lambda^4-i0)^{-1}$ obtained by Soffer--Wu--Yao \cite{SWY21} (see Lemma \ref{lemma_3_3} below). In fact,  in the regular case for instance, the proof of Theorem \ref{theorem_1} works well if $|V(x)|\lesssim \<x\>^{-\mu}$ for $\mu>9$ under the assumption that the expansion \eqref{lemma_3_3_1} holds. Although it is an interesting problem to improve the assumption of Lemma \ref{lemma_3_3}, we do not pursue it for the sake of simplicity.

Note that in the case of the Schr\"odinger operator $-\Delta+V(x)$, the Jost functions are known to be very useful tools for studying asymptotic behaviors of the resolvent (see \cite{DeTr}) and have been widely used in the proof of $L^p$-boundedness of wave operators (see \cite{ArYa, DaFa, Weder}). However, it is not clear whether  the same method can be also applied to the fourth-order case. Indeed, in view of the explicit formula of the free resolvents $(\Delta^2-\lambda^4\mp i0)^{-1}$ (see \eqref{free_resolvent} below), we must construct four Jost functions $f_\pm(\lambda,x),g_\pm(\lambda,x)$ such that$$
f_\pm(\lambda,x)\sim e^{\pm i\lambda x},\quad g_\pm(\lambda,x)\sim e^{\mp\lambda x},\quad x\to \pm\infty,
$$
Hence the situation is very different from the second-order case since $g_\pm(\lambda,x)$ can grow exponentially fast if $\lambda<0$, while the Jost functions are uniformly bounded in the second-order case. Note that one needs several global estimates of Jost functions or their Fourier transforms with respect to $\lambda,x\in \R$ in the proof of $L^p$-bounds for the wave operators (see {\it e.g.} \cite[Section 2]{Weder}). For readers interested in the construction of $f_\pm,g_\pm$, we refer to \cite{Hill} where the potential $V$ has been assumed to be compactly supported.

\subsection{Two types of applications}
\label{subsection_application}
By virtue of \eqref{Lp-bound of f(H)}, or more generally \eqref{Lp-bound of f(H)2}, our main estimates may have a lot of potential applications. We however do not pursue to list them as many as possible, but focus on a few primal applications which will be important for further applications to nonlinear equations. More precisely, we prove the following two types of results (see Section \ref{section_application} for the precise statements):
\begin{itemize}
\item $L^p$-$L^q$ decay estimates for the propagator $e^{-itH}P_{\ac}(H)$:
$$
\norm{e^{-itH}P_{\ac}(H)\phi}_{L^q(\R)}\lesssim |t|^{-\frac14(\frac1p-\frac1q)}\norm{\phi}_{L^p(\R)},\quad t\neq0,
$$
for $(1/p,1/q)$ belonging to a region of $\mathbb{R}^2$ (see Figure 1 in Section \ref{section_application}).
\item $L^p$-boundedness of the spectral multiplier $f(H)$:
$$
\norm{f(H)\phi}_{L^p(\R)}\lesssim \norm{\phi}_{L^p(\R)},\quad 1<p<\infty,
$$
where $f\in L^\infty(\R)$ satisfies the standard {\it H\"ormander condition}  (see \eqref{Hormander_conditions}).
\end{itemize}
These $L^p$-$L^q$ decay estimates for $e^{-itH}P_{\ac}(H)$ generalize the $L^1$-$L^\infty$ decay estimate obtained recently by Soffer--Wu--Yao \cite{SWY21}. On the other hand, the new interesting point of this spectral multiplier theorem for $f(H)$ is that our operator $H=\Delta^2+V(x)$ may have negative eigenvalues as well as zero resonances, so $e^{-tH}$ possibly has no sharp (generalized) Gaussian kernel bounds. Hence a standard criterion based on Gaussian kernel bounds (see {\it e.g.} Sikora--Yan--Yao \cite{SYY}) cannot be applied in the present case.

Furthermore, we notice that in the case of the Schr\"odinger operator $-\Delta+V(x)$, the $L^p$-boundedness of wave operators, as well as the $L^p$-$L^{p'}$ decay estimates for $e^{it(\Delta-V)}$ and the spectral multiplier theorem for $f(-\Delta+V)$ are very important tools for studying associated dispersive equations such as the nonlinear Schr\"odinger equations with potentials  (see {\it e.g.} \cite{Cuccagna, Deng_Soffer_Yao, Schlag_1, Schlag_2, Soffer_Wu} and reference therein). Hence, we believe that Theorems \ref{theorem_1} and \ref{theorem_3}, as well as these two results on $e^{-itH}$ and $f(H)$,  will be fundamental tools for studying several nonlinear dispersive equations associated with $H$, especially for the following fourth-order nonlinear Schr\"odinger equation with a potential:
$$
i\partial_t u-\partial_x^4u-V(x)u=\mu |u|^{p-1}u,\quad t,x\in \R.
$$

\subsection{More related  backgrounds}
\label{subsection_back_ground}
In this subsection, we record some known results on the $L^p$-boundedness of the wave operators, comparing them with our theorems. We also discuss some related results, as well as some backgrounds on the higher-order elliptic operators.

For the Schr\"odinger operator $-\Delta+V(x)$ on $\R^n$ in any dimensions $n\ge1$, there exists a great number of works are devoted to establish the $L^p$-boundedness of the wave operators in last almost thirty years. For instance, Yajima in the seminar work \cite{Yajima-JMSJ-95} proved the $L^p$-boundedness of wave operators for $n\ge 3$ in the regular case. Subsequently, the case $n=1$ were studied by Weder \cite{Weder} and Artbazar--Yajima \cite{ArYa} independently and the case  $n=2$ by Yajima \cite{Yajima-CMP-99}. Since then later, many further progresses and applications related to the $L^p$-boundedness of wave operators have been made for all the regular, zero resonance and zero eigenvalue cases under various conditions on the potential $V$ (see \cite{Be, BeSc_JST, BeSc, CMY1, CMY2, Cuccagna2, DaFa, DMSY, DMW, EGG, Finco_Yajima_II, Goldberg-Green-Advance, Goldberg-Green-Poincare, Jensen, Jensen_Yajima_2D, Jensen_Yajima_4D, Soffer_Wu, Weder_arxiv, Yajima_2006,Yajima_2016,Yajima_2018, Yajima_2021AHP, Yajima_2021arxiv} and references therein). Certainly, these  works would play indispensable roles in the studies of higher-order elliptic operators.

The weighted boundedness considerably less is known  compared with the unweighted one. As already mentioned in Remark \ref{remark_theorem_1} (3), Beceanu \cite{Be} obtained some weighted $L^p$-boundedness with polynomial weights $\<x\>^a$. Note that Beceanu--Schlag \cite{BeSc_JST,BeSc} proved (again for the Schr\"odinger operator) $W_\pm \in \mathbb B(X)$ if $X$ is any Banach space of measurable functions on $\R^3$ such that the norm $\norm{\cdot}_X$ is invariant under reflections and translations and that
$
\norm{\chi_H f}_X\le A \norm{f}_{X}
$ for any half space $H\subset \R^3$ with some uniform constant $A$. This result clearly implies the $L^p$-boundedness (even the $W^{s,p}$-boundedness) of $W_\pm$, but not the weighted $L^p$-boundedness since weighted $L^p$-norms are not invariant under translations.

Next we explain known results for the Schr\"odinger operator on $\R$ more precisely. Weder \cite{Weder} proved $W_\pm\in \mathbb B(L^p(\R))$ for $1<p<\infty$ if $\<x\>^{\gamma}V\in L^1(\R)$ with some $\gamma>3/2$ in the regular case and $\gamma>5/2$ in the zero resonant case. Artbazar--Yajima \cite{ArYa} also proved independently a similar result under a slightly stronger decay condition on $V$. Later, the assumption on $V$ has been weakened to $\<x\>V\in L^1(\R)$ in the regular case and $\<x\>^2V\in L^1(\R)$ in the zero resonant case by D'Ancona--Fanelli \cite{DaFa}, and finally to $\<x\>V\in L^1(\R)$ in the zero resonant case by Weder \cite{Weder_arxiv}. It was also shown by \cite{Weder} that $W_\pm\in \mathbb B(W^{k,p}(\R))\cap \mathbb B(L^1(\R),L^{1,\infty}(\R))\cap \mathbb B(\H^1(\R),L^{1}(\R))$ for general cases and that $W_\pm\in \mathbb B(L^1(\R))\cap \mathbb B(L^\infty(\R))$ if zero is a resonance and the scattering matrix at $\lambda=0$ is the identity matrix. It was also mentioned in \cite{Weder} that $W_\pm$ are neither bounded on $L^1(\R)$ nor on $L^\infty(\R)$ in general. The case with a delta potential $V=a\delta$ was studied by Duch\^ene--Marzuola--Weinstein \cite{DMW} and Weder \cite{Weder_arxiv}. Weder \cite{Weder_arxiv} also studied  the case with matrix Schr\"odinger operators on the line or the half line. Note that, in all these papers \cite{ArYa, DaFa, DMW, Weder, Weder_arxiv}, the proofs heavily rely on the Jost functions and their properties studied by Deift--Trubowitz \cite{DeTr}.

Now we shall consider the higher-order Schr\"odinger operator $(-\Delta)^m+V(x)$ on $\R^n$ with $m\in \N$ and $m>1$ and sufficiently fast decaying potential $V(x)$ for which great progresses have been made in recent years.  The first result in this direction is due to Goldberg--Green \cite{GoGr21} for the case $(m,n)=(2,3)$, where the $L^p$-boundedness of wave operators was proved for  $1<p<\infty$ if the zero energy is a regular point. For $n>2m\ge4$, Erdo\u{g}an--Green \cite{Erdogan-Green21,Erdogan-Green23} proved the $L^p$-boundedness for all $1\le p\le \infty$ if the zero energy is a regular point and the potential $V(x)$ is sufficiently smooth.  Furthermore, for the case $n>4m-1$, Erdo\u{g}an--Goldberg--Green \cite{EGG23} provides examples of compactly supported non-smooth potential $V(x)$ for which the wave operators are not bounded on $L^p$ if $2n/(n-4m+1)<p\le \infty$. More recently, the case $n=2m=4$ was considered by Galtbayar--Yajima \cite{Galtbayar_Yajima_ArXiv23} where the $L^p$-boundedness was proved for $1<p<p_0$ with suitable $p_0$ depending on the type of the singularity at the zero energy. It can be observed from these works that the behavior of wave operators are roughly classified into three cases: $n<2m$, $n=2m$ and $n>2m$. When $n<2m$, as observed by \cite{GoGr21}, the resolvent has a singularity at the zero energy even in the free case and singular integrals similar to  Hilbert transform are appeared in the stationary representation of the low energy part of wave operators even in the regular case. It thus can be expect that the wave operators are generically not bounded on $L^p$ for $p=1,\infty$ in this case. On the other hand, when $n>2m$, the singularity at the zero energy of the resolvent is relatively mild, but the high energy part becomes much more complicated than the case $n<2m$ since the resolvent does not decay (or even can grow in higher space dimensions) in the high energy limit. The case $n=2m$ is critical in the sense that it has these difficulties in the low and high energy parts of the wave operators together.

Compared with these existing works, the interest of our results in this paper is that we  provide not only the $L^p$-boundedness for all $1<p<\infty$, but also counterexamples of the $L^p$-boundedness at the endpoint $p=1,\infty$, as well as some weak-boundedness in the framework of $L^{1,\infty}$, $\H^1$ and $\BMO$. Moreover, we study all cases of the types of the singularity at the zero energy which has not been carried out at least in the case $n<2m$. Finally, the weighted $L^p$-boundedness with general even $A_p$-weight, as well as the explicit bounds \eqref{theorem_1_1} and \eqref{theorem_1_2}, seems to be totally new (see also Remark \ref{remark_theorem_1} (3)).

Finally  we should  mention that there is a huge  literature on the study of higher-order elliptic operators $P(D)+V(x)$ in many topics in mathematics and mathematical physics. In addition to the aforementioned works \cite{GoGr21,Erdogan-Green21,EGG23,Erdogan-Green23}, we refer the readers to {\it e.g.} \cite{Agmon, H2, Kuroda} for the spectral and scattering theory, \cite{DDY_JFA, SYY} for Harmonic analysis and \cite{DY, Erdogan-Green-Toprak, FSWY, FSY18, FWY, Green-Toprak,LSY, MiYa, SWY21, CLSY_ArXiv, EGG_ArXiv, CHHZ_ArXiv} for various dispersive properties such as time decay, local energy decay, Strichartz estimates for $e^{-itH}$, and the asymptotic expansion and uniform resolvent estimates for $(H-z)^{-1}$.


\subsection{The outline of the proof}
Here we briefly explain the ideas of the proof of the above theorems. For simplicity, we consider the case when zero is a regular point of $H$ only.

The starting point is the following stationary formula:
$$
W_-=\Id-\frac{2}{\pi i}\int_0^\infty \lambda^3 R_V^+(\lambda^4)V\left(R_0^+(\lambda^4)-R_0^-(\lambda^4)\right)d\lambda,
$$
where $R_0^\pm(\lambda^4)=(\Delta^2-\lambda^4\mp i0)^{-1}$ and $R_V^\pm(\lambda^4)=(H-\lambda^4\mp i0)^{-1}$ are the boundary values of the free  and perturbed resolvents. The integral kernels of $R_0^\pm(\lambda^4)$ are explicitly given by
\begin{align}
\label{idea_1}
R_0^\pm(\lambda^4,x,y)
=\frac{F_\pm(\lambda |x-y|)}{4\lambda^3}=\frac{F_\pm(\lambda |x|)}{4\lambda^3}-\frac{y}{4\lambda^2}\int_0^1\sgn(x-\theta y)F_\pm'(\lambda|x-\theta y|)d\theta,
\end{align}
where $F_\pm(s):=\pm ie^{\pm is}-e^{-s}$ and we have used the Taylor expansion near $y=0$ in the second line. In particular, $R_0^\pm(\lambda^4)=O(\lambda^{-3})$ at the level of the order of $\lambda$.

Decompose $W_--\Id$ into the low energy $\{0\le \lambda\ll1\}$ and the high energy $\{\lambda\gtrsim1\}$ parts. The high energy part is easier to treat than the low energy part since the free resolvent does not have singularity for $\lambda\ge1$, so we here consider the following low energy part only:
\begin{align}
\label{idea_2}
W_-^L:=\int_0^\infty \lambda^3 \chi(\lambda) R_V^+(\lambda^4)V\left(R_0^+(\lambda^4)-R_0^-(\lambda^4)\right)d\lambda,
\end{align}
where $\chi\in C_0^\infty(\R)$ such that $\chi\equiv1$ near $\lambda=0$. Setting $v(x)=\sqrt{|V(x)|}$, $U(x)=\sgn V(x)$ and $M(\lambda)=U+vR_0^+(\lambda^4)v$, one has the standard symmetric second resolvent equation:
$$
R_V^+(\lambda^4)V=R_0^+(\lambda^4)v M^{-1}(\lambda)v.
$$
Then one of key tools in our argument is the asymptotic expansion of $M^{-1}(\lambda)$ as $\lambda\to +0$ obtained recently  by \cite{SWY21} which, in the regular case, is of the form
\begin{align}
\nonumber
M^{-1}(\lambda)&=Q_{2}A_{0}^0Q_{2}+\lambda Q_1A_{1}^0Q_1+\lambda^2\left(Q_1A_{21}^0Q_1+Q_{2}A_{22}^0+A_{23}^0Q_{2}\right)\\
\label{idea_3}
&\quad +\lambda^3\left(Q_1A_{31}^0+A_{32}^0Q_1\right)+\lambda^3\widetilde P+\Gamma^0_4(\lambda),
\end{align}
where $A_{k}^0,A_{kj}^0,\widetilde P,Q_\alpha\in \mathbb B(L^2)$, $\norm{\Gamma^0_4(\lambda)}_{L^2\to L^2}=O(\lambda^4)$ and $Q_\alpha$ satisfies
\begin{align}
\label{idea_4}
Q_\alpha(x^kv)\equiv0,\quad \<x^kv,Q_\alpha f\>=0,
\end{align}
for any $f\in L^2$ and any integer $0\le k\le \alpha-1$.
The interest of these properties \eqref{idea_4} is that, combined with the Taylor expansion formula \eqref{idea_1}, one has
\begin{align}
\label{idea_5}
Q_\alpha vR_0^\pm(\lambda^4)=O(\lambda^{-3+\alpha}),\quad R_0^\pm(\lambda^4)vQ_\alpha=O(\lambda^{-3+\alpha}),
\end{align}
which are less singular in $\lambda\in (0,1]$ compared with the free resolvent $R_0^\pm(\lambda^4)=O(\lambda^{-3})$.
\vskip0.2cm
$\bullet$\ {\it\underline{On the $L^p(\R)$-boundedness.}} Substituting \eqref{idea_3} into \eqref{idea_2} one can find that $W_-^L$ is a sum of nine integral operators with integral kernels of the form
\begin{align}
\label{idea_6}
\int_0^\infty \lambda^{\ell-\alpha-\beta}\chi(\lambda)\left(R_0^+(\lambda^4)vQ_\alpha BQ_\beta v[R_0^+-R_0^-](\lambda^4)\right)(x,y)d\lambda,
\end{align}
where $B\in \mathbb B(L^2)$ varies from line to line, $\ell=6$ or $7$ and we set $Q_0=\Id$. Note that the integrand is of order $\lambda^{\ell-6}$ by \eqref{idea_5}. Then such nine integral operators
are classified into the following two classes with respect to the order of $\lambda$ of the integrands of their integral kernels:
\begin{itemize}
\item[(I)] $O(\lambda)$: $Q_\alpha BQ_\beta=Q_{2}A_{0}^0Q_{2},Q_1A_{21}^0Q_1,Q_{2}A_{22}^0,A_{23}^0Q_{2},Q_1A_{31}^0,A_{32}^0Q_1$ and $\lambda^{-4}\Gamma_4^0(\lambda)$;
     \vskip0.2cm
\item[(II)] $O(1)$: $Q_\alpha BQ_\beta=Q_1A_{1}^0Q_1$ and $\widetilde P$.
\end{itemize}

The operators in the class (I) can be shown to be bounded on $L^p(\R)$ for any $1\le p\le \infty$. We shall explain this for $Q_\alpha BQ_\beta=Q_1A_{21}^0Q_1$ as a model case. In such a case, by using \eqref{idea_1}, \eqref{idea_5} and the identity
$$
F_+'(\lambda|x|)[F_+'-F_-'](\lambda|y|)=e^{i\lambda(|x|+|y|)}-e^{i\lambda(|x|-|y|)}+e^{-\lambda(|x|+i|y|)}-e^{-\lambda(|x|-i|y|)},
$$
we can rewrite \eqref{idea_6} as a linear combination of following four functions:
\begin{align*}
K^\pm_{a_{1}}(x,y)&=\int_0^\infty  e^{i\lambda(|x|\pm|y|)}\lambda \chi(\lambda)a_{1}^\pm(\lambda,x,y)d\lambda,\\
K^\pm_{a_{2}}(x,y)&=\int_0^\infty  e^{-\lambda(|x|\pm i|y|)}\lambda \chi(\lambda)a_{2}^\pm(\lambda,x,y)d\lambda,
\end{align*}
where $a^\pm_j$ satisfy
$$
|\partial_\lambda^\ell a_{1}^\pm(\lambda,x,y)|+e^{-\lambda|x|}|\partial_\lambda^\ell a_{2}^\pm(\lambda,x,y)|\lesssim \norm{\<x\>^{4+2\ell}V}_{L^1},\quad x,y\in \R,\ \lambda\ge0,\ \ell=0,1,2.
$$
Then we apply integration by parts twice to $K_{a_j}^\pm$, obtaining
$$
|K^\pm_{a_{j}}(x,y)|\lesssim \<|x|\pm|y|\>^{-2},\quad x,y\in \R,
$$
where note that $K^\pm_{a_{j}}\in L^\infty(\R^2)$ since $\chi\in C_0^\infty(\R)$ and the term $O(\<|x|\pm|y|\>^{-1})$ does not appear thanks to the fact $\lambda\chi(\lambda)|_{\lambda=0}=0$. Now the $L^p(\R)$-boundedness for any $1\le p\le \infty$ follows from standard Schur's lemma since $\<|x|\pm|y|\>^{-2}\in L^\infty_yL^1_x\cap L^\infty_xL^1_y$.

In the above argument, the crucial point is that we have an additional $\lambda$ in the integrands. For the operators in the class (II) which do not have such a factor $\lambda$, we need more precise estimates for the integral kernels to employ the theory of Calder\'on--Zygmund operators. As a model case, we shall consider the integral \eqref{idea_6} with $Q_\alpha BQ_\beta=\widetilde P$ (in which case $\ell=6$). In such a case, a similar argument as above yields that \eqref{idea_6} can be rewritten in the form
$$
-\sum_{\pm} \int_0^\infty  \left(e^{i\lambda(|x|\pm|y|)}\chi(\lambda)c_{1}^\pm(\lambda,x,y)+i e^{-\lambda(|x|\pm i|y|)}\chi(\lambda)c_{2}^\pm(\lambda,x,y)\right) d\lambda
$$
with some $c_j^\pm $ satisfying the same estimates as for $a_j^\pm$. Applying integration by parts twice, we find that this integral is a sum of the leading term $\frac{-1+i}{8}g_1^+(x,y)$, where
$$
g_1^+(x,y)=\frac{\psi_+}{|x|+|y|}+\frac{\psi_-}{|x|-|y|}+\frac{\psi_-}{|x|+i|y|}+\frac{\psi_-}{|x|-i|y|},
$$
and the error term $O(\<|x|-|y|\>^{-2})$ which can be dealt as above, where $\psi_\pm=\psi(||x|\pm|y||^2)$ are smooth cut-off functions supported in $\{(x,y)\ |\ ||x|\pm|y||\ge1\}$. Although the integral operator $T_{g_1^+}$ with the kernel $g_1^+$ itself is not a Calder\'on--Zygmund operator, using the identity
$$
g_1^+(x,y)=\left(\chi_{+}(x)+\chi_{-}(x)\right)g_1^+(x,y)\left(\chi_{+}(y)+\chi_{-}(y)\right)
$$
with $\chi_\pm$ being the indicator function of $\R_\pm$, one can write
\begin{align}
\label{idea_7}
T_{g_1^+}
=\left((\chi_{+}-\chi_{-})T_{\widetilde k_1}+
\chi_+T_{\widetilde k_2^+}\chi_+- \chi_-T_{\widetilde k_2^+}\chi_-+\chi_+T_{\widetilde k_2^-}\chi_+- \chi_-T_{\widetilde k_2^-}\chi_-\right)(1+\tau),
\end{align}
where $\tau:f(x)\mapsto f(-x)$, $\widetilde k_1(x,y)=\psi(|x-y|^2)(x-y)^{-1}$ and $\widetilde k_2(x,y)=\psi(|x-y|^2)(x\pm iy)^{-1}$ so that $T_{\widetilde k_1}$ and $T_{\widetilde k_2^\pm}$ can be shown to be Calder\'on--Zygmund operators. The abstract theorem for Calder\'on--Zygmund operators then shows $T_{g_1^+}\in \mathbb B(L^p(\R))\cap \mathbb B(L^1(\R),L^{1,\infty}(\R))$.
\vskip0.2cm
$\bullet$\ {\it\underline{On the weighted $L^p$-boundedness.}} We shall consider $T_{g^+_1}$ as a model case. The theory of Calder\'on--Zygmund operators shows $T_{\widetilde k_1},T_{\widetilde k_2^\pm}\in \mathbb B(L^p(w_p))$ for any $w_p\in A_p$. Moreover, recent deep results by \cite{Hytonen} for $1<p<\infty$ imply
\begin{align*}
\norm{T_{\widetilde k_1}}_{L^p(w_p)\to L^p(w_p)}+\norm{T_{\widetilde k_2^\pm}}_{L^p(w_p)\to L^p(w_p)}\lesssim [w_p]_{A_p}^{\max\{1,1/(p-1)\}},\quad 1<p<\infty.
\end{align*}
If $w_p$ and $w_1$ are even, then these bounds on $T_{\widetilde k_1},T_{\widetilde k_2^\pm}$ and \eqref{idea_7} yield desired weighted boundedness of $T_{g^+_1}$ with explicit operator norm bounds in terms of $[w_p]_{A_p}$.
\vskip0.2cm
$\bullet$\ {\it\underline{On the $\H^1$-$L^1$ and $L^\infty$-$\BMO$ boundedness.}}
Let us consider again the operator $T_{g^+_1}$. Since $\H^1$ is not invariant under the map $f\mapsto \chi_\pm f$ (recall that any $f\in \H^1$ satisfies $\int fdx=0$), the formula \eqref{idea_7} is not enough to prove $T_{g^+_1}\in \mathbb B(\H^1(\R),L^1(\R))\cap \mathbb B(L^\infty(\R),\BMO(\R))$, although $T_{\widetilde k_1},T_{\widetilde k_2^\pm}\in \mathbb B(\H^1(\R),L^1(\R))\cap \mathbb B(L^\infty(\R),\BMO(\R))$ by the abstract theory for Calder\'on--Zygmund operators. Instead, we prove $T_{g^+_1},T_{g^+_1}^*\in \mathbb B(\H^1(\R),L^1(\R))$ directly by following the classical proof of the $\H^1$-$L^1$ boundedness for Calder\'on--Zygmund operators based on the atomic decomposition of $\H^1$. By the duality, $(\H^1)^*=\BMO$, one also has $T_{g^+_1}\in \mathbb B(L^\infty(\R),\BMO(\R))$.
\vskip0.2cm
$\bullet$\ {\it\underline{Counterexamples of $L^1$ and $L^\infty$ boundedness.}} As seen above, all the operators in the class (I) are bounded on $L^p(\R)$ for all $1\le p\le \infty$. Let $T_{K^0_1}$ (resp. $T_{K^0_{33}}$) be the integral operator in the class (II) associated with $Q_1A_{1}^0Q_1$ (resp. $\widetilde P$). Both of them in fact can be shown to be not bounded on $L^1(\R)$ nor $L^\infty(\R)$. Although this is not sufficient to disprove such boundedness properties of $W_-$, if we take a test function $f_R=\chi_{[-R,R]}$ then one can prove that $\sup_{R>0}\norm{T_{K^0_1}f_R}_{L^\infty}<\infty$ and $T_{K^0_1}f_1\in L^1(\R)$, but $|(T_{K^0_{33}}f_R)(R+2)|\to \infty$ as $R\to \infty$ and $T_{K^0_{33}}f_1\notin L^1(\R)$. This shows $W_-\notin \mathbb B(L^1(\R))\cup \mathbb B(L^\infty(\R))$. 

\vskip0.2cm$\bullet$\ {\it\underline{On the $W^{s,p}$-boundedness.}} Once the $L^p(\R)$-boundedness of $W_\pm$ is obtained, its $W^{s,p}$-boundedness easily follows from the intertwining identity $(H+E)^{s/4}W_\pm=W_\pm(\Delta^2+E)^{s/4}$ and the inequality
$
\norm{(\Delta^2+E)^{s/4}f}_{L^p(\R)}\lesssim \norm{(H+E)^{s/4}f}_{L^p}
$ for sufficiently large $E>0$, which can be shown by a standard method (see {\it e.g.} \cite{Blunck})  based on the generalized Gaussian bound \eqref{lemma_6_2_proof_2} for the semi-group $e^{-t(H+E)}$ proved by \cite{DDY_JFA}.

\subsection{Organizations of the paper} The rest of the paper is devoted to the proof of Theorems \ref{theorem_1}, \ref{theorem_2} and \ref{theorem_3} and their applications, and is organized as follows.

In Section \ref{section_preliminaries}, we prepare several preliminary materials, which include the stationary formula of wave operators (Subsection \ref{subsection_stationary}), the asymptotic expansion at low energy of the resolvent and several useful formulas for the free resolvent (Subsection \ref{subsection_resolvent}).

In Section \ref{subsection_integral_operators} we prepare a few criterions to obtain several boundedness properties of integral operators appeared in the stationary formula of the wave operator $W_-$.

The proof of Theorem \ref{theorem_1} for the low energy part of $W_-$ is given by Section \ref{section_low}, while
the proof for high energy part is given by Section \ref{section_high}.

Section \ref{section_counterexample} is devoted to the proof of Theorem \ref{theorem_2}. Theorem \ref{theorem_3} is proved in Section \ref{section_Sobolev}. Section \ref{section_application} is concerned with the applications explained in Subsection \ref{subsection_application}.

Finally, we give a short review of Calder\'on--Zygmund operators in Appendix \ref{appendix_CZ}.

\subsection{Notations}
\label{subsection_notation}
Throughout the paper we use the following notations:
\begin{itemize}
\item $A\lesssim B$ (resp. $A\gtrsim B$) means $A\le CB$ (resp. $A\ge CB$) with some constant $C>0$.
\item $L^p=L^p(\R),L^{1,\infty}=L^{1,\infty}(\R)$ denote the Lebesgue and weak $L^1$ spaces, respectively.
\item $\<f,g\>=\int f\overline g$ denotes the inner product in $L^2$.
\item For $w\in L^1_{\loc}(\R)$ positive almost everywhere and $1\le p<\infty$, $L^p(w)=L^p(\R,wdx)$ denotes the weighted $L^p$-space with the norm
$$
\norm{f}_{L^p(w)}=\left(\int |f(x)|^pw(x)dx\right)^{1/p}.
$$
$L^{1,\infty}(w)$ denotes the weighted weak $L^1$ space with the quasi-norm
$$
\norm{f}_{L^{1,\infty}(w)}=\sup_{\lambda>0} \lambda w(\{x\ |\ |f(x)|>\lambda\}).
$$
\item $\mathop{\mathrm{BMO}}=\mathop{\mathrm{BMO}}(\R)$ is the Bounded Mean Oscillation space: $f\in \mathop{\mathrm{BMO}}$  if $f\in L^1_{\loc}(\R)$ and
$$
\norm{f}_{\BMO}:=\sup_{I}\frac{1}{|I|}\int_I|f-f_I|dx<\infty,
$$
where the supremum takes over all bounded intervals and $f_I=\frac{1}{|I|}\int_Ifdx$. Note that $L^\infty \subset \BMO$. For instance, $\log (a|x|+b)\in \BMO\setminus L^\infty$ for any positive $a,b$.
\item $\H^1=\H^1(\R)$ is the Hardy space: $f\in \H^1$ if $f$ is a tempered distribution and
$$
\norm{f}_{\H^1}=\int_\R\sup_{t>0}|(f*\varphi_t)(x)|dx<\infty
$$
with some Schwartz function $\varphi$ satisfying $\int_\R\varphi(x)dx=1$ and $\varphi_t(x)=t^{-1}\varphi(x/t)$. It is known that $(\H^1)^*=\BMO$ and
$
|\<f,g\>|\lesssim \norm{f}_{\H^1}\norm{g}_{\BMO}
$ (see \cite{Fefferman}).
\item $T_K$ denotes the integral operator with the kernel $K(x,y)$, namely
$$
T_Kf(x)=\int_\R K(x,y)f(y)dy.
$$
\item Let $\{\varphi_N\}_{N\in \Z}$ be a homogeneous dyadic partition of unity on $(0,\infty)$: $\varphi_0\in C_0^\infty(\R_+)$, $0\le \varphi\le1$, $\supp \varphi\subset[\frac14,1]$, $\varphi_N(\lambda)=\varphi_0(2^{-N}\lambda)$, $\supp \varphi_N\subset [2^{N-2},2^N]$  and
$$
\sum_{N\in \Z}\varphi_N(\lambda)=1,\quad\lambda>0.
$$
\end{itemize}

\section{Preliminaries}
\label{section_preliminaries}

\subsection{Stationary representation of wave operators}
\label{subsection_stationary}
First of all, we observe that it suffices to deal with $W_-$ only since \eqref{def-wave} implies $W_+f=\overline{W_-\overline f}$.

The starting point is the well-known stationary formula \eqref{stationary} of $W_-$. To state the formula, we need to introduce some notations. Let
\begin{align*}
R_0(z)=(\Delta^2-z)^{-1},\quad
R_V(z)=(H-z)^{-1},\quad z\in \C\setminus[0,\infty),
\end{align*}
be the resolvents of $\Delta^2$ and $H=\Delta^2+V(x)$, respectively. We denote by $R^\pm_0(\lambda),R^\pm_V(\lambda)$ their boundary values (limiting resolvents) on $(0,\infty)$, namely
$$
R^\pm_0(\lambda)=\lim_{\ep \searrow 0}R_0(\lambda\pm i\ep),\quad R_V^\pm(\lambda)=\lim_{\ep \searrow 0}R_V(\lambda\pm i\ep),\quad \lambda>0.
$$
The existence of $R^\pm_0(\lambda)$ as bounded operators from $L^2_s$ to $L^2_{-s}$ with $s>1/2$ follows from the limiting absorption principle for the resolvent $(-\partial_x^2-z)^{-1}$ of the free Schr\"odinger operator $-\partial_x^2$ (see e.g. \cite{Agmon}) and the formula
$$
R_0(z)=\frac{1}{2\sqrt z}\left[(-\partial_x^2-\sqrt z)^{-1}-(-\partial_x^2+\sqrt z)^{-1}\right],\quad z\in \C\setminus[0,\infty),
$$
which is obtained by the identity $\partial_x^4-z=(-\partial_x^2-\sqrt z)(-\partial_x^2+\sqrt z)$ and the first resolvent equation.
This formula also gives the explicit formula of the kernel of $R_0^\pm(\lambda^4)$:
\begin{align}
\label{free_resolvent}
R_0^\pm(\lambda^4,x,y)=\frac{1}{4\lambda^3}\Big(\pm ie^{\pm i\lambda|x-y|}-e^{-\lambda|x-y|}\Big)=\frac{F_\pm(\lambda|x-y|)}{4\lambda^3},
\end{align}
where
$
F_\pm(s)=\pm ie^{\pm is}-e^{-s}$.
The existence of $R^\pm_V(\lambda)$ for $\lambda>0$ under our assumption of Theorem \ref{theorem_1} has been also already shown (see \cite{Agmon,Kuroda}).

Then $W_-$ has the following stationary representation (see {\it e.g}. \cite{Kuroda,ReSi}):
\begin{align}
\label{stationary}
W_-=\Id-\frac{2}{\pi i}\int_0^\infty \lambda^3 R_V^+(\lambda^4)V\left(R_0^+(\lambda^4)-R_0^-(\lambda^4)\right)d\lambda.
\end{align}
We decompose the second term of $W_-$ into the low and high energy parts as follows:
taking $\lambda_0>0$ small enough, we let $\chi_0\in C_0^\infty(\mathbb R)$ be such that $\chi_0\equiv1$ on $(-\lambda_0,\lambda_0)$ and $\supp \chi_0\subset [-2\lambda_0,2\lambda_0]$ and set $\chi(\lambda)=\chi_0(\lambda^2)$. We then define
\begin{align}
\label{W^{L}_-}
W^{L}_-&=\int_0^\infty \lambda^3 \chi(\lambda) R_V^+(\lambda^4)V\left(R_0^+(\lambda^4)-R_0^-(\lambda^4)\right)d\lambda,\\
\label{W^H_-}
W^H_-&=\int_0^\infty \lambda^3 \Big(1-\chi(\lambda)\Big) R_V^+(\lambda^4)V\left(R_0^+(\lambda^4)-R_0^-(\lambda^4)\right)d\lambda
\end{align}
such that
\begin{align}
\label{wave_operator}
W_-=\Id-\frac{2}{\pi i}\left(W_-^L+W_-^H\right).
\end{align}
We will deal with $W^{L}_-,W^H_-$ in Sections \ref{section_low} and \ref{section_high}, separately.

\subsection{Resolvent expansion}
\label{subsection_resolvent}

This subsection is mainly devoted to the study of asymptotic behaviors of the resolvent $R_V^+(\lambda^4)$ at low energy $\lambda\to +0$. We also prepare some elementary (but useful) lemmas used in the proof of our main theorems.

We begin with the well known symmetric second resolvent formula for $R_V^+(\lambda^4)$. Let $v(x)=|V(x)|^{1/2}$ and $U(x)=\sgn V(x)$, namely $U(x)=1$ if $V(x)\ge0$ and $U(x)=-1$ if $V(x)<0$. Let $M(\lambda)=U+vR_0^+(\lambda^4)v$ and $M^{-1}(\lambda):=[M(\lambda)]^{-1}$ as long as it exists.
\begin{lemma}
\label{lemma_3_2}
For $\lambda>0$, $M(\lambda)$ is invertible on $L^2$. Moreover, $R_V^+(\lambda^4)V$ has the form
\begin{align}
\label{lemma_3_2_1}
R_V^+(\lambda^4)V=R_0^+(\lambda^4)v M^{-1}(\lambda)v.
\end{align}
\end{lemma}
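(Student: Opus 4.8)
The plan is to prove the symmetric second resolvent formula \eqref{lemma_3_2_1} by combining the classical (non-symmetric) second resolvent identity with the factorization $V = v U v = v^2 U$ (using $U^2 = 1$), and to establish invertibility of $M(\lambda)$ on $L^2$ by a Fredholm argument together with the absence of an embedded eigenvalue at $\lambda^4 > 0$. First I would recall that for $z \in \C \setminus [0,\infty)$ the ordinary resolvent identity gives $R_V(z) = R_0(z) - R_0(z) V R_V(z)$, hence $R_V(z)(I + V R_0(z)) = R_0(z)$ after the dual manipulation, and more usefully $R_V(z) V = R_0(z) v (U + v R_0(z) v)^{-1} v$ whenever the middle factor is invertible; this is the standard Jensen--Kato / Ginibre--Moulin symmetrization. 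I would then pass to the boundary value $z = \lambda^4 + i0$ using the limiting absorption principle already quoted in the excerpt (the bounds $R_0^\pm(\lambda^4): L^2_s \to L^2_{-s}$, $s > 1/2$, and the analogous statement for $R_V^\pm$ from \cite{Agmon,Kuroda}), noting that $v \in L^2_s$-multiplication-compatible because $|V(x)| \lesssim \langle x\rangle^{-\mu}$ with $\mu$ large, so that $v R_0^+(\lambda^4) v$ is a well-defined bounded (indeed compact) operator on $L^2$ for each $\lambda > 0$.

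The core algebraic step is short: since $V = v^2 U = v U v$ (as $v \geq 0$ and $U = \sgn V$), one writes, for $\ep > 0$,
\begin{align*}
R_V(\lambda^4+i\ep) V &= R_0(\lambda^4+i\ep) V - R_0(\lambda^4+i\ep) V R_V(\lambda^4+i\ep) V\\
&= R_0(\lambda^4+i\ep) v \bigl(U - v R_V(\lambda^4+i\ep) v\bigr) U v,
\end{align*}
and then one checks the operator identity $\bigl(U - v R_V v\bigr) U \,(U + v R_0 v) = U$, i.e. $(U + v R_0 v)^{-1} = \bigl(U - v R_V v\bigr)U$, by expanding using $R_V = R_0 - R_0 V R_V$ once more; this yields $R_V(\lambda^4+i\ep) V = R_0(\lambda^4+i\ep) v\, M(\lambda+i\ep)^{-1} v$ with $M(\lambda+i\ep) = U + v R_0(\lambda^4 + i\ep) v$. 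Letting $\ep \searrow 0$ and using the limiting absorption principle for both resolvents gives \eqref{lemma_3_2_1}. The invertibility of $M(\lambda)$ on $L^2$ is then obtained as follows: $M(\lambda) = U + vR_0^+(\lambda^4)v = U(I + U v R_0^+(\lambda^4) v)$, and $U$ is unitary, so it suffices to show $I + Uv R_0^+(\lambda^4) v$ is invertible; since $vR_0^+(\lambda^4)v$ is compact on $L^2$ (the kernel $\frac{F_+(\lambda|x-y|)}{4\lambda^3} v(x) v(y)$ is Hilbert--Schmidt for fixed $\lambda>0$ thanks to the decay of $V$), the Fredholm alternative reduces this to showing the kernel is trivial, and a standard argument shows that $f \in L^2$ with $(I + Uv R_0^+(\lambda^4)v) f = 0$ produces $\psi = -R_0^+(\lambda^4) v f$ solving $(\Delta^2 + V - \lambda^4)\psi = 0$; the Agmon-type absence of embedded eigenvalues (assumed in Theorem \ref{theorem_1}, together with a Rellich-type uniqueness argument ruling out solutions that are merely in the Besov/weighted space given by the limiting absorption principle) forces $\psi = 0$ and hence $f = 0$.

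The main obstacle I anticipate is not the algebra but the functional-analytic bookkeeping at the boundary $\lambda > 0$: one must be careful that all the manipulations with $R_V^+(\lambda^4)$ and $(I + V R_0^+(\lambda^4))^{-1}$ are justified in the appropriate weighted spaces, that $vR_0^+(\lambda^4)v$ genuinely defines a bounded (compact) operator on $L^2$ — which requires $\mu > 1$ and is comfortably covered by \eqref{mu} — and that the triviality of $\Ker(I + Uv R_0^+(\lambda^4)v)$ really follows from the no-embedded-eigenvalue hypothesis; the latter uses that any distributional solution of $H\psi = \lambda^4 \psi$ with $\psi = R_0^+(\lambda^4) v f$ is automatically an $L^2$-eigenfunction by the limiting absorption principle combined with a standard Rellich--Kato uniqueness argument, which we may invoke from \cite{Agmon,Kuroda}. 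Everything else is routine, and I would keep the write-up brief, citing the limiting absorption principle and Fredholm theory rather than reproving them.
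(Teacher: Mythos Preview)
Your approach is essentially the same as the paper's: invoke the Birman--Schwinger principle together with the absence of embedded eigenvalues for the invertibility of $M(\lambda)$, and derive \eqref{lemma_3_2_1} from the second resolvent identity and the factorization $V=vUv$, $U^2=1$. The paper's proof is three lines and does exactly this.

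There are, however, two genuine algebraic slips in your execution. First, the displayed factorization $R_0 V - R_0 V R_V V = R_0 v(U - vR_V v)Uv$ is false: expanding the right-hand side gives $R_0 v\cdot U\cdot Uv - R_0 v\cdot v R_V v\cdot Uv = R_0 v^2 - R_0 v^2 R_V V = R_0|V| - R_0|V|R_V V$, which is not $R_0 V - R_0 V R_V V$ unless $V\ge0$. Second, from your (correct) verification $(U - vR_V v)U\,(U+vR_0 v)=U$ you conclude $(U+vR_0 v)^{-1}=(U - vR_V v)U$, but this only gives $(U - vR_V v)U = U M^{-1}$; the correct inverse is $M^{-1}=U(U - vR_V v)U = U - UvR_V vU$, and with \emph{this} one checks $R_0 v M^{-1} v = R_0 V - R_0 V R_V V = R_V V$ directly. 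The paper sidesteps these pitfalls by manipulating $R_V^+ v$ rather than $R_V^+ V$: from $R_V^+ v = R_0^+ v - R_V^+ vUv R_0^+ v$ one factors $R_V^+ v(I + UvR_0^+ v) = R_0^+ v$, notes $I + UvR_0^+ v = U M$, and then right-multiplies by $Uv$. Your errors are easy to repair and do not affect the validity of the plan.
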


\begin{proof}
Thanks to the absence of embedded eigenvalues and the Birman-Schwinger principle, $M(\lambda)$ is invertible. Using the decompositions $V=vUv$ and $1=U^2$, we compute
\begin{align*}
R_V^+(\lambda^4)v
&=R_0^+(\lambda^4)v-R_V^+(\lambda^4) v UvR_0^+(\lambda^4)v
=R_0^+(\lambda^4)v\Big(1+UvR_0^+(\lambda^4)v\Big)^{-1}\\
&=R_0^+(\lambda^4)v\Big(U+vR_0^+(\lambda^4)v\Big)^{-1}U^{-1}.
\end{align*}
Multiplying $Uv$ from the right, we obtain the desired formula for $R_V^+(\lambda^4)V$.
\end{proof}

By virtue of the formula \eqref{lemma_3_2_1}, $W_-^L$ defined by \eqref{W^{L}_-} is rewritten in the form
\begin{align}
\label{W^{L}_-_2}
W^{L}_-=\int_0^\infty \lambda^3 \chi(\lambda) R_0^+(\lambda^4)v M^{-1}(\lambda)v\left(R_0^+(\lambda^4)-R_0^-(\lambda^4)\right)d\lambda.
\end{align}
We now recall the asymptotic expansion of $M^{-1}(\lambda)$ proved by \cite{SWY21}, which plays a crucial role in the paper. To this end, we introduce some notations. We say that an integral operator $T_K\in \mathbb B(L^2(\R))$ with kernel $K$ is {\it absolutely bounded} if $T_{|K|}\in \mathbb B(L^2(\R))$. Let
\[
P:=\frac{\<\cdot,v\>v}{\norm{V}_{L^1}},\quad \widetilde P=-\frac{2(1+i)}{\norm{V}_{L^1}}P=-\frac{2(1+i)}{\norm{V}_{L^1}^2}\<\cdot,v\>v,\quad Q_1:=\Id-P.
\]
Note that $P$ is the orthogonal projection onto the span of $v$ in $L^2(\R)$, {\it i.e.} $PL^2=\mathrm{\mathop{span}}\{v\}$. Let $G_0:=(\Delta^2)^{-1}$ and $T_0:=U+vG_0v$ and define the subspaces $Q_{2}L^2,Q_{2}^{0}L^2,Q_{3}L^2$ of $L^2$ by
\begin{align*}
f\in Q_{2}L^2\ &\Longleftrightarrow\ f\in \mathrm{\mathop{span}}\{v,xv\}^\perp;\\
f\in Q_{2}^{0}L^2\ &\Longleftrightarrow\ f\in \mathrm{\mathop{span}}\{v,xv\}^\perp\ \text{and}\ T_0f\in \mathrm{\mathop{span}}\{v,xv\};\\
f\in Q_{3}L^2\ &\Longleftrightarrow\ f\in \mathrm{\mathop{span}}\{v,xv,x^2v\}^\perp\ \text{and}\ T_0f\in \mathrm{\mathop{span}}\{v\}.
\end{align*}
Note that $Q_{3}L^2\subset Q_{2}^{0}L^2\subset Q_{2}L^2$. Let $Q_{\alpha}$ and $Q_{2}^0$ be the orthogonal projection onto $Q_{\alpha}L^2$ and $Q_{2}^0L^2$, respectively. Since $v$ is real-valued, by definition, $Q_1,Q_2,Q_2^0,Q_{3}$ satisfy
\begin{align}
\label{Q}
Q_{\alpha}(x^k v)=0,\quad \<x^kv,Q_{\alpha}f\>=0,\quad Q_{2}^0(x^k v)=0,\quad \<x^k v,Q_{2}^0f\>=0.
\end{align}
for $k=0$ for $Q_1$, $k=0,1$ for $Q_{2},Q_2^0$ and $k=0,1,2$ for $Q_{3}$ . Recall that $|V(x)|\lesssim \<x\>^{-\mu}$.

\begin{lemma}[{\cite[Theorem 1.8 and Remark 1.9]{SWY21}}]
\label{lemma_3_3}
There exists $\lambda_0>0$ such that $M^{-1}(\lambda)$ satisfies the following asymptotic expansions on $L^2(\R)$ for $0<\lambda\le \lambda_0$:
\begin{itemize}
\item[(i)] If zero is a regular point of $H$ and $\mu>15$, then
\begin{align}
\label{lemma_3_3_1}
\nonumber
M^{-1}(\lambda)&=Q_{2}A_{0}^0Q_{2}+\lambda Q_1A_{1}^0Q_1+\lambda^2\left(Q_1A_{21}^0Q_1+Q_{2}A_{22}^0+A_{23}^0Q_{2}\right)\\
&\quad +\lambda^3\left(Q_1A_{31}^0+A_{32}^0Q_1\right)+\lambda^3\widetilde P+\Gamma_4^0(\lambda).
\end{align}
\item[(ii)] If zero is a first kind resonance of $H$ and $\mu>21$, then
\begin{align}
\label{lemma_3_3_2}
M^{-1}(\lambda)
\nonumber
&=\lambda^{-1}Q_{2}^{0}A_{-1}^1Q_{2}^{0}+Q_{2}A_{01}^1Q_1+Q_1A_{02}^1Q_{2}+\lambda\left(Q_1A_{11}^1Q_1+Q_{2}A_{12}^1+A_{13}^1Q_{2}\right)\\
&\quad +\lambda^2\left(Q_1A_{21}^1+A_{22}^1Q_1\right)+\lambda^3\left(Q_1A_{31}^1+A_{32}^1Q_1\right)+\lambda^3\widetilde P+\Gamma_4^1(\lambda).
\end{align}
\item[(iii)] If zero is a second kind resonance of $H$ and $\mu>29$, then
\begin{align}
\label{lemma_3_3_3}
\nonumber
M^{-1}(\lambda)
&=\lambda^{-3}Q_{3}A_{-3}^2Q_{3}
+\lambda^{-2}\left(Q_{3}A_{-21}^2Q_{2}+Q_{2}A_{-22}^2Q_{3}\right)\\
\nonumber
&\quad +\lambda^{-1}\left(Q_{2}A_{-11}^2Q_{2}+Q_{3}A_{-12}^2Q_1+Q_1A_{-13}^2Q_{3}\right)\\
\nonumber
&\quad+Q_{2}A^2_{01}Q_1+Q_1A^2_{02}Q_{2}+Q_{3}A^2_{03}+A^2_{04}Q_{3}+\lambda\left(Q_1A_{11}^2Q_1+Q_{2}A^2_{12}+A^2_{13}Q_{2}\right)\\
&\quad+\lambda^2\left(Q_1A^2_{21}+A^2_{22}Q_1\right)+\lambda^3\left(Q_1A_{31}^2+A_{32}^2Q_1\right)+\lambda^3\widetilde P+\Gamma_4^2(\lambda).
\end{align}
\end{itemize}
Here $A^j_k$ and $A^j_{k\ell}$ are $\lambda$-independent bounded operators on $L^2$ and $\Gamma_4^j(\lambda)$ are $\lambda$-dependent bounded operators on $L^2$ such that all the operators appeared in the right hand sides of \eqref{lemma_3_3_1}, \eqref{lemma_3_3_2} and \eqref{lemma_3_3_3} are absolutely bounded. Moreover, $\Gamma_4^j(\lambda)$ satisfy, for $\ell=0,1,2$,
\begin{align}
\label{lemma_3_3_4}
\norm{\partial_\lambda^\ell \Gamma^j_4(\lambda)}_{L^2\to L^2}\le C_\ell \lambda^{4-\ell},\quad \lambda>0.
\end{align}
\end{lemma}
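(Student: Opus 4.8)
\subsection*{Proof proposal}

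The plan is to obtain \eqref{lemma_3_3_1}--\eqref{lemma_3_3_3} by the standard Jensen--Kato low-energy inversion scheme applied to $M(\lambda)=U+vR_0^+(\lambda^4)v$, starting from the explicit kernel \eqref{free_resolvent}. First I would Taylor expand the entire function $F_+(s)=ie^{is}-e^{-s}$ about $s=0$, obtaining $F_+(s)=(i-1)-\tfrac{1+i}{2}s^2+\tfrac13 s^3+\sum_{4\le k\le N}c_ks^k+r_N(s)$ with $|r_N(s)|\lesssim s^{N+1}$ for all $s\ge0$ (for $s\ge1$ this follows from $|F_+(s)|\le2$ and $|P_N(s)|\lesssim s^N\le s^{N+1}$, where $P_N$ is the degree-$N$ Taylor polynomial). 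Substituting $s=\lambda|x-y|$, dividing by $4\lambda^3$ and sandwiching with $v$ yields, for $0<\lambda\le\lambda_0$, an operator expansion
\begin{align*}
M(\lambda)=\frac{(i-1)\norm{V}_{L^1}}{4\lambda^3}P+\lambda^{-1}M_{-1}+T_0+\sum_{1\le k\le N-3}\lambda^kM_k+E_N(\lambda),
\end{align*}
where $P=\<\cdot,v\>v/\norm{V}_{L^1}$, $T_0=U+vG_0v$, the operator $M_{-1}$ coming from $v(x)(x-y)^2v(y)$ has range (and cokernel) inside $\mathrm{span}\{v,xv,x^2v\}$, each $M_k$ has kernel $\propto v(x)|x-y|^{k+3}v(y)$ dominated by $(\<x\>^{k+3}v(x))(\<y\>^{k+3}v(y))$, hence Hilbert--Schmidt (so absolutely bounded) provided $\mu>2k+7$, and $\norm{\partial_\lambda^\ell E_N(\lambda)}_{L^2\to L^2}\lesssim\lambda^{N-2-\ell}$ for $\ell=0,1,2$ once $\<x\>^{N+1}v\in L^2$, i.e. $\mu>2N+3$.

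Next, I would invert $M(\lambda)$ by an iterated Feshbach (Grushin) reduction using the nested family of projections $Q_1=\Id-P,\ Q_2,\ Q_2^0,\ Q_3$ introduced above, with $Q_3L^2\subset Q_2^0L^2\subset Q_2L^2\subset Q_1L^2$. At each stage $M(\lambda)^{-1}$ is expressed through the Schur complement of the current diagonal block, and the relations \eqref{Q} ($Q_1v=0$, $Q_2(xv)=0$, $\ldots$, together with the extra $T_0$-conditions defining $Q_2^0,Q_3$) are precisely what lowers the order of the $\lambda$-singularity surviving on the smaller subspace and what forces each coefficient in \eqref{lemma_3_3_1}--\eqref{lemma_3_3_3} to carry the appropriate projections on the correct sides. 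Since zero eigenvalues of $H$ are excluded by the decay hypothesis \eqref{mu} (Remark \ref{remark_eigenvalue}), the possibilities are exhaustive: zero is regular iff the final Schur complement is boundedly invertible on $Q_2L^2$ (so $M^{-1}(\lambda)$ starts at order $\lambda^0$), a first-kind resonance iff $Q_2^0L^2\neq\{0\}$ but $Q_3L^2=\{0\}$ (order $\lambda^{-1}$), and a second-kind resonance iff $Q_3L^2\neq\{0\}$ while $H$ has no $L^2$-nullvector (order $\lambda^{-3}$); finite-dimensionality of these resonant subspaces comes from compactness of the $vR_0v$-type operators. Expanding every inverted block through order $\lambda^3$ and multiplying the Feshbach pieces out produces the displayed terms; the universal summand $\lambda^3\widetilde{P}$ isolates the $\lambda^3$-coefficient of $R_0^+$ acting between copies of $P$, which no $Q_\alpha$ annihilates.

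For the remainders $\Gamma_4^j(\lambda)$, one must take $N$ large enough that, after the finitely many algebraic inversion steps, the leftover is $O(\lambda^4)$ in $\mathbb B(L^2)$ together with its first two $\lambda$-derivatives. In the regular case there is no amplification of $E_N$, so $N=6$ suffices, which needs $v(x)|x-y|^7v(y)$ absolutely bounded, i.e. $\mu>15$; in the exceptional cases the Schur complement on the resonant subspace has the shape $(\lambda^jB+O(\lambda^{j+1}))^{-1}$ with $j=1$ or $3$, whose inversion and reinsertion amplify remainders by negative powers of $\lambda$, so one must carry the expansion of $F_+$ correspondingly further, which is the origin of the larger thresholds $\mu>21$ and $\mu>29$. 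The derivative bounds \eqref{lemma_3_3_4} follow by differentiating the expansion term by term: each $\partial_\lambda$ hitting $F_+(\lambda|x-y|)$ costs a factor $|x-y|\lesssim\<x\>\<y\>$ which is absorbed by the surplus decay of $V$, and since $F_+$ is entire, every coefficient and every remainder kernel depends smoothly on $\lambda$.

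The main obstacle, as is usual for such expansions, is the bookkeeping of the iterated Feshbach inversion: verifying at each stage that the relevant Schur complement is boundedly invertible on the correct subspace (this is where the definitions of $Q_2^0$ and $Q_3$, the absence of a zero eigenvalue, and the absence of embedded eigenvalues enter), and --- more delicately --- that every coefficient operator generated by the matrix manipulations is \emph{absolutely} bounded rather than merely bounded. Maintaining the latter requires representing each block uniformly as a finite-rank operator plus an absolutely bounded kernel and checking that this structure survives all the products of inverses in the Feshbach formulas; this is also exactly the property that makes the expansions \eqref{lemma_3_3_1}--\eqref{lemma_3_3_3} usable, in the rest of the paper, inside pointwise kernel estimates rather than merely operator-norm estimates.
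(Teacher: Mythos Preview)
Your outline is a faithful sketch of the Jensen--Kato/Feshbach inversion scheme that underlies this result, and the essential ingredients (the Taylor expansion of $F_+$, the nested projections $P,Q_1,Q_2,Q_2^0,Q_3$, the Schur-complement reductions, and the bookkeeping of orders of $\lambda$ and decay requirements on $V$) are all correctly identified. However, note that the present paper does \emph{not} prove this lemma: it is quoted directly from \cite[Theorem 1.8 and Remark 1.9]{SWY21}, with the only additional comment being (see the remark immediately following the statement) that \eqref{lemma_3_3_4} can be upgraded from $\ell=0,1$ to $\ell=0,1,2$ by inspecting the proof in \cite{SWY21} under the stronger decay \eqref{mu}. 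So there is no ``paper's own proof'' to compare against; your proposal is essentially a summary of what the cited reference does, and as such it is consistent with how the lemma is used here.
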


\begin{remark}$ $
\begin{itemize}
\item[(1)] We have used different notations $Q_1,Q_{2},Q_2^0,Q_{3}$ in Lemma \ref{lemma_3_3} from ones in \cite{SWY21}, which is convenient for our purpose. The relation between our notations and original ones are as follows: $(Q_1,Q_{2},Q_2^0,Q_{3})$ correspond to $(Q,S_0,S_1,S_2)$ in \cite[Theorem 1.9]{SWY21}.
    \vskip0.2cm
\item[(2)] In \cite[Remark 1.9]{SWY21}, it was only stated that \eqref{lemma_3_3_4} holds for $\ell=0,1$ under a slightly weaker condition on $V$ than \eqref{mu}. However, it can be seen from the proof of \cite[Theorem 1.9]{SWY21} that \eqref{lemma_3_3_4} in fact holds for $\ell=0,1,2,3,4$ under the condition \eqref{mu}.
\end{itemize}
\end{remark}

We also prepare three elementary (but useful) lemmas.

\begin{lemma}[{\cite[Lemma 2.5]{SWY21}}]
\label{lemma_3_4}
Let $\lambda>0$ and $x,y\in \R$.
\begin{itemize}
\item[(i)] If $F\in C^1(\R_+)$ then
\[
F(\lambda|x-y|)=F(\lambda |x|)-\lambda y\int_0^1\sgn(x-\theta y)F'(\lambda|x-\theta y|)d\theta.
\]
\item[(ii)] If $F\in C^2(\R_+)$ and $F'(0)=0$, then
\[
F(\lambda|x-y|)=F(\lambda |x|)-\lambda y \sgn(x)F'(\lambda|x|)+\lambda^2 y^2\int_0^1(1-\theta)F''(\lambda|x-\theta y|)d\theta.
\]
\item[(iii)] If $F\in C^3(\R_+)$ and $F'(0)=F''(0)=0$, then
\begin{align*}
F(\lambda|x-y|)
&=F(\lambda |x|)-\lambda y \sgn(x)F'(\lambda|x|)+\frac{\lambda^2 y^2}{2}F''(\lambda|x|)\\
&\quad-\frac{\lambda^3 y^3}{2}\int_0^1(1-\theta)^2\sgn(x-\theta y)F'''(\lambda|x-\theta y|)d\theta.
\end{align*}
\end{itemize}
\end{lemma}

We will mainly use this lemma for
$
F_\pm(s)=\pm ie^{\pm i s}-e^{-s}
$.
Combined with \eqref{free_resolvent} and \eqref{Q}, Lemma \ref{lemma_3_4} implies the following formulas, which will be one of key tools in the paper.

\begin{lemma}
\label{lemma_projection}
Let $Q_1,Q_{2},Q_{2}^{0},Q_{3}$ be as above, $\alpha=0,1,2,3$ and $\lambda>0$. Then:
\begin{align*}
&[Q_{\alpha} vR_0^\pm(\lambda^4)f](x)\\
&=\frac{(-1)^\alpha \lambda^{-3+\alpha}}{4\cdot (\alpha-1)!} Q_{\alpha}\left(x^\alpha v\int \int_0^1(1-\theta)^{\alpha-1}\left(\sgn(y-\theta x)\right)^\alpha  F_\pm^{(\alpha)}(\lambda|y-\theta x|)f(y)d\theta dy\right),\\
&[R_0^\pm(\lambda^4)vQ_{\alpha}f](x)\\
&=\frac{(-1)^\alpha \lambda^{-3+\alpha}}{4\cdot (\alpha-1)!}\int \int_0^1(1-\theta)^{\alpha-1}\!\left(\sgn(x-\theta y)\right)^\alpha  F_\pm^{(\alpha)}(\lambda|x-\theta y|)y^\alpha v(y)(Q_{\alpha}f)(y)d\theta dy,
\end{align*}
where for simplicity we have used the convention that $(\sgn x)^2\equiv1$ for all $x\in \R$ . Moreover, these estimates for $\alpha=2$ also hold with $Q_2$ replaced by $Q_2^0$.
\end{lemma}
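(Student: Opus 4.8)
\textbf{Proof proposal for Lemma \ref{lemma_projection}.}
The plan is to combine the explicit kernel formula \eqref{free_resolvent} for $R_0^\pm(\lambda^4)$, the Taylor expansion of Lemma \ref{lemma_3_4} applied to $F=F_\pm$, and the vanishing moment conditions \eqref{Q} for the projections $Q_\alpha$ and $Q_2^0$. The computation for $[R_0^\pm(\lambda^4)vQ_\alpha f](x)$ and for $[Q_\alpha vR_0^\pm(\lambda^4)f](x)$ is essentially the same up to transposing the roles of $x$ and $y$, so I would carry out the details for one of them, say the second, and then indicate that the first follows by an identical argument (using that $Q_\alpha$ is self-adjoint, $v$ real-valued). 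Note first that $F_\pm\in C^\infty(\R_+)$ and that the hypotheses $F'(0)=0$, $F''(0)=0$ needed in parts (ii), (iii) of Lemma \ref{lemma_3_4} do hold: indeed $F_\pm'(s)=\mp e^{\pm is}+e^{-s}$ so $F_\pm'(0)=\mp1+1=0$, and $F_\pm''(s)=-ie^{\pm is}(\pm1)^2\cdot(\pm1)\ldots$; more directly $F_\pm''(s)=\mp i(\pm i)e^{\pm is}-e^{-s}=e^{\pm is}-e^{-s}$, hence $F_\pm''(0)=1-1=0$. So all three cases $\alpha=1,2,3$ of Lemma \ref{lemma_3_4} are legitimately applicable, and the case $\alpha=0$ is trivial (the identity operator on the Taylor side, i.e. no expansion).

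For the second formula, I would write
\[
[R_0^\pm(\lambda^4)vQ_\alpha f](x)=\int_\R \frac{F_\pm(\lambda|x-y|)}{4\lambda^3}\,v(y)\,(Q_\alpha f)(y)\,dy
\]
and then expand $F_\pm(\lambda|x-y|)$ around $y=0$ — i.e. I apply Lemma \ref{lemma_3_4} with the roles reversed, expanding in the variable $y$ with $x$ as the base point, which gives (for $\alpha=1,2,3$ respectively)
\[
F_\pm(\lambda|x-y|)=\sum_{k=0}^{\alpha-1}\frac{(-\lambda y)^k}{k!}(\sgn x)^k F_\pm^{(k)}(\lambda|x|)+\frac{(-\lambda y)^\alpha}{(\alpha-1)!}\int_0^1(1-\theta)^{\alpha-1}(\sgn(x-\theta y))^\alpha F_\pm^{(\alpha)}(\lambda|x-\theta y|)\,d\theta,
\]
where I should double-check the exact sign/combinatorial bookkeeping of the remainder term against the three displayed cases in Lemma \ref{lemma_3_4} — in particular that the remainder coefficient is $(-1)^\alpha/((\alpha-1)!)$ times $(1-\theta)^{\alpha-1}$, matching parts (i)--(iii) with $y$ and $x$ swapped. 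Inserting this expansion into the integral, the polynomial-in-$y$ part contributes $\sum_{k=0}^{\alpha-1}c_k(\lambda,x)\int_\R y^k v(y)(Q_\alpha f)(y)\,dy$, and each of these integrals is $\langle y^k v, Q_\alpha f\rangle$ (up to a conjugate, which is harmless since $y^kv$ is real), hence vanishes by \eqref{Q} for $0\le k\le \alpha-1$. Only the remainder term survives, and collecting the constants $1/(4\lambda^3)$ and $(-\lambda)^\alpha/((\alpha-1)!)$ yields the stated factor $(-1)^\alpha\lambda^{-3+\alpha}/(4(\alpha-1)!)$ in front, with $y^\alpha v(y)(Q_\alpha f)(y)$ inside the $\theta$- and $y$-integral; this is exactly the claimed identity.

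For the first formula $[Q_\alpha vR_0^\pm(\lambda^4)f](x)$, I would instead expand $F_\pm(\lambda|y-x|)$ around $x=0$ (now $x$ is the expansion variable, $y$ the base point), apply $v$ and then $Q_\alpha$; the polynomial part in $x$ is of the form $\sum_{k=0}^{\alpha-1}(x^k v)(x)\cdot(\text{const}(\lambda,y))$, and applying $Q_\alpha$ annihilates each term $Q_\alpha(x^k v)=0$ by \eqref{Q}, leaving only the $Q_\alpha$ of the degree-$\alpha$ remainder term, which gives the first displayed identity. Finally, for the $Q_2^0$ statements I observe that the only property of $Q_2$ used above is the vanishing moment condition $Q_2(x^kv)=0$ and $\langle x^kv,Q_2 f\rangle=0$ for $k=0,1$; since \eqref{Q} asserts exactly the same relations for $Q_2^0$, the identical computation goes through verbatim with $Q_2$ replaced by $Q_2^0$, which handles the last sentence. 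The only genuinely delicate point is the sign and factorial bookkeeping when transcribing Lemma \ref{lemma_3_4} with $x$ and $y$ interchanged — i.e. making sure the convention $(\sgn x)^2\equiv1$ is used consistently so that the $\alpha=2$ case matches part (ii) (which has no sign factor on the remainder) while $\alpha=1,3$ match parts (i),(iii) (which do) — but this is routine once set up carefully, and I do not expect any analytic obstacle beyond it.
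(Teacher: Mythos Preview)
Your approach is the same as the paper's: combine the kernel formula \eqref{free_resolvent}, the Taylor expansions of Lemma \ref{lemma_3_4}, and the moment conditions \eqref{Q}. For $\alpha=1,2$ (and for $Q_2^0$) your argument is correct and matches the paper.

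However, there is a genuine error in the case $\alpha=3$: your claim that $F_\pm''(0)=0$ is false. From $F_\pm'(s)=-e^{\pm is}+e^{-s}$ (note: the coefficient is $-1$, not $\mp1$, since $(\pm i)^2=-1$) one gets $F_\pm''(s)=\mp i\,e^{\pm is}-e^{-s}$, hence $F_\pm''(0)=\mp i-1\neq0$. Therefore the hypothesis $F''(0)=0$ of Lemma \ref{lemma_3_4}(iii) fails for $F=F_\pm$, and you cannot apply it directly.

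The paper repairs this by setting $\widetilde F_\pm(s)=F_\pm(s)+\frac{1\pm i}{2}s^2$, so that $\widetilde F_\pm'(0)=\widetilde F_\pm''(0)=0$ while $\widetilde F_\pm'''\equiv F_\pm'''$. Lemma \ref{lemma_3_4}(iii) then applies to $\widetilde F_\pm$ and produces the stated remainder involving $F_\pm^{(3)}$. The leftover quadratic piece
\[
-\tfrac{1\pm i}{2}\lambda^2|x-y|^2=-\tfrac{1\pm i}{2}\lambda^2\big(x^2-2xy+y^2\big),
\]
after multiplication by $v(x)$, is a linear combination of $v,xv,x^2v$ (with $y$-dependent scalar coefficients) and is therefore annihilated by $Q_3$ thanks to \eqref{Q}. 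With this correction inserted, the rest of your argument goes through unchanged.
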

More precisely speaking, the above formula for $Q_{\alpha}vR_0^\pm(\lambda^4)f$ means
$$
Q_{\alpha}vR_0^\pm(\lambda^4)f=\frac{(-1)^\alpha \lambda^{-3+\alpha}}{4\cdot (\alpha-1)!} Q_{\alpha}\widetilde f_{\pm,\alpha}
$$
with
$$
\widetilde f_{\pm,\alpha}(\lambda,x)=x^\alpha v\int\int_0^1(1-\theta)^{\alpha -1}\left(\sgn(y-\theta x)\right)^\alpha F_\pm^{(\alpha )}(\lambda|y-\theta x|)f(y)d\theta dy.
$$

Note that the subscript $\alpha $ of $Q_{\alpha}$ coincides with the order of differentiation for $F_\pm$. This is the main reason why we use the notations $Q_1,Q_{2},Q_{2}^{0},Q_{3}$ instead of the original ones.

\begin{remark}
\label{remark_lemma_projection}
At the level of the order with respect to $\lambda$,  this lemma shows
$$
Q_{\alpha}vR^\pm_0(\lambda^4),R^\pm_0(\lambda^4)vQ_{\alpha}=O(\lambda^{-3+\alpha }),\quad \lambda\to +0.
$$
This gain of positive powers of $\lambda$, compared with that of the free resolvent $R^\pm_0(\lambda^4)=O(\lambda^{-3})$, is useful to cancel out singularities near $\lambda=0$ appeared in the expansion for $M^{-1}(\lambda)$ (see Lemma \ref{lemma_3_3}). This cancellation properties will be crucial in our argument.
\end{remark}

\begin{proof}[Proof of Lemma \ref{lemma_projection}]
Since $F_\pm'(0)=0$, we can apply Lemma \ref{lemma_3_4} to $F_\pm$ obtaining
\begin{align*}
R_0^\pm(\lambda^4,x,y)
&=\frac{F_\pm(\lambda |x|)}{4\lambda^3}-\frac{y}{4\lambda^2}\int_0^1\sgn(x-\theta y)F_\pm'(\lambda|x-\theta y|)d\theta\\
&=\frac{F_\pm(\lambda |x|)}{4\lambda^3}-\frac{y \sgn(x)F_\pm'(\lambda|x|)}{4\lambda^2}+\frac{y^2}{4\lambda}\int_0^1(1-\theta)F_\pm''(\lambda|x-\theta y|)d\theta.
\end{align*}
The cases $\alpha =1,2$ follow from this formula and \eqref{Q}. Indeed, we have
\begin{align*}
Q_2 vR_0^\pm(\lambda^4)f
&=\frac{1}{4\lambda^3}Q_2 (v)\int F_\pm(\lambda|y|)f(y)dy-\frac{1}{4\lambda^2} Q_2 (xv)\int \sgn(y)F_\pm'(\lambda|y|)f(y)dy\\
\nonumber
&\quad+\frac{1}{4\lambda} Q_2 \left(x^2v\int \int_0^1(1-\theta)F_\pm''(\lambda|y-\theta x|) f(y)d\theta dy\right)\\
&=\frac{1}{4\lambda}Q_2 \left(x^2v\int \int_0^1(1-\theta)F_\pm''(\lambda|y-\theta x|) f(y)d\theta dy\right).
\end{align*}
The proofs for the other cases with $Q_1,Q_2^0$ are similar. For the case $\alpha =3$, we write
$$4\lambda^3R^\pm(\lambda^4,x,y)=F_\pm(\lambda|x-y|)=\widetilde F_\pm(\lambda|x-y|)-\frac{1\pm i}{2}\lambda^2|x-y|^2,$$ where $
\widetilde F_\pm(s)=F_\pm(s)+\frac{1\pm i}{2}s^2
$. Then we can write
\begin{align*}
Q_{3}vR_0^\pm(\lambda^4)f=Q_{3}\left\{v\int\left(\widetilde F_\pm(\lambda|x-y|)-\frac{1\pm i}{2}\lambda^2|x-y|^2\right)f(y)dy\right\}.
\end{align*}
For the first term of the right hand side, since $\widetilde F_\pm'(0)=\widetilde F_\pm''(0)=0$ and $F_\pm'''\equiv\widetilde F_\pm'''$, we can apply Lemma \ref{lemma_3_4} (iii) and \eqref{Q} to compute
\begin{align*}
&Q_{3}\left(v \int \widetilde F_\pm(\lambda|x-y|)f(y)dy\right)\\
&=-\frac{1}{8}Q_{3}\left(x^3v\int \int_0^1(1-\theta)^2\sgn(y-\theta x)F_\pm'''(\lambda|y-\theta x|) f(y)d\theta dy\right),
\end{align*}
while the second part related with $|x-y|^2$ vanishes identically by virtue of \eqref{Q}. The proof for $R_0^+(\lambda^4)vQ_{3}f$ is analogous.
\end{proof}

We will also use often the following simple formula:

\begin{lemma}
\label{lemma_FF}
Let $F_\pm(s)=\pm ie^{\pm is}-e^{-s}$ and $\alpha,\beta\in \N\cup\{0\}$. Then
\begin{align*}
&f_{\alpha\beta}(\lambda,x,y):=F_+^{(\alpha)}(\lambda|x|)[F_+^{(\beta)}-F_-^{(\beta)}](\lambda|y|)\\
&=-i^{\alpha+\beta}\Big(e^{i\lambda(|x|+|y|)}+(-1)^\beta e^{i\lambda(|x|-|y|)}\Big)+(-1)^{\alpha+1}i^{\beta+1}\Big((-1)^\beta e^{-\lambda(|x|+i|y|)}+e^{-\lambda(|x|-i|y|)}\Big).
\end{align*}
\end{lemma}

\begin{proof}
A direct calculation yields
\begin{align*}
&F_+^{(\alpha)}(s)[F_+^{(\beta)}-F_-^{(\beta)}](t)
=\Big(i^{\alpha+1}e^{is}+(-1)^{\alpha+1}e^{-s}\Big)\Big(i^{\beta+1}e^{it}-(-i)^{\beta+1}e^{-it}\Big)\\
&=-i^{\alpha+\beta}\Big(e^{i(s+t)}+(-1)^\beta e^{i(s-t)}\Big)+(-1)^{\alpha+1}i^{\beta+1}\Big((-1)^\beta e^{-(s+it)}+e^{-(s-it)}\Big)
\end{align*}
The lemma then follows by letting $s=\lambda|x|$ and $t=\lambda|y|$.
\end{proof}

\section{Boundedness of some integrals related with wave operators}
\label{subsection_integral_operators}
Recall that $T_K$ denotes the integral operator with the kernel $K(x,y)$:
$$
T_Kf(x)=\int K(x,y)f(y)dy.
$$
This section is devoted to preparing some basic criterion to obtain several boundedness of $T_K$ related with the wave operator $W_-$.

\subsection{Classical Schur kernels}
We first recall the classical Schur lemma:
\begin{lemma}
\label{lemma_2_1}
$T_K\in \mathbb B(L^p(\R))$ for all $1\le p\le \infty$ if $K$ satisfies $$
\sup_{x\in \R} \int|K(x,y)|dy+\sup_{y\in \R} \int|K(x,y)|dx<\infty.
$$
\end{lemma}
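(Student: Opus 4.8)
The plan is to put $C_1 := \sup_{x\in\R}\int|K(x,y)|\,dy$ and $C_2 := \sup_{y\in\R}\int|K(x,y)|\,dx$, both finite by hypothesis, and to establish the quantitative bound $\|T_Kf\|_{L^p}\le C_1^{1/p'}C_2^{1/p}\|f\|_{L^p}$ for every $1\le p\le\infty$ (the constant being read as $C_1$ when $p=\infty$ and as $C_2$ when $p=1$). Since the whole range $1\le p\le\infty$ can be covered at once by a single Hölder argument, this is the natural route, although one could equally interpolate between the two endpoints.

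First I would treat the endpoints directly. For $p=\infty$, pulling $\|f\|_{L^\infty}$ out of the defining integral gives $|T_Kf(x)|\le\|f\|_{L^\infty}\int|K(x,y)|\,dy\le C_1\|f\|_{L^\infty}$. For $p=1$, integrating the pointwise bound $|T_Kf(x)|\le\int|K(x,y)||f(y)|\,dy$ in $x$ and using Tonelli's theorem to swap the order of integration — legitimate because the integrand is nonnegative — yields $\|T_Kf\|_{L^1}\le\int|f(y)|\big(\int|K(x,y)|\,dx\big)\,dy\le C_2\|f\|_{L^1}$. This same Tonelli computation, run with $|K|$ against an arbitrary $f\in L^p$, also shows that the integral defining $T_Kf(x)$ converges absolutely for almost every $x$, so $T_Kf$ is a well-defined measurable function.

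For $1<p<\infty$ I would split $|K(x,y)|=|K(x,y)|^{1/p'}|K(x,y)|^{1/p}$ and apply Hölder's inequality in $y$ with exponents $p'$ and $p$:
$$
|T_Kf(x)|\le\Big(\int|K(x,y)|\,dy\Big)^{1/p'}\Big(\int|K(x,y)||f(y)|^p\,dy\Big)^{1/p}\le C_1^{1/p'}\Big(\int|K(x,y)||f(y)|^p\,dy\Big)^{1/p}.
$$
Raising this to the power $p$, integrating in $x$, and applying Tonelli once more to interchange the order of integration gives
$$
\|T_Kf\|_{L^p}^p\le C_1^{p/p'}\int|f(y)|^p\Big(\int|K(x,y)|\,dx\Big)\,dy\le C_1^{p/p'}C_2\,\|f\|_{L^p}^p,
$$
i.e. $\|T_Kf\|_{L^p}\le C_1^{1/p'}C_2^{1/p}\|f\|_{L^p}$, as claimed. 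There is no genuine obstacle in this argument; the only point deserving a word is the use of Fubini–Tonelli, which is unconditional here since every integrand to which it is applied is nonnegative and measurable, and this same remark supplies the a.e. absolute convergence of $T_Kf$ needed for the statement to make sense.
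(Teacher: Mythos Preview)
Your proof is correct; this is the standard Schur-test argument via H\"older with the splitting $|K|=|K|^{1/p'}|K|^{1/p}$. The paper itself gives no proof of this lemma, simply recalling it as the classical Schur lemma, so your write-up supplies strictly more detail than the paper does.
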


We often use this lemma for the kernel satisfying  $|K(x,y)|\lesssim \<|x|-|y|\>^{-\rho}$ with some $\rho>1$. In fact, one can also obtain several weighted boundedness for such operators:

\begin{lemma}
\label{lemma_2_2}
Let $K$ satisfy $|K(x,y)|\lesssim \<|x|-|y|\>^{-\rho}$ on $\R^2$ with some $\rho>1$ and $\tau f(x)=f(-x)$. Let $1<p<\infty$, $w_p\in A_p$ and $w_1\in A_1$. Then $T_{K}$ satisfies the following bounds:
\begin{align}
\label{lemma_2_2_1}
\norm{T_{K}f}_{L^p(w_p)}+\norm{T_{K}^*f}_{L^p(w_p)}
&\lesssim [w_p]_{A_p}^{\max\{1,1/(p-1)\}}(\norm{f}_{L^p(w_p)}+\norm{\tau f}_{L^p(w_p)}),\\
\label{lemma_2_2_2}
\norm{T_{K}f}_{L^{1,\infty}(w_1)}+\norm{T_{K}^*f}_{L^{1,\infty}(w_1)}
&\lesssim [w_1]_{A_1}(1+\log [w]_{A_1})(\norm{f}_{L^1(w_1)}+\norm{\tau f}_{L^1(w_1)}).
\end{align}
\end{lemma}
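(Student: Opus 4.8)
\textbf{Proof plan for Lemma \ref{lemma_2_2}.}
The plan is to reduce the weighted bounds for $T_K$ to the corresponding weighted bounds for a genuine Calder\'on--Zygmund operator together with a trivial averaging/reflection operator, exploiting the fact that the kernel depends on $|x|,|y|$ only through $||x|-|y||$. First I would split $\R = \R_+ \cup \R_-$ and write, with $\chi_\pm$ the indicator of $\R_\pm$, the decomposition
$$
T_K = \sum_{\varepsilon,\delta\in\{+,-\}} \chi_\varepsilon T_K \chi_\delta.
$$
On each quadrant the kernel is $K(\varepsilon|x|,\delta|y|)$ restricted to a half-line; since $|K(x,y)|\lesssim \langle |x|-|y|\rangle^{-\rho}$, after the change of variables $x\mapsto \varepsilon x$, $y\mapsto \delta y$ the kernel becomes a function bounded by $\langle x - \delta\varepsilon y\rangle^{-\rho}$ on $\R_+\times\R_+$ or, after reflection, a function of the form $\langle x+y\rangle^{-\rho}$ or $\langle x-y\rangle^{-\rho}$. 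The $+$ pieces involve the difference kernel $k(x,y)$ with $|k(x,y)|\lesssim\langle x-y\rangle^{-\rho}$, which extends to an honest convolution-dominated Calder\'on--Zygmund kernel on $\R$ (a bounded, integrable, smooth-enough function, hence trivially Calder\'on--Zygmund), and the $-$ pieces involve $\langle x+y\rangle^{-\rho}$, which after composing with the reflection $\tau$ again becomes a difference kernel. This is exactly why the right-hand sides of \eqref{lemma_2_2_1} and \eqref{lemma_2_2_2} contain the extra term $\norm{\tau f}$.

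Concretely, I would show that there is a kernel $\widetilde K(x,y)$ on $\R^2$ with $|\widetilde K(x,y)|\lesssim\langle x-y\rangle^{-\rho}$ and satisfying the standard Calder\'on--Zygmund regularity estimates (H\"older continuity of exponent, say, $\min\{\rho-1,1\}$ in each variable away from the diagonal, which follows at once if one takes a smooth truncation $\widetilde K(x,y)=\psi(x-y)\langle x-y\rangle^{-\rho}$ dominating the relevant pieces), together with multiplication operators by $\chi_\pm$ and the reflection $\tau$, such that
$$
T_K f = \big(\text{a finite sum of } \chi_\varepsilon \, T_{\widetilde K}\, \chi_\delta \text{ and } \chi_\varepsilon\, T_{\widetilde K}\,\chi_\delta\,\tau\big) f .
$$
Then $T_{\widetilde K}$ is a Calder\'on--Zygmund operator (review in Appendix \ref{appendix_CZ}), so by the sharp weighted theory (the $A_2$-theorem and its $A_p$ extrapolation, see \cite{Hytonen}) one has $\norm{T_{\widetilde K}}_{L^p(w_p)\to L^p(w_p)}\lesssim [w_p]_{A_p}^{\max\{1,1/(p-1)\}}$ for $1<p<\infty$, and by \cite{LOP} one has the weighted weak-$(1,1)$ bound $\norm{T_{\widetilde K}}_{L^1(w_1)\to L^{1,\infty}(w_1)}\lesssim [w_1]_{A_1}(1+\log[w_1]_{A_1})$. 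Since multiplication by $\chi_\pm$ is a contraction on every $L^p(w)$ and on $L^{1,\infty}(w)$, and since $\norm{\tau f}_{L^p(w)}$, $\norm{\tau f}_{L^{1,\infty}(w)}$ are precisely the norms of $f$ against the reflected weight, combining these estimates over the finitely many quadrant pieces yields \eqref{lemma_2_2_1} and \eqref{lemma_2_2_2}. The same argument applies verbatim to $T_K^*$, whose kernel is $\overline{K(y,x)}$ and obeys the same pointwise bound.

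The routine but slightly delicate point is verifying that the truncated kernels $\widetilde K$ genuinely satisfy the Calder\'on--Zygmund smoothness conditions: the raw bound $|K(x,y)|\lesssim\langle|x|-|y|\rangle^{-\rho}$ gives size but no a priori regularity, so one must observe that it suffices to dominate each quadrant piece by the \emph{fixed} smooth model kernel $\psi(x-y)\langle x-y\rangle^{-\rho}$ (or $\psi(x-y)(x-y)^{-1}$-type kernels as in the outline), which plainly satisfies the H\"older estimates, and then majorant/comparison arguments transfer the weighted bounds. I expect this bookkeeping --- keeping track of the four quadrants, the reflections, and confirming that one never needs regularity of $K$ itself but only of the model kernel --- to be the main (though entirely elementary) obstacle; the weighted inequalities themselves are then immediate citations of \cite{Hytonen} and \cite{LOP}.
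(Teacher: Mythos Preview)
Your proposal is correct and follows essentially the same route as the paper's proof: the four-quadrant splitting $T_K=\sum_{\varepsilon,\delta}\chi_\varepsilon T_K\chi_\delta$, the observation that the same-sign pieces are dominated by $\langle x-y\rangle^{-\rho}$ while the opposite-sign pieces become so after composition with $\tau$, and then pointwise domination by the single Calder\'on--Zygmund operator with kernel $\langle x-y\rangle^{-\rho}$ (no cutoff $\psi$ is needed---the kernel is already smooth and bounded), followed by direct appeal to \cite{Hytonen} and \cite{LOP}. Your remark that one never needs regularity of $K$ itself, only of the nonnegative majorant kernel, is exactly the point that makes the argument work.
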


\begin{proof}
Let $\chi_\pm=\chi_{\R_\pm}$ be the characteristic function of $\R_\pm$. We decompose $K$ as
\begin{align*}
K(x,y)
&=\left(\chi_+(x)+\chi_-(x)\right)K(x,y)\left(\chi_+(y)+\chi_-(y)\right)\\
&=\sum_\pm \left(\chi_\pm(x)K(x,y)\chi_\pm(y)+\chi_\pm(x)K(x,y)\chi_\mp(y)\right)\\
&=:\sum_\pm\left(K_{\pm,\pm}(x,y)+K_{\pm,\mp}(x,y)\right),
\end{align*}
By assumption, $K_{\pm,\pm}$ and $K_{\pm,\mp}$ satisfy
$$
|K_{\pm,\pm}(x,y)|\lesssim \<x-y\>^{-\rho},\quad |K_{\pm,\mp}(x,y)|\lesssim \<x+y\>^{-\rho}.
$$
Hence if we set $\widetilde K_{\pm,\mp}(x,y)=K_{\pm,\mp}(x,-y)$ and $\tau f(x)=f(-x)$, then
\begin{align}
\label{lemma_2_2_proof_1}
|K_{\pm,\pm}(x,y)|+|\widetilde K_{\pm,\mp}(x,y)|\lesssim \<x-y\>^{-\rho}
\end{align}
and $T_{K}=T_{K_{+,+}}+T_{K_{-,-}}+(T_{K_{+,-}}+T_{K_{-,+}})\tau$. It follows from \eqref{lemma_2_2_proof_1} and Lemma \ref{lemma_2_1} that the integral operator $T$ with the kernel $\<x-y\>^{-\rho}$ is a Calder\'on--Zygmund operator (see Appendix \ref{appendix_CZ}). Theorem \ref{theorem_CZ_1} in Appendix \ref{appendix_CZ} thus implies, for $1<p<\infty$,
\begin{align*}
\norm{T_{K}f}_{L^p(w_p)}
&\lesssim \norm{Tf}_{L^p(w_p)}+\norm{T\tau f}_{L^p(w_p)}\\
&\lesssim [w_p]_{A_p}^{\max\{1,1/(p-1)\}}(\norm{f}_{L^p(w_p)}+\norm{\tau f}_{L^p(w_p)}).
\end{align*}
Similarly, we obtain for $p=1$,
\begin{align*}
\norm{T_{K}f}_{L^{1,\infty}(w_1)}&\lesssim [w_1]_{A_1}(1+\log[w_1]_{A_1})(\norm{f}_{L^1(w_1)}+\norm{\tau f}_{L^1(w_1)}).
\end{align*}
By virtue of \eqref{lemma_2_2_proof_1}, the same argument also implies the desired bounds for $T_{K}^*$.
\end{proof}

\subsection{Non-classical kernels related with wave operators}

As observed by \cite{GoGr21} for the case $\Delta^2+V(x)$ on $\R^3$, the wave operator for $(-\Delta)^m+V(x)$ on $\R^n$ has some singular integrals in its stationary representation if $n<2m$.
Precisely, in the present case,  the low energy part $W^{L}_-$ of the wave operator $W_-$ also has several terms with kernels satisfying $|K(x,y)|\lesssim \<|x|-|y|\>^{-1}$ only. To deal with such terms, we further prepare two lemmas based on the theory of Calde\'ron--Zygmund operators (see Appendix \ref{appendix_CZ} for Calde\'ron--Zygmund operators). The following lemma is concerned with the boundedness on weighted $L^p$-spaces:

\begin{lemma}
\label{lemma_2_3}
Let $1<p<\infty$ and $\psi\in C^\infty(\R;\R)$ be such that $\psi(s)=0$ for $0\le s\le 1$ and $\psi(s)=1$ for $s\ge2$. Let $K(x,y)$ be a linear combination of the following four functions
\begin{align*}
k_1^\pm(x,y)=\frac{\psi(||x|\pm |y||^2)}{|x|\pm |y|},\quad k_2^\pm(x,y)=\frac{\psi(||x|-|y||^2)}{|x|\pm i|y|}.
\end{align*}
Then  $T_{K}$ and $T_{K}^*$ satisfy the same bounds as \eqref{lemma_2_2_1} and \eqref{lemma_2_2_2}.
\end{lemma}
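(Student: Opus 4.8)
The plan is to carry out, for each of the four kernels $k_1^\pm,k_2^\pm$, the same ``quadrant'' splitting as in the proof of Lemma \ref{lemma_2_2}. Writing $\chi_\pm=\chi_{\R_\pm}$ and $f=\chi_+f+\chi_-f$, on the quadrant $\{\pm x>0,\ \pm y>0\}$ one replaces $|x|$ by $\pm x$ and $|y|$ by $\pm y$, so that each $k_j^\pm(x,y)$ becomes there, up to a sign, one of the functions
\begin{align*}
&\widetilde k_1(x,y)=\frac{\psi(|x-y|^2)}{x-y},\qquad \frac{\psi(|x+y|^2)}{x+y}=\widetilde k_1(x,-y),\\
&\widetilde k_2^\pm(x,y)=\frac{\psi(|x-y|^2)}{x\pm iy},\qquad \frac{\psi(|x+y|^2)}{x\mp iy}=\widetilde k_2^\pm(x,-y).
\end{align*}
Since the integral operator with kernel $\widetilde k(x,-y)$ equals $T_{\widetilde k}\tau$ with $\tau f(x)=f(-x)$, collecting the four quadrant pieces expresses each $T_{k_j^\pm}$ as a finite linear combination of operators of the form $\chi_{\varepsilon}S\chi_{\delta}$ and $\chi_{\varepsilon}S\chi_{\delta}\tau$ with $\varepsilon,\delta\in\{+,-\}$ and $S\in\{T_{\widetilde k_1},T_{\widetilde k_2^+},T_{\widetilde k_2^-}\}$; e.g.\ one finds $T_{k_2^+}=(\chi_+T_{\widetilde k_2^+}\chi_+-\chi_-T_{\widetilde k_2^+}\chi_-)(1+\tau)$. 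By linearity it therefore suffices to prove that each of $T_{\widetilde k_1},T_{\widetilde k_2^+},T_{\widetilde k_2^-}$ and its adjoint is a Calder\'on--Zygmund operator in the sense of Appendix \ref{appendix_CZ}.

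For $\widetilde k_1$ this is classical: $\widetilde k_1(x,y)=m(x-y)$ where $m(z)=\psi(z^2)/z$ is odd and satisfies $|m(z)|\lesssim|z|^{-1}$, $|m'(z)|\lesssim|z|^{-2}$, i.e.\ $m$ is a smoothly truncated Hilbert transform kernel, a convolution Calder\'on--Zygmund kernel, and $T_{\widetilde k_1}^*=-T_{\widetilde k_1}$. For $\widetilde k_2^\pm$ the kernel bounds are immediate: from $|x\pm iy|=\sqrt{x^2+y^2}\ge|x-y|/\sqrt2$ one gets $|\widetilde k_2^\pm(x,y)|\lesssim|x-y|^{-1}$, and differentiating $\psi(|x-y|^2)/(x\pm iy)$ and using that $\psi'$ is supported in $\{1\le|x-y|^2\le2\}$ gives $|\partial_x\widetilde k_2^\pm|+|\partial_y\widetilde k_2^\pm|\lesssim|x-y|^{-2}$; moreover $\overline{\widetilde k_2^\pm(y,x)}=\pm i\,\widetilde k_2^\pm(x,y)$, so the adjoints have kernels obeying the same bounds. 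The one point that is not purely formal is the $L^2(\R)$-boundedness of $T_{\widetilde k_2^\pm}$, which does not follow from Schur's lemma with constant weights (the relevant integrals diverge logarithmically). Instead, using $|\widetilde k_2^\pm(x,y)|\lesssim(x^2+y^2)^{-1/2}$ and, with $p(t)=|t|^{-1/2}$, the identity
\begin{align*}
\int_\R\frac{p(y)}{\sqrt{x^2+y^2}}\,dy=p(x)\int_\R\frac{|s|^{-1/2}}{\sqrt{1+s^2}}\,ds=C\,p(x),\qquad C<\infty,
\end{align*}
together with the symmetric estimate in $x$, Schur's test with the weight $p$ gives $T_{\widetilde k_2^\pm}\in\mathbb B(L^2(\R))$. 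Hence $T_{\widetilde k_1},T_{\widetilde k_2^\pm}$ and their adjoints are Calder\'on--Zygmund operators.

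Finally I would assemble the estimates exactly as in Lemma \ref{lemma_2_2}. By Theorem \ref{theorem_CZ_1}, each $S\in\{T_{\widetilde k_1},T_{\widetilde k_2^+},T_{\widetilde k_2^-}\}$ and its adjoint obeys $\|Sg\|_{L^p(w_p)}\lesssim[w_p]_{A_p}^{\max\{1,1/(p-1)\}}\|g\|_{L^p(w_p)}$ for $1<p<\infty$, $w_p\in A_p$, and $\|Sg\|_{L^{1,\infty}(w_1)}\lesssim[w_1]_{A_1}(1+\log[w_1]_{A_1})\|g\|_{L^1(w_1)}$ for $w_1\in A_1$. Multiplication by $\chi_\pm$ has norm $\le1$ on every $L^p(w)$ and $L^{1,\infty}(w)$, while $\|\tau g\|_{L^p(w)}=\|g\|_{L^p(\tau w)}$ with $[\tau w]_{A_p}=[w]_{A_p}$ (and likewise for $L^{1,\infty}$ and $A_1$). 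Substituting these into the decompositions $T_K=\sum_l c_l\chi_{\varepsilon_l}S_l\chi_{\delta_l}\sigma_l$ and $T_K^*=\sum_l\overline{c_l}\,\sigma_l\chi_{\delta_l}S_l^*\chi_{\varepsilon_l}$ (with $\sigma_l\in\{\mathrm{Id},\tau\}$) yields the bounds \eqref{lemma_2_2_1} and \eqref{lemma_2_2_2}, the terms with $\sigma_l=\tau$ producing the $\|\tau f\|$-summand on the right-hand side, and for $T_K^*$ the reflection-invariance of the $A_p$ and $A_1$ classes is used to absorb a left composition with $\tau$. The only genuine obstacle is the verification that the non-convolution kernels $\widetilde k_2^\pm$ define Calder\'on--Zygmund operators, i.e.\ their $L^2$-boundedness; once that is settled the rest is the same bookkeeping as in Lemma \ref{lemma_2_2}.
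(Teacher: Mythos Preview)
Your proof is correct and follows essentially the same route as the paper: the same quadrant decomposition into $\chi_\pm T_{\widetilde k}\chi_\pm(1+\tau)$ pieces with $\widetilde k_1(x,y)=\psi(|x-y|^2)/(x-y)$ and $\widetilde k_2^\pm(x,y)=\psi(|x-y|^2)/(x\pm iy)$, the same reduction to showing that $T_{\widetilde k_1},T_{\widetilde k_2^\pm}$ are Calder\'on--Zygmund operators, and the same appeal to Theorem~\ref{theorem_CZ_1}. The only difference is in the verification of $T_{\widetilde k_2^\pm}\in\mathbb B(L^2)$: the paper splits $\widetilde k_2^\pm$ into its real and imaginary parts $\widetilde k_{21}=x\psi/(x^2+y^2)$ and $\widetilde k_{22}=y\psi/(x^2+y^2)$, shows $T_{\widetilde k_{21}}\in\mathbb B(L^\infty)\cap\mathbb B(L^1,L^{1,\infty})$ directly and interpolates via Marcinkiewicz (then handles $T_{\widetilde k_{22}}=(T_{\widetilde k_{21}})^*$ by duality), whereas you use Schur's test with the homogeneous weight $|t|^{-1/2}$ on the dominating kernel $(x^2+y^2)^{-1/2}$. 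Both arguments are short and valid; your Schur test is perhaps slightly more direct.
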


\begin{remark}
Some singular integrals similar to $T_{k_j^\pm}$ have been already appeared in \cite[Lemma 3.3]{GoGr21}. Precisely, the singular integral with the kernel $|x|(|x|^4-|y|^4)^{-1}$ in $\R^3$
has been studied by using the spherical average and $L^p$-boundedness of the maximal (truncated) Hilbert transform and the Hardy-Littlewood Maximal function. Here we make use of a specific feature in one space dimension to observe that our operators  $T_{k_j^\pm}$ also fall within the scope of the theory of Calde\'ron--Zygmund operators.
\end{remark}

\begin{proof}
With some constants $a,b,c,d\in \C$, we can write
$$
K=a\frac{\psi(\big||x|+ |y|\big|^2)}{|x|+ |y|}+b\frac{\psi(\big||x|- |y|\big|^2)}{|x|- |y|}+c\frac{\psi(||x|-|y||^2)}{|x|+ i|y|}+d \frac{\psi(||x|-|y||^2)}{|x|- i|y|}.
$$
We set $\chi_\pm=\chi_{\R_\pm}$, $\tau f(x)=f(-x)$ and
$$
\widetilde k_1(x,y)=\frac{\psi(|x- y|^2)}{x-y},\quad \widetilde k_2^\pm(x,y)=\frac{\psi(|x- y|^2)}{x\pm iy}.
$$
Then $T_K$  is written in the form
\begin{align}
\label{lemma_2_3_proof_1}
\nonumber
T_{K}
&=\left\{a\left(\chi_{+}T_{\widetilde k_1}\chi_{-}-\chi_{-}T_{\widetilde k_1}\chi_{+}\right)+b\left(\chi_{+}T_{\widetilde k_1}\chi_{+}-\chi_{-}T_{\widetilde k_1}\chi_{-}\right)\right\}(1+\tau)\\
&\quad +\left\{c\left(\chi_+T_{\widetilde k_2^+}\chi_+- \chi_-T_{\widetilde k_2^+}\chi_-\right)+d\left(\chi_+T_{\widetilde k_2^-}\chi_+- \chi_-T_{\widetilde k_2^-}\chi_-\right)\right\}(1+\tau).
\end{align}
Indeed, since
$k_1^\pm(x,y)=\widetilde k_1(|x|,\mp |y|)$ and $\widetilde k_1(-x,-y)=-\widetilde k_1(x,y)$,
we have
\begin{align*}
k_1^\pm(x,y)&=\left(\chi_{+}(x)+\chi_{-}(x)\right)k_1^\pm(x,y)\left(\chi_{+}(y)+\chi_{-}(y)\right)\\
&=\chi_{+}(x)\widetilde k_1 (x,\mp y)\chi_{+}(y)
+\chi_{+}(x)\widetilde k_1(x,\pm y)\chi_{-}(y)
\\&\quad
-\chi_{-}(x)\widetilde k_1(x,\pm y)\chi_{+}(y)
-\chi_{-}(x)\widetilde k_1(x,\mp y)\chi_{-}(y).
\end{align*}
By the change of variable $y\mapsto -y$ in the first and fourth terms for the ``$+$" case and the second and third terms for the ``$-"$ case, respectively, we obtain
\begin{align*}
T_{k_1^\pm}f(x)
=[(\chi_{+}T_{\widetilde k_1}\chi_{\mp}-\chi_{-}T_{\widetilde k_1}\chi_{\pm})(1+\tau)]f(x).
\end{align*}
A similar calculation also implies
\begin{align*}
k_2^\pm(x,y)&=\chi_+(x)\widetilde k_2^\pm(x,y)\chi_+(y)+\chi_+(x)\widetilde k_2^\pm(x,-y)\chi_-(y)\\
&\quad -\chi_-(x)\widetilde k_2^\pm(x,-y)\chi_+(y)-\chi_-(x)\widetilde k_2^\pm(x,y)\chi_-(y).
\end{align*}
Hence, by the change of variable $y\mapsto -y$ in the second and third terms, we have
\begin{align*}
T_{k_2^\pm}f(x)=[(\chi_+T_{\widetilde k_2^\pm}\chi_+- \chi_-T_{\widetilde k_2^\pm}\chi_-)(1+\tau)]f(x).
\end{align*}
These two formulas imply \eqref{lemma_2_3_proof_1}. Since both the multiplication operator by $\chi_{\pm}(x)$ belongs to $\mathbb B(L^p(w_p))\cap\mathbb B(L^{1,\infty}(w_1))$ with operator norms $1$ for all $1\le p<\infty$, we obtain
\begin{align*}
\norm{T_{K}f}_{\mathcal Y}\lesssim \left(\norm{T_{\widetilde k_1}}_{\mathcal X\to \mathcal Y}+\norm{T_{\widetilde k_2^+}}_{\mathcal X\to \mathcal Y}+\norm{T_{\widetilde k_2^-}}_{\mathcal X\to \mathcal Y}\right)\left(\norm{f}_{\mathcal X}+\norm{\tau f}_{\mathcal X}\right)
\end{align*}
if $(\mathcal X,\mathcal Y)\in \{(L^p(w_p),L^p(w_p)),(L^1(w_1),L^{1,\infty}(w_1))\}$. Moreover, since
\begin{align*}
\overline{\widetilde k_1(y,x)}=-\widetilde k_1(x,y),\quad \overline{\widetilde k_2^\pm(y,x)}=\pm i\widetilde k_2^\pm(x,y)
\end{align*}
we have $(T_{\widetilde k_1})^*=-T_{\widetilde k_1}$ and $(T_{\widetilde k_2^\pm})^*=\pm iT_{\widetilde k_2^\pm}$ and
$$
\norm{T_{K}^*f}_{\mathcal Y}\lesssim  \left(\norm{T_{\widetilde k_1}}_{\mathcal X\to \mathcal Y}+\norm{T_{\widetilde k_2^+}}_{\mathcal X\to \mathcal Y}+\norm{T_{\widetilde k_2^-}}_{\mathcal X\to \mathcal Y}\right)\left(\norm{f}_{\mathcal X}+\norm{\tau f}_{\mathcal X}\right).
$$
By virtue of Theorem \ref{theorem_CZ_1}, it thus is enough to show that $T_{\widetilde k_1},T_{\widetilde k_2^\pm}$ are Calder\'on--Zygmund operators, namely $\widetilde k_1,\widetilde k_2^\pm$ are standard kernels and $T_{\widetilde k_1},T_{\widetilde k_2^\pm}\in \mathbb B(L^2(\R))$ (see Appendix \ref{appendix_CZ}).

Since $\widetilde k_1,\widetilde k_2^\pm$ are supported away from a neighborhood of the diagonal line, they are smooth on $\R^2$. Moreover, since $1\le |x-y|^2\le 2$ on $\supp \psi'(|x-y|^2)$, we have
\begin{align*}
|\partial_x^\alpha \partial_y^\beta \widetilde k_1(x,y)|+|\partial_x^\alpha \partial_y^\beta \widetilde k_2^\pm(x,y)|&\lesssim \<x-y\>^{-1-\alpha-\beta},\quad \alpha,\beta=0,1,2,....
\end{align*}
Hence $\widetilde k_1$ and $\widetilde k_2^\pm$ are standard kernels. To show $T_{\widetilde k_1}\in \mathbb B(L^2(\R))$, we observe that, modulo a rapidly decaying error term, $T_{\widetilde k_1}$ is essentially a truncated Hilbert transform $\mathcal H^{(2)}$ defined by
$$
\mathcal H^{(\ep)}f(x)=\int_{|x-y|>\ep}\frac{f(y)}{x-y}dy,\quad \ep>0.
$$
 Indeed, since $\psi(s)=1$ for $s\ge2$, we have, for any $N\ge0$,
\begin{align*}
\widetilde k_1(x,y)
=(\chi_{\{|x-y|>2\}}+\chi_{\{|x-y|\le2\}})\widetilde k_1(x,y)
=\frac{\chi_{\{|x-y|>2\}}}{x-y}+O(\<x-y\>^{-N}).
\end{align*}
Since $\mathcal H^{(2)}\in \mathbb B(L^2(\R))$ (see \cite[Theorems 5.1.12]{Grafakos_Classical_III}) and the error term is also bounded on $L^2(\R)$ by Lemma \ref{lemma_2_1}, so is $T_{\widetilde k_1}$. For the operators $T_{\widetilde k_2^\pm}$, we compute
$$
\widetilde k_2^\pm(x,y)=\frac{x}{x^2+y^2}\psi(|x- y|^2)\mp i\frac{y}{x^2+y^2}\psi(|x- y|^2)=:\widetilde k_{21}(x,y)\mp i \widetilde k_{22}(x,y).
$$
Then, $T_{\widetilde k_{21}}\in \mathbb B(L^\infty(\R))\cap \mathbb B(L^1(\R),L^{1,\infty}(\R))$ since
$$
\sup_{x\in \R}\int|\widetilde k_{21}(x,y)|dy\lesssim \sup_{x\in \R}\int \frac{|x|}{x^2+y^2+1}dy\lesssim1,\quad |\widetilde k_{21}(x,y)|\lesssim\<x\>^{-1}\in L^{1,\infty}(\R).
$$
The Marcinkiewicz interpolation theorem then yields $T_{\widetilde k_{21}}\in \mathbb B(L^2(\R))$. Since $T_{\widetilde k_{22}}=(T_{\widetilde k_{21}})^*$,  $T_{\widetilde k_{22}}\in \mathbb B(L^2(\R))$ by duality.  Hence $T_{\widetilde k_{2}^\pm}\in \mathbb B(L^2)$. Summarizing these arguments, we conclude that $T_{\widetilde k_1}$ and $T_{\widetilde k_2^\pm}$ are Calder\'on--Zygmund operators. This completes the proof.
\end{proof}

\begin{remark}
Although the proof is reduced to the theory of Calder\'on--Zygmund operators, the operator $T_K$ in Lemma \ref{lemma_2_3} itself is not a Calder\'on--Zygmund operator in general. Indeed, for instance, $\psi(||x|- |y||^2)(|x|-|y|)^{-1}$ is not a standard kernel.
\end{remark}

The following lemma will be used to prove the $\H^1$-$L^1$ and $L^\infty$-$\BMO$ boundedness.
\begin{lemma}
\label{lemma_2_4}
Let $k_1^\pm,k_2^\pm$ be as in Lemma \ref{lemma_2_3} and $a,b\in \C$. Define $g_{a,b}^\pm=g_{a,b}^\pm(x,y)$ by
$$
g_{a,b}^\pm=a(k_1^+\pm k_1^-)+b(k_2^+\pm k_2^-)
$$
and consider the following eight integral kernels
\begin{align*}
g_{1,a,b}^\pm(x,y)&=g_{a,b}^\pm(x,y),\\
g_{2,a,-a}^+(x,y)&=g_{a,-a}^+(x,y)\sgn y,\\
g_{2,a,b}^-(x,y)&=g_{a,b}^-(x,y)\sgn y,\\
g_{3,a,b}^+(x,y)&=g_{a,b}^+(x,y)\sgn x,\\
g_{3,a,-ia}^-(x,y)&=g_{a,-ia}^-(x,y)\sgn x,\\
g_{4,a,-a}^+(x,y)&=g_{a,-a}^+(x,y)\sgn x\sgn y,\\
g_{4,a,b}^-(x,y)&=g_{a,b}^-(x,y)\sgn x\sgn y.
\end{align*}
Then $T_{g_1^\pm},T_{g_2}^\pm,T_{g_3^\pm}\in \mathbb B(\mathcal H^1(\R),L^1(\R))\cap \mathbb B(L^\infty(\R),\BMO(\R))$ and $T_{g_4^\pm}\in \mathbb B(\mathcal H^1(\R),L^1(\R))$.
\end{lemma}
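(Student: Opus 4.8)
The claim concerns eight integral operators built from the "non-classical" kernels $k_1^\pm,k_2^\pm$, which individually fail to be Calder\'on--Zygmund operators, combined with sign factors $\sgn x$, $\sgn y$. The strategy is to reduce each $T_{g_j^\pm}$ to a finite combination of genuine Calder\'on--Zygmund operators $T_{\widetilde k_1},T_{\widetilde k_2^\pm}$ (whose standard-kernel property and $L^2$-boundedness were already established in the proof of Lemma \ref{lemma_2_3}) conjugated by the bounded maps $f\mapsto \chi_\pm f$, $f\mapsto \tau f$, \emph{except} that we must be careful: the map $f\mapsto\chi_\pm f$ is \emph{not} bounded on $\H^1(\R)$ (an $\H^1$ atom has mean zero, but $\chi_\pm$ times it need not), so the decomposition \eqref{lemma_2_3_proof_1} cannot be quoted verbatim. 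Thus I would organize the proof into two parts: first record the algebraic identities expressing each $g_j^\pm$ in a form where the "bad" pieces $\psi(||x|-|y||^2)/(|x|\pm|y|)$ cancel or recombine into $\widetilde k_1(x,y)=\psi(|x-y|^2)/(x-y)$ type kernels acting before the sign/reflection operators; second, verify directly the $\H^1$-$L^1$ bound for the resulting kernels by the classical atomic-decomposition argument, and deduce $L^\infty$-$\BMO$ by duality.

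\textbf{Key steps.} First I would treat $g_1^\pm$. Using $k_1^\pm(x,y)=\widetilde k_1(|x|,\mp|y|)$ and $\widetilde k_1(-x,-y)=-\widetilde k_1(x,y)$ together with $k_2^\pm(x,y)=\widetilde k_2^\pm(|x|,|y|)$, the symmetric combinations $k_1^++k_1^-$ and $k_2^++k_2^-$ (for $g^+$), resp. the antisymmetric ones (for $g^-$), have the crucial feature that, after splitting $x,y$ into sign sectors, the leading singular parts $1/(|x|-|y|)$ organize so that each sector-restricted piece becomes $\pm\widetilde k_1(x,\pm y)$ — i.e. a standard Calder\'on--Zygmund kernel in the \emph{shifted} variable — times $\chi_\pm(x)\chi_\pm(y)$. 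Concretely one obtains $g_{a,b}^\pm$ as a sum of terms of the form $\chi_{\pm}(x)\,\widetilde k(x,\pm y)\,\chi_{\pm}(y)$ with $\widetilde k\in\{\widetilde k_1,\widetilde k_2^\pm\}$; the sign factors $\sgn x$, $\sgn y$ in $g_2,g_3,g_4$ simply convert $\chi_+\mp\chi_-$ combinations and change overall signs without destroying this structure — indeed the specific coefficient choices in the statement (e.g. $b=-a$ in $g_{2,a,-a}^+$, $b=-ia$ in $g_{3,a,-ia}^-$) are exactly the ones that make the sector-restricted kernels collapse to $\widetilde k_1$ or to $\widetilde k_2^\pm$ rather than to their "wrong-variable" analogues. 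So the essential content is: $T_{g_j^\pm}$ equals a linear combination of operators $A\,T_{\widetilde k}\,B$ where $B\in\{\Id,\tau,\chi_\pm,\chi_\pm\tau\}$ and $A\in\{\chi_\pm,\Id\}$, plus a rapidly-decaying Schur error handled by Lemma \ref{lemma_2_1}.

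\textbf{The main obstacle.} Since left-multiplication by $\chi_\pm$ is bounded on $L^1$ but not on $\H^1$, and right-multiplication by $\chi_\pm$ destroys the mean-zero property of atoms, we cannot conclude $T_{g_j^\pm}\in\mathbb B(\H^1,L^1)$ merely from $T_{\widetilde k}\in\mathbb B(\H^1,L^1)$. The fix — and this is the step I expect to be most delicate — is to prove $T_{g_j^\pm}\in\mathbb B(\H^1,L^1)$ \emph{directly} via the atomic characterization: given an $\H^1$-atom $\mathfrak a$ supported in an interval $I=(x_0-r,x_0+r)$ with $\|\mathfrak a\|_\infty\le |I|^{-1}$ and $\int\mathfrak a=0$, one splits $\int_{\R}|T_{g_j^\pm}\mathfrak a|\,dx$ into the local part $\int_{2I}$, controlled by $L^2$-boundedness of $T_{g_j^\pm}$ (which we have, via \eqref{lemma_2_3_proof_1} and Lemma \ref{lemma_2_3}, since those kernels \emph{are} $L^2$-bounded) and Cauchy--Schwarz, and the tail $\int_{(2I)^c}$, controlled by the mean-zero cancellation of $\mathfrak a$ against the smoothness of $g_j^\pm$ \emph{in the region away from the diagonal and away from $\{|x|=|y|\}$}. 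The point is that on $(2I)^c$, either $x$ and $y$ lie on the same side of $0$ — where $g_j^\pm(x,y)$ is a smooth standard kernel in $x-y$ (or $x+y$, or $x\pm iy$), all of which satisfy $|\partial_y g_j^\pm(x,y)|\lesssim \<x-y\>^{-2}$ or $\<x+y\>^{-2}$ — or $x,y$ lie on opposite sides, in which case $||x|-|y||=|x+y|$ is comparable to $|x-y|$ and the same gradient bound holds. One must also check the borderline where $I$ straddles $0$; there $x_0$ is within $r$ of $0$, so $2I\supset (-r,r)$ absorbs the only region where the $\chi_\pm$-splitting is sensitive, and the tail estimate again applies. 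The symmetry $\overline{\widetilde k_1(y,x)}=-\widetilde k_1(x,y)$, $\overline{\widetilde k_2^\pm(y,x)}=\pm i\widetilde k_2^\pm(x,y)$ noted in the proof of Lemma \ref{lemma_2_3} shows the adjoints $T_{g_j^\pm}^*$ have the same structural form, so the identical atomic argument gives $T_{g_j^\pm}^*\in\mathbb B(\H^1,L^1)$, hence $T_{g_j^\pm}\in\mathbb B(L^\infty,\BMO)$ by the duality $(\H^1)^*=\BMO$. For $T_{g_4^\pm}$ only the $\H^1\to L^1$ statement is asserted (its adjoint structure apparently does not close the $L^\infty\to\BMO$ side cleanly, presumably because the two sign factors together reintroduce a genuinely non-standard kernel — e.g. $\psi(||x|-|y||^2)(|x|-|y|)^{-1}$ — on the $\chi_+\chi_-$ sectors), so there one runs only the direct atomic argument and stops.
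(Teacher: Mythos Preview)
Your overall strategy---atomic decomposition, local part via $L^2$-boundedness, tail via mean-zero cancellation and kernel regularity, then duality for $L^\infty\to\BMO$---is exactly the paper's approach. There are, however, two concrete gaps in the execution.

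First, your tail region $(2I)^c$ is too large. The kernels $k_1^-$, $k_2^\pm$ are singular where $||x|-|y||$ is small, i.e.\ near $\{|x|=|y|\}$, not just near the diagonal. If $I=(x_0-r,x_0+r)$ with $x_0\gg r>0$, then $x=-x_0\in(2I)^c$ but $||x|-|y||\approx 0$ for $y\in I$; your claim that $|x+y|$ is comparable to $|x-y|$ when $x,y$ have opposite signs is simply false (take $x=-x_0$, $y=x_0$). The paper fixes this by taking the local region to be $I_*=(x_0-3r,x_0+3r)\cup(-x_0-3r,-x_0+3r)$, which absorbs the reflected singularity; on $I_*^c$ one has $\min_\pm|x\pm x_0|\ge 3r$, and the correct H\"ormander-type bound is
\[
|K(x,y)-K(x,x_0)|\lesssim \frac{|x_0-y|}{\min_\pm|x\pm x_0|^2},\quad x\in I_*^c,\ y\in I,
\]
which integrates to $O(1)$ over $I_*^c$.

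Second, your explanation of the coefficient restrictions (e.g.\ $b=-a$ in $g_{2}^+$) as making sector-restricted kernels ``collapse to $\widetilde k_1$ or $\widetilde k_2^\pm$'' misses the actual mechanism. The point is that $g_2^+(x,y)=g_{a,b}^+(x,y)\sgn y$ has a jump in $y$, and the difference $g_2^+(x,y)-g_2^+(x,x_0)$ contains the cross term $g_{a,b}^+(x,x_0)(\sgn y-\sgn x_0)$, which is \emph{not} small in $|y-x_0|$ unless $g_{a,b}^+(x,x_0)$ itself carries extra decay. The choice $b=-a$ forces
\[
\frac{1}{|x|+|x_0|}+\frac{1}{|x|-|x_0|}-\frac{1}{|x|+i|x_0|}-\frac{1}{|x|-i|x_0|}
\]
to acquire an extra factor of $|x_0|$, and then $|x_0|\cdot|\sgn y-\sgn x_0|\le 2|x_0-y|$ (since $\sgn y\neq\sgn x_0$ forces $|x_0-y|\ge|x_0|$) recovers the H\"ormander bound. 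Without this cancellation the cross term is only $O(1/\min_\pm|x\pm x_0|)$, which is not integrable on $I_*^c$. The restrictions for $g_3^-$ and $g_4^+$ arise the same way, now with the jump in $x$ handled via the duality relations $(T_{g_{2,a,b}^\pm})^*=T_{g_{3,\overline a,i\overline b}^\mp}$, $(T_{g_{4,a,b}^\pm})^*=T_{g_{4,\overline a,i\overline b}^\mp}$.
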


\begin{remark}
For simplicity, we often omit the subscript $a,b$ and write simply $g_{1}^\pm(x,y)=g_{1,a,b}^\pm(x,y)$ and so on if there is no confusion. Note that, in contrast with $g_1^\pm$, $g_2^-$, $g_3^+$ and $g_4^-$,  there are restrictions on the choice of $b$ for the kernels  $g_{2,a,-a}^+$, $g_{3,a,-ia}^-$ and $g_{4,a,-a}^+$.
\end{remark}

\begin{proof}
We first observe that Lemma \ref{lemma_2_3} applies to $T_{g_j^\pm}$ for all $1\le j\le4$ since the multiplication by $\sgn x$ is bounded on $L^p(w_p)$ for all $1\le p<\infty$ and on $L^{1,\infty}(w_1)$. We also observe some duality among $T_{g_{j,a,b}^\pm}$. Namely, since a direct calculation yields
\begin{align*}
\overline{g_{a,b}^\pm(y,x)}&=g_{\overline a,i\overline b}^\mp(x,y),
\end{align*}
we have
$$
(T_{g_{1,a,b}^\pm})^*=T_{g_{1,\overline a,i\overline b}^\mp},\quad (T_{g_{2,a,b}^\pm})^*=T_{g_{3,\overline a,i\overline b}^\mp},\quad (T_{g_{3,a,b}^\pm})^*=T_{g_{2,\overline a,i\overline b}^\mp},\quad (T_{g_{4,a,b}^\pm})^*=T_{g_{4,\overline a,i\overline b}^\mp}.
$$
Since $\BMO(\R)=\H^1(\R)^*$ (see \cite{Fefferman}),  it is thus enough to show $T_{g_j^\pm}\in \mathbb B(\mathcal H^1(\R),L^1(\R))$ for $1\le j\le 4$ with the above restrictions on $b$ for $g_2^+,g_3^-$ and $g_4^+$. Moreover, since the multiplication by $\sgn x$ is bounded on $L^1(\R)$, it is enough to consider $T_{g_1^\pm}$ and $T_{g_2^\pm}$ only.

The proof of $T_{g_1^\pm},T_{g_2^\pm}\in \mathbb B(\mathcal H^1(\R),L^1(\R))$ follows a classical argument in the proof of the $\mathcal H^1$-$L^1$ boundedness of Calder\'on--Zygmund operators. We let $f\in \mathcal H^1$ and apply the atomic decomposition (see \cite[Section 2.3.5]{Grafakos_Modern_III}) to obtain
$$
f=\sum_{j=1}^\infty \lambda_ja_j(x),\quad \sum_{j=1}^\infty |\lambda_j|\lesssim \norm{f}_{\mathcal H^1},
$$
where $\lambda_j\in \C$ and $a_j$ are $L^\infty$-atoms for $\mathcal H^1$ satisfying, with some $x_j\in \R$ and $r_j\ge2$,
$$
\supp a_j\subset (x_j-r_j,x_j+r_j),\quad \norm{a_j}_{L^\infty}\lesssim r_j^{-1},\quad \int a_j(x)dx=0.
$$
Hence, for a given integral operator $T$, once we obtain
\begin{align}
\label{lemma_2_4_proof_1}
\norm{T a_j}_{L^1(\R)}\lesssim 1
\end{align}
uniformly in $j$, $T$ is bounded from $\H^1$ to $L^1$ since
$$
\norm{Tf}_{L^1(\R)}\le \sum_{j=1}^\infty |\lambda_j|\norm{Ta_j}_{L^1(\R)}\lesssim \norm{f}_{\H^1(\R)}.
$$
It is thus enough to prove \eqref{lemma_2_4_proof_1} for $T= T_{g_1^\pm},T_{g_2^\pm}$. Let $I=(x_0-r,x_0+r)$ with some fixed $x_0\in \R,r\ge2$ and take an $L^\infty$-atom $a$ satisfying
$$
\supp a\subset I,\quad \norm{a}_{L^\infty}\lesssim r^{-1},\quad \int_Ia(x)dx=0.
$$
We also let $I_*=(x_0-3r,x_0+3r)\cup (-x_0-3r,-x_0+3r)$ and decompose
$$
\norm{Ta}_{L^1(\R)}=\norm{Ta}_{L^1(I_*)}+\norm{Ta}_{L^1(I_*^c)}.
$$
For the first term, Lemma \ref{lemma_2_3} and H\"older's inequality imply
\begin{align}
\label{lemma_2_4_proof_2}
\norm{Ta}_{L^1(I_*)}\le |I_*|^{1/2}\norm{Ta}_{L^2(I_*)}\lesssim r^{1/2}\norm{a}_{L^2(I_*)}\lesssim r^{1/2}\norm{a}_{L^\infty}r^{1/2}\lesssim1
\end{align}
uniformly in $x_0$ and $r$ for all $T= T_{g_1^\pm},T_{g_2^\pm}$ and $a,b\in \C$.

To deal with the second term, we first deal with $g_1^\pm$ and $g_2^-$. Let $x\in I_*^c$ and $y\in \supp a$. Then, since $\supp a\subset I$, we have
\begin{align*}
||x|\pm |y||\ge \min_\pm |x\pm y|\ge r\ge2
\end{align*}
and hence $\psi(\big||x|\pm|y|\big|^2)=1$ since $\psi\equiv1$ on $[2,\infty)$. It also follows that
\begin{align*}
|x\pm y|\ge |x\pm x_0|-|x_0-y|\ge |x\pm x_0|-r\ge |x\pm x_0|/2.
\end{align*}
With these remarks at hand, we obtain
\begin{align*}
|g_1^\pm(x,y)-g_1^\pm(x,x_0)|
&\lesssim \sum_\pm \left(\left|\frac{1}{|x|\pm |y|}-\frac{1}{|x|\pm |x_0|}\right|+\left|\frac{1}{|x|\pm i|y|}-\frac{1}{|x|\pm i|x_0|}\right|\right)\\
&\lesssim \sum_\pm \left(\left|\frac{|x_0|-|y|}{(|x|\pm |y|)(|x|\pm |x_0|)}\right|+\left|\frac{|x_0|-|y|}{(|x|\pm i|y|)(|x|\pm i|x_0|)}\right|\right)\\
&\lesssim \frac{|x_0-y|}{\ds \min_\pm |x\pm x_0|^2}.
\end{align*}
Using the relations
\begin{align*}
\frac{\sgn y}{|x|+|y|} -\frac{\sgn y}{|x|-|y|}&=\frac{-2y}{x^2-y^2}=\frac{1}{x+y}-\frac{1}{x-y},\\
\frac{\sgn y}{|x|+i|y|}-\frac{\sgn y}{|x|-i|y|}&=\frac{-2iy}{x^2+y^2}=\frac{1}{x+iy}-\frac{1}{x-iy},
\end{align*}
we also have, for $x\in I_*^c$ and $y\in I$,
\begin{align*}
|g_2^-(x,y)-g_2^-(x,x_0)|
&\lesssim \sum_\pm \left(\left|\frac{1}{x\pm y}-\frac{1}{x\pm x_0}\right|+\left|\frac{1}{x\pm iy}-\frac{1}{x\pm ix_0}\right|\right)\\
&\lesssim \frac{|x_0-y|}{|x+x_0|^2}+\frac{|x_0-y|}{|x-x_0|^2}\lesssim \frac{|x_0-y|}{\ds \min_\pm |x\pm x_0|^2}.
\end{align*}
Hence, for the three cases $K=g_1^\pm,g_2^-$, $T_K$ satisfies
\begin{align*}
\nonumber
\norm{T_Ka}_{L^1(I_*^c)}
&=\int_{I_*^c}\left|\int_I\Big(K(x,y)-K(x,x_0)\Big)a(y)dy\right|dx\\
\nonumber
&\lesssim \norm{a}_{L^\infty} \int_I |x_0-y|dy \int_{I_*^c}\frac{1}{\ds \min_\pm |x\pm x_0|^2} dx\\
&\lesssim r\int_{I_*^c}\frac{1}{\ds \min_\pm |x\pm x_0|^2} dx.
\end{align*}
Setting $U_{x_0}=\{x\ |\ |x+x_0|\le |x-x_0|\}$, we further obtain
\begin{align*}
\int_{I_*^c}\frac{1}{\ds \min_\pm |x\pm x_0|^2} dx
&=\int_{I_*^c\cap U_{x_0}}\frac{1}{|x+ x_0|^2} dx+\int_{I_*^c\cap U_{x_0}^c}\frac{1}{|x-x_0|^2} dx\\
&\le \int_{|x+x_0|\ge 3r}\frac{1}{|x+ x_0|^2} dx+\int_{|x-x_0|\ge 3r}\frac{1}{|x-x_0|^2} dx\lesssim r^{-1}.
\end{align*}
It thus follows that
$\norm{T_Ka}_{L^1(I_*^c)}\lesssim 1
$ uniformly in $x_0,r$. This bound, together with \eqref{lemma_2_4_proof_2}, implies \eqref{lemma_2_4_proof_1} for $T= T_{g_1^\pm},T_{g_2^-}$.

It remains to show \eqref{lemma_2_4_proof_1} for $T=T_{g_2^+}$. To this end, as above it is enough to check
\begin{align}
\label{lemma_2_4_proof_3}
|g_2^+(x,y)-g_2^+(x,x_0)|\lesssim \frac{|x_0-y|}{\ds \min_\pm |x\pm x_0|^2}
\end{align}
for $x\in I_*^c$ and $y\in I$. Let us compute
\begin{align*}
|g_2^+(x,y)-g_2^+(x,x_0)|
&=|g_{a,-a}^+(x,y)\sgn y-g_{a,-a}^+(x,x_0)\sgn x_0|\\
&\le |g_{a,-a}^+(x,y)-g_{a,-a}^+(x,x_0)|+|g_{a,-a}^+(x,x_0)(\sgn x_0-\sgn y)|,
\end{align*}
where the first term is dominated by $C(\min_\pm |x\pm x_0|)^{-2}|x_0-y|$ as above. For the second term, we further calculate
\begin{align*}
&g_{a,-a}^+(x,x_0)(\sgn x_0-\sgn y)\\
&=a(\sgn x_0-\sgn y)\left\{\frac{1}{|x|+|x_0|}+\frac{1}{|x|-|x_0|}-\frac{1}{|x|+i|x_0|}-\frac{1}{|x|-i|x_0|}\right\}\\
&=(i-1)a(\sgn x_0-\sgn y)\left\{\frac{|x_0|}{(|x|+|x_0|)(|x|+i|x_0|)}-\frac{|x_0|}{(|x|-|x_0|)(|x|-i|x_0|)}\right\}\\
&=(i-1)a(x_0-|x_0|\sgn y)\left\{\frac{1}{(|x|+|x_0|)(|x|+i|x_0|)}-\frac{1}{(|x|-|x_0|)(|x|-i|x_0|)}\right\},
\end{align*}
where we have
$$
|x_0-|x_0|\sgn y|=|x_0-y+(|y|-|x_0|)\sgn y|\le 2|x_0-y|
$$
and, for $x\in I_*^c$,
$$
\min\{||x|\pm |x_0||,||x|\pm i|x_0||\}\ge \min_\pm |x\pm x_0|.
$$
Hence we have \eqref{lemma_2_4_proof_3}, so \eqref{lemma_2_4_proof_1} for $T=T_{g_2^+}$. This completes the proof.
\end{proof}

\section{Low energy estimate}
\label{section_low}
In this section we consider the low energy part of Theorem \ref{theorem_1}. Namely, we prove

\begin{theorem}
\label{theorem_low_1}
Under the assumption in Theorem \ref{theorem_1}, the low energy part $W_-^L$ defined by \eqref{W^{L}_-} satisfies the same statement as that in Theorem \ref{theorem_1}.
\end{theorem}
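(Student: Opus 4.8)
The plan is to begin from the representation \eqref{W^{L}_-_2} of $W_-^L$ and substitute the low-energy expansion of $M^{-1}(\lambda)$ from Lemma \ref{lemma_3_3}, treating the regular, first-kind and second-kind cases in parallel. Each line of \eqref{lemma_3_3_1}, \eqref{lemma_3_3_2}, \eqref{lemma_3_3_3} is a sum of blocks $\lambda^{j}Q_\alpha BQ_\beta$ (with the convention $Q_0=\Id$, $B$ absolutely bounded on $L^2$, and $j$ possibly negative) plus a remainder $\Gamma_4^j(\lambda)$, so $W_-^L$ becomes a finite sum of integral operators $T_K$ with
\[
K(x,y)=\int_0^\infty \lambda^{3+j}\chi(\lambda)\bigl(R_0^+(\lambda^4)vQ_\alpha BQ_\beta v[R_0^+-R_0^-](\lambda^4)\bigr)(x,y)\,d\lambda,
\]
the $\Gamma_4^j$-terms being handled separately via \eqref{lemma_3_3_4} together with the kernel bounds for $R_0^\pm(\lambda^4)v$. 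Using the projection formulas of Lemma \ref{lemma_projection} to rewrite $R_0^\pm(\lambda^4)vQ_\alpha$ and $Q_\alpha vR_0^\pm(\lambda^4)$, the product formula of Lemma \ref{lemma_FF} to split $F_+^{(\alpha)}(\lambda|x|)[F_+^{(\beta)}-F_-^{(\beta)}](\lambda|y|)$ into the four exponentials $e^{i\lambda(|x|\pm|y|)}$ and $e^{-\lambda(|x|\pm i|y|)}$, and the decisive cancellation of Remark \ref{remark_lemma_projection} (each $Q_\alpha$ gains a power $\lambda^{\alpha}$, which offsets $j<0$), each $K$ becomes a finite combination of scalar oscillatory integrals
\[
\int_0^\infty e^{i\lambda(|x|\pm|y|)}\lambda^{m}\chi(\lambda)a^\pm(\lambda,x,y)\,d\lambda,\qquad \int_0^\infty e^{-\lambda(|x|\pm i|y|)}\lambda^{m}\chi(\lambda)b^\pm(\lambda,x,y)\,d\lambda,
\]
where $m=j+\alpha+\beta-3\in\{0,1\}$, and the amplitudes $a^\pm$, resp.\ $e^{-\lambda|x|}b^\pm$, together with their first two $\lambda$-derivatives are bounded by a weighted $L^1$-norm of $V$ controlled by \eqref{mu}. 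In the regular case only the blocks $Q_1A_1^0Q_1$ and $\lambda^3\widetilde P$ produce $m=0$; in the resonant cases many more do, among them the most singular blocks $Q_2^0A_{-1}^1Q_2^0$ (first kind) and $Q_3A_{-3}^2Q_3$ (second kind).

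For the $m=1$ terms I would integrate by parts twice in $\lambda$: the boundary contributions vanish (at $\lambda=0$ since $\lambda\chi(\lambda)$ and its derivative vanish there, at $\lambda=+\infty$ since $\chi$ is compactly supported), and what remains satisfies $|K(x,y)|\lesssim\<|x|\pm|y|\>^{-2}$ uniformly in $x,y$ — the absence of a borderline $\<|x|\pm|y|\>^{-1}$ term being precisely the reward of the extra factor $\lambda$. Since $\<|x|\pm|y|\>^{-2}\in L^\infty_xL^1_y\cap L^\infty_yL^1_x$, Lemma \ref{lemma_2_1} yields boundedness on $L^p$ for all $1\le p\le\infty$, hence in particular $\mathcal H^1\to L^1$ (as $\mathcal H^1\hookrightarrow L^1$) and $L^\infty\to\BMO$, while Lemma \ref{lemma_2_2} upgrades this to the weighted $L^p$ ($1<p<\infty$) and weighted $L^1\to L^{1,\infty}$ bounds with the explicit $A_p$-characteristic dependence.

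The $m=0$ terms form the substantive part. Two further integrations by parts split each such $K$ into an error $O(\<|x|-|y|\>^{-2})$, absorbed by Lemma \ref{lemma_2_2}, and a \emph{leading term} which, after collecting the four exponentials, inserting the values $F_\pm(0),F_\pm'(0),\dots$, and tracking the $\sgn$-factors produced by the Taylor formulas of Lemma \ref{lemma_projection}, is an explicit linear combination of the kernels $k_1^\pm(x,y)=\psi(||x|\pm|y||^2)/(|x|\pm|y|)$ and $k_2^\pm(x,y)=\psi(||x|-|y||^2)/(|x|\pm i|y|)$ of Lemma \ref{lemma_2_3}, possibly multiplied by $\sgn x$ and/or $\sgn y$. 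The crucial and most delicate point is to compute these coefficients explicitly — here the precise normalisation $\widetilde P=-\tfrac{2(1+i)}{\norm{V}_{L^1}}P$ of Lemma \ref{lemma_3_3} and the moment relations $Q_\alpha(x^kv)=0$ are essential — so as to recognise each leading kernel as one of the admissible types $g_1^\pm,g_2^\pm,g_3^\pm,g_4^\pm$ of Lemma \ref{lemma_2_4}, \emph{including} the coefficient constraints there (e.g.\ $b=-a$ for $g_{2,a,-a}^+$, $b=-ia$ for $g_{3,a,-ia}^-$). Granting this, Lemma \ref{lemma_2_3} supplies the weighted $L^p$ and weighted $L^1\to L^{1,\infty}$ bounds, and Lemma \ref{lemma_2_4} supplies $\mathbb B(\mathcal H^1,L^1)$ for all of them together with $\mathbb B(L^\infty,\BMO)$ for all but the $g_4^\pm$-type; an inspection of the coefficients shows that a $g_4^\pm$-type kernel arises only when zero is a second-kind resonance, so that $W_-^L$ — hence $W_-$ — is bounded $\mathcal H^1\to L^1$ in all three cases and bounded $L^\infty\to\BMO$ exactly when zero is a regular point or a first-kind resonance, which is precisely Theorem \ref{theorem_1}(2). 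The corresponding assertions for $(W_-^L)^*$ follow from duality, $(\mathcal H^1)^*=\BMO$, and the adjoint relations among the $T_{g_j^\pm}$ recorded in the proof of Lemma \ref{lemma_2_4}, together with the (essential) symmetry of the Schur kernels from the previous step.

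I expect the genuine obstacle to be this $m=0$ step. In the second-kind case \eqref{lemma_3_3_3} runs down to $\lambda^{-3}$, so there is a long list of $m=0$ contributions; pushing each of them through the two integrations by parts to isolate its leading $k_1^\pm/k_2^\pm$ part and then verifying that its coefficients really lie in the admissible patterns of Lemma \ref{lemma_2_4} is a lengthy and somewhat delicate algebraic computation. Keeping straight which projection absorbs which power of $\lambda$ in that case is the other, conceptually routine, bookkeeping burden; the weighted and endpoint statements of Theorem \ref{theorem_1}(1) and (3) then follow from the two lemmas above applied termwise (with the additional hypotheses noted in Theorem \ref{theorem_1}(3) for the weighted weak-$(1,1)$ endpoint).
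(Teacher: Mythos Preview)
Your overall strategy matches the paper's: expand $M^{-1}(\lambda)$ via Lemma~\ref{lemma_3_3}, use Lemma~\ref{lemma_projection} to offset the negative $\lambda$-powers, classify the resulting terms by the net power $m\in\{0,1\}$, dispose of the $m=1$ terms by two integrations by parts and Lemmas~\ref{lemma_2_1}--\ref{lemma_2_2}, and analyse the $m=0$ leading kernels through Lemmas~\ref{lemma_2_3}--\ref{lemma_2_4}. The $\Gamma_4^j$ terms also need a Littlewood--Paley decomposition in $\lambda$ (since $\partial_\lambda^\ell(\lambda^{-4}\Gamma_4^j)$ is only $O(\lambda^{-\ell})$, not bounded), but that is a routine refinement.

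There is, however, a genuine gap in your treatment of the $m=0$ terms with \emph{both} $\alpha$ and $\beta$ odd --- the block $Q_1A_1^0Q_1$ in the regular case, and likewise $(\alpha,\beta)\in\{(1,1),(1,3),(3,1),(3,3)\}$ in the resonant cases. You claim the leading kernel is ``a linear combination of $k_1^\pm,k_2^\pm$ \dots\ possibly multiplied by $\sgn x$ and/or $\sgn y$''. That is not what comes out: the amplitude at $\lambda=0$ is
\[
\widetilde m(x,y)=\int_{\R^2\times[0,1]^2}\sgn(x-\theta_1 u_1)\,\sgn(y-\theta_2 u_2)\,u_1^\alpha u_2^\beta\,(vQ_\alpha BQ_\beta v)(u_1,u_2)\,d\Theta,
\]
a bounded function that does \emph{not} factor as $c_1(x)c_2(y)$ (because $(vQ_\alpha BQ_\beta v)(u_1,u_2)$ does not factor). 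Hence the leading operator is not of the form $c_1\,T_{g_j^\pm}\,c_2$, and Lemmas~\ref{lemma_2_3}--\ref{lemma_2_4} cannot be applied as you describe. The paper supplies two separate ideas here. For the weighted $L^p$ bounds it writes $K_1^0(x,y)=\int_{\R\times[0,1]}\widetilde K_1^0(x,y;u_1,\theta_1)\,du_1d\theta_1$, observes that for \emph{fixed} $(u_1,\theta_1)$ the coefficient does factor ($\sgn(x-\theta_1u_1)$ depends only on $x$), and then applies Minkowski's inequality over $(u_1,\theta_1)$; this is also why the weighted weak-$(1,1)$ bound requires $V$ compactly supported or $Q_1A_1^0Q_1$ of finite rank, since Minkowski fails for $L^{1,\infty}$. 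For the $\mathcal H^1\to L^1$ bound (and by duality $L^\infty\to\BMO$) the paper uses a different trick: it rewrites $K_1^0$ as an average of \emph{translated} copies of the model kernel $k_1^0(x,y)=\sgn x\,\sgn y\int_0^\infty\chi(\lambda)f_{11}(\lambda,x,y)\,d\lambda$ (formula~\eqref{proposition_regular_4_proof_6}), and exploits the translation invariance of the $L^1$ and $\mathcal H^1$ norms to reduce to $T_{k_1^0}\in\mathbb B(\mathcal H^1,L^1)$, which \emph{is} covered by Lemma~\ref{lemma_2_4} since $k_1^0=g_4^-+O(\<|x|-|y|\>^{-2})$. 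Note that multiplication by a general bounded $c_2(y)$ does not preserve $\mathcal H^1$, so even when $\widetilde m$ factors this translation step is needed.

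A related inaccuracy: you say a $g_4^\pm$-type kernel ``arises only when zero is a second-kind resonance''. In fact the $(1,1)$ block produces a $g_4^-$-type model kernel already in the regular case; the reason $L^\infty\to\BMO$ survives there (and in the first-kind case) is that the \emph{adjoint} model kernel happens to satisfy the coefficient constraint $b=-a$ in $g_{4,a,-a}^+$ of Lemma~\ref{lemma_2_4}, whereas for the second-kind blocks $(3,1)$ and $(3,3)$ it does not. So the dichotomy in Theorem~\ref{theorem_1}(2) is not about whether $g_4$ appears, but about whether the specific $(a,b)$ produced lies in the admissible set for both the operator and its adjoint.
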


\subsection{Regular case}
\label{subsection_regular}
We first consider the regular case. Throughout this subsection, we thus always assume that $|V(x)|\lesssim \<x\>^{-\mu}$ with $\mu>15$ and zero is a regular point of $H$.

Substituting the expansion \eqref{lemma_3_3_1} into \eqref{lemma_3_2_1}, we obtain
\begin{align*}
R^+(\lambda^4)V
&=R_0^+(\lambda^4)v\Big\{Q_{2}A_{0}^0Q_{2}+\lambda Q_1A_{1}^0Q_1+\lambda^2\left(Q_1A_{21}^0Q_1+Q_{2}A_{22}^0+A_{23}^0Q_{2}\right)\\
&\quad +\lambda^3\left(Q_1A_{31}^0+A_{32}^0Q_1\right)+\lambda^3\widetilde P+\Gamma_4^0(\lambda)\Big\}v.
\end{align*}
Then $W^{L}_-$ can be written as follows:
\begin{align}
\label{wave_operator_regular}
W^{L}_-=T_{K^0_{0}}+T_{K^0_{1}}+T_{K^0_{21}}+T_{K^0_{22}}+T_{K^0_{23}}+T_{K^0_{31}}+T_{K^0_{32}}+T_{K^0_{33}}+T_{K^0_{4}}
\end{align}
with the integral kernels
\begin{align*}
K^0_{0}(x,y)&=\int_0^\infty \lambda^3\chi(\lambda)\Big(R_0^+(\lambda^4)vQ_{2}A_{0}^0Q_{2}v[R_0^+-R_0^-](\lambda^4)\Big)(x,y)d\lambda,\\
K^0_{1}(x,y)&=\int_0^\infty \lambda^4\chi(\lambda)\Big(R_0^+(\lambda^4)vQ_1A_{1}^0Q_1v[R_0^+-R_0^-](\lambda^4)\Big)(x,y)d\lambda,\\
K^0_{kj}(x,y)&=\int_0^\infty \lambda^{3+k} \chi(\lambda)\Big(R_0^+(\lambda^4)vB^0_{kj}v[R_0^+-R_0^-](\lambda^4)\Big)(x,y)d\lambda,\\
K^0_{4}(x,y)&=\int_0^\infty \lambda^3 \chi(\lambda)\Big(R_0^+(\lambda^4)v\Gamma_4^0(\lambda)v[R_0^+-R_0^-](\lambda^4)\Big)(x,y)d\lambda,
\end{align*}
where $j=1,2,3$, $k=2,3$ and $B^0_{kj}$ are given by
\begin{itemize}
\item $B^0_{21}=Q_1A_{21}^0Q_1$, $B^0_{22}=Q_{2}A_{22}^0$ and $B^0_{23}=A_{23}^0Q_{2}$;
 \vskip0.2cm
\item $B^0_{31}=Q_1A_{31}^0$, $B^0_{32}=A_{32}^0Q_1$ and $B^0_{33}=\widetilde P$.
\end{itemize}
By virtue of this formula for $W^{L}_-$, Theorem \ref{theorem_low_1} for the regular case follows from the corresponding boundedness of these nine integral operators.  Note that, since $|v(x)|\lesssim \<x\>^{-\mu/2}$ with $\mu>15$ by the assumption on $V$, we have
$$
\norm{\<x\>^{k}vBv\<x\>^k f}_{L^1}\le \norm{\<x\>^kv}_{L^2}^2\norm{B}_{L^2\to L^2}\norm{f}_{L^\infty}\lesssim \norm{\<x\>^{2k}V}_{L^1}\norm{f}_{L^\infty}
$$
for all $B=Q_{2}A_{0}^0Q_{2},Q_1A_{1}^0Q_1,B^0_{kj},\Gamma^0_4(\lambda)$ and $k<(\mu-1)/2$.
Hence, in all cases, $\<x\>^kvBv\<x\>^k$ is an absolutely bounded integral operator for any $k\le 7$ at least, satisfying
\begin{align}
\label{vBv}
\int_{\R^2}\<x\>^k|(vBv)(x,y)|\<y\>^kdxdy\lesssim \norm{\<x\>^{2k}V}_{L^1}<\infty,
\end{align}
where, denoting the integral kernel of $B$ by $B(x,y)$, we use the notation $$(vBv)(x,y)=v(x)B(x,y)v(y).$$
By virtue of Remark \ref{remark_lemma_projection} (2), these nine operators $T_{K^0_j},T_{K^0_{kj}}$ are classified into the following two cases with respect to the order of $\lambda$ of the integrands of their kernels :
\begin{itemize}
\item[(I)] $O(\lambda)$: $T_{K^0_{0}},T_{K^0_{21}},T_{K^0_{22}},T_{K^0_{23}},T_{K^0_{31}},T_{K^0_{32}}$ and $T_{K^0_4}$.
 \vskip0.2cm
\item[(II)] $O(1)$: $T_{K^0_{1}}$ and $T_{K^0_{33}}$.
\end{itemize}
The class (I) is further decomposed into $T_{K^0_4}$ and otherwise.

\begin{remark}
Note that the two projections $Q_{2},Q_{2}^{0}$ will play completely the same role in the following arguments. Hence, in what follows, we do not  distinguish them and use the same notation $Q_{2}$ to denote these operators $Q_{2},Q_{2}^{0}$.
\end{remark}

We start dealing with  the operators in the class (I), except for the last one $T_{K^0_{4}}$, namely the operators $T_{K^0_{0}},T_{K^0_{21}},T_{K^0_{22}},T_{K^0_{23}},T_{K^0_{31}}$ and $T_{K^0_{32}}$.

\begin{proposition}
\label{proposition_regular_1}
Let $K\in \{K^0_{0},K^0_{21},K^0_{22},K^0_{23},K^0_{31},K^0_{32}\}$. Then
$T_{K}\in \mathbb B(L^p)$ for all $1\le p\le \infty$. Moreover, if $1<p<\infty$, $w_p\in A_p$ and $w_1\in A_1$, then \eqref{lemma_2_2_1} and  \eqref{lemma_2_2_2} also hold.
\end{proposition}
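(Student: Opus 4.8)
The plan is to reduce the entire statement to the single pointwise kernel estimate
\begin{equation*}
|K(x,y)|\lesssim \<|x|-|y|\>^{-2},\qquad x,y\in \R,
\end{equation*}
valid for every $K\in\{K^0_{0},K^0_{21},K^0_{22},K^0_{23},K^0_{31},K^0_{32}\}$. Indeed, $\<|x|-|y|\>^{-2}\in L^\infty_xL^1_y\cap L^\infty_yL^1_x$, so this bound together with the classical Schur lemma (Lemma \ref{lemma_2_1}) gives $T_K\in\mathbb B(L^p(\R))$ for all $1\le p\le\infty$, while Lemma \ref{lemma_2_2} with $\rho=2$ gives the weighted bounds \eqref{lemma_2_2_1} and \eqref{lemma_2_2_2}; thus the proposition is exactly this pointwise estimate.

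To establish it, I would first insert Lemma \ref{lemma_projection} (and, on any side with no adjacent projection, the explicit formula \eqref{free_resolvent}) into each kernel. Each factor $R_0^{\pm}(\lambda^4)vQ_\alpha$ or $Q_\alpha vR_0^{\pm}(\lambda^4)$ becomes the corresponding expression of Lemma \ref{lemma_projection}, carrying the gain $\lambda^{-3+\alpha}$ and a factor $F_\pm^{(\alpha)}$, while a bare $R_0^{\pm}(\lambda^4)$ contributes $\lambda^{-3}$ and a factor $F_\pm$, with its spatial variable integrated over $\R$ against the weight $v$. The crucial bookkeeping point, checked in each of the six cases, is that the power of $\lambda$ produced by the prefactor $\lambda^{3}$ (or $\lambda^{3+k}$) together with these two gains is uniformly $+1$: $\lambda^{3}\cdot\lambda^{-1}\cdot\lambda^{-1}$ for $K^0_{0}$ (two $Q_2$'s), $\lambda^{5}\cdot\lambda^{-2}\cdot\lambda^{-2}$ for $K^0_{21}$ (two $Q_1$'s), and $\lambda^{5\ \text{or}\ 6}\cdot\lambda^{-1\ \text{or}\ -2}\cdot\lambda^{-3}$ for the remaining four (one projection, one bare resolvent). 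Expanding the product of the two $F$-factors by Lemma \ref{lemma_FF}, each kernel becomes a finite linear combination of integrals of the form
\begin{equation*}
\iint_{[0,1]^2}\iint_{\R^2}\mathcal B(z,w)\Big(\int_0^\infty \lambda\,\chi(\lambda)\,e^{i\lambda(|x-\theta_1 z|\pm|y-\theta_2 w|)}\,d\lambda\Big)\,dz\,dw\,d\theta_1\,d\theta_2
\end{equation*}
and of the analogues with $e^{-\lambda(|x-\theta_1 z|\pm i|y-\theta_2 w|)}$ in place of the oscillatory exponential (the contribution of the $e^{-s}$ parts of $F$); on a bare side the pair $(\theta_j,z)$, resp. $(\theta_j,w)$, is replaced by a single variable $x_1\in\R$ entering as $|x-x_1|$, resp. $|x_1-y|$. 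Here $\mathcal B(z,w)$ equals, up to $\sgn$-factors of modulus $\le 1$, $z^\alpha(vBv)(z,w)w^\beta$ with $\alpha,\beta\in\{0,1,2\}$ and $B$ one of the absolutely bounded operators $Q_2A_0^0Q_2$, $B^0_{kj}$ of Lemma \ref{lemma_3_3}, so \eqref{vBv} with $k=2$ (and $|z^\alpha|\le\<z\>^{2}$) gives $\iint_{\R^2}|\mathcal B(z,w)|\,dz\,dw<\infty$.

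Next I would bound the inner $\lambda$-integral by integrating by parts twice. Since $\chi\in C_0^\infty$ there is no contribution from $\lambda=\infty$, and since $\lambda\chi(\lambda)$ vanishes at $\lambda=0$ the first boundary term vanishes as well; writing $s=|x-\theta_1 z|\pm|y-\theta_2 w|$ this yields $|\int_0^\infty\lambda\chi(\lambda)e^{i\lambda s}d\lambda|\lesssim\min\{1,|s|^{-2}\}\lesssim\<s\>^{-2}$, and likewise $|\int_0^\infty\lambda\chi(\lambda)e^{-\lambda(A\pm iB)}d\lambda|\lesssim\<|A\pm iB|\>^{-2}\lesssim\<A+B\>^{-2}$ with $A=|x-\theta_1 z|\ge0$, $B=|y-\theta_2 w|\ge0$. (Here the uniform power $+1$ is what matters: a $\lambda^0$ integrand would leave a non-integrable $\<s\>^{-1}$ leading term, exactly as it does for the class-(II) operators $K^0_{1}$, $K^0_{33}$.) It then remains to perform the $z,w$-integration. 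Setting $r=\big||x|-|y|\big|$: for $r\le1$, bound the $\lambda$-integral by $1$ and use $\iint|\mathcal B|<\infty$; for $r\ge1$, split the $(z,w)$-domain at $|z|+|w|=r/2$, noting that on $\{|z|+|w|\le r/2\}$ one has $\big||x-\theta_1 z|\pm|y-\theta_2 w|\big|\ge r-|z|-|w|\ge r/2$, so the $\lambda$-integral there is $\lesssim\<r\>^{-2}$, whereas on the complementary set the fast $L^2$-decay of $z^\alpha v$ and $w^\beta v$ (available since $\mu>15$) together with the absolute boundedness of $B$ give $\iint_{|z|+|w|>r/2}|\mathcal B(z,w)|\,dz\,dw\lesssim r^{-N}$ for any $N$, in particular $N\ge 2$. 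Combining the two regimes yields $|K(x,y)|\lesssim\<r\>^{-2}=\<|x|-|y|\>^{-2}$, as required.

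The two integrations by parts, Schur's lemma, and the $v$-decay estimates are all routine; the only step requiring real care is the bookkeeping following the substitutions from Lemmas \ref{lemma_projection} and \ref{lemma_FF}: verifying that the net power of $\lambda$ is uniformly $+1$ for all six kernels, and correctly tracking which distances $|x-\theta_1 z|$, $|y-\theta_2 w|$ (or $|x-x_1|$, $|x_1-y|$) enter each exponential. It is precisely this $+1$ power, combined with the vanishing of $\lambda\chi(\lambda)$ at $\lambda=0$, that upgrades the kernel decay from $\<\cdot\>^{-1}$ to the $L^1$-integrable $\<\cdot\>^{-2}$, thereby delivering the $L^p$ and the weighted bounds simultaneously.
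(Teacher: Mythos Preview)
Your proposal is correct and takes essentially the same approach as the paper: reduce everything to the pointwise bound $|K(x,y)|\lesssim\<|x|-|y|\>^{-2}$, verify via Lemma \ref{lemma_projection} that the net $\lambda$-power is $+1$, expand by Lemma \ref{lemma_FF}, and integrate by parts twice. The only (minor) technical difference is that the paper factors the phases as $e^{i\lambda(|X_1|\pm|Y_2|)}=e^{i\lambda(|x|\pm|y|)}e^{\lambda\Phi^\pm}$ and absorbs $e^{\lambda\Phi^\pm}$ into a smooth amplitude with uniformly bounded $\lambda$-derivatives before integrating by parts, which yields $\<|x|\pm|y|\>^{-2}$ directly and avoids your final domain-splitting step in $(z,w)$.
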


\begin{proof}
All the kernels $K^0_{0},K^0_{21},K^0_{22},K^0_{23},K^0_{31}$ and $K^0_{32}$ can be written in the form
\begin{align}
\label{G}
\int_0^\infty \lambda^{7-\alpha-\beta}\chi(\lambda)\Big(R_0^+(\lambda^4)vQ_\alpha BQ_\beta v[R_0^+-R_0^-](\lambda^4)\Big)(x,y)d\lambda
\end{align}
with some $B\in \mathbb B(L^2)$ so that $Q_\alpha BQ_\beta $ is absolutely bounded, $Q_0:=1$ and $(\alpha,\beta)$ is give by
\begin{align*}
(\alpha,\beta)&=\begin{cases}(2,2)&\text{for}\quad K=K^0_{0},\\(1,1)&\text{for}\quad K=K^0_{21},\\(2,0)&\text{for}\quad K=K^0_{22},\end{cases}\
(\alpha,\beta)=\begin{cases}(0,2)&\text{for}\quad K=K^0_{23},\\(1,0)&\text{for}\quad K=K^0_{31},\\(0,1)&\text{for}\quad K=K^0_{32}.\end{cases}
\end{align*}
Let $G^0_{\alpha\beta}(x,y)$ be the function given by \eqref{G}. Then we shall show $T_{G^0_{\alpha\beta}}$ satisfies the desired assertion for any $\alpha,\beta\ge0$. To this end, by Lemmas \ref{lemma_2_1} and \ref{lemma_2_2}, it is enough to show that
\begin{align}
\label{proposition_regular_1_proof_1}
|G^0_{\alpha\beta}(x,y)|\lesssim \<|x|-|y|\>^{-2},\quad x,y\in \R.
\end{align}
We consider three cases (i) $\alpha,\beta\neq0$, (ii) $\beta=0$ and (iii) $\alpha=0$ separately.

{\it Case (i)}. We first suppose $\alpha,\beta\neq0$ and rewrite $G^0_{\alpha\beta}$ as follows. Let
$$
\widetilde f_{\pm,\beta}(\lambda,x)=x^\beta v\int\int_0^1(1-\theta)^{\beta-1}\left(\sgn(y-\theta x)\right)^\beta F_\pm^{(\beta)}(\lambda|y-\theta x|)f(y)d\theta dy.$$
Then Lemma \ref{lemma_projection} and Remark \ref{remark_lemma_projection} (1) imply that
\begin{align}
\nonumber
&\lambda^{6-\alpha-\beta}[R_0^+(\lambda^4)vQ_\alpha BQ_\beta v[R_0^+-R_0^-](\lambda^4) f](x)\\
\nonumber
&=C_\beta \lambda^{3-\alpha}(R_0^+(\lambda^4)vQ_\alpha BQ_\beta [\widetilde f_{+,\beta}-\widetilde f_{-,\beta}])(x)\\
\nonumber
&=C_\alpha C_\beta\int\int_0^1(1-\theta_1)^{\alpha-1}(\sgn (X_1))^\alpha F_+^{(\alpha)}(\lambda|X_1|)u_1^\alpha v(u_1)Q_\alpha BQ_\beta [\widetilde f_+-\widetilde f_-](u_1)d\theta du_1\\
\nonumber
&=\int\left(\int_{\R^2\times[0,1]^2}M_{\alpha\beta}(X_1,Y_2,\Theta)F_+^{(\alpha)}(\lambda|X_1|)[F_+^{(\beta)}-F_-^{(\beta)}](\lambda|Y_2|) d\Theta \right)f(y) dy\\
\label{proposition_regular_1_proof_2}
&=\int\left(\int_{\R^2\times[0,1]^2}M_{\alpha\beta}(X_1,Y_2,\Theta)f_{\alpha \beta}(\lambda,X_1,Y_2) d\Theta \right)f(y) dy,
\end{align}
where we set $C_\alpha=(-1)^\alpha/(4\cdot(\alpha-1)!)$, $f_{\alpha\beta}$ is defined in Lemma \ref{lemma_FF}  and $$\Theta=(u_1,u_2,\theta_1,\theta_2),\quad X_1=x-\theta_1u_1,\quad Y_2=y-\theta_2u_2,$$and $M_{\alpha\beta}(x,y,\Theta)$ is defined by
\begin{align}
\label{M_alphabeta}
M_{\alpha\beta}(x,y,\Theta)=\frac{(-1)^{\alpha+\beta}(1-\theta_1)^{\alpha-1}(1-\theta_2)^{\beta-1}(\sgn x)^\alpha (\sgn y)^\beta u_1^\alpha u_2^\beta (vQ_\alpha  BQ_\beta  v)(u_1,u_2)}{16(\alpha-1)!(\beta-1)!} .
\end{align}
Substituting this formula into \eqref{G}, we obtain
$$
G^0_{\alpha\beta}(x,y)=\int_0^\infty \lambda \chi(\lambda)\left(\int_{\R^2\times[0,1]^2}M_{\alpha\beta}(X_1,Y_2,\Theta)f_{\alpha \beta}(\lambda,X_1,Y_2) d\Theta \right)d\lambda.
$$
It follows from Lemma \ref{lemma_FF} that $f_{\alpha\beta}(\lambda,X_1,Y_2)$ is given by a linear combination of $
e^{i\lambda(|X_1|\pm |Y_2|)}$ and $e^{-\lambda(|X_1|\pm i|Y_2|)}
$ for any $\alpha,\beta$ (not only for the case (i)). Let
\begin{equation}
\begin{aligned}
\label{proposition_regular_1_proof_3}
\Phi^\pm_1(x,y,\Theta)&=i(|X_1|-|x|)\pm i(|Y_2|-|y|),\\
\Phi^\pm_2(x,y,\Theta)&=-|X_1|+|x|\mp i(|Y_2|-|y|).
\end{aligned}
\end{equation}
Then $e^{i\lambda(|X_1|\pm |Y_2|)}=e^{i\lambda(|x|\pm|y|)}e^{\lambda\Phi^\pm_1(x,y,\Theta)}$ and $
e^{-\lambda(|X_1|\pm i|Y_2|)}=e^{-\lambda(|x|\pm i|y|)}e^{\lambda\Phi^\pm_2(x,y,\Theta)}$.
Define
\begin{align}
\label{proposition_regular_1_proof_4}
a_{j}^\pm(\lambda,x,y)&=\int_{\R^2\times[0,1]^2}e^{\lambda\Phi^\pm_j(x,y,\Theta)}M_{\alpha\beta}(X_1,Y_2,\Theta)d\Theta,\\
\nonumber
K^\pm_{a_{1}}(x,y)&=\int_0^\infty  e^{i\lambda(|x|\pm|y|)}\lambda \chi(\lambda)a_{1}^\pm(\lambda,x,y)d\lambda,\\
\nonumber
K^\pm_{a_{2}}(x,y)&=\int_0^\infty  e^{-\lambda(|x|\pm i|y|)}\lambda \chi(\lambda)a_{2}^\pm(\lambda,x,y)d\lambda.
\end{align}
Then $G^0_{\alpha\beta}$ can be written as a linear combination of $K^\pm_{a_1}$ and $K^\pm_{a_2}$.

Here we summarize several properties of $M_{\alpha\beta}$, $\Phi^\pm_j$ and $a^\pm_{j}$ needed in the proof:
\begin{itemize}
\item By \eqref{vBv}, $\<u_1\>^\ell M_{\alpha\beta}(x,y,\Theta)\<u_2\>^\ell \in L^1(\R^2\times[0,1]^2;L^\infty(\R^2_{x,y}))$ for $\ell=0,1,2$ and
\begin{align}
\label{proposition_regular_1_proof_5}
\int_{\R^2\times[0,1]^2}\sup_{x,y\in \R^2}\<u_1\>^\ell |M_{\alpha\beta}(x,y,\Theta)|\<u_2\>^\ell d\Theta\lesssim \norm{\<x\>^{2(\alpha+\beta+\ell)}V}_{L^1}.
\end{align}
\item By the triangle inequality, for all $x,y\in \R$, $\lambda\ge0$ and $\Theta\in \R^2\times[0,1]^2$,
\begin{align}
\label{proposition_regular_1_proof_6}
|e^{\lambda\Phi^\pm_1(x,y,\Theta)}|\le1,\quad |e^{\lambda\Phi^\pm_2(x,y,\Theta)}|\le e^{\lambda |x|},\quad |\Phi^\pm_j(x,y,\Theta)|\le |u_1|+|u_2|.
\end{align}
\item By \eqref{proposition_regular_1_proof_5} and \eqref{proposition_regular_1_proof_6}, $a_{j}^\pm$ are smooth in $\lambda$, satisfying
\begin{align}
\label{proposition_regular_1_proof_7}
|\partial_\lambda^\ell a_{1}^\pm(\lambda,x,y)|+e^{-\lambda|x|}|\partial_\lambda^\ell a_{2}^\pm(\lambda,x,y)|&\lesssim \norm{\<x\>^{4+2\ell}V}_{L^1}
\end{align}
uniformly in $x,y\in \R$ and $\lambda\ge0$, at least for $\ell\le 2$.
\end{itemize}
Since $\chi\in C_0^\infty(\R)$, $K^\pm_{a_1}$ and $K^\pm_{a_2}$ are bounded on $\R^2$. In particular,
$$
|K^\pm_{a_1}(x,y)|+|K^\pm_{a_2}(x,y)|\lesssim1\lesssim \<|x|-|y|\>^{-2}
$$
if $||x|-|y||\le1$. Next, when $||x|-|y||\ge1$, we apply integration by parts twice to compute
\begin{align*}
K^\pm_{a_1}(x,y)
&=-\frac{1}{i(|x|\pm|y|)}\int_0^\infty  e^{i\lambda(|x|\pm|y|)} \left(\chi a_{1}^\pm+\lambda \partial_\lambda(\chi a_{1}^\pm)\right)(\lambda,x,y)d\lambda\\
&=-\frac{a_{1}^\pm(0,x,y)}{(|x|\pm|y|)^2}-\frac{1}{(|x|\pm|y|)^2}\int_0^\infty  e^{i\lambda(|x|\pm|y|)}\left(2\partial_\lambda(\chi a_{1}^\pm)+\lambda\partial_\lambda^2(\chi a_{1}^\pm)\right)d\lambda\\
&=O(\<|x|-|y|\>^{-2}).
\end{align*}
Similarly, it follows from \eqref{proposition_regular_1_proof_7} and the integration by parts that
$$
|K^\pm_{a_2}(x,y)|\lesssim \<|x|-|y|\>^{-2}
$$
Therefore, we have \eqref{proposition_regular_1_proof_1} for the case $\alpha,\beta\neq0$.

{\it Case (ii)}. Let $\beta=0$, $\alpha\neq0$. As in the case (i), it follows from \eqref{free_resolvent} and Lemma \ref{lemma_projection} that
\begin{align*}
G^0_{\alpha0}(x,y)=\int_0^\infty \lambda\chi(\lambda) \left(\int_{\R^2\times[0,1]}M_{\alpha0}(X_1,\Theta_1)f_{\alpha 0}(\lambda,X_1,y-u_2) d\Theta_1 \right)d\lambda
\end{align*}
where $\Theta_1=(u_1,u_2,\theta_1)$, $X_1=x-\theta_1u_1$ and
\begin{align}
\label{M_alpha0}
M_{\alpha0}(x,\Theta_1)=\frac{(-1)^\alpha }{16(\alpha-1)!}(1-\theta_1)^{\alpha-1}(\sgn x)^\alpha u_1^\alpha (vQ_\alpha BQ_{0}v)(u_1,u_2).
\end{align}
Define $\widetilde a_{j}^\pm(\lambda,x)$ by
$$
\widetilde a_{j}^\pm(\lambda,x)=\int_{\R^2\times[0,1]}e^{\lambda\Phi^\pm_j(x,y,\Theta_1)}M_{\alpha0}(X_1,\Theta_1)d\Theta_1.
$$
Then $M_{\alpha0}$ and $\widetilde a_{j}^\pm$ satisfy the same estimates as \eqref{proposition_regular_1_proof_5} and \eqref{proposition_regular_1_proof_7} for $M_{\alpha\beta}$ and $a_{j}^\pm$, respectively. Moreover, $G^0_{\alpha 0}$ is given by a linear combination of the following four functions
$$
\int_0^\infty  e^{i\lambda(|x|\pm|y|)}\lambda \chi(\lambda)\widetilde a_{1}^\pm(\lambda,x)d\lambda,\quad
\int_0^\infty  e^{-\lambda(|x|\pm i|y|)}\lambda \chi(\lambda)\widetilde a_{2}^\pm(\lambda,x)d\lambda.
$$
Hence, it can be shown by the same argument as in the case (i) that $G^0_{\alpha0}$ also satisfies \eqref{proposition_regular_1_proof_1}.

{\it Case (iii)}. Let $\alpha=0$, $\beta\neq0$. Again,  it follows from \eqref{free_resolvent} and Lemma \ref{lemma_projection} that
$$
G^0_{0\beta}(x,y)=\int_0^\infty \lambda\chi(\lambda) \left(\int_{\R^2\times[0,1]}M_{0\beta}(Y_2,\Theta_2)f_{0\beta}(\lambda,x-u_1,Y_2) d\Theta_2 \right)d\lambda,
$$
where $\Theta_2=(u_1,u_2,\theta_2)$, $Y_2=y-\theta_2u_2$ and
\begin{align}
\label{M_0beta}
M_{0\beta}(y,\Theta_2)=\frac{(-1)^\beta}{16(\beta-1)!}(1-\theta_2)^{\beta-1}(\sgn y)^\beta u_2^\beta (vQ_0 BQ_{\beta}v)(u_1,u_2).
\end{align}
Then the same argument as above  implies \eqref{proposition_regular_1_proof_1}. This completes the proof.
\end{proof}

Next we consider the remaining term $T_{K^0_{4}}$ in the class (I).
\begin{proposition}
\label{proposition_regular_2}
$T_{K^0_{4}}$ satisfies the same statement as that in Proposition \ref{proposition_regular_1}.
\end{proposition}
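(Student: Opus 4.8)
The plan is to follow the proof of Proposition \ref{proposition_regular_1} as closely as possible, the only structural novelty being that the ``middle'' operator is the $\lambda$-dependent remainder $\Gamma_4^0(\lambda)$ rather than a fixed absolutely bounded $Q_\alpha BQ_\beta$. Since $\Gamma_4^0(\lambda)$ carries no projection, the Taylor expansions of Lemma \ref{lemma_projection} are of no use, and the cancellation of the factor $\lambda^{-3}$ produced by the two free resolvents must come entirely from the decay \eqref{lemma_3_3_4}, $\norm{\partial_\lambda^\ell\Gamma_4^0(\lambda)}_{L^2\to L^2}\lesssim\lambda^{4-\ell}$. The first step is to record the analogue of \eqref{vBv}: for $\ell=0,1,2$ and $0\le k\le 7$,
\[
\int_{\R^2}\<u_1\>^k\,\bigl|(v\,\partial_\lambda^\ell\Gamma_4^0(\lambda)\,v)(u_1,u_2)\bigr|\,\<u_2\>^k\,du_1\,du_2\lesssim\lambda^{4-\ell}\,\norm{\<x\>^{2k}V}_{L^1},
\]
which follows exactly as \eqref{vBv}: the left-hand side is the $L^\infty\to L^1$ operator norm of $\<\cdot\>^kv\,\partial_\lambda^\ell\Gamma_4^0(\lambda)\,\<\cdot\>^kv$, hence at most $\norm{\<\cdot\>^kv}_{L^2}^2\,\norm{\partial_\lambda^\ell\Gamma_4^0(\lambda)}_{L^2\to L^2}$, and $\norm{\<\cdot\>^kv}_{L^2}^2=\norm{\<x\>^{2k}V}_{L^1}<\infty$ because $\mu>15$. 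Only the operator-norm bounds \eqref{lemma_3_3_4} are used here, not absolute boundedness of $\Gamma_4^0(\lambda)$.

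Inserting the explicit kernels \eqref{free_resolvent} into the definition of $K^0_4$ and applying Lemma \ref{lemma_FF} with $\alpha=\beta=0$, one obtains
\[
K^0_4(x,y)=\frac{1}{16}\int_0^\infty\lambda^{-3}\chi(\lambda)\int_{\R^2}f_{00}(\lambda,x-u_1,y-u_2)\,(v\Gamma_4^0(\lambda)v)(u_1,u_2)\,du_1\,du_2\,d\lambda.
\]
As in \eqref{proposition_regular_1_proof_3} one extracts the oscillatory factors $e^{i\lambda(|x|\pm|y|)}$ and $e^{-\lambda(|x|\pm i|y|)}$ by means of the phases $\Phi^\pm_j$ (now with $X_1=x-u_1$, $Y_2=y-u_2$), which still satisfy $|e^{\lambda\Phi^\pm_1}|\le1$, $|e^{\lambda\Phi^\pm_2}|\le e^{\lambda|x|}$ and $|\Phi^\pm_j|\le|u_1|+|u_2|$. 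This writes $K^0_4$ as a finite linear combination of oscillatory integrals with amplitudes $b^\pm_j(\lambda,x,y)=\lambda^{-3}\int_{\R^2}e^{\lambda\Phi^\pm_j}(v\Gamma_4^0(\lambda)v)(u_1,u_2)\,du_1\,du_2$, and the estimate of the first paragraph gives, for $\ell=0,1,2$,
\[
|\partial_\lambda^\ell b^\pm_1(\lambda,x,y)|+e^{-\lambda|x|}|\partial_\lambda^\ell b^\pm_2(\lambda,x,y)|\lesssim\lambda^{1-\ell},\qquad x,y\in\R.
\]

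The main obstacle is now visible: $b^\pm_j$ is only $O(\lambda)$ with derivatives $O(\lambda^{1-\ell})$, so its second derivative may be as large as $\lambda^{-1}$, which is not integrable near $\lambda=0$; hence one cannot integrate by parts twice in $\lambda$ as in Proposition \ref{proposition_regular_1}, where the amplitudes $a^\pm_j$ had no negative powers of $\lambda$. To get around this I would localize dyadically, $\chi(\lambda)=\sum_N\varphi_N(\lambda)\chi(\lambda)$, work in a single block $\lambda\sim 2^N$ (only $2^N\lesssim\lambda_0$ contribute), and rescale $\lambda=2^Ns$, $s\in[\tfrac14,1]$. Using $\supp\varphi_0\Subset(0,\infty)$, the rescaling identity $\norm{\partial_s^\ell(\Gamma_4^0(2^Ns))}_{L^2\to L^2}=2^{N\ell}\norm{(\partial_\lambda^\ell\Gamma_4^0)(2^Ns)}_{L^2\to L^2}\lesssim 2^{4N}$, and the kernel estimate of the first paragraph (whose weights $\<u_i\>^2$ absorb the factors $(|u_1|+|u_2|)^\ell$ arising from $\partial_s^\ell e^{2^Ns\Phi^\pm_j}$, a demand amply covered by $\mu>15$), one verifies, just as for \eqref{proposition_regular_1_proof_5}--\eqref{proposition_regular_1_proof_7}, that inside each block the rescaled amplitudes and their $s$-derivatives up to order $2$ are bounded by $C2^{2N}$ (resp. by $C2^{2N}e^{2^Ns|x|}$ for the $e^{-\lambda(|x|\pm i|y|)}$-terms), uniformly in $s$ and $x,y$, with no boundary contributions in $s$. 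Then the argument of Proposition \ref{proposition_regular_1} applies verbatim in each block: two integrations by parts in $s$ give $|K^{0,N}_4(x,y)|\lesssim 2^{2N}\<2^N(|x|\pm|y|)\>^{-2}$ for the $e^{i\lambda(|x|\pm|y|)}$-terms, while for the $e^{-\lambda(|x|\pm i|y|)}$-terms the gain per integration by parts is $(2^N|x\pm i y|)^{-1}\lesssim(2^N||x|-|y||)^{-1}$ and the factor $e^{2^Ns|x|}$ in the amplitude is compensated by $|e^{-2^Ns(|x|\pm i|y|)}|=e^{-2^Ns|x|}$, exactly as for $K^\pm_{a_2}$ in Proposition \ref{proposition_regular_1}.

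Since $|x|\pm|y|\ge||x|-|y||$ and $|x\pm iy|\ge\tfrac1{\sqrt2}||x|-|y||$, summing over $N$ (together with the trivial bound $|K^{0,N}_4(x,y)|\lesssim 2^{2N}$) yields
\[
|K^0_4(x,y)|\lesssim\sum_{N:\ 2^N\lesssim\lambda_0}\frac{2^{2N}}{\<2^N(|x|-|y|)\>^{2}}\lesssim\<|x|-|y|\>^{-2}\log\<|x|-|y|\>\lesssim\<|x|-|y|\>^{-\rho}
\]
for any $\rho<2$; in particular $|K^0_4(x,y)|\lesssim\<|x|-|y|\>^{-\rho}$ for some $\rho>1$, so Lemmas \ref{lemma_2_1} and \ref{lemma_2_2} give $T_{K^0_4}\in\mathbb B(L^p)$ for all $1\le p\le\infty$ together with the weighted bounds \eqref{lemma_2_2_1} and \eqref{lemma_2_2_2}, which is precisely the statement of Proposition \ref{proposition_regular_1} for $K=K^0_4$. (A clean $\<|x|-|y|\>^{-2}$ bound — matching Proposition \ref{proposition_regular_1} exactly — follows by integrating by parts three times in $s$, using the extension of \eqref{lemma_3_3_4} to $\ell\le4$ noted in the remark after Lemma \ref{lemma_3_3}.)
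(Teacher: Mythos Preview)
Your approach is essentially the paper's own: both recognize that $\partial_\lambda^2$ of the amplitude is only $O(\lambda^{-1})$, hence not integrable near $\lambda=0$, and cure this by a dyadic decomposition $\chi=\sum_N\chi\varphi_N$, bounding each block by $\min\{2^{2N},\langle|x|-|y|\rangle^{-2}\}$ (equivalently $2^{2N}\langle 2^N(|x|-|y|)\rangle^{-2}$) and summing to get $|K^0_4|\lesssim\langle|x|-|y|\rangle^{-\rho}$ for some $\rho>1$. The paper interpolates with $\theta=3/4$ to reach $\rho=3/2$; your direct summation giving $\langle|x|-|y|\rangle^{-2}\log\langle|x|-|y|\rangle$ is equivalent for the purpose at hand.

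There is one slip in your first paragraph: the displayed kernel integral $\int_{\R^2}\langle u_1\rangle^k|(v\,\partial_\lambda^\ell\Gamma_4^0\,v)|\langle u_2\rangle^k$ is \emph{not} the $L^\infty\to L^1$ operator norm of $\langle\cdot\rangle^k v\,\partial_\lambda^\ell\Gamma_4^0\,v\langle\cdot\rangle^k$ (only an upper bound for it), so bounding the latter by $\norm{\langle\cdot\rangle^k v}_{L^2}^2\norm{\partial_\lambda^\ell\Gamma_4^0}_{L^2\to L^2}$ does not give the former; and Lemma \ref{lemma_3_3} does not claim that the \emph{absolute} norm of $\Gamma_4^0$ obeys \eqref{lemma_3_3_4}. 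This does not damage your proof, because what you actually need is the bound on $b^\pm_j$, and that follows directly from the operator norm \eqref{lemma_3_3_4}: since $\Phi_j^\pm$ (with $\theta_1=\theta_2=1$) splits as $\phi_1(u_1)+\phi_2(u_2)$, every factor $\Phi^m e^{\lambda\Phi}$ arising in $\partial_\lambda^\ell b^\pm_j$ is a finite sum of tensor products in $(u_1,u_2)$, so the $du_1du_2$-integral is a genuine $L^2$-pairing $\langle\overline{g_1}v,\,\partial_\lambda^{\ell'}\Gamma_4^0(g_2 v)\rangle$ with $\norm{g_i v}_{L^2}\lesssim\norm{\langle\cdot\rangle^\ell v}_{L^2}$. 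With this correction the rest of your argument stands.
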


\begin{proof}
We show $K^0_{4}=O(\<|x|-|y|\>^{-3/2})$ which, together with Lemmas \ref{lemma_2_1} and \ref{lemma_2_2}, implies the assertion. The proof is more involved than in the previous case since $\Gamma^0_4$ depends on $\lambda$.

A similar computation as before based on Lemma \ref{lemma_projection} implies
\begin{align*}
K^0_{4}(x,y)
&=\int_0^\infty \lambda^3 \chi(\lambda)\Big(R_0^+(\lambda^4)v\Gamma_4^0(\lambda)v[R_0^+-R_0^-](\lambda^4)\Big)(x,y)d\lambda\\
&=\int_0^\infty \lambda \chi(\lambda)\left(\int_{\R^2}\widetilde \Gamma(\lambda,u_1,u_2)f_{00}(\lambda,x-u_1,y-u_2)du_1du_2\right)d\lambda,
\end{align*}
where we set $\widetilde \Gamma(\lambda,u_1,u_2)=\frac{1}{16\lambda^4}(v\Gamma_4^0(\lambda)v)(u_1,u_2)$ for short and recall (see Lemma \ref{lemma_FF}) that
\begin{align*}
f_{00}(\lambda,x-u_1,y-u_2)
=-\sum_{\pm}\left(e^{i\lambda(|x-u_1|\pm |y-u_2|)}+ie^{-\lambda(|x-u_1|\pm i|y-u_2|)}\right).
\end{align*}
Let $\Phi^\pm_j$ be defined by \eqref{proposition_regular_1_proof_3} and
\begin{align*}
b^\pm_j(\lambda,x,y)&=\int_{\R^2}e^{\lambda \Phi^\pm_j(x,y,u_1,u_2,1,1)}\widetilde \Gamma(\lambda,u_1,u_2)du_1du_2,\\
K^\pm_{b_{1}}(x,y)&=-\int_0^\infty  e^{i\lambda(|x|\pm|y|)}\lambda \chi(\lambda)b_{1}^\pm(\lambda,x,y)d\lambda,\\
\nonumber
K^\pm_{b_{2}}(x,y)&=-i\int_0^\infty  e^{-\lambda(|x|\pm i|y|)}\lambda \chi(\lambda)b_{2}^\pm(\lambda,x,y)d\lambda.
\end{align*}
Then, as before, $K^0_{4}=K^+_{b_{1}}+K^-_{b_{1}}+K^+_{b_{2}}+K^-_{b_{2}}$. By virtue of \eqref{lemma_3_3_4}, the bound $|v(x)|\lesssim \<x\>^{-\mu/2}$ with $\mu>15$ and \eqref{proposition_regular_1_proof_6}, $b^\pm_j(\lambda,x,y)$ satisfy
\begin{align}
\label{proposition_regular_2_proof_1}
|\partial_\lambda^{\ell}b^\pm_1(\lambda,x,y)|+e^{-\lambda|x|}|\partial_\lambda^{\ell}b^\pm_2(\lambda,x,y)|\lesssim \norm{\<x\>^{2\ell}V}_{L^1}\lambda^{-\ell}
\end{align}
for $\lambda>0$, $x,y\in \R$ and $\ell=0,1,2$. To deal with a possible singularity of $\partial_\lambda b_j^\pm$ in $\lambda\ll1$, we decompose $\chi$ by using the dyadic partition of unity $\{\varphi_N\}$ defined in Subsection \ref{subsection_notation}, as
$$
\chi(\lambda)=\sum_{N=-\infty}^{N_0}\widetilde \chi_N(\lambda),\quad \widetilde \chi_N(\lambda):=\chi(\lambda)\varphi_N(\lambda),\quad \lambda>0,
$$
where $N_0\lesssim |\log\lambda_0|\lesssim1$ since $\supp \chi\subset [0,\lambda_0]$. Note that $\supp\widetilde\chi_N\subset [2^{N-2},2^N]$ and
\begin{align}
\label{proposition_regular_2_proof_2}
|\partial_\lambda^\ell \widetilde \chi_N(\lambda)|\le C_\ell 2^{-N\ell}
\end{align}
for all $\ell$. Let $K^{\pm}_{b_j,N}$ is given by $K^{\pm}_{b_j}$ with $\chi$ replaced by $\widetilde \chi_N$ and decompose $K^\pm_{b_j}$ as
$$
K^\pm_{b_j}=\sum_{N\le N_0}K^{\pm}_{b_j,N}.
$$
Since $\lambda\sim 2^N$ on $\supp \widetilde \chi_N$, we know by  \eqref{proposition_regular_2_proof_1} that
$$
|K^{\pm}_{b_j,N}(x,y)|\lesssim 2^N\int_{\supp \widetilde \chi_N}d\lambda\lesssim 2^{2N},\quad x,y\in \R.
$$
In particular, if $||x|-|y||\le1$ then
$$
|K^{\pm}_{b_j,N}(x,y)|\lesssim 2^{2N}\<|x|-|y|\>^{-2}.
$$
On the other hand, when $||x|-|y||>1$,  we obtain by integrating by parts twice that
\begin{align*}
K^{\pm}_{b_1,N}(x,y)=-\frac{1}{(|x|\pm |y|)^2}\int_0^\infty  e^{i\lambda(|x|\pm |y|)}\Big[2\partial_\lambda (\widetilde \chi_Nb^\pm_1)+\lambda\partial_\lambda^2(\widetilde \chi_Nb^\pm_1)\Big]d\lambda
\end{align*}
since $\widetilde\chi_N(0)=0$. Then \eqref{proposition_regular_2_proof_1}, \eqref{proposition_regular_2_proof_2} and the support property of $\widetilde \chi_N$ imply
\begin{align*}
|K^{\pm}_{b_1,N}(x,y)|\lesssim \<|x|-|y|\>^{-2}2^{-N}\int_{2^{N-2}}^{2^N}d\lambda\lesssim \<|x|-|y|\>^{-2}
\end{align*}
if $||x|-|y||>1$. Therefore, $K^{\pm}_{b_1,N}(x,y)$ satisfies
$$
|K^{\pm}_{b_1,N}(x,y)|\lesssim \min\{2^{2N},\<|x|-|y|\>^{-2}\}\lesssim 2^{2N(1-\theta)}\<|x|-|y|\>^{-2\theta},\quad \theta\in [0,1],
$$
uniformly in $N\le N_0$, $x,y\in \R$. In particular, taking $\theta=3/4$ for instance, we obtain
\begin{align*}
|K^{\pm}_{b_1}(x,y)|\lesssim \<|x|-|y|\>^{-3/2}\sum_{N\le N_0}2^{N/2}\lesssim \<|x|-|y|\>^{-3/2}.
\end{align*}
It follows similarly from \eqref{proposition_regular_2_proof_1}, \eqref{proposition_regular_2_proof_2} and the support property of $\widetilde \chi_N$ that
\begin{align*}
|K^{\pm}_{b_2}(x,y)|\lesssim \<|x|+|y|\>^{-3/2}.
\end{align*}
Therefore, $K^0_4(x,y)=O( \<|x|-|y|\>^{-3/2})$ and the result follows by Lemmas \ref{lemma_2_1} and \ref{lemma_2_2}.
\end{proof}

Next we deal with the class (II), namely $T_{K^0_1}$ and $T_{K^0_{33}}$. We begin with $T_{K^0_{33}}$.

\begin{proposition}
\label{proposition_regular_3}
If $1<p<\infty$, $w_p\in A_p$ and $w_1\in A_1$ then $T_{K^0_{33}}$ and $T_{K^0_{33}}^*$ satisfy the same bounds as \eqref{lemma_2_2_1} and  \eqref{lemma_2_2_2}. Moreover, $T_{K^0_{33}},T_{K^0_{33}}^*\in \mathbb B(\H^1(\R),L^1(\R))\cap \mathbb B(L^\infty(\R),\BMO(\R))$.
\end{proposition}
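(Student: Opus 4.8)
The strategy is to peel off from $K^0_{33}$ a constant multiple of the non‑classical kernel $g_{1,1,1}^+$ of Lemma \ref{lemma_2_4} and to absorb everything else into a remainder to which Lemmas \ref{lemma_2_1} and \ref{lemma_2_2} apply. First I would insert the rank‑one formula $\widetilde P=-\tfrac{2(1+i)}{\norm{V}_{L^1}^2}\<\cdot,v\>v$, so that $(v\widetilde Pv)(u_1,u_2)=-\tfrac{2(1+i)}{\norm{V}_{L^1}^2}v(u_1)^2v(u_2)^2$ and, by $\int v^2=\norm{V}_{L^1}$, $\int_{\R^2}(v\widetilde Pv)(u_1,u_2)\,du_1du_2=-2(1+i)$. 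Since the factor $\lambda^6$ in $K^0_{33}$ exactly cancels the $\lambda^{-3}$ coming from each of the two free resolvents—so, in contrast with the class (I) terms, no residual power of $\lambda$ survives—formula \eqref{free_resolvent} and Lemma \ref{lemma_FF} with $\alpha=\beta=0$ let me rewrite $K^0_{33}$, after factoring out the phases $e^{i\lambda(|x|\pm|y|)}$ and $e^{-\lambda(|x|\pm i|y|)}$ with $\Phi_j^\pm$ as in \eqref{proposition_regular_1_proof_3}, as a linear combination of the four integrals $\int_0^\infty e^{i\lambda(|x|\pm|y|)}\chi(\lambda)c_1^\pm(\lambda,x,y)\,d\lambda$ and $\int_0^\infty e^{-\lambda(|x|\pm i|y|)}\chi(\lambda)c_2^\pm(\lambda,x,y)\,d\lambda$, where $c_j^\pm(\lambda,x,y)=\int_{\R^2}e^{\lambda\Phi_j^\pm(x,y,u_1,u_2,1,1)}(v\widetilde Pv)(u_1,u_2)\,du_1du_2$ satisfy bounds of the same type as the amplitudes in \eqref{proposition_regular_1_proof_7}, namely $|\partial_\lambda^\ell c_1^\pm|+e^{-\lambda|x|}|\partial_\lambda^\ell c_2^\pm|\lesssim 1$ uniformly in $x,y$ for $\ell=0,1,2$ (cf.\ the proof of Proposition \ref{proposition_regular_2}).

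The main step is the extraction of the leading term. Because $\chi\equiv1$ near $0$ but there is no vanishing prefactor $\lambda$, one integration by parts in $\lambda$ produces a nonzero boundary contribution at $\lambda=0$, and a second integration by parts (using the amplitude bounds and $\chi\in C_0^\infty$) gives, on $\{\,||x|\pm|y||\ge1\,\}$, $\int_0^\infty e^{i\lambda(|x|\pm|y|)}\chi(\lambda)c_1^\pm\,d\lambda=\tfrac{i\,c_1^\pm(0,x,y)}{|x|\pm|y|}+O(\<|x|\pm|y|\>^{-2})$, and likewise $\int_0^\infty e^{-\lambda(|x|\pm i|y|)}\chi(\lambda)c_2^\pm\,d\lambda=\tfrac{c_2^\pm(0,x,y)}{|x|\pm i|y|}+O(\<|x|-|y|\>^{-2})$ once $x^2+y^2\ge1$; off the supports of the cut‑offs $\psi(||x|\pm|y||^2)$, resp.\ $\psi(x^2+y^2)$, these integrals are $O(1)$ which is $\lesssim\<|x|-|y|\>^{-2}$ there, and replacing $\psi(x^2+y^2)$ by $\psi(||x|-|y||^2)$ costs only a further $O(\<|x|-|y|\>^{-2})$ term. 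The crucial observation is that $e^{\lambda\Phi_j^\pm}\big|_{\lambda=0}=1$, so each $c_j^\pm(0,x,y)=\int_{\R^2}(v\widetilde Pv)\,du_1du_2=-2(1+i)$ is a \emph{universal constant}, independent of $x,y$; collecting the four pieces and matching signs through Lemma \ref{lemma_FF} yields
$$
K^0_{33}(x,y)=\frac{-1+i}{8}\,g_{1,1,1}^+(x,y)+E(x,y),\qquad |E(x,y)|\lesssim\<|x|-|y|\>^{-2},
$$
with $g_{1,1,1}^+$ the kernel of Lemma \ref{lemma_2_4} for $a=b=1$.

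The conclusion then assembles. Since $|E(x,y)|\lesssim\<|x|-|y|\>^{-2}$, Lemma \ref{lemma_2_1} gives $T_E\in\mathbb B(L^p(\R))$ for every $1\le p\le\infty$—hence $T_E\in\mathbb B(\H^1,L^1)\cap\mathbb B(L^\infty,\BMO)$ via $\H^1\hookrightarrow L^1$ and $L^\infty\hookrightarrow\BMO$—while Lemma \ref{lemma_2_2} supplies the weighted bounds \eqref{lemma_2_2_1}--\eqref{lemma_2_2_2}; the main term satisfies \eqref{lemma_2_2_1}--\eqref{lemma_2_2_2} by Lemma \ref{lemma_2_3} and lies in $\mathbb B(\H^1,L^1)\cap\mathbb B(L^\infty,\BMO)$ by Lemma \ref{lemma_2_4}. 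For $T_{K^0_{33}}^*$ I would rerun the identical computation with $x$ and $y$ interchanged—equivalently, use $\overline{g_{a,b}^\pm(y,x)}=g_{\overline a,i\overline b}^\mp(x,y)$ together with the fact that the adjoint of an operator whose kernel is $O(\<|x|-|y|\>^{-2})$ has a kernel of the same type—to present its kernel as $\tfrac{-1-i}{8}g_{1,1,i}^-$ plus an $O(\<|x|-|y|\>^{-2})$ error, so Lemmas \ref{lemma_2_3} and \ref{lemma_2_4} apply verbatim. The main obstacle is the middle step: verifying that the $\lambda=0$ boundary term is exactly a constant times the particular non‑classical kernel $g_1^+$—which hinges on the rank‑one structure of $\widetilde P$ and on the normalization $\int v^2=\norm{V}_{L^1}$—and that the remainder decays like $\<|x|-|y|\>^{-2}$ (with two bounded $\lambda$‑derivatives available), since $T_{g_1^+}$ is not itself a Calderón–Zygmund operator (as remarked after Lemma \ref{lemma_2_3}), so its $\H^1$--$L^1$ and $L^\infty$--$\BMO$ mapping cannot be read off from the classical theory and one genuinely needs Lemma \ref{lemma_2_4}.
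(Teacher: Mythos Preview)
Your proposal is correct and follows essentially the same route as the paper: you extract the leading term $\tfrac{-1+i}{8}\,g_{1,1,1}^+$ via two integrations by parts, using that the rank-one structure of $\widetilde P$ forces $c_j^\pm(0,x,y)$ to be the universal constant $-\tfrac{1+i}{8}$ (after normalization by $\tfrac{1}{16}$), and you control the remainder by $\<|x|-|y|\>^{-2}$ so that Lemmas \ref{lemma_2_1}--\ref{lemma_2_4} finish the job. The only cosmetic difference is that the paper localizes the $e^{-\lambda(|x|\pm i|y|)}$ integrals directly with $\psi_-=\psi(||x|-|y||^2)$ rather than passing through $\psi(x^2+y^2)$ first, which saves a line but is equivalent since $||x|\pm i|y||\ge||x|-|y||$.
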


\begin{proof}We shall show that $K^0_{33}$ is written in the form
\begin{align}
\label{proposition_regular_3_proof_1}
K^0_{33}(x,y)=\frac{-1+i}{8} g_{1}^+(x,y)+O(\<|x|-|y|\>^{-2}),
\end{align}
with $g_{1}^+=g_{1,1,1}^+$ defined in Lemma \ref{lemma_2_4} with the choice of $a=b=1$. Then Lemmas \ref{lemma_2_1}--\ref{lemma_2_4} apply to $T_{K^0_{33}}$, yielding the desired assertion. As before, using \eqref{free_resolvent} and Lemma \ref{lemma_FF}, we have
\begin{align*}
K^0_{33}(x,y)
&=\int_0^\infty \lambda^6 \chi(\lambda)\Big(R_0^+(\lambda^4)v\widetilde Pv[R_0^+-R_0^-](\lambda^4)\Big)(x,y)d\lambda\\
&=\frac{1}{16}\int_0^\infty \chi(\lambda) \left(\int_{\R^2} (v\widetilde Pv)(u_1,u_2)f_{00}(\lambda, x-u_1,y-u_2)du_1du_2\right)d\lambda\\
&=K^+_{33,1}(x,y)+K^-_{33,1}(x,y)+K^+_{33,2}(x,y)+K^-_{33,2}(x,y),
\end{align*}
where, using $\Phi^\pm_j$ defined by \eqref{proposition_regular_1_proof_4}, we set
\begin{align*}
c_{j}^\pm(\lambda,x,y)&=\frac{1}{16}\int_{\R^2}e^{\lambda\Phi^\pm_j(x,y,u_1,u_2,1,1)}(v\widetilde Pv)(u_1,u_2)du_1du_2,\\
K^\pm_{33,1}(x,y)&=-\int_0^\infty e^{i\lambda(|x|\pm|y|)}\chi(\lambda)c_{1}^\pm(\lambda,x,y)d\lambda,\\
K^\pm_{33,2}(x,y)&=-i\int_0^\infty e^{-\lambda(|x|\pm i|y|)}\chi(\lambda)c_{2}^\pm(\lambda,x,y)d\lambda.
\end{align*}
By \eqref{vBv} and \eqref{proposition_regular_1_proof_6}, $c_{j}^\pm$ satisfy a similar estimates as that for $a_{j}^\pm$: for $x,y\in \R$, $\lambda\ge0$,
\begin{align*}
|\partial_\lambda^\ell c_{1}^\pm(\lambda,x,y)|+e^{-\lambda|x|}|\partial_\lambda^\ell c_{2}^\pm(\lambda,x,y)|&\lesssim \norm{\<x\>^{2\ell}V}_{L^1}\lesssim 1,\quad \ell=0,1,2.
\end{align*}
Hence, since $\chi\in C_0^\infty$, $K^0_{33}$ is bounded on $\R^2$. Let $\psi(||x|\pm|y||^{2})$ be defined in Lemma \ref{lemma_2_3} such that $\psi(||x|\pm|y||^{2})$ is supported away from the region $\{|x|\pm|y|\le1\}$. We decompose
$$
K^\pm_{33,1}=\psi_\pm K^\pm_{33,1}+\left(1-\psi_\pm\right)K^\pm_{33,1},
$$
where $\psi_\pm:=\psi(||x|\pm|y||^{2})$ for short. The second part of the right hand side satisfies
$$
|\left(1-\psi_\pm\right)K^\pm_{33,1}(x,y)|\lesssim 1\lesssim \<|x|-|y|\>^{-2},\quad x,y\in \R.
$$
To estimate the first term, we recall that $\widetilde P=\frac{-2(1+i)}{\norm{V}_{L^1}^2}\<\cdot,v\>v$ and hence, for all $x,y,j$,
$$
c^\pm_{j}(0,x,y)=\frac{1}{16}\int_{\R^2}(v\widetilde Pv)(u_1,u_2)du_1du_2=-\frac{1+i}{8}.
$$
Then we obtain by integration by parts twice that
\begin{align*}
\psi_\pm K^\pm_{33,1}(x,y)
&=\frac{-1+i}{8}\frac{\psi_\pm}{|x|\pm|y|}+\frac{\psi_\pm}{i(|x|\pm|y|)}\int_0^\infty e^{i\lambda(|x|\pm|y|)}\partial_\lambda(\chi c_{1}^\pm)(\lambda,x,y)d\lambda\\
&=\frac{-1+i}{8}\frac{\psi_\pm}{|x|\pm|y|}
+\frac{\psi_\pm\partial_\lambda(\chi c_{1}^\pm)(0,x,y)}{(|x|\pm|y|)^2}\\
&\quad\quad-\frac{\psi_\pm}{(|x|\pm|y|)^2}\int_0^\infty e^{i\lambda(|x|\pm|y|)}\partial_\lambda^2(\chi c_{1}^\pm)(\lambda,x,y)d\lambda\\
&=\frac{-1+i}{8}\frac{\psi_\pm}{|x|\pm|y|}+O(\<|x|-|y|\>^{-2}).
\end{align*}
Decomposing $K^\pm_{33,2}$ as $
K^\pm_{33,2}=\psi_- K^\pm_{33,2}+(1-\psi_-)K^\pm_{33,2},
$
we similarly have
$$
K^\pm_{33,2}(x,y)=\frac{-1+i}{8}\frac{\psi_-}{|x|\pm i|y|}+O(\<|x|-|y|\>^{-2}).
$$
Therefore, \eqref{proposition_regular_3_proof_1} follows. This completes the proof.
\end{proof}

It remains to deal with the most technical and delicate term $T_{K^0_{1}}$.

\begin{proposition}
\label{proposition_regular_4}
For any $1<p<\infty$ and $w_p\in A_p$,  $T_{K^0_{1}}$ and $T_{K^0_{1}}^*$ satisfy the same bound as  \eqref{lemma_2_2_1}. Moreover, $T_{K^0_{1}}$ satisfies the following statements:
\begin{itemize}
\item[(1)] If $V$ is compactly supported, then $T_{K^0_{1}},T_{K^0_{1}}^*\in \mathbb B(L^1(\R),L^{1,\infty}(\R))$;
\item[(2)] If $Q_1A_1^0Q_1$ is finite rank, then $T_{K^0_{1}}$ and $T_{K^0_{1}}^*$ satisfy the same bound as \eqref{lemma_2_2_2};
\item[(3)] $T_{K^0_{1}},T_{K^0_{1}}^*\in \mathbb B(\H^1(\R),L^1(\R))\cap \mathbb B(L^{\infty}(\R),\BMO(\R))$.
\end{itemize}
\end{proposition}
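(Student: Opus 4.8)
The plan is to follow the strategy used for $T_{K^0_{33}}$ in Proposition \ref{proposition_regular_3}, but keeping careful track of the fact that $Q_1A_1^0Q_1$, unlike the rank-one operator $\widetilde P$, is a general absolutely bounded operator, so the amplitude appearing at $\lambda=0$ is a non-trivial function of $(x,y)$. First I would substitute \eqref{lemma_3_3_1} into \eqref{lemma_3_2_1}, apply Lemma \ref{lemma_projection} with $\alpha=\beta=1$ (the two factors $Q_1$ producing the first derivatives $F_+^{(1)}$ and $(F_+-F_-)^{(1)}$, hence a factor $\lambda$ on each side) and then Lemma \ref{lemma_FF}, obtaining
\[
K^0_1(x,y)=\int_0^\infty\chi(\lambda)\Big(\int_{\R^2\times[0,1]^2}M_{11}(X_1,Y_2,\Theta)\,f_{11}(\lambda,X_1,Y_2)\,d\Theta\Big)\,d\lambda,
\]
with $M_{11}$, $X_1=x-\theta_1u_1$, $Y_2=y-\theta_2u_2$, $\Theta$ as in the proof of Proposition \ref{proposition_regular_1}. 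The crucial new feature is that the power $\lambda^{6-\alpha-\beta}=\lambda^{4}$ cancels exactly the $\lambda^{4}$ in front of $K^0_1$, so that—in contrast with the operators of Proposition \ref{proposition_regular_1}—there is no spare factor $\lambda$; this is why $T_{K^0_1}$ is the delicate term. As there, I would pull the phases of $f_{11}$ back to $e^{i\lambda(|x|\pm|y|)}$ and $e^{-\lambda(|x|\pm i|y|)}$, writing $K^0_1=\sum_\pm K^\pm_{a_1}+\sum_\pm K^\pm_{a_2}$ with amplitudes $a_j^\pm(\lambda,x,y)=\int_{\R^2\times[0,1]^2}e^{\lambda\Phi_j^\pm}M_{11}(X_1,Y_2,\Theta)\,d\Theta$ obeying the bounds \eqref{proposition_regular_1_proof_7}.

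Next I would integrate by parts in $\lambda$. Since the integrand is $O(1)$ near $\lambda=0$, there is now a genuine boundary term at $\lambda=0$; a second integration by parts makes the remainder $O(\<|x|-|y|\>^{-2})$, handled by Lemmas \ref{lemma_2_1} and \ref{lemma_2_2}. Here the point is that $a_j^\pm(0,x,y)=\int_{\R^2\times[0,1]^2}M_{11}(X_1,Y_2,\Theta)\,d\Theta$ is the same function for all four terms; writing it out and collecting the four leading contributions on the cut-off regions determined by $\psi_\pm$ gives, for an explicit constant,
\[
K^0_1(x,y)=\mathcal A(x,y)\,g^-_{i,1}(x,y)+O(\<|x|-|y|\>^{-2}),
\]
where $g^-_{i,1}$ is a kernel of the form treated in Lemma \ref{lemma_2_4} (built from $k_1^\pm,k_2^\pm$) and $\mathcal A(x,y)=\tfrac1{16}\int_{\R^2\times[0,1]^2}\sgn(x-\theta_1u_1)\sgn(y-\theta_2u_2)\,u_1u_2\,(vQ_1A_1^0Q_1v)(u_1,u_2)\,d\Theta$.

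Then I would analyse the variable coefficient $\mathcal A$. Since $\sgn(x-\theta_1u_1)-\sgn x$ is bounded and supported in $\{|x|\le|u_1|\}$, and $|v(x)|\lesssim\<x\>^{-\mu/2}$ with $\mu>15$, one can write $\mathcal A(x,y)=c\,\sgn x\,\sgn y+\sgn x\,\rho_2(y)+\rho_1(x)\,\sgn y+\rho_1(x)\rho_2(y)$ with $c$ a constant and $\rho_1,\rho_2$ rapidly decaying. The first summand, times $g^-_{i,1}$, is a linear combination of the kernels of Lemma \ref{lemma_2_4}; the main work is to check that the combination that actually occurs is one of the admissible ones listed there—using the identities $\sgn y(k_1^+-k_1^-)=\tfrac1{x+y}-\tfrac1{x-y}$, $\sgn y(k_2^+-k_2^-)=\tfrac1{x+iy}-\tfrac1{x-iy}$ and the transposition relations among the $g^\pm_{j,a,b}$—so that Lemma \ref{lemma_2_4} gives $\mathbb B(\H^1,L^1)$ and, by duality, $\mathbb B(L^\infty,\BMO)$, and Lemma \ref{lemma_2_3} gives \eqref{lemma_2_2_1}. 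The remaining three summands carry an extra rapidly decaying factor in $x$ or $y$; multiplied by the factors $(|x|\pm|y|)^{-1}$, $(|x|\pm i|y|)^{-1}$ of $g^-_{i,1}$ and decomposed via the $\chi_\pm$ splitting of Lemma \ref{lemma_2_3}, they are dominated by standard kernels such as $\<y\>^{-N}\psi(|x\mp y|^2)(x\mp y)^{-1}$, which define Calderón--Zygmund operators and hence are harmless on all the desired spaces; the statements for $T_{K^0_1}^*$ follow from the transposition relations (and $\mathbb B(L^\infty,\BMO)$ for $T_{K^0_1}$ also follows from $T_{K^0_1}^*\in\mathbb B(\H^1,L^1)$).

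Finally, for the weak-type statements (1) and (2) the extra hypotheses enter because the variable amplitude $\mathcal A$ obstructs the usual weak-$(1,1)$ argument. If $V$ is compactly supported, then $u_1,u_2$ range over a bounded set, so $\mathcal A(x,y)=c\,\sgn x\,\sgn y$ as soon as $|x|,|y|$ are large, the correction being supported where $|x|$ or $|y|$ is bounded and hence absorbable into an absolutely bounded error plus a genuine Calderón--Zygmund piece; this yields $\mathbb B(L^1,L^{1,\infty})$. If instead $Q_1A_1^0Q_1$ is finite rank, then $(vQ_1A_1^0Q_1v)(u_1,u_2)$ is a finite sum of tensor products of rapidly decaying functions, so $\mathcal A$ factors and the analysis reduces to one-dimensional truncated-Hilbert-transform-type operators, which are of weak type $(1,1)$ also for $A_1$-weights. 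I expect the main obstacle to be precisely this bookkeeping around $\mathcal A$—pinning down the exact combination of the kernels $g^\pm_{j,a,b}$ of Lemma \ref{lemma_2_4} produced by $K^0_1$, verifying the coefficient restrictions there, and separating off the $\sgn$-twisted pieces—since once the kernel is written in that form everything else is a routine appeal to Lemmas \ref{lemma_2_1}--\ref{lemma_2_4}.
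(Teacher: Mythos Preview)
Your derivation of the representation $K^0_1(x,y)=\mathcal A(x,y)\,g^-_{i,1}(x,y)+O(\<|x|-|y|\>^{-2})$ and your treatment of the weighted $L^p$ bound and of items (1) and (2) are essentially the paper's argument: the paper also isolates the leading factor (called $\widetilde m_1(x,y)$ there), handles the weighted $L^p$ bound by reducing to $T_{g_1^-}$ composed with bounded multipliers, and for compactly supported $V$ or finite-rank $Q_1A_1^0Q_1$ observes that $\mathcal A$ factorizes (at least for large $|x|$), exactly as you say. One cosmetic point: the fourth correction term is not literally a product $\rho_1(x)\rho_2(y)$ unless $vQ_1A_1^0Q_1v$ is rank one, but it is $O(\<x\>^{-N}\<y\>^{-N})$, which is all you need.

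The gap is in item (3). Your claim that the correction terms produce kernels ``dominated by standard kernels such as $\<y\>^{-N}\psi(|x-y|^2)(x-y)^{-1}$, which define Calder\'on--Zygmund operators'' is not correct: the $y$-regularity condition of a standard kernel fails for this function, because differentiating the prefactor $\<y\>^{-N}$ produces a term of size $\<y\>^{-N-1}|x-y|^{-1}$, which is not $\lesssim |x-y|^{-2}$ when $y$ is bounded and $|x-y|\to\infty$. More fundamentally, the composition argument you rely on --- write the operator as $T_{g_3^-}$ composed with multiplication by $\rho_2(y)$ --- cannot give $\H^1\to L^1$ because multiplication by a (non-constant) bounded function does not preserve $\H^1$: it destroys the mean-zero condition. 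The paper makes exactly this point explicitly and therefore abandons the decomposition of $\mathcal A$ for the $\H^1\to L^1$ bound. Instead it rewrites
\[
K^0_1(x,y)=\frac{1}{16}\int_{\R^2\times[0,1]^2}u_1u_2(vQ_1A^0_1Q_1v)(u_1,u_2)\,k^0_1(x-\theta_1u_1,y-\theta_2u_2)\,d\Theta,
\]
with $k^0_1(x,y)=\sgn x\,\sgn y\int_0^\infty\chi(\lambda)f_{11}(\lambda,x,y)\,d\lambda=g_{4,i,1}^-(x,y)+O(\<|x|-|y|\>^{-2})$, and then uses the \emph{translation invariance} of the $L^1$ and $\H^1$ norms to pass the shifts $x\mapsto x-\theta_1u_1$, $y\mapsto y-\theta_2u_2$ through the norms, reducing everything to the single estimate $T_{k^0_1}\in\mathbb B(\H^1,L^1)$, which follows from Lemma \ref{lemma_2_4}. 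This trick replaces the problematic variable amplitude $\mathcal A(x,y)$ by translations, which are harmless on $\H^1$. You should incorporate this step; the rest of your outline is fine.
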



\begin{proof}[Proof of Proposition \ref{proposition_regular_4}]
The proof is divided into five steps.

\underline{{\it Step 1}}. We first derive a useful asymptotic formula of $K^0_{1}$. Using Lemma \ref{lemma_projection}, we compute
\begin{align}
\label{proposition_regular_4_proof_1}
\nonumber
K^0_{1}(x,y)
&=\int_0^\infty \lambda^4\chi(\lambda)\Big(R_0^+(\lambda^4)vQ_1A_{1}^0Q_1v[R_0^+-R_0^-](\lambda^4)\Big)(x,y)d\lambda\\
&=\int_0^\infty \chi(\lambda)\left(\int_{\R^2\times [0,1]^2} M_{11}(X_1,Y_2,\Theta)f_{11}(\lambda,X_1,Y_2)d\Theta\right)d\lambda,
\end{align}
where $\Theta=(u_1,u_2,\theta_2,\theta_2)\in\R^2\times[0,1]^2$, $
X_1=x-\theta_1u_1$, $Y_2=y-\theta_2u_2,
$
and $M_{11}$ is given by \eqref{M_alphabeta} with $B=A^0_1$. Since $f_{11}$ is bounded on $\R_+\times \R^2$, $\chi\in C_0^\infty$ and $M_{\alpha\beta}$ satisfies \eqref{proposition_regular_1_proof_5},
 $K^0_1$ is absolutely convergent and bounded on $\R^2$. Fubini's theorem then yields
\begin{align}
\nonumber
K^0_{1}(x,y)
&=\int_{\R\times [0,1]}\left(\int_0^\infty \chi(\lambda)\left(\int_{\R\times [0,1]} M_{11}(X_1,Y_2,\Theta)f_{11}(\lambda,X_1,Y_2)du_2d\theta_2\right)d\lambda \right)du_1d\theta_1\\
\label{proposition_regular_4_proof_1_1}
&=\int_{\R\times [0,1]}\widetilde K^0_1(x,y;\theta_1,u_1)du_1d\theta_1,
\end{align}
where
\begin{align}
\label{proposition_regular_4_proof_2}
\widetilde K^0_1(x,y;\theta_1,u_1)=\int_0^\infty \chi(\lambda)\left(\int_{\R\times [0,1]} M_{11}(X_1,Y_2,\Theta)f_{11}(\lambda,X_1,Y_2)du_2d\theta_2\right)d\lambda.
\end{align}
Now we shall show that $\widetilde K^0_1$ is of the form
\begin{align}
\label{proposition_regular_4_proof_3}
\widetilde K^0_1(x,y;\theta_1,u_1)
=\sgn(X_1)g_1^-(x,y) m_{1}(y,u_1,\theta_1)+O\left(\<|x|-|y|\>^{-2}\rho_3 (u_1)\right),
\end{align}
where $g_1^-=g_{1,i,1}^-$ is given in Lemma \ref{lemma_2_4} (with $a=i,b=1$) and $\widetilde{m}_{1},\rho_\ell$ are given by
\begin{align*}
 m_{1}(y,u_1,\theta_1)
&:=\int_{\R\times[0,1]}\frac{M_{11}(X_1,Y_2,\Theta)}{\sgn X_1}du_2d\theta_2\\
&=\frac{1}{16}\int_{\R\times[0,1]}(\sgn Y_2) u_1 u_2 (vQ_1A_1^0Q_1  v)(u_1,u_2)du_2d\theta_2,
\end{align*}
and, for $\ell=0,1,2,...$, $$
\rho_\ell (u_1):=\frac{1}{16}\<u_1\>^\ell \int_{\R}|(vQ_1A_1^0Q_1v)(u_1,u_2)|\<u_2\>^\ell du_2.
$$
Note that $| m_{1}(y,u_1,\theta_1)|\le \rho_1(u_1)$. To prove \eqref{proposition_regular_4_proof_3}, we set
\begin{align*}
d^\pm_j(\lambda,x,y;u_1,\theta_1)&=\int_{\R\times[0,1]}e^{\lambda \Phi^\pm_j(x,y,\Theta)}M_{11}(X_1,Y_2,\Theta)du_2d\theta_2,\\
K^\pm_{1,1}(x,y;u_1,\theta_1)&=\int_0^\infty e^{i\lambda(|x|\pm |y|)}\chi(\lambda)d^\pm_1(\lambda,x,y;u_1,\theta_1)d\lambda,\\
K^\pm_{1,2}(x,y;u_1,\theta_1)&=\int_0^\infty e^{-\lambda(|x|\pm i|y|)}\chi(\lambda)d^\pm_2(\lambda,x,y;u_1,\theta_1)d\lambda,
\end{align*}
where $\Phi^\pm_j$ are defined by \eqref{proposition_regular_1_proof_3}. It follows from \eqref{proposition_regular_4_proof_2} and Lemma \ref{lemma_FF} with $\alpha=\beta=1$ that
\begin{align*}
\widetilde K^0_1(x,y;u_1,\theta_1)=K^+_{1,1}-K^-_{1,1}+K^+_{1,2}-K^-_{1,2}.
\end{align*}
Moreover, since $d^\pm_j(0,x,y;u_1,\theta_1)=\sgn(X_1) m_{1}(y,u_1,\theta_1)$, \eqref{proposition_regular_1_proof_6} and \eqref{vBv} imply
\begin{align}
\label{proposition_regular_4_proof_4}
|\partial_\lambda^\ell d^\pm_1(\lambda,x,y;u_1,\theta_1)|+e^{-\lambda|x|}|\partial_\lambda^\ell d^\pm_2(\lambda,x,y;u_1,\theta_1)|\lesssim \rho_{\ell+1}(u_1)
\end{align}
uniformly in $\lambda\ge0$, $x,y\in \R$, $u_1\in \R$ and $\theta_1\in [0,1]$. Hence $K^\pm_{1,j}$ satisfy
\begin{align}
\label{proposition_regular_4_proof_5}
|K^\pm_{1,j}(x,y;u_1,\theta_1)|\lesssim \rho_{1}(u_1)
\end{align}
uniformly in $x,y,u_1$ and $\theta_1$. We next let $\psi_\pm=\psi(||x|\pm|y||^2)$ be as in Lemma \ref{lemma_2_3} and apply integration by parts twice to $\psi_\pm K^\pm_{1,1}$ as in the previous case, obtaining
\begin{align*}
\psi_\pm K^\pm_{1,1}
&=-\frac{\psi_\pm \sgn(X_1) m_{1}}{i(|x|\pm|y|)}-\frac{\psi_\pm}{i(|x|\pm|y|)}\int_0^\infty e^{i\lambda(|x|\pm |y|)}\partial_\lambda(\chi d^\pm_1)d\lambda\\
&=-\frac{\psi_\pm \sgn(X_1) m_{1}}{i(|x|\pm|y|)}-\frac{\psi_\pm \partial_\lambda(\chi d^\pm_1)|_{\lambda=0}}{(|x|\pm|y|)^2}-\frac{\psi_\pm}{(|x|\pm|y|^2)}\int_0^\infty e^{i\lambda(|x|\pm |y|)}\partial_\lambda^2(\chi d^\pm_1)d\lambda\\
&=-\frac{\psi_\pm \sgn(X_1) m_{1}}{i(|x|\pm|y|)}+O(\<|x|-|y|\>^{-2}\rho_{3}(u_1)).
\end{align*}
The same calculation and \eqref{proposition_regular_4_proof_4} also yield
\begin{align*}
\psi_- K^\pm_{1,2}=\frac{\psi_- \sgn(X_1) m_{1}}{|x|\pm i|y|}+O(\<|x|-|y|\>^{-2}\rho_{3}(u_1)).
\end{align*}
Moreover, since $1-\psi_\pm$ is supported in $\{|x|\pm|y|\le1\}$, we know by \eqref{proposition_regular_4_proof_5} that $(1-\psi_\pm)K^\pm_{1,1}$ and $(1-\psi_-)K^\pm_{1,2}$ are dominated by $\<|x|-|y|\>^{-2}\rho_{3}(u_1)$. Therefore, we have
\begin{align*}
\widetilde K^0_1
&\equiv\sgn(X_1) m_{1}(y,u_1,\theta_1)\left(-i\frac{\psi_+}{(|x|+|y|)}+i\frac{\psi_-}{(|x|-|y|)}-\frac{\psi_-}{(|x|+ i|y|)}+\frac{\psi_-}{(|x|-i|y|)}\right)\\
&\equiv\sgn(X_1) m_{1}(y,u_1,\theta_1)g_1^-(x,y)
\end{align*}
modulo the error term $O(\<|x|-|y|\>^{-2}\rho_{3}(u_1))$ and \eqref{proposition_regular_4_proof_3} thus follows.

\underline{{\it Step 2: Proof of \eqref{lemma_2_2_1}}}. Let $T_{u_1,\theta_1}=T_{\widetilde K^0_1(\cdot,\cdot,u_1,\theta_1)}$ be the integral operator with kernel $\widetilde K^0_1(x,y,u_1,\theta_1)$, where $u_1,\theta_1$ are considered as parameters. We apply Fubini's theorem and Minkowski's integral inequality (which holds for any $\sigma$-finite measures) to \eqref{proposition_regular_4_proof_1_1}, obtaining
\begin{align}
\label{proposition_regular_4_proof_2_1}
\norm{T_{K^0_1}f}_{L^p(w_p)}\lesssim  \int_{\R\times [0,1]}\norm{T_{u_1,\theta_1}f}_{L^p(w_p)}du_1d\theta_1,\quad 1\le p<\infty.
\end{align}
Thanks to \eqref{proposition_regular_4_proof_3}, the main term of $T_{u_1,\theta_1}$ is the composition $\sgn(X_1)T_{g_1^-} m_{1}$. Moreover, since $| m_{1}(y,u_1,\theta_1)|\le \rho_1(u_1)$,  the multiplication by $ m_{1}$ is bounded on $L^p(w_p)$ for any $1\le p<\infty$ with the operator norm at most $\rho_1(u_1)$. It thus  follows from Lemmas  \ref{lemma_2_2} and \ref{lemma_2_3} that
\begin{align*}
\norm{T_{u_1,\theta_1}f}_{L^p(w_p)}
&\lesssim [w_p]_{A_p}^{\max\{1,1/(p-1)\}}\rho_3(u_1)\left(\norm{f}_{L^p(w_p)}+\norm{\tau f}_{L^p(w_p)}\right),\quad 1<p<\infty,
\end{align*}
where $\tau f(x)=f(-x)$. Since $\rho_3(u_1)\in L^1(\R)$ by the assumption on $V$ and \eqref{vBv},  the desired the bound \eqref{lemma_2_2_1} for $T_{K^0_{1}}$ follow by applying this bound to \eqref{proposition_regular_4_proof_2_1}. By the same argument, we also obtain the same the bounds \eqref{lemma_2_2_1} for its adjoint $T_{K^0_1}^*$.

\underline{{\it Step 3: Proof of (1)}}. Suppose $\supp V\subset \{|x|\le r\}$ with some $r>0$. Integrating \eqref{proposition_regular_4_proof_3} over $(u_1,\theta_1)\in \R\times[0,1]$ and using \eqref{proposition_regular_4_proof_1_1}, we have
\begin{align}
\label{remark_regular_4_1}
K^0_1(x,y)=g_1^-(x,y)\widetilde{m}_{1}(x,y)+O(\<|x|-|y|\>^{-2}),
\end{align}
where $\widetilde{m}_{1}$ is given by
\begin{align}
\label{proposition_regular_4_proof_5_1}
\widetilde{m}_{1}(x,y)=\frac{1}{16}\int_{\R^2\times[0,1]^2}(\sgn X_1)(\sgn Y_2) u_1 u_2 (vQ_1A_1^0Q_1  v)(u_1,u_2)du_1du_2d\theta_1d\theta_2.
\end{align}
Hence it is enough to prove $T_{\widetilde{m}_{1}g_1^-}\in \mathbb B(L^1(\R),L^{1,\infty}(\R))$. We decompose it as
$$
T_{\widetilde{m}_{1}g_1^-}=\mathds1_{\{|x|\ge r+1\}}T_{\widetilde{m}_{1}g_1^-}+\mathds1_{\{|x|\le r+1\}}T_{\widetilde{m}_{1}g_1^-}.
$$
For the first part, since $|\theta_1u_1|\le r$ for $u_1\in \supp v=\supp V$ and $0\le \theta_1\le1$, we have
$
\sgn X_1=\sgn(x-\theta_1u_1)=\sgn x
$
if $|x|\ge r+1$,  and hence
$$
\mathds1_{\{|x|\ge r+1\}}T_{\widetilde{m}_{1}g_1^-}=\mathds1_{\{|x|\ge r+1\}}\cdot \sgn x\cdot T_{g_1^-}\cdot \widetilde{m}_{2},
$$
where
$$
\widetilde{m}_{2}(y):=(\sgn y)^{-1}\widetilde m_1=\frac{1}{16}\int_{\R^2\times[0,1]^2}(\sgn Y_2) u_1 u_2 (vQ_1A_1^0Q_1  v)(u_1,u_2)du_1du_2d\theta_1d\theta_2
$$
depends only on $y$ and is bounded on $\R$. Recalling that $g^-_1$ is a linear combination of $k_1^\pm$ and $k_2^\pm$, we thus obtain $\mathds1_{\{|x|\ge r+1\}}T_{\widetilde{m}_{1}g_1^-}\in \mathbb B(L^1(\R),L^{1,\infty}(\R))$ by Lemma \ref{lemma_2_3}.
In fact, the same weighted weak-type bound as \eqref{lemma_2_2_2} holds for $\mathds1_{\{|x|\ge r+1\}}T_{\widetilde{m}_{1}g_1^-}$. For the second term, we set
$$
E_\lambda=\{x\in \R\ |\ |\mathds1_{\{|x|\le r+1\}}T_{\widetilde{m}_{1}g_1^-}f(x)|>\lambda\}
$$
for $f\in L^1(\R)$. Since $\widetilde{m}_{1}g_1^-$ is bounded on $\R$, we obtain
$$
|\mathds1_{\{|x|\le r+1\}}T_{\widetilde{m}_{1}g_1^-}f(x)|\lesssim \|f\|_{L^1(\R)}
$$
We also have $|E_\lambda |\lesssim r$ thanks to the restriction $\mathds1_{\{|x|\le r+1\}}$. Thus,
$$
\|\mathds1_{\{|x|\le r+1\}}T_{\widetilde{m}_{1}g_1^-}f\|_{L^{1,\infty}(\R)}\lesssim \sup_{\lambda>0}\lambda |E_\lambda|\lesssim \|f\|_{L^1(\R)}.
$$
This completes the proof of the item (1).

\underline{{\it Step 4: Proof of (2)}}. Suppose $Q_1A_1^0Q_1$ is finite rank. The proof for this case is almost analogous to that for $\mathds1_{\{|x|\ge r+1\}}T_{\widetilde{m}_{1}g_1^-}$. Indeed, we can write
$$
(Q_1A_1^0Q_1 )(u_1,u_2)=\sum_{i,j=1}^N a_{ij}\varphi_i(u_1)\overline{\varphi_j(u_2)}
$$
with some $\varphi_j\in L^2(\R)$, $a_{ij}\in \C$ and $N<\infty$. With this expression, we can apply Fubini's theorem in \eqref{proposition_regular_4_proof_5_1} to compute the $(u_1,\theta_1)$-integral and $(u_2,\theta_2)$-integral separately, and obtain
$$
\widetilde{m}_{1}(x,y)=\sum_{i,j=1}^N a_{ij}c_i(x)\overline{c_j(y)},\quad c_i(x)=\frac14\int_{\R\times[0,1]}(\sgn X_1)u_1v(u_1)\varphi_i(u_1)du_1d\theta_1\in L^\infty(\R).
$$
Hence, the same argument as above yields the bound \eqref{lemma_2_2_2} for $T_{K_0^1}$ and $T_{K_0^1}^*$.

\underline{{\it Step 5: Proof of (3)}}. In order to prove the item (3), it is enough to show $T_{K^0_{1}},T_{K^0_{1}}^*\in \mathbb B(\H^1,L^1)$ by the duality. For that purpose, \eqref{proposition_regular_4_proof_3} is not useful since the multiplication by $ m_{1}$ does not leave $\H^1$ invariant since $f\in \H^1$ must satisfy $\int_\R f(x)dx=0$. Instead, we use a simple trick based on the translation invariance. Let
$$
k^0_1(x,y)=\sgn(x)\sgn(y) \int_0^\infty \chi(\lambda) f_{11}(\lambda,x,y)d\lambda.
$$
Then, recalling the formula \eqref{proposition_regular_4_proof_1}, we have
\begin{align}
\label{proposition_regular_4_proof_6}
K^0_1(x,y)=\frac{1}{16}\int_{\R^2\times[0,1]^2}u_1u_2(vQ_1A^0_1Q_1v)(u_1,u_2) k^0_1(x-\theta_1u_1,y-\theta_2u_2)d\Theta.
\end{align}
Since the $L^1$-norm and $\H^1$-norm are invariant under the translation $f\mapsto f(\cdot-u)$, assuming
 $T_{k^0_1}\in \mathbb B(\H^1,L^1)$, we obtain by the change of variables $x\mapsto x+\theta_1u_1$ and $y\mapsto y+\theta_2u_2$ that
\begin{align*}
\norm{T_{K^0_1}f}_{L^1}
&\le \int_{\R^2\times[0,1]^2}\<u_1\>\<u_2\>|(vQ_1A^0_1Q_1v)(u_1,u_2)| \norm{T_{k^0_1(\cdot-\theta_1u_1,\cdot-\theta_2u_2)}f}_{L^1}d\Theta\\
&\lesssim \int_{\R^2\times[0,1]^2}\<u_1\>\<u_2\>|(vQ_1A^0_1Q_1v)(u_1,u_2)| \norm{f(\cdot+\theta_2u_2}_{\H^1}d\Theta\\
&\lesssim \norm{f}_{\H^1}.
\end{align*}
The same argument also applies to $T_{K^0_1}^*$. It remains to show $T_{k^0_1},T_{k^0_1}^*\in \mathbb B(\H^1,L^1)$. By a similar argument as in the Step 1 based on Lemma \ref{lemma_FF}, one can obtain
\begin{align}
\label{proposition_regular_4_proof_7}
k^0_1(x,y)=g_4^-(x,y)+O(\<|x|-|y|\>^{-2}),
\end{align}
where  $g_4^-=g_{4,i,1}^-$ are defined in Lemma \ref{lemma_2_4} with the choice of $a=i$ and $b=1$.
Moreover, the kernel of $T_{k^0_1}^*$ is given by
$$
\overline{k^0_1(y,x)}=\overline{g_{4,i,1}^-(y,x)}+O(\<|x|-|y|\>^{-2})=g_{4,-i,i}^+(x,y)+O(\<|x|-|y|\>^{-2}).
$$
Therefore, Lemmas \ref{lemma_2_1} and \ref{lemma_2_4} imply $T_{k^0_1},T_{k^0_1}^*\in \mathbb B(\H^1,L^1)$. This completes the proof.
\end{proof}


By Propositions \ref{proposition_regular_1}--\ref{proposition_regular_4} and \eqref{wave_operator_regular}, we have obtained for the regular case and $1<p<\infty$,
$$
W_-^L, (W_-^L)^*\in \mathbb B(L^p(\R))\cap \mathbb B(\H^1(\R),L^1(\R))\cap \mathbb B(L^\infty (\R),\BMO(\R))
$$
as well as the weighted estimate
$$
\norm{W_-^L f}_{L^p(w_p)}+\norm{(W_-^L)^* f}_{L^p(w_p)}\lesssim [w_p]_{A_p}^{\max\{1,1/(p-1)\}}\left(\norm{f}_{L^p(w_p)}+\norm{\tau f}_{L^p(w_p)}\right)
$$
We have also proved $W_\pm,W_\pm^*\in \mathbb B(L^1(\R),L^{1,\infty}(\R))$ if $V$ is compactly supported,  and
$$
\norm{W_-^L f}_{L^{1,\infty}(w_1)}+\norm{(W_-^L )^*f}_{L^{1,\infty}(w_1)}\lesssim [w_1]_{A_1}(1+\log [w]_{A_1})(\norm{f}_{L^1(w_1)}+\norm{\tau f}_{L^1(w_1)})
$$
if $Q_1A_1^0Q_1$ is finite rank. This completes the proof of Theorem \ref{theorem_low_1} for the regular case.

\subsection{First kind resonant case}
Next we consider the case when zero is a first kind resonance of $H$ and $|V(x)|\lesssim \<x\>^{-\mu}$ with $\mu>21$. 
By \eqref{W^{L}_-_2} and \eqref{lemma_3_3_2}, $W^{L}_-$ is of the form
\begin{align}
\label{wave_operator_first}
W^{L}_-=T_{K^1_{-1}}+\sum_{j=1}^2T_{K^1_{0j}}+\sum_{j=1}^3T_{K^1_{1j}}+\sum_{j=1}^2T_{K^1_{2j}}+\sum_{j=1}^3T_{K^1_{3j}}+T_{K^1_4},
\end{align}
where
\begin{align*}
K^1_{-1}(x,y)&:=\int_0^\infty \lambda^2\chi(\lambda)\Big(R_0^+(\lambda^4)vQ_{2}^{0}A_{-1}^1Q_{2}^{0}v[R_0^+-R_0^-](\lambda^4)\Big)(x,y)d\lambda,\\
K^1_{kj}(x,y)&:=\int_0^\infty \lambda^{3+k}\chi(\lambda)\Big(R_0^+(\lambda^4)vB_{kj}^1v[R_0^+-R_0^-](\lambda^4)\Big)(x,y)d\lambda,\\
K^1_{4}(x,y)&:=\int_0^\infty \lambda^3\chi(\lambda)\Big(R_0^+(\lambda^4)v\Gamma_4^1(\lambda)v[R_0^+-R_0^-](\lambda^4)\Big)(x,y)d\lambda
\end{align*}
with $k=0,1,2$ and
\begin{itemize}
\item $B_{01}^1=Q_{2}A_{01}^1Q_1$ and $B_{02}^1=Q_1A_{02}^1Q_{2}$;
 \vskip0.1cm
\item $B_{1}^1=Q_1A_{11}^1Q_1$, $B_{12}^1=Q_{2}A_{12}^1$ and $B_{13}^1=A_{13}^1Q_{2}$;
 \vskip0.1cm
\item $B_{21}^1=Q_1A_{21}^1$ and $B_{22}^1=A_{22}^1Q_1$;
 \vskip0.1cm
\item $B_{31}^1=Q_1A_{31}^1$, $B_{32}^1=A_{32}^1Q_1$ and $B_{33}^1=\widetilde P$.
\end{itemize}
For any $B\in \{Q_{2}^{0}A_{-1}^1Q_{2}^{0},B_{kj}^1,\Gamma_4^1\}$ and $k\le8$, $vBv$ is an integral operator satisfying the bounds \eqref{vBv}. As in the regular case, Theorem \ref{theorem_low_1} for the second kind resonant case follows from the following proposition.

\begin{proposition}
\label{proposition_first_1}
Let $1<p<\infty$, $w_p\in A_p$ and $w_1\in A_1$. Then all the integral operators in \eqref{wave_operator_first} satisfy the same bound as \eqref{lemma_2_2_1} and belong to $\mathbb B(\H^1(\R),L^1(\R))\cap \mathbb B(L^\infty(\R),\BMO(\R))$. Moreover, we have:
\begin{itemize}
\item these operators also belong to $\mathbb B(L^\infty(\R),L^{1,\infty}(\R))$ if $V$ is compactly supported;
\vskip0.2cm
\item  $T_{K_{31}^1},T_{K_{32}^1}$ and $T_{K_4^1}$  in fact belong to $\mathbb B(L^1(\R))\cap \mathbb B(L^\infty(\R))$.
\end{itemize}
\end{proposition}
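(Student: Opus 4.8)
The plan is to mirror the strategy already used in the regular case (Subsection~\ref{subsection_regular}), treating the terms according to the order in $\lambda$ of the integrand of their kernel. First I would expand $W_-^L$ via \eqref{W^{L}_-_2} and the first-kind expansion \eqref{lemma_3_3_2}, which yields the decomposition \eqref{wave_operator_first} into twelve integral operators. The key bookkeeping device is Lemma~\ref{lemma_projection}: each factor $R_0^\pm(\lambda^4)vQ_\alpha$ (resp.\ $Q_\alpha vR_0^\pm(\lambda^4)$) gains $\lambda^{\alpha}$ and replaces $F_\pm$ by $F_\pm^{(\alpha)}$, so that after substituting the $\lambda$-weights $\lambda^{3+k}$ (and $\lambda^2$ for $K^1_{-1}$) one reads off the net power of $\lambda$. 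For instance $K^1_{-1}$ carries $\lambda^{2}$ and is flanked by $Q_2^0$ on both sides, giving net order $\lambda^{2-2+2\cdot(-3)+6}=\lambda^{2}$; the terms $B^1_{0j},B^1_{1j},B^1_{2j},B^1_{31},B^1_{32}$ and $\Gamma_4^1$ all turn out to have net integrand order $O(\lambda)$, while only $B^1_{33}=\widetilde P$ gives net order $O(1)$ (exactly as $K^0_{33}$ did in the regular case). So the term-by-term analysis splits into a large class ``$O(\lambda)$'' and the single delicate term $T_{K^1_{33}}$.

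For the $O(\lambda)$ class I would argue as in Propositions~\ref{proposition_regular_1} and~\ref{proposition_regular_2}: write the kernel using Lemma~\ref{lemma_FF} as a linear combination of oscillatory/exponential integrals $\int_0^\infty e^{i\lambda(|x|\pm|y|)}\lambda\chi(\lambda)a_1^\pm(\lambda,x,y)\,d\lambda$ and $\int_0^\infty e^{-\lambda(|x|\pm i|y|)}\lambda\chi(\lambda)a_2^\pm(\lambda,x,y)\,d\lambda$ with symbols $a_j^\pm$ satisfying \eqref{proposition_regular_1_proof_7}-type bounds (using $\mu>21$, so $\<x\>^{4+2\ell}V\in L^1$ with room to spare for the shifted projections $Q_2^0$). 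Integration by parts twice, using the crucial vanishing $\lambda\chi(\lambda)|_{\lambda=0}=0$, produces the kernel bound $O(\<|x|-|y|\>^{-2})$, and for $\Gamma_4^1$ one runs the dyadic decomposition of $\chi$ as in Proposition~\ref{proposition_regular_2} to absorb the $\lambda^{-\ell}$ loss in $\partial_\lambda^\ell\Gamma_4^1$, getting $O(\<|x|-|y|\>^{-3/2})$. Lemmas~\ref{lemma_2_1} and~\ref{lemma_2_2} then give $L^p(w_p)$ and $L^{1,\infty}(w_1)$ bounds with the stated $A_p$-dependence, and in particular $\mathbb B(L^1)\cap\mathbb B(L^\infty)$ for all these terms; in fact this covers the last bullet of the proposition (note $K^1_{31},K^1_{32}$ and $K^1_4$ are among them). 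For the $\H^1$-$L^1$ and $L^\infty$-$\BMO$ statements, any term already bounded on $L^1$ and $L^\infty$ is automatically bounded on $\H^1$ into $L^1$ and $L^\infty$ into $\BMO$ by interpolation/duality with $L^2$, so only those $O(\lambda)$ terms whose kernel is merely $O(\<|x|-|y|\>^{-2})$ but not Schur-in-$L^1$ need more; but $\<|x|-|y|\>^{-2}\in L^\infty_yL^1_x\cap L^\infty_xL^1_y$, so Lemma~\ref{lemma_2_1} already gives $\mathbb B(L^1)\cap\mathbb B(L^\infty)$ for \emph{all} $O(\lambda)$ terms, hence the $\H^1$-$L^1$/$L^\infty$-$\BMO$ and (for compactly supported $V$) the $L^\infty$-to-$L^{1,\infty}$ claims follow at once.

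The main obstacle is the single term $T_{K^1_{33}}$ coming from $\lambda^3\widetilde P$, which has net integrand order $O(1)$ and therefore cannot be reduced to a Schur kernel. Here I would repeat the analysis of Proposition~\ref{proposition_regular_3} verbatim: using $\widetilde P=-\tfrac{2(1+i)}{\norm{V}_{L^1}^2}\<\cdot,v\>v$ and integration by parts twice, one extracts the leading singular kernel $\tfrac{-1+i}{8}g_1^+(x,y)$ with $g_1^+=g_{1,1,1}^+$ as in Lemma~\ref{lemma_2_4}, plus an $O(\<|x|-|y|\>^{-2})$ error. Then Lemmas~\ref{lemma_2_2}, \ref{lemma_2_3} and~\ref{lemma_2_4} deliver exactly the weighted $L^p$ bound \eqref{lemma_2_2_1}, the weak-type bound \eqref{lemma_2_2_2}, and $T_{K^1_{33}},(T_{K^1_{33}})^*\in\mathbb B(\H^1,L^1)\cap\mathbb B(L^\infty,\BMO)$. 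The point is that $\widetilde P$ appears identically in all three expansions \eqref{lemma_3_3_1}--\eqref{lemma_3_3_3}, so no new work is required beyond citing the regular-case computation. Combining this with the $O(\lambda)$ estimates and the decomposition \eqref{wave_operator_first} finishes the proof; note $T_{K^1_{33}}$ is \emph{not} among the $\mathbb B(L^1)\cap\mathbb B(L^\infty)$ terms, consistent with the proposition asserting this only for $T_{K^1_{31}},T_{K^1_{32}},T_{K^1_4}$.
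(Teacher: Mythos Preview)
Your $\lambda$-accounting is off, and this is a genuine gap. For $K^1_{-1}$ you compute $\lambda^{2-2+2\cdot(-3)+6}$, which equals $\lambda^0$, not $\lambda^2$; more importantly, the same bookkeeping applied to the whole list shows that the terms $K^1_{-1},K^1_{0j},K^1_{1j},K^1_{2j}$ are all of net order $O(1)$ in $\lambda$, \emph{not} $O(\lambda)$. Concretely, each of these kernels is of the form
\[
\int_0^\infty \lambda^{6-\alpha-\beta}\chi(\lambda)\Big(R_0^+(\lambda^4)vQ_\alpha BQ_\beta v[R_0^+-R_0^-](\lambda^4)\Big)(x,y)\,d\lambda,
\]
one power of $\lambda$ lower than the regular-case form \eqref{G}. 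Only $K^1_{31},K^1_{32},K^1_4$ are genuinely $O(\lambda)$ (which is exactly why the proposition singles them out in the last bullet). For the $O(1)$ terms, your integration-by-parts argument breaks: there is no factor $\lambda$ to kill the boundary term at $\lambda=0$ (recall $\chi(0)=1$), so the leading contribution is of size $(|x|\pm|y|)^{-1}$, not $(|x|\pm|y|)^{-2}$, and Lemma~\ref{lemma_2_1} no longer applies.

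The paper therefore treats all of $K^1_{-1},K^1_{0j},K^1_{1j},K^1_{2j}$ as delicate $O(1)$ terms, extracting the leading kernel $(\sgn X_1)^\alpha(\sgn Y_2)^\beta g_{\alpha\beta}(x,y)$ with $g_{\alpha\beta}$ as in \eqref{proposition_first_1_proof_4}, and then splitting into two cases. When one of $\alpha,\beta$ is even (all pairs except $(1,1)$), the $\sgn$ factors decouple into bounded multipliers $C^1_\alpha(x)$, $C^2_\beta(y)$, and Lemmas~\ref{lemma_2_2}--\ref{lemma_2_4} plus the translation-invariance trick give the weighted, $\H^1$--$L^1$, and $L^\infty$--$\BMO$ bounds. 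The remaining case $(\alpha,\beta)=(1,1)$, namely $K^1_{11}$, is the direct analogue of $K^0_1$ and needs the full argument of Proposition~\ref{proposition_regular_4}; in particular, this is where the compact-support hypothesis on $V$ enters for the weak-$(1,1)$ bound. Your plan misses this entire layer of analysis.
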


\begin{proof}
The proof is essentially same as that of the regular case, so we only give a brief outline. Recall that we do not  distinguish $Q_{2},Q_{2}^{0}$ and use the same notation $Q_{2}$ to denote them.

At first, $T_{K_{33}^1}=T_{K_{33}^0}$. Moreover, $K_{31}^1$ and $K_{32}^1$ are written in the form \eqref{G} with some $B\in \mathbb B(L^2)$ such that $Q_\alpha B Q_\beta$ is absolutely bounded, and $(\alpha,\beta)=(1,0)$ for $K_{31}^1$ and $(\alpha,\beta)=(0,1)$ for $K_{32}^1$. Hence the proofs for $K_{31}^1$ and $K_{32}^1$ are completely same as that of Proposition \ref{proposition_regular_1}. The proof for $T_{K_4^1}$ is also completely same as that for  $T_{K_4^0}$ since $\Gamma_4^1$ satisfies the same estimates as $\Gamma_4^0$ (see \eqref{lemma_3_3_4}).

Next, for the other cases, precisely for the operators $T_{K^1_{-1}},T_{K^1_{0j}},T_{K^1_{1j}},T_{K^1_{2j}}$, the corresponding kernel is written in the following form:
\begin{align}
\label{proposition_first_1_proof_1}
\int_0^\infty \lambda^{6-\alpha-\beta}\chi(\lambda)\Big(R_0^+(\lambda^4)vQ_\alpha BQ_\beta v[R_0^+-R_0^-](\lambda^4)\Big)(x,y)d\lambda,
\end{align}
where $Q_\alpha BQ_\beta$ is absolutely bounded and
\begin{align}
\label{proposition_first_1_proof_2}
(\alpha,\beta)=\begin{cases}
(2,2)&\text{for}\quad K=K^1_{-1},\\
(2,1)&\text{for}\quad K=K^1_{01},\\
(1,2)&\text{for}\quad K=K^1_{02},\\
(1,1)&\text{for}\quad K=K^1_{11},
\end{cases}\quad
(\alpha,\beta)=\begin{cases}
(2,0)&\text{for}\quad K=K^1_{12},\\
(0,2)&\text{for}\quad K=K^1_{13},\\
(1,0)&\text{for}\quad K=K^1_{21},\\
(0,1)&\text{for}\quad K=K^1_{22}.
\end{cases}
\end{align}
Recall that $X_1=x-\theta_1u_1$, $Y_2=y-\theta_2u_2$, $\Theta=(u_1,u_2,\theta_1,\theta_2)$, $\Theta_j=(u_1,u_2,\theta_j)$. Let $M_{\alpha\beta}(X_1,Y_2,\Theta)$, $M_{\alpha0}(X_1,\Theta_1)$ and $M_{0\beta}(Y_2,\Theta_2)$ be as in \eqref{M_alphabeta},  \eqref{M_alpha0} and \eqref{M_0beta}, respectively. For simplicity, without any confusion, we shall use the same notation $M_{\alpha\beta}(X_1,Y_2,\Theta)$ to denote $M_{\alpha0}(X_1,\Theta_1)$ and $M_{0\beta}(Y_2,\Theta_2)$ by regarding $M_{\alpha0}(X_1,\Theta_1)$ (resp. $M_{0\beta}(Y_2,\Theta_2))$ as a constant function of $y,\theta_2$ (resp. $x,\theta_1$).
Let $G^1_{\alpha\beta}(x,y)$ be the function given by \eqref{proposition_first_1_proof_1}. Using $f_{\alpha\beta}$ defined in Lemma \ref{lemma_FF}, we have
\begin{align}
\label{proposition_first_1_proof_3}
G^1_{\alpha\beta}(x,y)=\int_0^\infty \chi(\lambda)\left(\int_{\R^2\times[0,1]^2} M_{\alpha\beta}(X_1,Y_2,\Theta)f_{\alpha\beta}(\lambda,X_1,Y_2)d\Theta\right)d\lambda.
\end{align}
We consider the two cases (i) $(\alpha,\beta)\neq(1,1)$ and (ii) $(\alpha,\beta)=(1,1)$, separately. It will be seen that the proof for the case (i) (resp. (ii)) is similar to that for $T_{K^0_{33}}$ (resp. $T_{K^0_1}$).
\vskip0.2cm
\underline{{\it Case (i)}}. Let $(\alpha,\beta)\neq(1,1)$. Then the same argument as in the proof of  Proposition \ref{proposition_regular_3} based on Lemma \ref{lemma_FF} yields that $G^1_{\alpha\beta}(x,y)$ is of the form
$$
C_{\alpha\beta}(x,y)\Big(a_{\alpha\beta}\left(k_1^++(-1)^\beta k_1^-\right)+b_{\alpha\beta}\left(k_2^++(-1)^\beta k_2^-\right)\Big)(x,y)+O(\<|x|-|y|\>^{-2}),
$$
where $k^\pm_j$ are defined in Lemma \ref{lemma_2_3}, $a_{\alpha\beta}=i^{\alpha+\beta+1}$, $b_{\alpha\beta}=(-1)^{\alpha+\beta}i^{\beta+1}$ and
$$
C_{\alpha\beta}(x,y)=\int_{\R^2\times[0,1]^2}M_{\alpha\beta}(X_1,Y_2,\Theta)d\Theta.
$$
An important feature is that only one of $\sgn X_1$ or $\sgn Y_2$ appears in the integrand of $C_{\alpha\beta}$ since one of $\alpha,\beta$ is even in \eqref{proposition_first_1_proof_2}, except for $(\alpha,\beta)=(1,1)$. In particular, $C_{\alpha\beta}(x,y)$ is of the form $C^1_\alpha(x)C^2_\beta(y)$ with some bounded functions $C^1_\alpha$ and $C^2_\beta$ (see \eqref{M_alphabeta},  \eqref{M_alpha0} and \eqref{M_0beta} and recall the convention $(\sgn x)^2=1$). Hence, $T_{G^1_{\alpha\beta}}$ is a sum of the composition $C^1_\alpha T_{g_{\alpha\beta}}C^2_\beta$ and an error term satisfying the condition of Lemma \ref{lemma_2_1}, where
\begin{align}
\label{proposition_first_1_proof_4}
g_{\alpha\beta}=a_{\alpha\beta}\left(k_1^++(-1)^\beta k_1^-\right)+b_{\alpha\beta}\left(k_2^++(-1)^\beta k_2^-\right).
\end{align}
Since the multiplication operator by bounded function is bounded on $L^p(w_p)$ for any $1\le p<\infty$ and on $L^{1,\infty}(w_1)$, we can apply Lemma \ref{lemma_2_3} to obtain that the same bounds as \eqref{lemma_2_2_1} holds for $T_{G^1_{\alpha\beta}},T_{G^1_{\alpha\beta}}^*$ and hence for all $T_{K^1_{-1}},T_{K^1_{0j}},T_{K^1_{1j}},T_{K^1_{2j}}$ and their adjoint operators.

To obtain $T_{G^1_{\alpha\beta}}\in \mathbb B(\H^1(\R),L^1(\R))\cap \mathbb B(L^\infty(\R),\BMO(\R))$, we use the same trick as in Proposition \ref{proposition_regular_4} based on the translation invariance of the $L^1$, $\H^1$  and $\BMO$-norms to reduce the proof to that of $T_{g_{\alpha\beta}}\in \mathbb B(\H^1(\R),L^1(\R))\cap \mathbb B(L^\infty(\R),\BMO(\R))$, where
\begin{align}
\label{proposition_first_1_proof_5}
\widetilde g_{\alpha\beta}(x,y)=(\sgn x)^\alpha (\sgn y)^\beta\int_0^\infty \chi(\lambda) f_{\alpha\beta}(\lambda,x,y)d\lambda.
\end{align}
Namely, we have for $\mathcal Y=L^1(\R)$ and $\BMO(\R)$,
$$
\norm{T_{G^1_{\alpha\beta}}f}_{\mathcal Y}\lesssim \norm{T_{g_{\alpha\beta}}f}_{\mathcal Y}.
$$
By Lemma \ref{lemma_3_4} and integration by parts, we find that
$$
\widetilde g_{\alpha\beta}=(\sgn x)^\alpha g_{\alpha\beta}(x,y)(\sgn y)^\beta+O(\<|x|-|y|\>^{-2}).
$$
For $(\alpha,\beta)$ in \eqref{proposition_first_1_proof_2} and $(\alpha,\beta)\neq(1,1)$, $\widetilde g_{\alpha\beta}$ coincides with one of $g^+_1,g_2^-$ and $g_3^+$ with some $a,b\in \C$ given in Lemma \ref{lemma_2_4} (recall that the convention $(\sgn x)^2=1$). Hence Lemma \ref{lemma_2_4} applies to $T_{\widetilde g_{\alpha\beta}}$, obtaining $T_{\widetilde g_{\alpha\beta}}\in \mathbb B(\H^1(\R),L^1(\R))\cap \mathbb B(L^\infty(\R),\BMO(\R))$.
\vskip0.2cm
\underline{{\it Case (ii)}}. The function $K^1_{11}=G^1_{11}$, which is given by \eqref{proposition_first_1_proof_1} with $B=A^1_{11}$ and $\alpha=\beta=1$, essentially coincides with the function $K^0_{1}$ which is given by \eqref{proposition_first_1_proof_1} with $B=A^0_1$ and $\alpha=\beta=1$ (see \eqref{proposition_regular_4_proof_1}). Hence the same proof as that of Proposition \ref{proposition_regular_4} yields that $T_{G^1_{11}}$ satisfies the statement of Proposition \ref{proposition_first_1}.

Summarizing the above two cases (i) and (ii), we conclude that,  for all $(\alpha,\beta)$ in \eqref{proposition_first_1_proof_2}, $T_{G^1_{\alpha\beta}},T_{G^1_{\alpha\beta}}^*$ satisfy the same bound as \eqref{lemma_2_2_1}, as well as \eqref{lemma_2_2_2} if $V$ is compactly supported, and belong to $\mathbb B(\H^1(\R),L^1(\R))\cap \mathbb B(L^\infty(\R),\BMO(\R))$. This completes the proof of the proposition and hence of Theorem \ref{theorem_1} for the first kind resonance case.
\end{proof}

\begin{remark}
\label{remark_first_1}
Note that for any odd integer $\alpha,\beta\ge1$, we also have $T_{\widetilde g_{\alpha\beta}}\in \mathbb B(\H^1(\R),L^1(\R))$ by the same argument as in the case $\alpha=\beta=1$ since, in such a case, we can choose $a,b\in \C$ appropriately so that $\widetilde g_{\alpha\beta}=g_{4,a,b}^-$, where $g_{4,a,b}^-$ is defined in Lemma \ref{lemma_2_4}.
\end{remark}


\subsection{Second kind resonant case}
Finally we consider the case when zero is a second kind resonance of $H$ and $|V(x)|\lesssim \<x\>^{-\mu}$ with some $\mu>29$. In such a case, according to the expansion \eqref{lemma_3_3_3}, $W^{L}_-$ consists of 19 integral operators as
\begin{align}
\label{wave_operator_second}
\nonumber
W^{L}_-&=T_{K^2_{-3}}+\sum_{j=1}^2T_{K^2_{-2j}}+\sum_{j=1}^3T_{K^2_{-1j}}+\sum_{j=1}^4T_{K^2_{0j}}\\
&\quad\quad+\sum_{j=1}^3T_{K^2_{1j}}+\sum_{j=1}^2T_{K^2_{2j}}+\sum_{j=1}^3T_{K^1_{3j}}+T_{K^2_4},
\end{align}
where
\begin{align*}
K^2_{-3}(x,y)&:=\int_0^\infty \chi(\lambda)\Big(R_0^+(\lambda^4)vQ_{3}A_{-3}^2Q_{3}v[R_0^+-R_0^-](\lambda^4)\Big)(x,y)d\lambda,\\
K^2_{kj}(x,y)&:=\int_0^\infty \lambda^{3+k}\chi(\lambda)\Big(R_0^+(\lambda^4)vB_{kj}^2v[R_0^+-R_0^-](\lambda^4)\Big)(x,y)d\lambda,\\
K^2_{4}(x,y)&:=\int_0^\infty \lambda^3\chi(\lambda)\Big(R_0^+(\lambda^4)v\Gamma_4^2(\lambda)v[R_0^+-R_0^-](\lambda^4)\Big)(x,y)d\lambda
\end{align*}
with $k=-2,-1,0,1,2$ and
\begin{itemize}
\item $B_{-21}^2=Q_{3}A_{-21}^2Q_{2}$ and $B_{-22}^2=Q_{2}A_{-22}^2Q_{3}$; \vskip0.1cm

\item $B_{-11}^2=Q_{2}A_{-11}^2Q_{2}$, $B_{-12}^2=Q_{3}A_{-12}^2Q_1$ and $B_{-13}^2=Q_1A_{-13}^2Q_{3}$;\vskip0.1cm
\item $B_{01}^2=Q_{2}A^2_{01}Q_1$, $B_{02}^2=Q_1A^2_{02}Q_{2}$, $B_{03}^2=Q_{3}A^2_{03}$ and $B_{04}^2=A^2_{04}Q_{3}$;\vskip0.1cm
\item $B_{11}^2=Q_1A_{11}^2Q_1$, $B_{12}^2=Q_{2}A^2_{12}$ and $B_{13}^2=A^2_{13}Q_{2}$;\vskip0.1cm
\item $B_{21}^2=Q_1A^2_{21}$ and $B_{22}^2=A^2_{22}Q_1$;\vskip0.1cm
\item $B_{31}^2=Q_1A_{31}^2$, $B_{32}^2=A_{32}^2Q_1$ and $B_{33}^2=\widetilde P$.
\end{itemize}
As in the previous two cases, Theorem \ref{theorem_low_1} for the second kind resonant case then follows from the following proposition:

\begin{proposition}
\label{proposition_second_1}
Let $1<p<\infty$, $w_p\in A_p$ and $w_1\in A_1$. Then all the integral operators in \eqref{wave_operator_first} satisfy the same bound as \eqref{lemma_2_2_1} and belong to $\mathbb B(\H^1(\R),L^1(\R))$. If in addition $V$ is compactly supported, then they also satisfy the same bound as \eqref{lemma_2_2_2}. Moreover, we have:
\begin{itemize}
\item except for $T_{K^2_{-3}}$ and $T_{K^2_{-12}}$, these operators belong to $\mathbb B(L^\infty(\R),\BMO(\R))$;\vskip0.2cm
\item $T_{K^2_{31}},T_{K^2_{32}}$ and $T_{K^2_4}$ in fact belong to $\mathbb B(L^1(\R))\cap \mathbb B(L^\infty(\R))$. \end{itemize}
\end{proposition}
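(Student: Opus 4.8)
The plan is to follow, term by term, the scheme already carried out for the regular and first-kind resonant cases (Propositions \ref{proposition_regular_1}--\ref{proposition_regular_4} and Proposition \ref{proposition_first_1}). Inserting the expansion \eqref{lemma_3_3_3} into \eqref{W^{L}_-_2} produces the $19$ integral operators listed in \eqref{wave_operator_second}; by Lemma \ref{lemma_projection} the $\lambda$-order of the integrand of the kernel of the term whose middle factor is $\lambda^m Q_\alpha A Q_\beta$ (with the convention $Q_0:=\Id$) equals $\lambda^{m+\alpha+\beta-3}$. A direct count shows this order is $O(\lambda)$ exactly for $T_{K^2_{31}},T_{K^2_{32}}$ and $T_{K^2_4}$ (``class I''), and $O(1)$ for the other sixteen (``class II''). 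For $T_{K^2_{31}},T_{K^2_{32}}$ the proof of Proposition \ref{proposition_regular_1} applies verbatim: after using Lemmas \ref{lemma_projection} and \ref{lemma_FF} and integrating by parts twice in $\lambda$ one gets $|K^2_{3j}(x,y)|\lesssim\<|x|-|y|\>^{-2}$. For $T_{K^2_4}$ the proof of Proposition \ref{proposition_regular_2} applies verbatim, since $\Gamma_4^2(\lambda)$ obeys the same derivative bounds \eqref{lemma_3_3_4} as $\Gamma_4^0(\lambda)$, giving $|K^2_4(x,y)|\lesssim\<|x|-|y|\>^{-3/2}$. In both cases Lemmas \ref{lemma_2_1} and \ref{lemma_2_2} yield membership in $\mathbb B(L^p)$ for every $1\le p\le\infty$ together with \eqref{lemma_2_2_1}, \eqref{lemma_2_2_2}; this is the second bullet of the proposition.

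For the class II operators I would split exactly as in Proposition \ref{proposition_first_1}. The term $T_{K^2_{33}}$ is literally $T_{K^0_{33}}$ and is covered by Proposition \ref{proposition_regular_3}. Writing the kernel of each remaining term in the form \eqref{proposition_first_1_proof_1}--\eqref{proposition_first_1_proof_3} with a pair $(\alpha,\beta)$, one is in ``case (i)'' — at least one of $\alpha,\beta$ even, the value $0$ included — for the twelve operators $T_{K^2_{-21}},T_{K^2_{-22}},T_{K^2_{-11}},T_{K^2_{01}},T_{K^2_{02}},T_{K^2_{03}},T_{K^2_{04}},T_{K^2_{12}},T_{K^2_{13}},T_{K^2_{21}},T_{K^2_{22}}$ and $T_{K^2_{33}}$. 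Here the case (i) argument of Proposition \ref{proposition_first_1} applies: Lemma \ref{lemma_FF} and two integrations by parts give a leading term $C_{\alpha\beta}(x,y)\,g_{\alpha\beta}(x,y)$ modulo $O(\<|x|-|y|\>^{-2})$, where $g_{\alpha\beta}$ is a combination of the kernels $k_1^\pm,k_2^\pm$ of Lemma \ref{lemma_2_3} and $C_{\alpha\beta}$ factors as $C^1(x)C^2(y)$ with bounded factors, one of them constant, precisely because one of $(\sgn X_1)^\alpha,(\sgn Y_2)^\beta$ is trivial. Then Lemmas \ref{lemma_2_1}, \ref{lemma_2_3}, \ref{lemma_2_4} together with the translation-invariance trick of Proposition \ref{proposition_regular_4} give \eqref{lemma_2_2_1}, the bound \eqref{lemma_2_2_2} when $V$ is compactly supported, and membership in $\mathbb B(\H^1(\R),L^1(\R))\cap\mathbb B(L^\infty(\R),\BMO(\R))$.

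The four class II operators with $\alpha,\beta$ both odd — $T_{K^2_{11}}$ with $(\alpha,\beta)=(1,1)$, $T_{K^2_{-13}}$ with $(1,3)$, $T_{K^2_{-12}}$ with $(3,1)$ and $T_{K^2_{-3}}$ with $(3,3)$ — I would treat as in Proposition \ref{proposition_regular_4}: using translation-invariance of the $L^1$-, $\H^1$- and $\BMO$-norms one reduces to the ``stripped'' kernel $\widetilde g_{\alpha\beta}(x,y)=(\sgn x)^\alpha(\sgn y)^\beta\int_0^\infty\chi(\lambda)f_{\alpha\beta}(\lambda,x,y)\,d\lambda$, which by Lemma \ref{lemma_FF} and integration by parts coincides, modulo $O(\<|x|-|y|\>^{-2})$, with a $g_{4,a,b}^-$-type kernel (cf.\ Remark \ref{remark_first_1}); Lemmas \ref{lemma_2_1} and \ref{lemma_2_4} then give $\mathbb B(\H^1(\R),L^1(\R))$, and Steps 2--4 of the proof of Proposition \ref{proposition_regular_4} give \eqref{lemma_2_2_1} and \eqref{lemma_2_2_2}. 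It is here that the decay rate $\mu>29$ enters: in the $Q_3$-cases the weights $\rho_\ell$ carry index $\ell+3$ in $\<u\>$, so integrability after two integrations by parts requires $\norm{\<x\>^{28}V}_{L^1}<\infty$.

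The point I expect to be the main obstacle is the $\mathbb B(L^\infty(\R),\BMO(\R))$ statement for the class II operators. By the duality $(\H^1)^*=\BMO$ this reduces to showing the adjoint of each such term is again bounded from $\H^1$ to $L^1$, which forces a term-by-term analysis of whether the reduced sign-kernel — of both the operator and its adjoint — lies among $g_1^\pm,g_2^\pm,g_3^\pm$, for which Lemma \ref{lemma_2_4} supplies the $L^\infty$-$\BMO$ bound, or only among $g_4^\pm$, for which only the $\H^1$-$L^1$ bound is available. For the twelve case (i) terms and for $T_{K^2_{11}}$ (and, after checking its adjoint, $T_{K^2_{-13}}$) this bookkeeping is favorable, whereas for $T_{K^2_{-3}}$ and $T_{K^2_{-12}}$ it is not, which is exactly why these two are excluded from the $\mathbb B(L^\infty,\BMO)$ claim — and ultimately underlies the sharp negative result of Theorem \ref{theorem_2}. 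Once this dichotomy is settled, summing the $19$ contributions proves Theorem \ref{theorem_low_1} in the second-kind resonant case, which together with the high-energy estimate of Section \ref{section_high} completes the proof of Theorem \ref{theorem_1}.
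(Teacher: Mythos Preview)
Your proposal follows the paper's proof essentially line for line: the class I terms $T_{K^2_{31}},T_{K^2_{32}},T_{K^2_4}$ are handled exactly as in Propositions \ref{proposition_regular_1}--\ref{proposition_regular_2}, the term $T_{K^2_{33}}$ is literally $T_{K^0_{33}}$, and the remaining class II terms are sorted according to the parity of $(\alpha,\beta)$ precisely as the paper does (its cases (i), (ii), (iii)). Your treatment of the weighted $L^p$-bound, the $\H^1$-$L^1$ bound, and the compact-support $L^1$-$L^{1,\infty}$ bound via the Steps 1--3 mechanism of Proposition \ref{proposition_regular_4} is correct and identical to the paper's argument.

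There is one imprecision in your explanation of why $T_{K^2_{-3}}$ and $T_{K^2_{-12}}$ miss the $L^\infty$-$\BMO$ bound. You frame the dichotomy as ``stripped kernel lies among $g_1^\pm,g_2^\pm,g_3^\pm$ versus only among $g_4^\pm$'', but for \emph{all four} odd-odd pairs the stripped kernel is of type $g_{4,a,b}^-$ and its adjoint of type $g_{4,\overline a,i\overline b}^+$; the kernels $g_1,g_2,g_3$ never appear here. The actual obstruction is the coefficient constraint built into Lemma \ref{lemma_2_4}: $T_{g_{4,a',b'}^+}\in\mathbb B(\H^1,L^1)$ only when $b'=-a'$. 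Computing $a_{\alpha\beta}=i^{\alpha+\beta+1}$ and $b_{\alpha\beta}=(-1)^{\alpha+\beta}i^{\beta+1}$ one finds that the adjoint coefficients satisfy $i\overline{b_{\alpha\beta}}=-\overline{a_{\alpha\beta}}$ for $(\alpha,\beta)=(1,1),(1,3)$ but \emph{not} for $(3,1),(3,3)$. This is why the paper separates case (ii) $(\alpha=1)$ from case (iii) $(\alpha=3)$: the former inherits the full Step 5 argument of Proposition \ref{proposition_regular_4}, while the latter only the $\H^1$-$L^1$ half. Once you replace the ``$g_1,g_2,g_3$ vs $g_4$'' heuristic by this coefficient check, your argument is complete and matches the paper exactly.
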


\begin{proof}
The proof is similar to that of the previous two cases. Indeed, $T_{K^2_{33}}$ is equal to $T_{K^0_{33}}$. The proof for $T_{K^2_{31}},T_{K^2_{32}},T_{K^2_4}$ is same as that for $T_{K^0_{31}},T_{K^0_{32}},T_{K^0_4}$, respectively.

All the other operators in \eqref{wave_operator_second} can be written in the form \eqref{proposition_first_1_proof_3} with $(\alpha,\beta)$ given by
\begin{align*}
(\alpha,\beta)=\begin{cases}
(3,3)&\text{for}\quad K=K^2_{-3},\\
(3,2)&\text{for}\quad K=K^2_{-21},\\
(2,3)&\text{for}\quad K=K^2_{-22},\\
(2,2)&\text{for}\quad K=K^3_{-11},\\
(3,1)&\text{for}\quad K=K^2_{-12},\\
(1,3)&\text{for}\quad K=K^2_{-13},\\
(2,1)&\text{for}\quad K=K^3_{01},\\
(1,2)&\text{for}\quad K=K^2_{02},
\end{cases}\quad\quad
(\alpha,\beta)=\begin{cases}
(3,0)&\text{for}\quad K=K^2_{03}\\
(0,3)&\text{for}\quad K=K^2_{04},\\
(1,1)&\text{for}\quad K=K^2_{11},\\
(2,0)&\text{for}\quad K=K^2_{12},\\
(0,2)&\text{for}\quad K=K^2_{13},\\
(1,0)&\text{for}\quad K=K^3_{21},\\
(0,1)&\text{for}\quad K=K^2_{22}.\end{cases}
\end{align*}

We consider the following three cases separately: (i) one of $\alpha,\beta$ is even, (ii) $(\alpha,\beta)=(1,1)$, $(1,3)$, and (iii) $(\alpha,\beta)=(3,1)$, $(3,3)$.

\underline{{\it Case (i)}}. If in addition that one of $\alpha,\beta$ is even, then the same argument as that in the case (i) of the proof for the first kind resonant case yields that these operators satisfy the same bounds as \eqref{lemma_2_2_1}, as well as the $\H^1$-$L^1$ and $L^\infty$-$\BMO$ boundedness.

\underline{{\it Case (ii)}}. If  $(\alpha,\beta)=(1,1)$, $(1,3)$, the completely same argument as that in the proof for $T_{K^0_{1}}$ works. Indeed, for $(\alpha,\beta)=(1,1)$, $K_{11}^2$ can be obtained by replacing $A_1^0$ in the formula of $K^0_{1}$ (see \eqref{proposition_regular_4_proof_1}) by $A_{11}^2$. Moreover, for $(\alpha,\beta)=(1,3)$, $K_{-13}^2$ is given by
\begin{align*}
K_{-13}^2(x,y)
&=\int_0^\infty \chi(\lambda)\left(\int_{\R\times[0,1]} M_{13}(X_1,Y_2,\Theta)f_{13}(\lambda,X_1,Y_2)d\Theta\right)d\lambda\\
&=-\int_0^\infty \chi(\lambda)\left(\int_{\R\times[0,1]} M_{13}(X_1,Y_2,\Theta)f_{11}(\lambda,X_1,Y_2)d\Theta\right)d\lambda,
\end{align*}
where, with some constant $c_{13}>0$,
$$
M_{13}(X_1,Y_2,\Theta)=c_{13}(\sgn X_1)(\sgn Y_2)(1-\theta_2)^2u_1u_2^3(vQ_1A_{-13}^2Q_3v)(u_1,u_2).
$$
Applying the same argument as in the Step 1 of  Proposition \ref{proposition_regular_4}, we can write
$$
K_{-13}^2(x,y)=\int_{\R\times [0,1]}\Big(\sgn(X_1)g_1^-(x,y) m_{13}(y,u_1,\theta_1)+O\left(\<|x|-|y|\>^{-2}\rho_8 (u_1)\right)\Big)du_1d\theta_1
$$
with $m_{13}(y,u_1,\theta_1)=\int_{\R\times[0,1]}\frac{M_{13}(X_1,Y_2,\Theta)}{\sgn X_1}du_2d\theta_2$. Hence, the same argument as that in Proposition \ref{proposition_regular_4} also applies to $T_{K_{-13}}^2$.

\underline{{\it Case (iii)}}. Let $(\alpha,\beta)=(3,1)$ or $(3,3)$, namely $K=K_{-12}^2$  or $K_{-3}^2$.  In this case,  an almost same argument as for $T_{K^0_{1}}$ still works, except for the part of the boundedness from $L^\infty$ to $\BMO$. If we rewrite \eqref{proposition_first_1_proof_3} as
$$
\int_{\R\times[0,1]}G ^1_{\alpha\beta}(x,y;u_1,\theta_1)du_1d\theta_1
$$
with
$$
G ^1_{\alpha\beta}(x,y;u_1,\theta_1)=\int_0^\infty \chi(\lambda)\left(\int_{\R\times[0,1]} M_{\alpha\beta}(X_1,Y_2,\Theta)f_{\alpha\beta}(\lambda,X_1,Y_2)d\Theta\right)d\lambda,
$$
then we find by the same argument as in Proposition \ref{proposition_regular_4} that
$$
G ^1_{\alpha\beta}(x,y;u_1,\theta_1)=\sgn(X_1)g_{\alpha\beta}(x,y)m_{\alpha\beta}(y,\theta_1,u_1)+O(\<|x|-|y|\>^{-2}\rho_8(u_1)),
$$
where $g_{\alpha\beta}$ is given by \eqref{proposition_first_1_proof_4} and
$$
m_{\alpha\beta}(y,\theta_1,u_1)=\int_{\R\times[0,1]}\frac{M_{\alpha \beta}(X_1,Y_2,\Theta)}{\sgn X_1}du_2d\theta_2=O(\rho_6(u_1)).
$$
Since $T_{g_{\alpha\beta}}\in \mathbb B(L^p(w_p))\cap \mathbb B(L^1(\R),L^{1,\infty}(\R))$ for any $\alpha,\beta$, as in the Steps 2 and 3 in the proof of Proposition \ref{proposition_regular_4}, we obtain $T_{K_{-12}^2},T_{K_{-3}^2}\in \mathbb B(L^p(w_p))$ for $1<p<\infty$, as well as $T_{K_{-12}^2},T_{K_{-3}^2}\in \mathbb B(L^1(\R),L^{1,\infty}(\R))$ if $V$ is compactly supported.

As in Proposition \ref{proposition_regular_4}, the $\H^1$-$L^1$ boundedness is deduced from the bound
$$
\norm{T_{k_{\alpha\beta}}f}_{L^1}\lesssim \norm{f}_{\H^1},
$$
with $k_{31}=g_{31}\sgn x\sgn y=g_{4,i,-1}^-$ and $k_{33}=g_{33}\sgn x\sgn y=g_{4,-i,1}^-$. Hence, Applying Lemma \ref{lemma_2_4}, we obtain $T_{K_{-12}^2},T_{K_{-3}^2}\in \mathbb B(\H^1,L^1)$.
\end{proof}


Putting Propositions \ref{proposition_regular_1}--\ref{proposition_regular_4}, \ref{proposition_first_1} and \ref{proposition_second_1} all together, we have finished the proof of Theorem \ref{theorem_low_1}.

\section{High energy estimate}
\label{section_high}
Here we give the proof of the high energy part of Theorem \ref{theorem_1}, that is, the following theorem. Recall that the high energy part $W^H_-$ of the wave operator was given by \eqref{W^H_-}.

\begin{theorem}
\label{theorem_high_1}
Suppose that $|V(x)|\lesssim \<x\>^{-\mu}$ with $\mu>3$ and  $H$ has no embedded eigenvalues. Then $W^H_-$ is bounded on $L^p$ for any $1\le p\le \infty$. Moreover, for any $1<p<\infty$ and $w_p\in A_p$ and $w_1\in A_1$, $W^H_-$ and $(W^H_-)^*$ satisfy the same bounds as \eqref{theorem_1_1} and \eqref{theorem_1_2}
 \end{theorem}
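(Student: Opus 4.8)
The plan is to start from the stationary formula \eqref{W^H_-} and expand the resolvent $R_V^+(\lambda^4) = R_0^+(\lambda^4) - R_0^+(\lambda^4) v M^{-1}(\lambda) v R_0^+(\lambda^4)$ via the symmetric resolvent identity \eqref{lemma_3_2_1}. Since $1-\chi(\lambda)$ is supported in $\{\lambda \gtrsim \lambda_0\}$, there is no singularity at $\lambda = 0$, and the main analytic input one needs is a high-energy bound on $M^{-1}(\lambda)$ together with its derivatives: namely that $M(\lambda)^{-1} = U + O_{L^2\to L^2}(\lambda^{-1})$ and that $\partial_\lambda^\ell M^{-1}(\lambda)$ decays in $\lambda$ for $\ell = 1,2$. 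These follow from the uniform limiting absorption estimates $\|\langle x\rangle^{-s} R_0^\pm(\lambda^4) \langle x\rangle^{-s}\|_{L^2\to L^2} = O(\lambda^{-3})$ (sharper weighted-$L^2$ bounds with $\lambda$-gain after differentiating, cf.\ \eqref{free_resolvent}), the absence of embedded eigenvalues, and a Neumann-series/compactness argument; this is standard and is available in \cite{Agmon, Kuroda, SWY21}. I would state this as a preliminary lemma and not reprove it.

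With this in hand, $W_-^H$ splits into the Born term (with $R_V^+$ replaced by $R_0^+$) plus a remainder term involving $v M^{-1}(\lambda) v$. For each term one substitutes the explicit free kernel \eqref{free_resolvent}, applies the Taylor expansion Lemma \ref{lemma_3_4} to pull $v$, $\langle x\rangle^k v$ factors out (exactly as in \eqref{vBv} and the proof of Proposition \ref{proposition_regular_1}), and uses Lemma \ref{lemma_FF} to write the oscillatory $\lambda$-integrand as a linear combination of $e^{i\lambda(|X|\pm|Y|)}$ and $e^{-\lambda(|X|\pm i|Y|)}$ against amplitudes with bounded $\partial_\lambda^\ell$ norms (controlled by $\|\langle x\rangle^{2\ell}V\|_{L^1}$, which is finite since $\mu > 3$). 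The key difference from the low-energy analysis is that $1-\chi$ is \emph{not} compactly supported, so I would insert a dyadic decomposition $1-\chi(\lambda) = \sum_{N \ge 0} \varphi_N(\lambda)$ and, on each piece, integrate by parts in $\lambda$ three or four times. Each integration by parts gains a factor $(|X|\pm|Y|)^{-1}$ at the cost of a $\partial_\lambda$, which on $\operatorname{supp}\varphi_N$ costs $2^{-N}$; combining the resulting $2^{N}$-from-$d\lambda$ against the extra decay produces, after summing in $N$, a kernel bound $|K^H(x,y)| \lesssim \langle |x|-|y|\rangle^{-2}$ (for the $e^{i\lambda(|x|\pm|y|)}$ pieces) and $\lesssim \langle |x|+|y|\rangle^{-2}$ (for the $e^{-\lambda(|x|\pm i|y|)}$ pieces, where the exponential decay $e^{-\lambda|x|}$ gives even more room). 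This is precisely the structure handled abstractly by Lemmas \ref{lemma_2_1} and \ref{lemma_2_2}, which then deliver $\mathbb B(L^p)$ for $1\le p\le\infty$ and the weighted bounds \eqref{theorem_1_1}, \eqref{theorem_1_2}, with the $[w_p]_{A_p}$-dependence coming for free from the Calder\'on--Zygmund machinery cited there. The bounds for $(W_-^H)^*$ follow by the same argument (or by the symmetry $R_0^+(\lambda^4)^* = R_0^-(\lambda^4)$).

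The main obstacle is the bookkeeping of the $\lambda\to\infty$ decay: one must check that the amplitudes obtained after expanding $v M^{-1}(\lambda) v R_0^+(\lambda^4)$ genuinely have $\partial_\lambda^\ell$-bounds that are integrable against the dyadic sum, i.e.\ that the worst term decays like $\lambda^{-1-\epsilon}$ rather than merely staying bounded, so that summing over $N \ge 0$ converges. This forces at least three integrations by parts (hence the need for $\partial_\lambda^\ell M^{-1}$, $\ell \le 2$, to be controlled) and is the reason the decay hypothesis $\mu > 3$ enters — it is exactly what makes $\langle x\rangle^{2\ell} V \in L^1$ for $\ell \le 2$ and what is needed for the high-energy limiting absorption estimates with derivatives. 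Everything else is a direct adaptation of the low-energy computations already carried out in Section \ref{section_low}, and in fact the high-energy case is genuinely easier because the projections $Q_\alpha$ and the delicate non-Calder\'on--Zygmund kernels $g_j^\pm$ of Lemma \ref{lemma_2_4} never appear: every term here is already of the benign type $\langle |x|\pm|y|\rangle^{-2}$ covered by Lemma \ref{lemma_2_2}.
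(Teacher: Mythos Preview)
Your overall strategy—reduce to a pointwise kernel bound $|K^H(x,y)|\lesssim\langle|x|-|y|\rangle^{-2}$ and then invoke Lemmas \ref{lemma_2_1} and \ref{lemma_2_2}—is exactly what the paper does, and your identification of the relevant oscillatory structure via Lemma \ref{lemma_FF} is correct. However, the execution you sketch is more complicated than necessary, and a couple of the details are off.

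The paper's route is cleaner in two respects. First, instead of working with $M^{-1}(\lambda)$, the paper decomposes via the ordinary resolvent equation $R_V^+=R_0^+-R_0^+VR_V^+$, yielding the Born term $W_1^H$ (your Born term) and a remainder $W_2^H$ whose kernel contains $\Gamma^H(\lambda,u_1,u_2)=(VR_V^+(\lambda^4)V)(u_1,u_2)$. For the latter the paper simply quotes the known high-energy weighted $L^2$ bound $\|\langle x\rangle^{-1/2-\ell-\varepsilon}\partial_\lambda^\ell R_V^+(\lambda^4)\langle x\rangle^{-1/2-\ell-\varepsilon}\|_{L^2\to L^2}\lesssim\langle\lambda\rangle^{-3}$ from \cite{FSY18}, which plays the role of your $M^{-1}$ lemma but with the sharp $\lambda^{-3}$ decay already built in. Second—and this is the point you overcount—the two explicit free resolvents in the integrand already contribute a factor $\lambda^{-3}$ (after the $\lambda^3$ from the stationary formula), so the full amplitude is $O(\lambda^{-3})$ for $W_1^H$ and $O(\lambda^{-6})$ for $W_2^H$. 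This is directly integrable on $[\lambda_0,\infty)$, and two integrations by parts (not three or four) give $\langle|x|\pm|y|\rangle^{-2}$; no dyadic decomposition is needed, and there are no boundary terms since $\widetilde\chi(0)=0$ and the amplitude decays at infinity. The Taylor expansion of Lemma \ref{lemma_3_4} and the projections $Q_\alpha$ are likewise never invoked in the high-energy argument; those are purely low-energy tools.

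One small correction: the condition $\mu>3$ is what makes $\langle x\rangle^{\ell}V\in L^1$ for $\ell\le2$ (and $\langle x\rangle^{5/2+\varepsilon}V\in L^2$ for the weighted resolvent bound), not $\langle x\rangle^{2\ell}V\in L^1$ as you wrote—the latter would require $\mu>5$.
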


The proof of this theorem consists of two parts. Using the resolvent equation
$$
R^+_V(\lambda^4)=R_0^+(\lambda^4)-R^+_0(\lambda^4)VR_V^+(\lambda^4),
$$
we write $W^H_-=W^H_1-W^H_2$, where $\widetilde \chi=1-\chi$ and
\begin{align*}
W^H_1&=\int_0^\infty \lambda^3\widetilde \chi(\lambda)R_0^+(\lambda^4)V[R_0^+-R_0^-](\lambda^4)d\lambda,\\
W^H_2&=\int_0^\infty \lambda^3\widetilde \chi(\lambda)R^+_0(\lambda^4)VR_V^+(\lambda^4)V[R_0^+-R_0^-](\lambda^4)d\lambda.
\end{align*}

By virtue of Lemmas \ref{lemma_2_1} and \ref{lemma_2_2}, Theorem \ref{theorem_high_1} follows from the following Propositions \ref{proposition_high_2} and \ref{proposition_high_3}.

\begin{proposition}
\label{proposition_high_2}
Suppose $|V(x)|\lesssim \<x\>^{-\mu}$ with $\mu>3$. Then the integral kernel $K^H_1(x,y)$ of $W^H_1$ satisfies $|K^H_1(x,y)|\lesssim \<|x|-|y|\>^{-2}$ on $\R^2$.
\end{proposition}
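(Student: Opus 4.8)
The plan is to write out the kernel of $W_1^H$ explicitly using the free resolvent formula \eqref{free_resolvent} and the identity of Lemma \ref{lemma_FF}, and then estimate it by repeated integration by parts in $\lambda$, exploiting that the cutoff $\widetilde\chi=1-\chi$ removes the singularity of $R_0^\pm(\lambda^4)$ at $\lambda=0$. Concretely, since $R_0^\pm(\lambda^4,x,y)=F_\pm(\lambda|x-y|)/(4\lambda^3)$, the kernel of $W_1^H$ can be written as
\begin{align*}
K_1^H(x,y)=\frac{1}{16}\int_0^\infty \lambda^{-3}\widetilde\chi(\lambda)\int_{\R} V(z) f_{00}(\lambda,x-z,z-y)\,dz\,d\lambda,
\end{align*}
where $f_{00}(\lambda,x-z,z-y)=F_+(\lambda|x-z|)[F_+-F_-](\lambda|z-y|)$ is, by Lemma \ref{lemma_FF}, a sum of four pure exponentials $e^{i\lambda(|x-z|\pm|z-y|)}$ and $e^{-\lambda(|x-z|\pm i|z-y|)}$. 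First I would fix $z$ and treat the four resulting $\lambda$-integrals separately; for the oscillatory ones the phase is $\lambda(|x-z|\pm|z-y|)$, while for the exponentially decaying ones the real part of the exponent is $-\lambda|x-z|\le 0$, so all four integrands are bounded by $\lambda^{-3}|\widetilde\chi(\lambda)|$ in absolute value, which is integrable near $\lambda=0$ (it vanishes there) and compactly... no, not compactly supported — $\widetilde\chi\equiv 1$ for large $\lambda$, so decay at infinity must come from integration by parts.

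The key step is therefore: integrate by parts $N$ times in $\lambda$ against the oscillation $e^{i\lambda(|x-z|\pm|z-y|)}$ (resp.\ the decay $e^{-\lambda|x-z|}$) to gain a factor $\langle |x-z|\pm|z-y|\rangle^{-N}$ (resp.\ $\langle |x-z|\rangle^{-N}$), noting that each derivative falling on $\lambda^{-3}\widetilde\chi(\lambda)$ produces terms bounded by $\lambda^{-3-\ell}$ on $\supp\widetilde\chi\subset\{\lambda\gtrsim\lambda_0\}$, hence integrable in $\lambda$ over $[\lambda_0,\infty)$ once $N$ is large enough. Boundary terms at $\lambda=0$ vanish because $\widetilde\chi$ and its derivatives vanish there (as $\chi\equiv1$ near $0$), and there is no boundary contribution at $\lambda=\infty$ by the decay just gained. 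This yields
\begin{align*}
|K_1^H(x,y)|\lesssim \int_{\R}|V(z)|\Big(\langle\,|x-z|+|z-y|\,\rangle^{-2}+\langle\,|x-z|-|z-y|\,\rangle^{-2}+\langle x-z\rangle^{-2}\Big)dz,
\end{align*}
after choosing $N=2$ and keeping track that the decaying exponentials also contribute $\langle|z-y|\rangle$-type factors. Finally I would bound this $z$-integral by $\langle|x|-|y|\rangle^{-2}$: using $|x-z|+|z-y|\ge |x-y|\ge \big||x|-|y|\big|$ controls the first term uniformly, while for the remaining terms one splits the region $|z|\le\tfrac12(|x|+|y|)$ versus its complement and uses $|V(z)|\lesssim\langle z\rangle^{-\mu}$ with $\mu>3$ together with the triangle inequality $\langle x-z\rangle\gtrsim \langle |x|-|z|\rangle$ to absorb one power and integrate the other two against $\langle z\rangle^{-\mu}$.

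The main obstacle I expect is the bookkeeping in the last step: after integration by parts one does not simply get $\langle|x-z|\pm|z-y|\rangle^{-2}$ cleanly, because the exponentially decaying pieces $e^{-\lambda(|x-z|\pm i|z-y|)}$ carry an imaginary part in the exponent and the derivative hits $\lambda^{-3}$, so one must organize the estimate so that a genuine two-variable decay survives and then convert $|x-z|$, $|z-y|$ dependence into $|x|-|y|$ dependence uniformly in $z\in\supp V$. This is exactly where the decay hypothesis $\mu>3$ is used and is the one place requiring a careful, but routine, region decomposition; everything else is integration by parts and Lemma \ref{lemma_FF}.
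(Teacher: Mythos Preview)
Your approach is correct in outline and would yield the stated bound, but it differs from the paper's route. The paper does not integrate by parts against the $z$-dependent phase $|x-z|\pm|z-y|$ and then convert. Instead it first factors $e^{i\lambda|x-z|}=e^{i\lambda|x|}e^{i\lambda(|x-z|-|x|)}$ (and likewise for $y$), pulling the $z$-independent oscillation $e^{i\lambda(|x|\pm|y|)}$ out of the $z$-integral. What remains is an amplitude $A_j^\pm(\lambda,x,y)=\int V(z)e^{\lambda\Phi_j^\pm}dz$ with $|\Phi_j^\pm|\le 2|z|$, so $|\partial_\lambda^\ell A_j^\pm|\lesssim\|\langle z\rangle^\ell V\|_{L^1}$; two integrations by parts then give $\langle|x|\pm|y|\rangle^{-2}$ directly, with no region decomposition needed. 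Both methods use $\mu>3$ in the same essential way (yours via the tail $\int_{|z|\gtrsim||x|-|y||}\langle z\rangle^{-\mu}dz$, the paper's via the second moment $\|\langle z\rangle^2V\|_{L^1}$), but the paper's phase-shift trick is cleaner and avoids your final conversion step entirely.

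One point in your write-up needs correction: the contribution $\langle x-z\rangle^{-2}$ you record for the exponentially damped pieces is not enough by itself, since $\int|V(z)|\langle x-z\rangle^{-2}dz$ carries no decay in $y$. In fact integration by parts against $e^{-\lambda(|x-z|\pm i|z-y|)}$ produces the factor $(|x-z|\pm i|z-y|)^{-1}$, whose modulus is $(|x-z|^2+|z-y|^2)^{-1/2}\lesssim(|x-z|+|z-y|)^{-1}$; so these terms are actually controlled by your first summand $\langle|x-z|+|z-y|\rangle^{-2}\le\langle|x|-|y|\rangle^{-2}$ and cause no trouble. With that fix, your region decomposition for the remaining term $\int|V(z)|\langle|x-z|-|z-y|\rangle^{-2}dz$ (splitting at $|z|\sim||x|-|y||$) goes through as you indicate.
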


\begin{proof}
By the formula \eqref{free_resolvent} and the same argument as in the proof of Proposition \ref{proposition_regular_1}, $K^H_1(x,y)$ can be written the form
\begin{align*}
&K_1^H(x,y)\\
&=\frac{1}{16}\int_0^\infty \int\lambda^{-3}\widetilde \chi(\lambda) F_+(\lambda|x-u|) V(u) [F_+-F_-](\lambda|y-u|)dud\lambda\\
&=\frac{1}{16}\int_0^\infty\lambda^{-3}\widetilde \chi(\lambda) \left(\int_{\R}V(u)f_{00}(\lambda,x-u,y-u)du\right)d\lambda\\
&=\sum_\pm \left(\int_0^\infty e^{i\lambda(|x|\pm|y|)} \lambda^{-3}\widetilde \chi(\lambda)A^\pm_1(\lambda,x,y)d\lambda+\int_0^\infty e^{-\lambda(|x|\pm i|y|)} \lambda^{-3}\widetilde \chi(\lambda)A^\pm_2(\lambda,x,y)d\lambda\right),
\end{align*}
where $f_{00}$ is defined in Lemma \ref{lemma_FF} and $A^\pm_j$ satisfy, for all $x,y\in \R$, $\lambda\ge1$ and $\ell=0,1,2$,
\begin{align*}
|\partial_\lambda^\ell A^\pm_1(\lambda,x,y)|+e^{-\lambda|x|}|\partial_\lambda^\ell A^\pm_2(\lambda,x,y)|\lesssim \norm{\<x\>^\ell V}_{L^1}<\infty.
\end{align*}
Therefore, the same argument as in the low energy case based on integration by parts implies $|K^H_1(x,y)|\lesssim \<|x|-|y|\>^{-2}$. This completes the proof.
\end{proof}

\begin{proposition}
\label{proposition_high_3}
Under the assumption in Theorem \ref{theorem_high_1}, the integral kernel $K^H_2(x,y)$ of $W^H_2$ satisfies $|K^H_2(x,y)|\lesssim \<|x|-|y|\>^{-2}$ on $\R^2$.
\end{proposition}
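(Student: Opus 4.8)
\textbf{Proposal for the proof of Proposition \ref{proposition_high_3}.}
The plan is to mimic the proof of Proposition \ref{proposition_high_2}, but now with the extra factor $VR_V^+(\lambda^4)V$ sandwiched between the two free resolvents. The starting point is the symmetric resolvent identity of Lemma \ref{lemma_3_2}, namely $R_V^+(\lambda^4)V = R_0^+(\lambda^4)vM^{-1}(\lambda)v$, which lets us write
\begin{align*}
W_2^H = \int_0^\infty \lambda^3\widetilde\chi(\lambda)\, R_0^+(\lambda^4)\, V\, R_0^+(\lambda^4)\, v\, M^{-1}(\lambda)\, v\, \big[R_0^+-R_0^-\big](\lambda^4)\, d\lambda.
\end{align*}
The key analytic input that replaces the low-energy expansion of Lemma \ref{lemma_3_3} is a high-energy bound on $M^{-1}(\lambda)$: since $H$ has no embedded eigenvalue and $\mu>3$, one has $\|vR_0^\pm(\lambda^4)v\|_{L^2\to L^2}\to 0$ as $\lambda\to\infty$ together with derivative bounds $\|\partial_\lambda^\ell(vR_0^\pm(\lambda^4)v)\|_{L^2\to L^2}\lesssim \lambda^{-\ell}$ for $\ell=0,1,2$ (these follow from \eqref{free_resolvent}, the decay $|v(x)|\lesssim\langle x\rangle^{-\mu/2}$, and a Schur/Hilbert–Schmidt estimate). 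Consequently $M(\lambda)=U+vR_0^+(\lambda^4)v$ is invertible for $\lambda\gtrsim 1$ with $M^{-1}(\lambda)=U^{-1}-U^{-1}vR_0^+(\lambda^4)vU^{-1}+\cdots$, a Neumann series that is uniformly bounded on $L^2$ with $\|\partial_\lambda^\ell M^{-1}(\lambda)\|_{L^2\to L^2}\lesssim \lambda^{-\ell}$ for $\ell\le 2$ on $\operatorname{supp}\widetilde\chi$.

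Next I would insert the explicit kernel \eqref{free_resolvent} three times (for the two outer $R_0^+(\lambda^4)$ factors and the bracket $[R_0^+-R_0^-](\lambda^4)$), each contributing a factor $(4\lambda^3)^{-1}F_\pm$, and collect the $\lambda$-powers: the prefactor $\lambda^3\widetilde\chi(\lambda)$ together with the three $\lambda^{-3}$'s from the kernels gives an overall $\lambda^{-6}\widetilde\chi(\lambda)$, which is harmless (indeed integrable at infinity) since $\widetilde\chi$ is supported in $\{\lambda\gtrsim 1\}$. Writing $v(u_1)M^{-1}(\lambda)(u_1,u_2)v(u_2)$ with $u_1,u_2$ the two inner integration variables and $V(u_0)$ the middle potential, the kernel $K_2^H(x,y)$ becomes an integral over $(\lambda,u_0,u_1,u_2)$ of $\lambda^{-6}\widetilde\chi(\lambda)$ times a product $F_+(\lambda|x-u_0|)F_+(\lambda|u_0-u_1|)[F_+-F_-](\lambda|u_2-y|)$ against the amplitude $V(u_0)(vM^{-1}(\lambda)v)(u_1,u_2)$. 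Using $|e^{\pm is}|\le e^{|\operatorname{Im}|}$-type bounds and Lemma \ref{lemma_FF} (or a direct expansion of the product of $F$'s), one rewrites the oscillatory/exponential part as a linear combination of terms $e^{i\lambda(|x-u_0|\pm|u_2-y|)}$ and $e^{-\lambda(|x-u_0|\pm i|u_2-y|)}$ times smooth, uniformly $L^1_{u_0,u_1,u_2}$-bounded amplitudes $B_j^\pm(\lambda,x,y)$ with $|\partial_\lambda^\ell B_j^\pm|\lesssim \langle x\rangle^\ell$-weighted $L^1$ norms of $V$; absolute convergence of the $u$-integrals is guaranteed by the Hilbert–Schmidt bound on $vM^{-1}(\lambda)v$ and $\mu>3$. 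The final step is the same two-fold integration by parts in $\lambda$ as in Propositions \ref{proposition_regular_1}--\ref{proposition_regular_3} and \ref{proposition_high_2}: since $\lambda^{-6}\widetilde\chi(\lambda)$ vanishes (to infinite order, in fact) at $\lambda=0$ and is Schwartz-like in $\lambda\in[1,\infty)$, no boundary term appears and one gains a factor $(|x|\pm|y|)^{-2}$, which dominates $\langle |x|-|y|\rangle^{-2}$ after splitting into $||x|\pm|y||\le 1$ and $\ge 1$ exactly as before, finally yielding $|K_2^H(x,y)|\lesssim \langle|x|-|y|\rangle^{-2}$.

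The main obstacle I anticipate is not the integration by parts (which is routine once set up) but the uniform high-energy control of $M^{-1}(\lambda)$ and its two $\lambda$-derivatives on $\operatorname{supp}\widetilde\chi$, i.e.\ establishing $\|\partial_\lambda^\ell M^{-1}(\lambda)\|_{L^2\to L^2}\lesssim \lambda^{-\ell}$ for $\ell=0,1,2$ with constants independent of $\lambda$. One must be slightly careful that the naive Neumann series converges only for $\lambda$ large, so for $\lambda$ in the transition region $\lambda\sim 1$ (where $\widetilde\chi$ is not yet $1$) one instead invokes the limiting absorption principle and the absence of embedded eigenvalues to get invertibility of $M(\lambda)$ together with continuity, and then differentiability, of $\lambda\mapsto M^{-1}(\lambda)$ on the compact set where $\widetilde\chi$ is supported but $<1$; a compactness argument then yields uniform bounds there, and the Neumann series handles $\lambda\gg 1$. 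A second, minor technical point is bookkeeping the weighted $L^1$-estimates on the triple $u$-integral so that the amplitudes $B_j^\pm$ and their $\lambda$-derivatives are genuinely bounded uniformly in $x,y$ — this needs $\mu>3$ to absorb the two copies of $v$ in Hilbert–Schmidt norm plus one copy of $V$ in $L^1$, which is precisely the hypothesis of Theorem \ref{theorem_high_1}.
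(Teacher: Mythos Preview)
Your overall strategy (extract phases $e^{i\lambda(|x|\pm|y|)}$, integrate by parts twice) is right, but the decomposition you chose creates a weight problem that your bookkeeping undercounts. By writing $R_V^+(\lambda^4)V=R_0^+(\lambda^4)vM^{-1}(\lambda)v$ you introduce a \emph{third} free-resolvent factor $F_+(\lambda|u_0-u_1|)$ sitting between $V(u_0)$ and $v(u_1)$, and your stated phase $e^{i\lambda(|x-u_0|\pm|u_2-y|)}$ shows you have absorbed it into the amplitude $B_j^\pm$. Each $\partial_\lambda$ landing on that factor pulls down $|u_0-u_1|$; after two integrations by parts the worst term carries $|u_1|^2$, and controlling
\[
\int_{\R^2}|u_1|^2\,|v(u_1)|\,|M^{-1}(u_1,u_2)|\,|v(u_2)|\,du_1du_2\le \norm{|\cdot|^2v}_{L^2}\norm{M^{-1}}_{L^2\to L^2}\norm{v}_{L^2}
\]
requires $\norm{|\cdot|^2v}_{L^2}^2=\int|u|^4|V(u)|\,du<\infty$, i.e.\ $\mu>5$, not $\mu>3$. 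The same obstruction already appears in your claimed bound $\norm{\partial_\lambda^2 M^{-1}(\lambda)}_{L^2\to L^2}\lesssim\lambda^{-2}$: differentiating the kernel of $vR_0^+(\lambda^4)v$ twice in $\lambda$ also produces $|x-y|^2$. So your final assertion that ``$\mu>3$ absorbs the two copies of $v$ in Hilbert--Schmidt norm plus one copy of $V$ in $L^1$'' is wrong, and Proposition~\ref{proposition_high_3} under the sharp hypothesis $\mu>3$ of Theorem~\ref{theorem_high_1} is not reached by this route.

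The paper sidesteps all of this by \emph{not} expanding $R_V^+$: it packages $\Gamma^H(\lambda,u_1,u_2):=(VR_V^+(\lambda^4)V)(u_1,u_2)$ as a single object and invokes the weighted high-energy resolvent estimate of Lemma~\ref{lemma_high_4}, which yields $\int\langle u_1\rangle^k|\partial_\lambda^\ell\Gamma^H|\langle u_2\rangle^k\,du_1du_2\lesssim\lambda^{-3}\norm{\langle x\rangle^{1/2+\ell+k+\varepsilon}V}_{L^2}^2$ for $\ell+k\le2$, finite exactly when $\mu>3$. With only two outer free resolvents and no middle oscillatory factor, the remainder is identical to Proposition~\ref{proposition_high_2} (indeed to Proposition~\ref{proposition_regular_2}). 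Your approach can be salvaged by strengthening the hypothesis to $\mu>5$ --- still harmless for Theorem~\ref{theorem_1} --- but the paper's route is both sharper and shorter.
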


In  the proof of this proposition, we need the following high energy resolvent estimate:

\begin{lemma}[{\cite[Theorem 2.23]{FSY18}}]
\label{lemma_high_4}
Suppose that  $|V(x)|\lesssim \<x\>^{-\mu}$ with $\mu>1$ and $H$ has no embedded eigenvalues. Then, for any integer $0\le \ell<\mu$ and $\ep>0$, the map $(0,\infty)\ni\lambda \mapsto \<x\>^{-\sigma}R_V^\pm(\lambda^4)\<x\>^{-\sigma}$ is of $C^\ell$-class in the norm topology on $L^2$ and satisfies
$$
\norm{\<x\>^{-1/2-\ell-\ep}\partial_\lambda^\ell \left\{R_V^\pm(\lambda^4)\right\}\<x\>^{-1/2-\ell-\ep}}_{L^2\to L^2}\le C_\ell\<\lambda\>^{-3},\quad \lambda\ge \lambda_0.
$$
\end{lemma}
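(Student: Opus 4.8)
The plan is to reduce the statement to its free analogue through the symmetric resolvent identity of Lemma~\ref{lemma_3_2}, after controlling the inverse factor $M^\pm(\lambda)^{-1}$ uniformly on $[\lambda_0,\infty)$. The first step is to prove the corresponding free estimates: that $\lambda\mapsto\<x\>^{-1/2-\ell-\ep}R_0^\pm(\lambda^4)\<x\>^{-1/2-\ell-\ep}$ is of $C^\ell$-class in the norm topology on $(0,\infty)$ and that $\norm{\<x\>^{-1/2-\ell-\ep}\partial_\lambda^\ell R_0^\pm(\lambda^4)\<x\>^{-1/2-\ell-\ep}}_{L^2\to L^2}\lesssim\<\lambda\>^{-3}$ for $\lambda\ge\lambda_0$. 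By Leibniz's rule and \eqref{free_resolvent}, $\partial_\lambda^\ell R_0^\pm(\lambda^4)$ is a finite linear combination of operators with kernels $\lambda^{-3-m}|x-y|^{\ell-m}F_\pm^{(\ell-m)}(\lambda|x-y|)$, $0\le m\le\ell$, where $F_\pm(s)=\pm ie^{\pm is}-e^{-s}$; using $F_\pm^{(k)}(s)=(\pm i)^{k+1}e^{\pm is}-(-1)^ke^{-s}$ I would split each kernel into an exponentially decaying piece and an oscillatory piece. The decaying piece, even after multiplication by $|x-y|^{\ell-m}$, has a kernel that is integrable in one variable uniformly for $\lambda\ge\lambda_0$, hence defines a bounded operator on $L^2$ with norm $\lesssim\<\lambda\>^{-3}$ and needs no weights; for the oscillatory piece I would use that the operator with kernel $e^{\pm i\lambda|x-y|}$ equals $\mp2i\lambda(-\partial_x^2-\lambda^2\mp i0)^{-1}$ and invoke the limiting absorption principle for $-\partial_x^2$ (as in \cite{Agmon}), after writing the polynomial factor as $|x-y|^{\ell-m}\lesssim\<x\>^{\ell-m}+\<y\>^{\ell-m}$ and distributing the weights accordingly. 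Norm continuity and its $C^\ell$ refinement then follow from the smoothness and boundedness of $F_\pm$ and its derivatives.

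The second step is to control $M^\pm(\lambda)=U+vR_0^\pm(\lambda^4)v$ (with $v=|V|^{1/2}$, $U=\sgn V$, so that $M^+(\lambda)=M(\lambda)$ of Section~\ref{section_preliminaries}) and its derivatives uniformly on $[\lambda_0,\infty)$. Since $|v|\lesssim\<x\>^{-\mu/2}$, the first step gives $\norm{\partial_\lambda^j M^\pm(\lambda)}_{L^2\to L^2}\lesssim\<\lambda\>^{-3}$ for $0\le j\le\ell$ (the hypothesis on $\mu$ relative to $\ell$ being used to ensure each $v$ absorbs the required powers of $\<x\>$), and in particular $\norm{vR_0^\pm(\lambda^4)v}\to0$ as $\lambda\to\infty$. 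Hence there is $\Lambda\ge\lambda_0$ such that $M^\pm(\lambda)=U(\Id+U^{-1}vR_0^\pm(\lambda^4)v)$ is invertible by a Neumann series for $\lambda\ge\Lambda$, with $\sup_{\lambda\ge\Lambda}\norm{M^\pm(\lambda)^{-1}}<\infty$ and, differentiating the series, $\norm{\partial_\lambda^j M^\pm(\lambda)^{-1}}\lesssim\<\lambda\>^{-3}$ for $1\le j\le\ell$. On the compact interval $[\lambda_0,\Lambda]$ the operator $vR_0^\pm(\lambda^4)v$ is compact, $M^\pm(\lambda)$ is invertible for each $\lambda$ by Lemma~\ref{lemma_3_2} (the Birman--Schwinger principle together with the assumed absence of embedded eigenvalues), and $\lambda\mapsto M^\pm(\lambda)$ is $C^\ell$ in norm; therefore $\lambda\mapsto M^\pm(\lambda)^{-1}$ is $C^\ell$ in norm and bounded there by compactness. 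Altogether $\lambda\mapsto M^\pm(\lambda)^{-1}$ is $C^\ell$ in norm on $(0,\infty)$ with $\norm{\partial_\lambda^j M^\pm(\lambda)^{-1}}\lesssim1$ for $\lambda\ge\lambda_0$, $0\le j\le\ell$.

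Finally I would assemble the two steps through $R_V^\pm(\lambda^4)=R_0^\pm(\lambda^4)-R_0^\pm(\lambda^4)vM^\pm(\lambda)^{-1}vR_0^\pm(\lambda^4)$: differentiating by the Leibniz rule, with $\partial_\lambda^j(M^{-1})=-\sum M^{-1}(\partial_\lambda^{k_1}M)M^{-1}\cdots(\partial_\lambda^{k_p}M)M^{-1}$ over $k_1+\cdots+k_p=j$ and $\partial_\lambda^{k}M^\pm=v(\partial_\lambda^{k}R_0^\pm)v$, one obtains a finite sum of terms
$$(\partial_\lambda^{j_0}R_0^\pm)\,v\,M^{-1}\,(v(\partial_\lambda^{k_1}R_0^\pm)v)\,M^{-1}\cdots M^{-1}\,v\,(\partial_\lambda^{j_{p+1}}R_0^\pm)$$
with $j_0+k_1+\cdots+k_p+j_{p+1}=\ell$, plus the plain term $\partial_\lambda^\ell R_0^\pm(\lambda^4)$. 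Flanking each such term by $\<x\>^{-1/2-\ell-\ep}$ and inserting $\<x\>^{\pm(1/2+m+\ep)}$ around each free-resolvent factor carrying $m$ derivatives, the weighted free bounds of the first step control the resolvents, the bounds of the second step control the $M^{-1}$'s, and the leftover powers of $\<x\>$ are absorbed by the adjacent factors $v\lesssim\<x\>^{-\mu/2}$; each term is then $O(\<\lambda\>^{-3})$ for $\lambda\ge\lambda_0$ and $C^\ell$ in norm on $(0,\infty)$, which yields the lemma. The main obstacle is exactly this bookkeeping: one must split the $\<x\>^{-1/2-\ell-\ep}$ weights asymmetrically — loading all the $\ell$ units produced by the polynomial $|x-y|^\ell$ onto one side of each free resolvent and keeping the other side light — so that every interior multiplication by $v$ absorbs precisely the residual weight, uniformly over the combinatorics of the Leibniz expansion; this is the technical core of \cite[Theorem~2.23]{FSY18}, and it is where the relation $\ell<\mu$ enters.
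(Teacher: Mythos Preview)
The paper does not prove this lemma; it is quoted from \cite[Theorem~2.23]{FSY18} and used as a black box in the proof of Proposition~\ref{proposition_high_3}. Your outline --- weighted free-resolvent bounds from the explicit kernel~\eqref{free_resolvent}, uniform control of $M^\pm(\lambda)^{-1}$ on $[\lambda_0,\infty)$ by a Neumann series for large $\lambda$ together with a continuity-plus-compactness argument on the remaining compact interval, and then assembly via the symmetric resolvent identity and the Leibniz rule --- is the standard route to such high-energy limiting-absorption estimates and is, as best one can tell without the source in hand, what \cite{FSY18} does. You have correctly isolated the weight-distribution step as the technical crux.

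One word of caution there: the asymmetric loading you describe handles the two \emph{outermost} free-resolvent factors cleanly, since the prescribed exterior weight $\<x\>^{-1/2-\ell-\ep}$ can absorb any polynomial $|x-y|^{j}$ with $j\le\ell$ landing on that side. But for an \emph{interior} factor $v\,(\partial_\lambda^{k}R_0^\pm)\,v$ both flanking weights are supplied by $v\lesssim\<x\>^{-\mu/2}$, and the crude splitting $|x-y|^{k}\lesssim\<x\>^{k}+\<y\>^{k}$ forces one of the two copies of $v$ to provide $\<x\>^{-1/2-k-\ep}$, which on its face requires $\mu>2k+1$ rather than merely $\mu>k$. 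Closing the bookkeeping under the bare hypothesis $\ell<\mu$ therefore needs something sharper than the naive splitting --- this is exactly the content of the cited theorem --- but for the application in this paper the issue is moot, since the main theorems already assume $\mu>15$.
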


\begin{proof}[Proof of Proposition \ref{proposition_high_3}]
As before, $W^H_2$ is given by an integral operator with the kernel
\begin{align*}
K_2^H(x,y)
&=\frac{1}{16}\int_0^\infty \int_{\R^2}\frac{\widetilde \chi(\lambda)}{\lambda^3} \Gamma^H(\lambda,u_1,u_2) F_+(\lambda|x-u_1|) [F_+-F_-](\lambda|y-u_2|)du_1du_2d\lambda\\
&=\frac{1}{16}\int_0^\infty \int_{\R^2}\frac{\widetilde \chi(\lambda)}{\lambda^3} \Gamma^H(\lambda,u_1,u_2) f_{00}(\lambda,x-u_1,y-u_2)du_1du_2d\lambda
\end{align*}
where $\Gamma^H(\lambda,u_1,u_2)=(VR_V^+(\lambda^4)V)(u_1,u_2)$. Note that Lemma \ref{lemma_high_4} and H\"older's inequality imply that, for any $\ell=0,1,2$, any $k$ satisfying $\ell+k\le2$ and small $\ep>0$ with $3+\ep<\mu$,
\begin{align*}
&\norm{\<x\>^kV\partial_\lambda^\ell R_V^\pm(\lambda^4)V\<x\>^kf}_{L^1}\\
&\lesssim \norm{\<x\>^{1/2+\ell+k+\ep}V}_{L^2}^2\norm{\<x\>^{-1/2-\ell-\ep}\partial_\lambda^\ell R_V^\pm(\lambda^4)\<x\>^{-1/2-\ell-\ep}}_{L^2\to L^2}\norm{f}_{L^\infty}\\
&\lesssim \<\lambda\>^{-3}\norm{\<x\>^{1/2+\ell+k+\ep}V}_{L^2}^2\norm{f}_{L^\infty}
\end{align*}
uniformly in $\lambda\ge \lambda_0$. Hence $\Gamma^H(\lambda, u_1,u_2)$ satisfies
\begin{align}
\label{proposition_high_4_proof_1}
\int_{\R^2}\<u_1\>^k|\partial_\lambda^\ell\Gamma^H(\lambda,u_1,u_2)|\<u_2\>^kdu_1du_2\lesssim \<\lambda\>^{-3}\norm{\<x\>^{1/2+\ell+k+\ep}V}_{L^2}^2
\end{align}
for $\ell=0,1,2$ and $\lambda\ge\lambda_0$. With this estimate at hand, we can see that the rest of the proof is essentially same as that of Proposition \ref{proposition_regular_2}. Indeed, setting
\begin{align*}
B^\pm_j(\lambda,x,y)&=\int_{\R^2}e^{\lambda \Phi^\pm_1(x,y,u_1,u_2,0,0)}\Gamma^H(\lambda, u_1,u_2)du_1du_2,
\end{align*}
where $\Phi^\pm_j$ are defined by \ref{proposition_regular_1_proof_3}, we have that $K_2^H$ is a linear combination of
\begin{align}
\label{proposition_high_4_proof_2}
\int_0^\infty e^{i\lambda(|x|\pm|y|)}\lambda^{-3}\widetilde \chi(\lambda)B^\pm_1(\lambda,x,y)d\lambda,\quad
\int_0^\infty e^{-\lambda(|x|\pm i|y|)}\lambda^{-3}\widetilde \chi(\lambda)B^\pm_2(\lambda,x,y)d\lambda.
\end{align}
Moreover, \eqref{proposition_regular_1_proof_6} and \eqref{proposition_high_4_proof_1} imply that for $\ell=0,1,2$,
\begin{align*}
&|\partial_\lambda^\ell B^\pm_1(\lambda,x,y)|+e^{-\lambda|x|}|\partial_\lambda^\ell B^\pm_2(\lambda,x,y)|\\
&\lesssim\sum_{k+\ell'=\ell}\int_{\R^2}\<u_1\>^{k}\<u_2\>^{k}|\partial_\lambda^{\ell'}\Gamma^H(\lambda, u_1,u_2)|du_1du_2\\
&\lesssim \lambda^{-3}\norm{\<x\>^{5/2+\ep}V}_{L^2}^2.
\end{align*}
Hence, since $\widetilde \chi(0)=0$, we obtain by integrating by parts twice that all the 4 integrals in \eqref{proposition_high_4_proof_2} are $O(\<|x|-|y|\>^{-2})$. This proves the desired assertion.
\end{proof}

This completes the proof of Theorem \ref{theorem_high_1}. By virtue of \eqref{wave_operator} and
Theorem \ref{theorem_low_1}, this also completes the proof of Theorem \ref{theorem_1} for $W_-$. As mentioned in Section \ref{subsection_stationary}, this also gives the desired results for $W_+$ since $W_+f=\overline{W_-\overline  f}$. We thus have finished the proof of Theorem \ref{theorem_1}.

\section{Counterexample for endpoint estimates}
\label{section_counterexample}

Here we prove Theorem \ref{theorem_2}. Throughout the paper, we assume that $H$ has no embedded eigenvalue in $(0,\infty)$.

\subsection{Counterexample for the $L^1$ and $L^\infty$ boundedness} In this subsection, we suppose that zero is a regular point of $H$ and prove Theorem \ref{theorem_2} (1).

Before staring the proof, we explain briefly its strategy. To disprove the $L^1$- and $L^\infty$-boundedness, we first observe by Propositions \ref{proposition_regular_1} and \ref{proposition_regular_2} that all the terms appeared in the right hand side of \eqref{wave_operator_regular}, except for the two terms $T_{K^0_1}$ and $T_{K^0_{33}}$, are bounded on $L^1$ and $L^\infty$. Hence, we need to deal with $T_{K^0_1}$ and $T_{K^0_{33}}$. We then shall show that, for a test function $$f_R=\chi_{[-R,R]},$$
\begin{itemize}
\item[(a)] $\norm{T_{K^0_{33}}f_R}_{L^\infty(\R)}$ is not bounded in $R\gg1$ and $T_{K^0_{33}}f_1\notin L^1(\R)$, but

\vskip0.2cm
\item[(b)] $\norm{T_{K^0_{1}}f_R}_{L^\infty(\R)}$ is bounded in $R>0$ and $T_{K^0_{1}}f_1\in L^1(\R)$.
\end{itemize}
These properties (a) and (b) will be shown in Propositions \ref{proposition_example_1} and \ref{proposition_example_2}, respectively.

\vskip0.2cm
We begin with the statement (a):

\begin{proposition}
\label{proposition_example_1}
Let $f_R=\chi_{[-R,R]}$. Then $|(T_{K^0_{33}}f_R)(R+2)|\to \infty$ as $R\to \infty$. Moreover, $T_{K^0_{33}}f_1\notin L^1(\R)$. In particular, $T_{K^0_{33}}$ is neither bounded on $L^\infty(\R)$ nor on $L^1(\R)$.
\end{proposition}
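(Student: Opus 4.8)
The plan is to work directly with the asymptotic formula for $K^0_{33}$ established in Proposition \ref{proposition_regular_3}, namely
\begin{align}
\label{plan_eq_1}
K^0_{33}(x,y)=\frac{-1+i}{8}g_1^+(x,y)+O(\<|x|-|y|\>^{-2}),
\end{align}
where $g_1^+ = g_{1,1,1}^+$ from Lemma \ref{lemma_2_4}. Since the error term gives an integral operator bounded on $L^1$ and $L^\infty$ (Schur's lemma, as $\<|x|-|y|\>^{-2}\in L^1_xL^\infty_y\cap L^\infty_x L^1_y$), it suffices to analyze $T_{g_1^+}$ applied to $f_R=\chi_{[-R,R]}$. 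First I would record that, up to the smooth cutoffs $\psi_\pm$ which equal $1$ when $||x|\pm|y||\ge 2$, one has
$$
g_1^+(x,y)=\frac{1}{|x|+|y|}+\frac{1}{|x|-|y|}+\frac{1}{|x|+i|y|}+\frac{1}{|x|-i|y|}.
$$
The last two terms combine to $\frac{2|x|}{|x|^2+|y|^2}$, which is bounded by $2/|x|$ and contributes an $L^\infty$-bounded, harmless-on-$L^1$ piece after integrating $f_R$ over $|y|\le R$ (its $L^1_y$ mass over a bounded strip is $O(R/|x|)$, integrable in $x$ away from the origin and bounded near it). The real source of unboundedness is the combination $\frac{1}{|x|+|y|}+\frac{1}{|x|-|y|}=\frac{2|x|}{|x|^2-|y|^2}$, or more precisely the $\frac{1}{|x|-|y|}$ term.

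Next I would compute $(T_{g_1^+}f_R)(x)$ for $x=R+2$ (so $|x|-|y|$ ranges over roughly $[2, R+2]$ as $y$ ranges over $[-R,R]$, staying bounded away from zero, so the cutoffs $\psi_\pm$ are active and there is no principal-value issue). The integral $\int_{-R}^{R}\frac{dy}{|x|-|y|}=2\int_0^R \frac{dy}{R+2-y}=2\log\frac{R+2}{2}\to\infty$ as $R\to\infty$, while the other three terms of $g_1^+$ contribute quantities bounded uniformly in $R$ (for $\frac{1}{|x|+|y|}$ the integral is $2\log\frac{2R+2}{R+2}=O(1)$; the complex terms as noted are $O(1)$). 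Hence $(T_{g_1^+}f_R)(R+2)=\frac{2|x|}{?}$ — more carefully, $|(T_{K^0_{33}}f_R)(R+2)|\gtrsim \log R - C \to\infty$, which proves $T_{K^0_{33}}\notin\mathbb B(L^\infty(\R))$ since $\|f_R\|_{L^\infty}=1$.

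For the $L^1$ statement, I would take $f_1=\chi_{[-1,1]}$ and estimate $(T_{K^0_{33}}f_1)(x)$ for large $|x|$: there $|x|-|y|\sim |x|$ uniformly for $|y|\le 1$, the cutoffs are $1$, and the three bounded terms of $g_1^+$ each contribute $O(1/|x|)$, but we must check the leading behavior is genuinely $\sim c/|x|$ with a nonzero constant and does not cancel. Writing $(T_{g_1^+}f_1)(x)=\int_{-1}^1\big(\frac{1}{|x|+|y|}+\frac{1}{|x|-|y|}+\frac{2|x|}{|x|^2+|y|^2}\big)dy$, for $|x|\to\infty$ each summand is $\frac{2}{|x|}+O(|x|^{-3})$, so the bracket integrates to $\frac{6}{|x|}+O(|x|^{-3})$, giving $|(T_{K^0_{33}}f_1)(x)|\sim \frac{6\sqrt2}{8|x|}$, which is not integrable at infinity; the $O(\<|x|-|y|\>^{-2})$ error is integrable and does not affect this. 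Therefore $T_{K^0_{33}}f_1\notin L^1(\R)$, so $T_{K^0_{33}}\notin\mathbb B(L^1(\R))$.

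The main obstacle is bookkeeping: one must verify that none of the "harmless" pieces (the complex-denominator terms, the $\frac{1}{|x|+|y|}$ term, and especially the error term in \eqref{plan_eq_1}) conspire to cancel the logarithmic blow-up at $x=R+2$ or the $1/|x|$ tail for $f_1$. This is handled by the elementary but careful sign/size estimates sketched above: the logarithmic divergence comes from a single term with a definite sign, and the $1/|x|$ decay for $f_1$ has a strictly positive leading coefficient since all three relevant summands are positive for real $x,y$. A secondary point to check is that the choice $x=R+2$ keeps $x$ at positive distance from the singular set $\{|x|=|y|\}$ uniformly in $R$, so that the cutoffs $\psi_\pm$ are identically $1$ and \eqref{plan_eq_1} applies with no modification; this is immediate since $|y|\le R$ forces $|x|-|y|\ge 2$.
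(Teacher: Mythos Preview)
Your approach is essentially the same as the paper's: both use the asymptotic formula \eqref{proposition_regular_3_proof_1}, isolate the $\frac{1}{|x|-|y|}$ contribution at $x=R+2$ to produce a $\log R$ divergence for the $L^\infty$ claim, and for the $L^1$ claim show that $(T_{g_1^+}f_1)(x)\sim c/|x|$ as $|x|\to\infty$ (the paper does this by bounding $\int_{R+2}^{R'}\log\frac{x+R}{x-R}\,dx$ from below, which is the same computation in integrated form).

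Two small slips to fix: first, your aside that the complex-denominator terms are ``harmless-on-$L^1$'' because their $y$-integral is $O(R/|x|)$, ``integrable in $x$ away from the origin,'' is false---$1/|x|$ is not integrable at infinity---and in fact you correctly include these terms in your own $L^1$ asymptotic a few lines later, so simply delete that remark. Second, the third summand $\int_{-1}^{1}\frac{2|x|}{x^2+y^2}\,dy=4\arctan(1/|x|)\sim 4/|x|$, not $2/|x|$, so the leading coefficient is $8/|x|$ rather than $6/|x|$; the conclusion is unaffected since the coefficient is still nonzero.
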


\begin{proof}
Recall that $K^0_{33}=\frac{-1+i}{8}g_1^++O(\<|x|-|y|\>^{-2})$ (see \eqref{proposition_regular_3_proof_1} and Lemma \ref{lemma_2_4}). We compute
\begin{align*}
&g_1^+(x,y)
=\chi_{\{||x|-|y||\ge2\}}g_1^+(x,y)+\chi_{\{||x|-|y||\le2\}}g_1^+(x,y)\\
&=\chi_{\{||x|-|y||\ge2\}}\left(\frac{1}{|x|+|y|}+\frac{1}{|x|-|y|}+\frac{1}{|x|+i|y|}+\frac{1}{|x|-i|y|}\right)+O(\<|x|-|y|\>^{-2})\\
&=\chi_{\{||x|-|y||\ge2\}}\left(\frac{1}{|x|+|y|}+\frac{1}{|x|-|y|}+\frac{2|x|}{x^2+y^2}\right)+O(\<|x|-|y|\>^{-2}),
\end{align*}
where we have used the property $\psi(||x|\pm|y||^2)=1$ for $||x|-|y||\ge2$. Note that
$$
\sup_x\int \frac{|x|}{x^2+y^2}|f_R(y)|dy\le \pi\norm{f_R}_{L^\infty(\R)}\le \pi.
$$
Hence, by Lemma \ref{lemma_2_1}, there exists constants $c_0,c_1>0$ such that
$$
|(T_{K^0_{33}}f_R)(x)|\ge c_0\left|\int_{-R}^R\left(\frac{\chi_{\{||x|-|y||\ge2\}}}{|x|+|y|}+\frac{\chi_{\{||x|-|y||\ge2\}}}{|x|-|y|}\right)dy\right|-c_1.
$$
We thus have $|(T_{K^0_{33}}f_R)(R+2)|\to \infty$ as $R\to \infty$ since
\begin{align*}
&\int_{-R}^R\left(\frac{\chi_{\{|R+2-|y||\ge2\}}}{R+2+|y|}+\frac{\chi_{\{|R+2-|y||\ge2\}}}{R+2-|y|}\right)dy
=2\int_0^R\left(\frac{1}{R+2+y}+\frac{1}{R+2-y}\right)dy\\
&=\left.2\log\frac{R+2+y}{R+2-y}\right|_{0}^R=2\log(R+1).
\end{align*}

We next prove $T_{K^0_{33}}f_R\notin L^1(\R)$. Since $g_1^+(x,y)$ is continuous on $\R^2$,
$$
\int_{-R-2}^{R+2}\int_{-R}^R|g_1^+(x,y)|dxdy<\infty.
$$
On the other hand, by the same computation as above, we have
\begin{align*}
&\int_{R+2\le |x|\le R'}\left|\int_{-R}^R\left(\frac{\chi_{\{||x|-|y||\ge2\}}}{|x|+|y|}+\frac{\chi_{\{|x|-|y||\ge2\}}}{|x|-|y|}+\frac{2\chi_{\{||x|-|y||\ge2\}}|x|}{x^2+y^2}\right)dy\right|dx\\
&=4\int_{R+2}^{R'}\int_0^R\left(\frac{1}{x+y}+\frac{1}{x-y}+\frac{2x}{x^2+y^2}\right)dydx\\
&\ge 4\int_{R+2}^{R'}\log\frac{x+R}{x-R}dx\gtrsim \log R'\to \infty
\end{align*}
as $R'\to \infty$. Hence $T_{K^0_{33}}f_R\notin L^1(\R)$.
\end{proof}

We next prove the item (b) for the operator $T_{K^0_{1}}$:
\begin{proposition}
\label{proposition_example_2}
Let $f_R=\chi_{[-R,R]}$. Then
$
\sup_{R>0}\norm{T_{K^0_1}f_R}_{L^\infty(\R)}<\infty
$
 and $T_{K^0_1}f_1\in L^1(\R)$.
\end{proposition}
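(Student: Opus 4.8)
The starting point is the asymptotic formula \eqref{remark_regular_4_1} from the proof of Proposition \ref{proposition_regular_4},
$$K^0_1(x,y)=g_1^-(x,y)\,\widetilde m_1(x,y)+O(\<|x|-|y|\>^{-2}),$$
with $g_1^-=g_{1,i,1}^-$ as in Lemma \ref{lemma_2_4} and $\widetilde m_1$ given by \eqref{proposition_regular_4_proof_5_1}; this identity is valid here without assuming $V$ compactly supported, since it follows by integrating \eqref{proposition_regular_4_proof_3} in $(u_1,\theta_1)$ and using $\rho_3\in L^1(\R)$. I will exploit two facts: (a) $g_1^-$ is bounded on $\R^2$ (the cut-offs $\psi$ remove all singularities) and depends on $y$ only through $|y|$, so $g_1^-(x,-y)=g_1^-(x,y)$; and (b) $f_R=\chi_{[-R,R]}$ is even, hence annihilates any kernel that is odd in $y$. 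The plan is to peel off from $\widetilde m_1$ its ``$\sgn y$'' component, which then combines with the even function $g_1^-$ into an odd kernel.

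Concretely, writing $W(u_1,u_2)=(vQ_1A_1^0Q_1v)(u_1,u_2)$ and $\sgn(y-\theta_2u_2)=\sgn y+\big(\sgn(y-\theta_2u_2)-\sgn y\big)$ in \eqref{proposition_regular_4_proof_5_1}, I would write
$$\widetilde m_1(x,y)=c_1(x)\,\sgn y+r(x,y),\qquad c_1(x):=\tfrac{1}{16}\int_{\R\times[0,1]}\sgn(x-\theta_1u_1)\,u_1\Big(\int_\R u_2\,W(u_1,u_2)\,du_2\Big)du_1\,d\theta_1,$$
where $c_1\in L^\infty(\R)$ by \eqref{vBv}. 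The discrepancy $\sgn(y-\theta_2u_2)-\sgn y$ is bounded by $2$ and vanishes unless $|u_2|>|y|$, so $|r(x,y)|\le\tfrac{1}{8}\int_\R\int_{|u_2|>|y|}|u_1u_2|\,|W(u_1,u_2)|\,du_1du_2$, which is nonincreasing in $|y|$ and, again by \eqref{vBv}, satisfies $|r(x,y)|\lesssim\<y\>^{-2}$ uniformly in $x$.

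For the $L^\infty$ bound, $\sgn y\cdot g_1^-(x,y)$ is odd in $y$, so by fact (b)
$$(T_{K^0_1}f_R)(x)=c_1(x)\int_{-R}^R\sgn y\,g_1^-(x,y)\,dy+\int_{-R}^Rg_1^-(x,y)r(x,y)\,dy+\int_{-R}^RO(\<|x|-|y|\>^{-2})\,dy,$$
and the first integral vanishes; the remaining two are bounded by $\int_\R|g_1^-(x,y)|\<y\>^{-2}dy\lesssim1$ and $\int_\R\<|x|-|y|\>^{-2}dy\lesssim1$, uniformly in $x,R$, whence $\sup_{R>0}\norm{T_{K^0_1}f_R}_{L^\infty(\R)}<\infty$. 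For $T_{K^0_1}f_1\in L^1(\R)$ the same cancellation gives $(T_{K^0_1}f_1)(x)=\int_{-1}^1g_1^-(x,y)r(x,y)\,dy+\int_{-1}^1O(\<|x|-|y|\>^{-2})\,dy$, which is $O(1)$ on $\{|x|\le3\}$ (hence $L^1$ there), while for $|x|\ge3$ and $|y|\le1$ every cut-off in $g_1^-$ equals $1$, so that $g_1^-(x,y)=i\big(\tfrac{1}{|x|+|y|}-\tfrac{1}{|x|-|y|}\big)+\big(\tfrac{1}{|x|+i|y|}-\tfrac{1}{|x|-i|y|}\big)=O(|x|^{-2})$; therefore $|(T_{K^0_1}f_1)(x)|\lesssim\<x\>^{-2}$, integrable over $\{|x|\ge3\}$, and $T_{K^0_1}f_1\in L^1(\R)$ follows.

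The crux — and the step I expect to be the real content — is this cancellation: unlike $K^0_{33}$, whose leading kernel $g_1^+$ is even in $y$ and produces a genuine $\log R$ growth against $f_R$ (Proposition \ref{proposition_example_1}), here it is precisely the factor $\widetilde m_1$ that converts the even kernel $g_1^-$ into the odd kernel $\sgn y\,g_1^-$, and only after isolating this odd part — using both the parity of $f_R$ and the decay of $V$ to absorb the remainder $r$ — does the uniform $L^\infty$ bound emerge; the $L^1$ statement additionally hinges on the extra decay $g_1^-(x,y)=O(|x|^{-2})$ coming from the subtraction in $g_1^-$. Everything else is the routine estimate of Lemma \ref{lemma_2_1} already used above.
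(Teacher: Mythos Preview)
Your proof is correct and rests on the same two ingredients as the paper's argument: the oddness in $y$ of the leading kernel (which forces the main contribution against the even test function $f_R$ to vanish), and the ``minus'' structure of $g_1^-$, which yields the extra factor of $|y|$ and hence the $O(|x|^{-2})$ decay needed for the $L^1$ claim.

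The organization, however, differs from the paper's. The paper works with the representation $K^0_1(x,y)=\int M_{11}(u_1,u_2)\,g_4^-(X_1,Y_2)\,d\Theta + e(x,y)$ \emph{before} carrying out the $\Theta$-integral, and for the $L^\infty$ bound it splits the $(u_1,u_2)$-domain into $U_R=\{|u_1|\ge R/2\ \text{or}\ |u_2|\ge R/2\}$ and its complement: on $U_R$ the decay of $V$ gives smallness, while on $U_R^c$ the shifted interval $[-R-\theta_2u_2,R-\theta_2u_2]$ still contains the origin, so the oddness of $\widetilde g_4^-$ in its second variable leaves only a tail of length $O(|u_2|)$. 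You instead pass to the already-integrated form $K^0_1=g_1^-\widetilde m_1+O(\langle|x|-|y|\rangle^{-2})$ and extract the $\sgn y$ from $\widetilde m_1$ directly, bounding the discrepancy $r(x,y)$ by $\langle y\rangle^{-2}$ via the constraint $|u_2|>|y|$. This sidesteps the $U_R/U_R^c$ splitting entirely and is a bit more economical; it also lets you reuse the same cancellation for the $L^1$ part (the paper's $L^1$ argument does not invoke the odd/even cancellation but estimates $|\widetilde g_4^-(x,Y_2)|$ pointwise). Both routes are short, but yours packages the bookkeeping more cleanly.
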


\begin{proof}
It follows from \eqref{proposition_regular_4_proof_6} and \eqref{proposition_regular_4_proof_7} that
$$
K^0_1(x,y)=\int_{\R^2\times[0,1]^2}M_{11}(u_1,u_2)g_4^-(X_1,Y_2)d\Theta+e(x,y)
$$
where $X_1=x-\theta_1u_1$, $Y_2=y-\theta_2u_2$, $\Theta=(u_1,u_2,\theta_1,\theta_2)$, $g_4^-$ is given by Lemma \ref{lemma_2_4} (with the choice of $a=i$, $b=1$) and
$
M_{11}(u_1,u_2)=\frac{1}{16}u_1u_2(vQ_1A^0_1Q_1v)(u_1,u_2)
$
satisfies $$\norm{\<u_1\>^kM_{11}(u_1,u_2)\<u_2\>^{k}}_{L^1(\R^2)}\lesssim \norm{\<x\>^{2+2k}V}_{L^1},\quad k\le 6.$$
Moreover, $e(x,y)$ is the error term satisfying
$$
|e(x,y)|\lesssim \int_{\R^2\times[0,1]^2}M_{11}(u_1,u_2)\<|X_1|-|Y_2|\>^{-2}d\Theta
$$
It is easy to see that $T_e\in \mathbb B(L^1)\cap \mathbb B(L^\infty)$ by Lemma \ref{lemma_2_1}.
As above, we can write
\begin{align*}
g_4^-(x,y)
&=i\sgn x\left(\frac{\chi_{\{||x|-|y||\ge2\}}}{|x|+|y|}-\frac{\chi_{\{|x|-|y||\ge2\}}}{|x|-|y|}-\frac{2\chi_{\{||x|-|y||\ge2\}}|y|}{x^2+y^2}\right)\sgn y+O(\<|x|-|y|\>^{-2})\\
&=:\widetilde g_4^-(x,y)+O(\<|x|-|y|\>^{-2}).
\end{align*}
Note that $\widetilde g_4$ is bounded on $\R^2$ by the support property of $\chi_{\{||x|-|y||\ge2\}}$.
Define
$$
G(x,y)=\int_{\R^2\times[0,1]^2}M_{11}(u_1,u_2)\widetilde g_4(X_1,Y_2)d\Theta.
$$

Now we shall prove $\norm{T_{K^0_1}f_R}_{L^\infty(\R)}\lesssim1
$ uniformly in $R>0$.
Lemma \ref{lemma_2_1} implies there exists $C>0$ independent of $R$ such that
$$\norm{T_{K^0_1}f_R}_{L^\infty(\R)}\le \norm{T_{G}f_R}_{L^\infty(\R)}+C.
$$
Next, we set $U_R=\{(u_1,u_2)\ |\ |u_1|\ge R/2\ \text{or}\ |u_2|\ge R/2\}$ and decompose
\begin{align*}
G(x,y)
&=\left(\int_{U_R\times[0,1]^2}+\int_{U_R^c\times[0,1]^2}\right)M_{11}(u_1,u_2)\widetilde g_4^-(X_1,Y_2)d\Theta\\
&=:G_1(x,y)+G_2(x,y)
\end{align*}
For the former term $G _1$, since $\widetilde g_4$ is bounded on $\R^2$ and
$$
\norm{M_{11}}_{L^1(\R^2)}\lesssim \norm{\<u_1\>\<u_2\>M_{11}}_{L^1(\R^2)}R^{-1}\lesssim R^{-1},\quad (u_1,u_2)\in U_R,
$$
 we have
$
\norm{T_{G_1}f_R}_{L^\infty(\R)}\lesssim 1
$ uniformly in $R>0$.
To deal with the latter term $G _2$, we observe that the interval $(R-\theta_2u_2,-R-\theta_2u_2)$ contains the origin since $|u_2|\le R/2$ and $\theta_2\in [0,1]$. Hence, since $\widetilde g_4^-$ is an odd function in the $y$-variable (thanks to the term $\sgn y$), we have
$$
\int_{-R}^R\widetilde g_4^-(X_1,Y_2)dy=\int_{-R-\theta_2u_2}^{R-\theta_2u_2}\widetilde g_4^-(X_1,y)dy=\int_{-R-\theta_2u_2}^{-R+\theta_2u_2}\widetilde g_4^-(X_1,y)dy=O(\<u_2\>)
$$
for the case $\theta_2u_2\ge0$,  and
$$
\int_{-R}^R \widetilde g_4^-(X_1,Y_2)dy
=\int_{R-\theta_2u_2}^{R+\theta_2u_2}\widetilde g_4^-(X_1,y)dy=O(\<u_2\>)
$$
for the case $\theta_2u_2\le0$.
Therefore, we obtain uniformly in $R>0$ that
$$
\norm{T_{G _2}f_R}_{L^\infty(\R)}\lesssim \|\<u_2\>M_11\|_{L^1}\lesssim1.
$$

We next prove $T_{K^0_1}f_1\in L^1(\R)$. As above, we have
$$
\norm{T_{K^0_1}f_1}_{L^1(\R)}\le \norm{T_{G}f_1}_{L^1(\R)}+C.
$$
with some $C>0$ by Lemma \ref{lemma_2_1}. Using Fubini's theorem, Minkowski's inequality and the translation invariance of the $L^1$-norm, we compute
\begin{align*}
\norm{T_{G}f_1}_{L^1(\R)}
\le \int_{\R^2\times[0,1]^2}|M_{11}(u_1,u_2)|\left(\int_\R\int_{-1}^1 |\widetilde g_4^-(X_1,Y_2)|dydx\right)d\Theta
\end{align*}
Since $|x^2-(y-\theta_2u_2)^2|\gtrsim (|x|-|y-\theta_2u_2|)^2\gtrsim \<|x|-|y-\theta_2u_2|\>^2$ on $\supp \widetilde g_4^-$, we have
$$
|\widetilde g_4^-(x,Y_2)|\le \frac{4|y-\theta_2u_2|\chi_{\{||x|-|y-\theta_2u_2||\ge2\}}}{|x^2-(y-\theta_2u_2)^2|}\lesssim \<u_2\>\<|x|-|y-\theta_2u_2|\>^{-2}
$$
for $x\in \R,y\in [-1,1]$ and hence, again by the translation invariance of the $L^1$-norm,
$$
\int_{\R^2\times[0,1]^2}|M_{11}(u_1,u_2)|\int_\R\int_{-1}^1 |\widetilde g_4^-(x,Y_2)|dydxd\Theta \lesssim\int_{\R^2\times[0,1]^2}|M_{11}(u_1,u_2)|\<u_2\>d\Theta<\infty.
$$
This shows $T_{K^0_1}f_1\in L^1(\R)$ and completes the proof.
\end{proof}

\begin{proof}[Proof of Theorem \ref{theorem_2}(1)]
We know by Proposition \ref{proposition_regular_1} that all the operators, except for $T_{K^0_1}$ and $T_{K^0_{33}}$, appeared in the right hand side of \eqref{wave_operator_regular} are bounded on $L^1(\R)$ and on $L^\infty(\R)$. By Propositions \ref{proposition_example_1} and \ref{proposition_example_2}, $W_- f_1\notin L^1(\R)$ and there exists $C>0$, independent of $R$, such that
\begin{align*}
\norm{W_- f_R}_{L^\infty(\R)}&\ge |(T_{K^0_{33}}f_R)(R+2)|-C\to \infty,\quad R\to \infty.
\end{align*}
Hence $W_-$ is neither bounded on $L^1(\R)$ nor on $L^\infty(\R)$. 
\end{proof}

\begin{remark}
\label{remark_counterexample_1}
Combining with the idea in Subsection \ref{subsection_counterexample_2} below and the above constructions, one can also obtain some results on the unboundedness of $W_\pm$ in $L^1$ and $L^\infty$ for the resonant cases. Suppose zero is a first kind resonance of $H$ and $V$ is compactly supported. Set
$$
C_{\alpha\beta}^*=\int_{\R^2\times[0,1]^2} M_{\alpha\beta}(\Theta)d\Theta,\quad M_{\alpha\beta}(\Theta)=\frac{M_{\alpha\beta}(x,y,\Theta)}{\sgn x\sgn y},
$$
where $M_{\alpha\beta}(x,y,\Theta)$ is given by \eqref{M_alphabeta}. Then one can show $W_\pm \notin \mathbb B(L^1(\R))$ if
$$
2C_{02}^*+(1+i)(C_{10}^*-C_{12}^*)\neq\frac{1-i}{8}.
$$
Moreover, $W_\pm \notin \mathbb B(L^\infty(\R))$ if
$$
iC_{02}^*+C_{10}^*-C_{12}^*+iC_{20}^*-iC_{22}^*\neq\frac{1-i}{8}.
$$
Similar type counterexamples can be also obtained for the second resonant case. We however do not pursue this issue for simplicity.
\end{remark}

\subsection{Counterexample for the $L^\infty$-$\BMO$ boundedness}
\label{subsection_counterexample_2}
We next prove Theorem \ref{theorem_2} (2), precisely the following Proposition.
\begin{proposition}
\label{proposition_example_3}
Suppose that zero is a second kind resonance of $H$ and $V$ is compactly supported. If $D_*\neq0$, then  $W_\pm \notin \mathbb B(L^\infty(\R),\BMO(\R))$ and $W_\pm^*\notin \mathbb B(\H^1(\R),L^1(\R))$, where
\begin{align}
D_*=\int_{\R^2\times [0,1]^2}\Big(6u_1^3u_2(v Q_3A_{-12}^2Q_1v)(u_1,u_2)-u_1^3u_2^3(v Q_3A_{-3}^2Q_3v)(u_1,u_2)\Big)du_1du_2.
\end{align}
\end{proposition}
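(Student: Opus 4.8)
The strategy is to concentrate the failure in the two most singular operators $T_{K^2_{-3}}$ and $T_{K^2_{-12}}$ and then to test $W_-$ on indicator functions $\chi_{[R_0,R_0+R]}$ of far-away intervals, using the compact support of $V$ to freeze the sign factors away from the origin. First I would reduce to $T:=T_{K^2_{-3}}+T_{K^2_{-12}}$. By \eqref{wave_operator} and Theorem \ref{theorem_high_1} one has $W_-=\Id-\tfrac{2}{\pi i}(W_-^L+W_-^H)$ with $\Id$ mapping $L^\infty$ into $L^\infty\subset\BMO$ and $W_-^H\in\mathbb B(L^\infty)$, and by \eqref{wave_operator_second} together with Proposition \ref{proposition_second_1} all the terms of $W_-^L$ except $T_{K^2_{-3}}$ and $T_{K^2_{-12}}$ belong to $\mathbb B(L^\infty,\BMO)$. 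Hence $\|W_-f\|_{\BMO}\ge\tfrac{2}{\pi}\|Tf\|_{\BMO}-C\|f\|_{L^\infty}$ for all $f\in L^\infty$, and it suffices to exhibit, when $D_*\neq0$, a bounded sequence in $L^\infty$ whose image under $T$ is unbounded in $\BMO$.

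Next I would compute the kernel of $T$ on the region $\{|x|>r,\ |y|>r\}$, where $\supp V\subset\{|x|\le r\}$. Revisiting Case (iii) of the proof of Proposition \ref{proposition_second_1}, for $(\alpha,\beta)\in\{(3,1),(3,3)\}$ one has
\[
G^1_{\alpha\beta}(x,y;u_1,\theta_1)=\sgn(x-\theta_1u_1)\,g_{\alpha\beta}(x,y)\,m_{\alpha\beta}(y,\theta_1,u_1)+O\big(\<|x|-|y|\>^{-2}\rho_8(u_1)\big),
\]
where $g_{33}=-g_{31}$ by Lemma \ref{lemma_FF} (equivalently $a_{33}=-a_{31}$ and $b_{33}=-b_{31}$ in \eqref{proposition_first_1_proof_4}), and $(\sgn x)(\sgn y)g_{31}=g_{4,i,-1}^-$ in the notation of Lemma \ref{lemma_2_4}. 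For $|x|>r$ and $|y|>r$ the factor $\sgn(x-\theta_1u_1)$ equals $\sgn x$, and the sign $\sgn(y-\theta_2u_2)$ occurring inside $m_{\alpha\beta}$ equals $\sgn y$, for all $(u_1,u_2)$ in $(\supp v)^2$; performing the $\Theta$-integration — the $\theta_j$-integrals producing only the constants $\int_0^1(1-\theta)^k\,d\theta$ ($k\in\{0,2\}$) and the $u_j$-integrals being exactly those appearing in $D_*$ — one obtains, for $|x|,|y|>r$,
\[
\big(K^2_{-3}+K^2_{-12}\big)(x,y)=c_*\,D_*\,(\sgn x)(\sgn y)\,g_{31}(x,y)+O\big(\<|x|-|y|\>^{-2}\big),
\]
with $c_*\neq0$ an explicit elementary constant; the weight $6$ in the definition of $D_*$ is precisely the ratio of the normalizing constants attached to $(3,1)$ and $(3,3)$ in \eqref{M_alphabeta} together with the $\theta_j$-integrations.

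Then I would test on $f_R=\chi_{[R_0,R_0+R]}$ with $R_0\gg r$ fixed and $R\to\infty$, so that $\|f_R\|_{L^\infty}=1$. For $x\in(r,2r]$ and $y\in[R_0,R_0+R]$ the cut-offs inside $g_{31}$ are identically $1$ and $(\sgn x)(\sgn y)g_{31}(x,y)=g_{4,i,-1}^-(x,y)=-4iy^3/(x^4-y^4)$; integrating in $y$ and using $x^4\le(2r)^4\ll R_0^4$ gives $Tf_R(x)=c_*D_*\,i\log\tfrac{(R_0+R)^4-x^4}{R_0^4-x^4}+O(1)=4i\,c_*D_*\log(1+R/R_0)+O(1)$, uniformly on $(r,2r]$; replacing $x$ by a point of $[-2r,-r)$ flips $\sgn(x-\theta_1u_1)$ to $-1$ while $g_{31}$ depends on $x$ only through $|x|$, so $Tf_R=-4i\,c_*D_*\log(1+R/R_0)+O(1)$ uniformly on $[-2r,-r)$. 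Thus on the fixed interval $I=[-2r,2r]$ the function $Tf_R$ takes, up to a uniformly bounded error, the two values $\pm A_R$ on subsets of $I$ of measure $r$, where $|A_R|\sim|D_*|\log(1+R/R_0)$; since $|A_R-m|+|{-}A_R-m|\ge2|A_R|$ for every $m\in\C$,
\[
\frac{1}{|I|}\int_I\big|Tf_R-(Tf_R)_I\big|\,dx\ \gtrsim\ |A_R|-O(1)\ \gtrsim\ |D_*|\log(1+R/R_0)-O(1)\ \longrightarrow\ \infty .
\]
Hence $\|Tf_R\|_{\BMO}\to\infty$, so by the first step $W_-\notin\mathbb B(L^\infty,\BMO)$ when $D_*\neq0$. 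By the duality $(\H^1)^*=\BMO$ this gives $W_-^*\notin\mathbb B(\H^1,L^1)$, and since $W_+f=\overline{W_-\overline f}$ while complex conjugation is an isometry of $L^\infty$, $\BMO$, $\H^1$ and $L^1$, the same conclusions hold for $W_+$ and $W_+^*$.

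The main obstacle is the kernel identity of the second step: one must rerun the Case (iii) argument of Proposition \ref{proposition_second_1} while tracking exact constants, verifying that after the $\lambda$- and $\Theta$-integrations the contributions of $K^2_{-3}$ and $K^2_{-12}$ do not cancel but combine — through $g_{33}=-g_{31}$ and the normalizations in \eqref{M_alphabeta} — into exactly $c_*D_*$ times a single $g_4^-$-type kernel, modulo a remainder $O(\<|x|-|y|\>^{-2})$. The reduction in the first step and the explicit $\log$-growth computation with its sign flip in the third step are then routine.
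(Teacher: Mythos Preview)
Your argument is correct and the kernel identity you isolate is exactly the one the paper derives (with the same constant $c_*=1/576$), but the route you take to the contradiction is genuinely different from the paper's.  The paper passes immediately to the adjoint: it tests $T_K^*$ (with $K=K^2_{-3}+K^2_{-12}$) on the explicit $\H^1$-atom $g_R=\sgn(x)\chi_{\{R\le|x|\le 2R\}}$, and shows by a Taylor expansion at infinity that $(T_K^*g_R)(x)=\frac{i\overline{D_*}}{72}Rx^{-1}+O(|x|^{-2})$ for $|x|$ large, hence $T_K^*g_R\notin L^1(\R)$; the $L^\infty\to\BMO$ failure then follows by duality.  You instead stay on the $L^\infty\to\BMO$ side throughout: you test $T_K$ on a one-sided indicator $\chi_{[R_0,R_0+R]}$ and exploit the factor $\sgn x$ in the frozen kernel to produce, on the two halves $(r,2r]$ and $[-2r,-r)$ of a fixed interval, values $\pm c_*D_*\cdot 4i\log(1+R/R_0)+O(1)$; the elementary inequality $|A-m|+|{-}A-m|\ge 2|A|$ then forces the mean oscillation on $[-2r,2r]$ to diverge.

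Both arguments hinge on the same structural point --- compact support of $V$ freezes $\sgn X_1$ and $\sgn Y_2$ to $\sgn x,\sgn y$ --- and on the same kernel combination $g_{33}=-g_{31}$ that produces the single coefficient $D_*$.  The paper's approach is slightly more economical (pointwise non-integrability is a one-line conclusion), whereas yours is more self-contained on the $\BMO$ side and avoids adjoints until the final duality step; your choice of an asymmetric test function is what makes the sign-flip visible and is a nice touch.  One small point: your $O(1)$ control on $(r,2r]\cup[-2r,-r)$ relies on the error term in the kernel giving rise to an operator bounded on $L^\infty$, which is exactly what Proposition~\ref{proposition_second_1} (via Lemma~\ref{lemma_2_1}) supplies --- you use this implicitly but correctly.
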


\begin{proof}
Let $K=K_{-12}^2+K_{-3}^2$. By virtue of Proposition \ref{proposition_second_1} and the duality , it is enough to show
$
T_K^*\notin \mathbb B(\H^1(\R),L^1(\R))
$. By Lemma \ref{lemma_FF}, we have $f_{33}=-f_{31}$. Hence
\begin{align*}
K(x,y)=\int_0^\infty \chi(\lambda)\left(\int_{\R^2\times[0,1]^2}M(X_1,Y_2,\Theta)f_{31}(\lambda,X_1,Y_2)d\Theta\right)d\lambda,
\end{align*}
where $\varphi_1(u_1,u_2)=(v Q_3A_{-12}^2Q_1v)(u_1,u_2)$ and $\varphi_2(u_1,u_2)=(v Q_3A_{-3}^2Q_3v)(u_1,u_2)$ and
\begin{align*}M(x,y,\Theta)
&=(M_{31}-M_{33})(x,y,\Theta)\\
&=\frac{1}{64}(\sgn x)(\sgn y)(1-\theta_1^2)\Big(2u_1^3u_2\varphi_1(u_1,u_2)-(1-\theta_2^2)\varphi_2(u_1,u_2)\Big)
\end{align*}
The same argument as in the proof of Proposition \ref{proposition_regular_4} then yields that, modulo an error term whose associated integral operator belongs to $\mathbb B(L^\infty(\R))$,
\begin{align*}
K(x,y)&\equiv m(x,y)g_1^-(x,y)\\
&\equiv m(x,y)\chi_{\{||x|-|y||\ge2\}}\sum_\pm\left(\frac{\mp i}{|x|\pm |y|}\pm \frac{1}{|x|\pm i|y|}\right)\\
&=m(x,y)\chi_{\{||x|-|y||\ge2\}}\left(-\frac{i}{|x|+|y|}+\frac{i}{|x|-|y|}-\frac{2i|y|}{x^2+y^2}\right),
\end{align*}
where
$$
m(x,y)=\int_{\R^2\times[0,1]^2}M(X_1,Y_2,\Theta)d\Theta.
$$
The kernel of $T_{K}^*$, denoted by $K^*$, thus is given by
$$
K^*(x,y)\equiv \overline{m(y,x)}\chi_{\{||x|-|y||\ge2\}}\left(\frac{i}{|x|+|y|}+\frac{i}{|x|-|y|}+\frac{2i|x|}{x^2+y^2}\right)
$$
again modulo a harmless term. Now we suppose  $\supp V\subset\{|x|\le R-1\}$ with $R\ge2$ and let $$g_R(x)=\sgn(x)\chi_{\{R\le |x|\le 2R\}}(x)\in \H^1(\R).$$
Here we observe that since $\supp v\subset[-R+1,R-1]$ and $\theta_1,\theta_2\in [0,1]$,
$$
\sgn (X_1)\sgn (Y_2)=\sgn(x-\theta_1u_1)\sgn(y-\theta_2u_2)=\sgn x\sgn y
$$
if $|x|\ge 2R+2$, $|y|\ge R$ and $u_1,u_2\in \supp v$. Hence, if $|x|\ge 2R+2$ and $|y|\ge R$, then
\begin{align*}
m(x,y)
=\sgn x\sgn y\int_{\R^2\times[0,1]^2}\frac{M(x,y,\Theta)}{\sgn x\sgn y}d\Theta=\frac{D_*}{576} \sgn x\sgn y.
\end{align*}
Modulo an integral term, we then have for sufficiently large $|x|\ge 2R+2$
\begin{align*}
(T_{K}^*g_R)(x)
&\equiv \frac{\overline{D_*}}{576} \sgn x\int_{R\le |y|\le 2R}\left(\frac{i}{|x|+|y|}+\frac{i}{|x|-|y|}+\frac{2i|x|}{x^2+y^2}\right)dy\\
&=\frac{i\overline{D_*}}{288}\sgn x\int_{R}^{2R}\left(\frac{1}{|x|+y}+\frac{1}{|x|-y}+\frac{2|x|}{x^2+y^2}\right)dy\\
&=\frac{i\overline{D_*}}{288}\left(\log\frac{1+R/x-2R^2/x^2}{1-R/x-2R^2/x^2}+2\arctan\frac{2R}{x}-2\arctan\frac{R}{x}\right)\\
&=\frac{i\overline{D_*}}{288}\left(Rx^{-1}+Rx^{-1}+4Rx^{-1}-2Rx^{-1}\right)+O(|x|^{-2})\\
&=\frac{i\overline{D_*}}{72}Rx^{-1}+O(|x|^{-2})
\end{align*}
by Taylor's expansion near $x=\infty$. Hence, modulo an integral term,
$$
|(T_{K}^*g_R)(x)|\gtrsim |D_*|R|x|^{-1}.
$$
This shows $T_{K}^*g_R\notin L^1(\R)$ and hence $T_{K}^*\notin \mathbb B(\H^1(\R),L^1(\R))$ as long as $D_*\neq0$.
\end{proof}

\section{Boundedness on Sobolev spaces}
\label{section_Sobolev}
Here we prove Theorem \ref{theorem_3}. We follow the same argument as in Finco--Yajima \cite[Section 7]{Finco_Yajima_II}. Recall that $B^N$ for $N\ge1$ is defined in \eqref{B^N}. For short, we set $B^0=L^\infty$.

\begin{lemma}
\label{lemma_6_2}
Let $1<p<\infty$, $N\in \N\cup\{0\}$, $V\in B^{4N}(\R)$ and $E>0$ be large enough. Then $(\Delta^2+E)^{s/4}(H+E)^{-s/4},(H+E)^{s/4}(\Delta^2+E)^{-s/4}\in \mathbb B(L^p(\R))$ for all $0<s\le 4(N+1)$.
\end{lemma}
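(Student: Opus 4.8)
The plan is to reduce the mapping property of the fractional powers to two ingredients: interpolation in $s$ and a heat-kernel (Gaussian) bound for $e^{-t(H+E)}$, following the standard scheme used for Schr\"odinger operators (see Finco--Yajima \cite{Finco_Yajima_II} and Blunck \cite{Blunck}). First I would treat the two operators symmetrically, so it suffices to bound $(\Delta^2+E)^{s/4}(H+E)^{-s/4}$ for $0<s\le 4(N+1)$; the reverse statement follows by swapping the roles of $\Delta^2$ and $H=\Delta^2+V$ and using $V\in B^{4N}(\R)\subset B^0=L^\infty$. By the square-function / complex-interpolation characterization of the domains of fractional powers of a positive self-adjoint operator generating a bounded analytic semigroup with Gaussian bounds, it is enough to establish the estimate at the integer exponents $s=4k$, $k=0,1,\dots,N+1$; the general $0<s\le 4(N+1)$ then follows by interpolating between $s=4k$ and $s=4(k+1)$.

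At $s=4k$ the operator $(\Delta^2+E)^{k}(H+E)^{-k}$ can be analyzed purely algebraically. Writing $H+E=(\Delta^2+E)+V$ and expanding $(\Delta^2+E)^k=((H+E)-V)^k$ by the (noncommutative) binomial/Leibniz expansion, one obtains
\begin{align*}
(\Delta^2+E)^k(H+E)^{-k}=\sum_{j=0}^{k}(H+E)^{-j}\,P_j\,(H+E)^{-k}\cdot(H+E)^{k-?}
\end{align*}
— more precisely, after commuting the resolvents past the differential operators $V^{(\ell)}$, one writes $(\Delta^2+E)^k(H+E)^{-k}$ as a finite sum of terms of the form $Q\,(H+E)^{-m}$ where $Q$ is a differential operator of order at most $4(k-m)$ with coefficients built out of $V$ and its derivatives up to order $4(k-1)\le 4N$ (hence in $L^\infty$ by the hypothesis $V\in B^{4N}$), and $0\le m\le k$. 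Each such term is handled by factoring $Q(H+E)^{-m}= \big(Q(\Delta^2+E)^{-(k-m)}\big)\big((\Delta^2+E)^{k-m}(H+E)^{-(k-m)}\big)(H+E)^{-m+(k-m)}$... so an induction on $k$ is natural: the factor $Q(\Delta^2+E)^{-(k-m)}$ is an $L^p$-bounded Fourier multiplier composed with multiplication by $L^\infty$ coefficients, while lower powers $(\Delta^2+E)^{j}(H+E)^{-j}$ with $j<k$ are $L^p$-bounded by the inductive hypothesis, and the leftover negative power $(H+E)^{-r}$, $r\ge 0$, is $L^p$-bounded once $E$ is large.

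The remaining point, which is where the Gaussian bound is used, is the $L^p$-boundedness of negative powers $(H+E)^{-r}$ and more importantly the analytic-semigroup/square-function machinery that legitimizes the interpolation in $s$: for $E$ sufficiently large, $H+E$ is positive on $L^2$, generates a bounded analytic semigroup, and $e^{-t(H+E)}$ satisfies the generalized Gaussian kernel bound quoted in the excerpt (the bound \eqref{lemma_6_2_proof_2} from \cite{DDY_JFA}), which yields $L^p$-boundedness of the imaginary powers $(H+E)^{i\sigma}$ with polynomial growth in $\sigma$, hence boundedness of the complex-interpolation scale and the identification of $[L^p,\mathcal D_p((H+E))]_{\theta}$ with the appropriate fractional domain. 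The main obstacle I expect is precisely the bookkeeping in this last step: making the commutator expansion of $(\Delta^2+E)^k(H+E)^{-k}$ completely rigorous on $L^p$ (as opposed to merely on $L^2$ where everything is manifestly bounded), i.e. justifying that all the intermediate differential operators act boundedly between the relevant $L^p$-based domains, and invoking the correct version of the interpolation theorem for fractional powers under only a generalized (rather than pointwise pointwise-positive) Gaussian bound. Once those functional-analytic facts are in place, the algebra is routine, and one concludes $(\Delta^2+E)^{s/4}(H+E)^{-s/4},(H+E)^{s/4}(\Delta^2+E)^{-s/4}\in\mathbb B(L^p(\R))$ for all $0<s\le 4(N+1)$.
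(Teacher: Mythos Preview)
Your overall strategy matches the paper's: prove the integer cases $s=4k$ by algebra using $V\in B^{4N}$, then interpolate to fractional $s$ via $L^p$-bounds on the imaginary powers $(H+E)^{i\beta}$, which follow from Blunck's spectral multiplier theorem and the generalized Gaussian bound \eqref{lemma_6_2_proof_2} for $e^{-t(H+E)}$ due to \cite{DDY_JFA}. The paper invokes exactly this: Gaussian bound $\Rightarrow \|(H+E)^{i\beta}\|_{L^p\to L^p}\lesssim\<\beta\>^2$ $\Rightarrow$ Stein analytic interpolation between $s=0$ and $s=4(N+1)$.

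The one substantive difference is in the integer step, and here the paper's argument is cleaner than your commutator expansion (which, as written, is left unfinished---note the ``$?$'' in your displayed formula and the garbled factorization after it). Rather than expanding $((H+E)-V)^k$ and commuting, the paper runs an induction on the \emph{resolvent} side: from
\[
(H+E)^{-N-1}=(\Delta^2+E)^{-1}(H+E)^{-N}-(\Delta^2+E)^{-1}V(H+E)^{-N-1}
\]
and the inductive hypothesis $(H+E)^{-N}\in\mathbb B(L^p,W^{4N,p})$, one gets $(H+E)^{-N-1}\in\mathbb B(L^p,W^{4(N+1),p})$ immediately, since multiplication by $V\in B^{4N}$ preserves $W^{4N,p}$ and $(\Delta^2+E)^{-1}:W^{4N,p}\to W^{4(N+1),p}$. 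No commutators are ever computed, and the role of the hypothesis $V\in B^{4N}$ is transparent. Your approach would also work once the bookkeeping is carried out, but the resolvent-equation induction is both shorter and avoids the ``main obstacle'' you flagged.
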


\begin{proof}
The proof is decomposed into several steps.

{\it Step 1}. We first prove $(\Delta^2+E)(H+E)^{-1}\in\mathbb B(L^p)$.
Since $H$ is bounded below, there exists $E_0>0$ such that if $E\ge E_0$ then $H+E$ is a positive self-adjoint operator and
$$
(H+E)^{-1}f=\int_0^\infty e^{-tH}e^{-Et}fdt,\quad f\in L^2.
$$
It was proved by Deng--Ding--Yao \cite[Theorem 1.1]{DDY_JFA} that $e^{-tH}$ (initially defined on $L^2$) extends to an analytic semi-group $e^{-zH}$ on $L^1$ with angle $\pi/2$ and its kernel satisfies:
\begin{align}
\label{lemma_6_2_proof_1}
|e^{-tH}(x,y)|\lesssim t^{-1/4}\exp\left(-\frac{c|x-y|^{4/3}}{t^{1/3}}+\omega t\right),\quad t>0
\end{align}
with some constant $c,\omega>0$. In particular, $e^{-tH}\in \mathbb B(L^p(\R))$ for all $1\le p\le \infty,t\ge0$ and
$$
\norm{e^{-tH}}_{L^p\to L^p}\lesssim e^{\omega t}.
$$
In what follows, we always assume $E>\max(E_0,\omega)$. Then, for $f\in L^2\cap L^p$,
$$
\norm{(H+E)^{-1}f}_{L^p}\le \int_0^\infty e^{-Et}\norm{e^{-tH}f}_{L^p}dt\lesssim \int_0^\infty e^{-(E-\omega)t}dt\norm{f}_{L^p}\lesssim |E-\omega|^{-1}\norm{f}_{L^p}.
$$
Hence $(H+E)^{-1}$ extends to a bounded operator on $L^p$. Moreover, we have
$$
\Delta^2(H+E)^{-1}f=(H+E-V-E)(H+E)^{-1}f=1-(V+E)(H+E)^{-1}f
$$
and hence $
\norm{\Delta^2(H+E)^{-1}f}_{L^p}\lesssim(1+\norm{V}_{L^\infty})\norm{f}_{L^p}
$ for all $f\in L^2\cap L^p$.
By the density argument, we thus obtain $(\Delta^2+E)(H+E)^{-1}\in \mathbb B(L^p)$.
\vskip0.2cm
{\it Step 2}. Next we prove $(\Delta^2+E)^{s/4}(H+E)^{-s/4}\in\mathbb B(L^p)$ for $0<s<4$. It follows from \eqref{lemma_6_2_proof_1} that $H+E$ satisfies the generalized gaussian bound:
\begin{align}
\label{lemma_6_2_proof_2}
|e^{-t(H+E)}(x,y)|\lesssim t^{-1/4}\exp\left(-\frac{c|x-y|^{4/3}}{t^{1/3}}\right),\quad t>0,\quad E>\max(E_0,\omega).
\end{align}
With this bound at hand, we can apply the abstract spectral multiplier theorem by Blunck \cite[Theorem1.1 and Remark (b) after Theorem1.1]{Blunck} to $H+E$ obtaining $$\norm{(H+E)^{i\beta}}_{L^p\to L^p}\le C_p\<\beta\>^2,\quad 1<p<\infty,\ \beta\in \R.$$ This $L^p$-bounds allow us to interpolate between the trivial case $s=0$ and the case $s=4$ proved in the above Step 1 by applying Stein's analytic interpolation theorem \cite{Stein_56}, yielding $(\Delta^2+E)^{s/4}(H+E)^{-s/4}\in \mathbb B(L^p)$ for $0<s<4$.
\vskip0.2cm
{\it Step 3}. Next, we prove by induction that $(\Delta^2+E)^{N+1}(H+E)^{-N-1}\in\mathbb B(L^p)$ if $V\in B^{4N}(\R)$. The case $N=0$ holds by Step 1. If $N\ge1$, we find by the resolvent equation that
$$
(H+E)^{-N-1}f=(\Delta^2+E)^{-1}(H+E)^{-N}f-(\Delta^2+E)^{-1}V(H+E)^{-N-1}f,\quad f\in L^2.
$$
We also know that $(H+E)^{-N},V(H+E)^{-N-1}\in \mathbb B(L^p,W^{4N,p})$ by the assumption on $V$, the fact $(H+E)^{-1}\in  \mathbb B(L^p)$ and the induction hypothesis. Moreover, it is well known that $(\Delta^2+E)^{-1}\in \mathbb B(W^{4(N-1),p},W^{4N,p})$. Therefore, it follows for $f\in L^p\cap L^2$ that
\begin{align*}
\norm{(H+E)^{-N-1}f}_{W^{4N,p}}
\lesssim \norm{(H+E)^{-N}f}_{W^{4N,p}}+\norm{V(H+E)^{-N-1}f}_{W^{4N,p}}
\lesssim \norm{f}_{L^p}
\end{align*}
Hence $(\Delta^2+E)^{N+1}(H+E)^{-N-1}\in\mathbb B(L^p,W^{4(N+1),p})$ by the density argument.
\vskip0.2cm
{\it Step 4}. The same interpolation argument as above with $(H+E)^{-1}$ replaced by $(H+E)^{-N-1}$, together with the above Step 3, implies $(\Delta^2+E)^{s/4}(H+E)^{-s/4}\in\mathbb B(L^p)$ for all $0<s<4(N+1)$ if $V\in B^{4N}(\R)$. This completes the proof of $(\Delta^2+E)^{s/4}(H+E)^{-s/4}\in \mathbb B(L^p)$. The proof of $(H+E)^{s/4}(\Delta^2+E)^{-s/4}\in \mathbb B(L^p)$ is analogous, so we omit it.
\end{proof}

\begin{proof}[Proof of Theorem \ref{theorem_3}]
Let $E$ be as in Lemma \ref{lemma_6_2} and $f\in C_0^\infty(\R)$. It follows from Theorem \ref{theorem_1}, Lemma \ref{lemma_6_2}  and the intertwining property $(H+E)^s W_\pm =W_\pm (\Delta^2+E)^s$ that
\begin{align*}
\norm{W_\pm f}_{W^{s,p}}\lesssim \norm{(H+E)^{-s}}_{L^p\to W^{s,p}} \norm{W_\pm (\Delta^2+E)^s f}_{L^p}\lesssim \norm{(\Delta^2+E)^s f}_{L^p}\lesssim \norm{f}_{W^{s,p}}
\end{align*}
Since $(\Delta^2+E)^s W_\pm^* =W_\pm^* (H+E)^s$, it also follows from Theorem \ref{theorem_1} and Lemma \ref{lemma_6_2} that
\begin{align*}
\norm{W_\pm^* f}_{W^{s,p}}\lesssim \norm{W_\pm^* (H+E)^s f}_{L^p}\lesssim \norm{ (H+E)^s f}_{L^p}\lesssim \norm{f}_{W^{s,p}}.
\end{align*}
Then the result follows by the density argument.
\end{proof}

\section{Applications}
\label{section_application}
In this section we consider two types of applications of Theorem \ref{theorem_1}: the $L^p$-$L^q$ decay estimates for the propagator $e^{-itH}P_{\ac}(H)$ and the H\"ormander-type $L^p$-boundedness theorem for the spectral multiplier $f(H)$.

\subsection{$L^p$-$L^q$ decay estimates for the propagator $e^{-itH}$}

\begin{theorem}\label{Lp-Lq estimate}
Let $H=\Delta^2+V$ satisfy the same conditions of Theorem \ref{theorem_1}. Then
\begin{equation}\label{decay estimate}
\norm{e^{-itH}P_{\ac}(H)f}_{L^q(\R)}\lesssim |t|^{-\frac{1}{4}(\frac{1}{p}-\frac{1}{q})}\norm{f}_{L^p(\R)}, \ \ t\neq 0,
\end{equation}
for all $(\frac{1}{p},\frac{1}{q})\in \Box_{\mathrm{ABCD}}\setminus\{\overline{\mathrm{BC}},\overline{\mathrm{DC}}\}$, where $\Box_{\mathrm{ABCD}}$ is the closed quadrangle by the four vertex points (see Figure 1):
$\mathrm{A}=(\frac{1}{2},\frac{1}{2})$, $\mathrm{B}=(1,\frac{1}{3})$, $\mathrm{C}=(1,0)$, $\mathrm{D}=(\frac{2}{3}, 0)$, and $\overline{\mathrm{BC}}$ (resp. $\overline{\mathrm{DC}}$ ) is the closed line segment linked by two points $\mathrm{B}, \mathrm{C}$ (resp. $\mathrm{D}, \mathrm{C}$).
\end{theorem}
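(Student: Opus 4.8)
The plan is to combine the intertwining property \eqref{intertwining_1} with the $L^p$-boundedness of the wave operators from Theorem \ref{theorem_1}, thereby reducing the whole statement to a single $L^p$-$L^q$ decay estimate for the \emph{free} propagator $e^{-it\Delta^2}$.

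First, applying \eqref{intertwining_1} with $f(\lambda)=e^{-it\lambda}$ (equivalently, using $W_-W_-^*=P_{\ac}(H)$ and $e^{-itH}W_-=W_-e^{-it\Delta^2}$) gives
$$
e^{-itH}P_{\ac}(H)=W_-\,e^{-it\Delta^2}\,W_-^*.
$$
A geometric inspection of the quadrangle shows that on the admissible region $\Box_{\mathrm{ABCD}}\setminus(\overline{\mathrm{BC}}\cup\overline{\mathrm{DC}})$ one has $1<p\le 2\le q<\infty$; in particular $W_-^*\in\mathbb B(L^p(\R))$ and $W_-\in\mathbb B(L^q(\R))$ by Theorem \ref{theorem_1}(1). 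Hence, by \eqref{Lp-bound of f(H)2},
\begin{align*}
\norm{e^{-itH}P_{\ac}(H)f}_{L^q}\le \norm{W_-}_{L^q\to L^q}\,\norm{e^{-it\Delta^2}}_{L^p\to L^q}\,\norm{W_-^*}_{L^p\to L^p}\,\norm{f}_{L^p},
\end{align*}
so it suffices to prove $\norm{e^{-it\Delta^2}}_{L^p\to L^q}\lesssim|t|^{-\frac14(\frac1p-\frac1q)}$ for $(\frac1p,\frac1q)\in\Box_{\mathrm{ABCD}}$. This reduction also makes transparent \emph{why} the two edges $\overline{\mathrm{BC}}$ (where $p=1$) and $\overline{\mathrm{DC}}$ (where $q=\infty$) must be excluded: on them one would need $W_-^*$ bounded on $L^1$ or $W_-$ bounded on $L^\infty$, which fails in view of Theorem \ref{theorem_2}(1).

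For the free estimate I would argue by kernel analysis. By Fourier inversion the convolution kernel of $e^{-it\Delta^2}$ is $K_t(x)=c\,|t|^{-1/4}G(|t|^{-1/4}x)$ with $G(z)=\int_\R e^{iz\eta\mp i\eta^4}\,d\eta$, and a van der Corput / stationary phase analysis of the phase $z\eta-\eta^4$ (degenerate critical point of order four near $\eta=0$, nondegenerate of curvature $\sim|z|^{2/3}$ for $|z|\gg1$) gives $|G(z)|\lesssim\<z\>^{-1/3}$. Consequently $\norm{K_t}_{L^\infty}\lesssim|t|^{-1/4}$, $\norm{K_t}_{L^{3,\infty}}\lesssim|t|^{-1/6}$, and $\norm{K_t}_{L^r}\lesssim|t|^{\frac14(\frac1r-1)}$ for every $r>3$. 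Inserting these into Young's inequality yields the strong bound with the desired decay on the part $\{\frac1p-\frac1q>\frac23\}$ of the quadrangle (the sub-triangle $\triangle\mathrm{DCB}$), while $e^{-it\Delta^2}$ is unitary on $L^2$ at the vertex $\mathrm{A}$. Interpolating the $L^2$ estimate against these bounds — and, on the critical diagonal $\overline{\mathrm{BD}}=\{\frac1p-\frac1q=\frac23\}$, using O'Neil's convolution inequality in Lorentz spaces with $K_t\in L^{3,\infty}$ — fills in the complementary triangle $\triangle\mathrm{ABD}$. Since the claimed exponent $\frac14(\frac1p-\frac1q)$ is affine in $(\frac1p,\frac1q)$ and matches the exponents at all the vertices and endpoints used, the interpolated decay rate is exactly $|t|^{-\frac14(\frac1p-\frac1q)}$ everywhere in $\Box_{\mathrm{ABCD}}$.

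The main obstacle is the behaviour on and near the critical line $\overline{\mathrm{BD}}$: there $K_t$ just fails to lie in $L^3$, so plain Young's inequality is unavailable and one is forced to use the weak-type information $K_t\in L^{3,\infty}$ through the O'Neil inequality (or, equivalently, to obtain these points by real interpolation from interior estimates). Everything else is routine — the intertwining reduction is immediate from Theorem \ref{theorem_1}, and the kernel bound $|G(z)|\lesssim\<z\>^{-1/3}$ is the standard stationary-phase estimate underlying the $L^1$-$L^\infty$ dispersive decay of $e^{-it\Delta^2}$ (the $V=0$ case of the estimate in \cite{SWY21}).
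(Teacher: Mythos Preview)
Your proposal is correct and follows essentially the same route as the paper: the intertwining identity $e^{-itH}P_{\ac}(H)=W_\pm e^{-it\Delta^2}W_\pm^*$ together with Theorem~\ref{theorem_1}(1) reduces everything to the free $L^p$--$L^q$ decay estimate, and the exclusion of $\overline{\mathrm{BC}}$ and $\overline{\mathrm{DC}}$ is exactly the condition $1<p,\,q<\infty$ needed for the wave operator bounds. The only difference is that the paper does not re-derive the free estimate but simply cites it from Ding--Yao \cite[Theorem~2.3]{DY} (see also \cite{BKS}), where it is stated to hold on $\Box_{\mathrm{ABCD}}\setminus\{\mathrm{B},\mathrm{C}\}$; your kernel/stationary-phase sketch with $|G(z)|\lesssim\<z\>^{-1/3}$ and the O'Neil inequality on the critical diagonal $\overline{\mathrm{BD}}$ is a correct way to recover that result, but is unnecessary given the existing reference.
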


\begin{figure}[htbp]
\begin{center}
\setlength{\unitlength}{4cm}
\begin{picture}(3.0, 1.7)
\put(0.7,0.2){\vector(0,1){1.5}}
\put(0.7,0.2){\vector(1,0){1.9}}
\put(2.6,0.1){${1\over p}$}
\put(0.6,1.7){${1\over q}$}
\put(1.2,0.2){\line(0,1){1}}
\put(1.7,0.2){\line(0,1){1}}
\put(0.7,1.2){\line(1,0){1}}
 \put(0.7,0.7){\line(1,0){1}}
 \put(1.72,1.22){F(1,1)}
\put(1.2,0.75){A(${1\over2}$,${1\over2}$)}
\put(1.72,0.5){B$(1,{1\over 3})$}  \put(1.7,0.1){C(1,0)}
\put(1.35,0.1){D$({2\over 3}, 0)$}


\put(1.385,0.173){$\bullet$}
\put(1.68,0.47){$\bullet$}
\put(1.18,0.68){$\bullet$}
\put(1.68,0.173){$\bullet$}
\put(0.6,0.1){O}
\put(1.15,0.1){$1\over
2$}
 \put(0.6,0.65){$1\over 2$}
 \put(0.6,1.2){1}
 \put(1.2,0.7){\line(5,-2){0.50}}
 \put(1.2,0.7){\line(2,-5){0.195}}
{\color{red}\put(1.2,0.70){\line(1,-1){0.50}}}

\end{picture}\\
\end{center}
\caption{The closed quadrangle $\Box_{\mathrm{ABCD}}$}
\end{figure}

\begin{remark}
The vertex point $\mathrm{C}=(1,0)$ is not covered by Theorem \ref{Lp-Lq estimate} above. This actually corresponds to the following endpoint decay estimate:
\begin{equation}\label{endpoint decay estimate}\|e^{-itH}P_{\ac}(H)\|_{L^1-L^\infty}\lesssim |t|^{-\frac{1}{4}}, \ \ t\neq 0,\end{equation}
which was directly proved in Soffer--Wu--Yao \cite{SWY21} by the oscillatory integrals method. Furthermore, by \eqref{endpoint decay estimate} and the $L^2$-$L^2$ estimate of $e^{-itH}$, the interpolation can give
\begin{equation}\label{ partial decay estimate}
\|e^{-itH}P_{\ac}(H)\|_{L^p-L^{p'}}\lesssim |t|^{-\frac{1}{4}(\frac{1}{p}-\frac{1}{p'})}, \ \ t\neq 0,
\end{equation}
for all $1\le p\le 2$, which correspond to the line segment $\overline{\mathrm{AC}}$. Hence except for the endpoint $\mathrm{C}=(1,0)$, it is obvious that Theorem \ref{Lp-Lq estimate} extends the admissible line segment $\overline{\mathrm{AC}}$ (i.e. \eqref{ partial decay estimate}) obtained by Soffer--Wu--Yao \cite{SWY21} to the region $\Box_{\mathrm{ABCD}}\setminus\{\overline{\mathrm{BC}},\overline{\mathrm{DC}}\}$.
\end{remark}
\begin{proof}[Proof of Theorem \ref{Lp-Lq estimate}]
Recall that the $L^p$-$L^q$ estimates for $e^{-it\Delta^2}$ were proved  as a special case by Ding--Yao in \cite[Theorem 2.3]{DY} (also see \cite{BKS}). In particular, for any $(\frac{1}{p},\frac{1}{q})\in \Box_{\mathrm{ABCD}
}\setminus\{\mathrm{B},\mathrm{C}\}$ (see the definition of $\Box_{\mathrm{ABCD}}$ in Theorem \ref{Lp-Lq estimate} above), we have
\begin{equation}\label{free decay estimate}
\|e^{-it\Delta^2}\|_{L^p\to L^q}\lesssim |t|^{-\frac{1}{4}(\frac{1}{p}-\frac{1}{q})}, \ \ t\neq 0.
\end{equation}
Since we have the $L^p$-boundedness of $W_\pm$ and $W_\pm^*$ for all $1<p<\infty$ by Theorem \ref{theorem_1}, the intertwining property $\eqref{intertwining_1}$ and \eqref{free decay estimate} yield
\begin{align}
\|e^{-itH}P_{\ac}(H)\|_{L^p\to L^q}\le \|W_\pm\|_{L^q \to L^q}\ \|e^{-it\Delta^2}\|_{L^p\to L^q}\ \|W_\pm^*\|_{L^p \to L^p}\lesssim|t|^{-\frac{1}{4}(\frac{1}{p}-\frac{1}{q})},
\end{align}
for any $(\frac{1}{p},\frac{1}{q})\in \Box_{\mathrm{ABCD}}\setminus\{\overline{\mathrm{BC}},\overline{ \mathrm{DC}}\}$. Thus the proof is concluded.
\end{proof}

\subsection{H\"ormander-type spectral multiplier $f(H)$}

\begin{theorem}\label{Lp-Lp estimate}
Let $H=\Delta^2+V$ satisfy the same conditions of Theorem \ref{theorem_1}. If a bounded Borel function $f:\mathbb{R}\mapsto \mathbb{C}$ satisfies the so-called H\"ormander condition:
\begin{equation}\label{Hormander_conditions}
\sup_{\delta>0}\|\eta(\cdot)f(\delta\cdot)\|_{H^{s}(\mathbb{R})}\le M<\infty,
\end{equation}
with some $s>1/2$ and $\eta\in  C_0^\infty(\mathbb{R}\setminus0)$.
Then for all $1<p<\infty$ we have
\begin{equation}\label{Hormander multiplier}
\|f(H)\phi\|_{L^p}\lesssim (\norm{f}_{L^\infty}+M)\|\phi\|_{L^p},\ \ \phi\in L^p(\mathbb{R}).
\end{equation}
\end{theorem}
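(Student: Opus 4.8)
The plan is to reduce the $L^p$-boundedness of $f(H)$ to the corresponding statement for $f(\Delta^2)$ via the intertwining property and Theorem \ref{theorem_1}. Precisely, by \eqref{intertwining_1} we have $f(H)P_{\ac}(H)=W_\pm f(\Delta^2)W_\pm^*$, so \eqref{Lp-bound of f(H)2} with $X=Y=L^p(\R)$ and the $L^p$-boundedness of $W_\pm,W_\pm^*$ for $1<p<\infty$ (Theorem \ref{theorem_1}(1)) give
\begin{align*}
\|f(H)P_{\ac}(H)\phi\|_{L^p}\lesssim \|f(\Delta^2)\phi\|_{L^p},\quad 1<p<\infty.
\end{align*}
Since $H$ has no embedded eigenvalues in $(0,\infty)$ and only finitely many negative eigenvalues $\{-E_j\}_{j=1}^K$ with finite-rank spectral projections $P_j$, one writes $f(H)=f(H)P_{\ac}(H)+\sum_{j=1}^K f(-E_j)P_j$, and each $P_j$ is bounded on $L^p$ for all $1<p<\infty$ because its integral kernel is a finite sum $\sum \psi_{j,k}(x)\overline{\psi_{j,k}(y)}$ with $\psi_{j,k}$ eigenfunctions decaying fast enough (by Agmon-type estimates, using $\mu$ large) to lie in $L^p\cap L^{p'}$; thus $\|P_j\phi\|_{L^p}\lesssim \|\phi\|_{L^p}$ and $|f(-E_j)|\le \|f\|_{L^\infty}$ handles the point-spectrum part. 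So everything reduces to the free spectral multiplier bound $\|f(\Delta^2)\phi\|_{L^p}\lesssim (\|f\|_{L^\infty}+M)\|\phi\|_{L^p}$.

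For the free operator, the key observation is that $f(\Delta^2)=g(\sqrt{-\Delta})$ where $g(\rho)=f(\rho^4)$ (equivalently $g(\rho)=f(\rho^2)$ if one prefers to work with $-\Delta$ directly, using $\Delta^2=(-\Delta)^2$), and that the H\"ormander condition \eqref{Hormander_conditions} on $f$ transfers to the analogous H\"ormander condition on $g$ with the same order $s>1/2$: indeed $\eta(\cdot)g(\delta\cdot)=\eta(\cdot)f(\delta^4\cdot^4)=\wtilde\eta(\cdot)f(\wtilde\delta\,\cdot)\circ(\text{smooth change of variable})$ on the support of $\eta$, and composition with the smooth, nonsingular map $\rho\mapsto\rho^4$ on a fixed compact subset of $(0,\infty)$ preserves $H^s$ norms up to a constant. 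Then the classical Mikhlin--H\"ormander multiplier theorem for $-\Delta$ on $\R^n$ (here $n=1$), which requires $s>n/2=1/2$ localized Sobolev control of the symbol, yields $\|g(\sqrt{-\Delta})\phi\|_{L^p}\lesssim (\|g\|_{L^\infty}+\sup_{\delta>0}\|\eta g(\delta\cdot)\|_{H^s})\|\phi\|_{L^p}$ for all $1<p<\infty$, which is exactly \eqref{Hormander multiplier} for $f(\Delta^2)$.

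The main obstacle — really the only nontrivial point — is verifying that the $L^p$-bound for $P_{\ac}(H)f(\Delta^2)$ truly suffices, i.e. that the negative eigenspace contribution is harmless; this needs the pointwise decay of eigenfunctions, which follows from the fast decay of $V$ in \eqref{mu} together with standard elliptic regularity and Agmon exponential estimates for $\Delta^2+V$ (the eigenfunctions of a fourth-order Schr\"odinger operator with negative eigenvalue decay exponentially), so that $P_j\in\mathbb B(L^p)$ for every $1\le p\le\infty$. Everything else is bookkeeping: one assembles $f(H)=W_\pm f(\Delta^2)W_\pm^*+\sum_j f(-E_j)P_j$, applies Theorem \ref{theorem_1}(1) twice, invokes the free Mikhlin--H\"ormander theorem, and tracks constants to get the stated bound $(\|f\|_{L^\infty}+M)$. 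A small technical remark worth including is that $f(H)$ is a priori well-defined and bounded on $L^2$ by the spectral theorem with $\|f(H)\|_{L^2\to L^2}\le\|f\|_{L^\infty}$, so the $L^p$ estimate extends to all of $L^p$ by density once it is established on $L^2\cap L^p$.
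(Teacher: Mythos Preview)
Your proposal is correct and follows essentially the same approach as the paper: decompose $f(H)=W_\pm f(\Delta^2)W_\pm^*+\sum_j f(\lambda_j)P_{\lambda_j}$, reduce the absolutely continuous part to the classical H\"ormander multiplier theorem via the change of variable $\xi\mapsto\xi^4$ (the paper writes $m(\xi)=f(\xi^4)$ so that $f(\Delta^2)=m(D)$), and handle the finite discrete spectrum by showing the eigenfunctions lie in every $L^p$. The only cosmetic difference is that the paper invokes H\"ormander \cite[Theorem 14.5.2]{H2} for the rapid decay of eigenfunctions rather than Agmon-type estimates, but either argument gives $e_j\in L^p$ for all $1\le p\le\infty$ and the rest is identical.
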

\begin{remark}
It is well known that the following Mikhlin's condition
\begin{equation}\label{Mikhlin conditions}
|f^{(j)}(\lambda)|\le C_j |\lambda|^{-j},\quad j=0,1,\quad \lambda>0,
\end{equation}
implies \eqref{Hormander_conditions} (see e.g. Stein \cite[P. 263]{Stein}).
\end{remark}
\begin{remark}Under the assumptions of  Theorem \ref{theorem_1}, by the scattering theory (see {\it e.g.} H\"ormander \cite[Chap.14]{H2}), the spectrum $\sigma(H)$ consists of finitely many negative eigenvalues $\{\lambda_k\}_{k=1}^{N}$ with finite multiplicity and the absolutely continuous spectrum $\sigma_{ac}(H)=[0, \infty)$. In particular, $H$ does not have neither embedded positive eigenvalues nor singular spectrum.  Hence by the spectral theorem and the intertwining property $\eqref{intertwining_1}$, we can write down
\begin{equation}\label{Formula of multiplier}
f(H)=\sum_{j=1}^Nf(\lambda_j)P_{\lambda_j}+W_\pm f(\Delta^2)W_\pm^*,
\end{equation}
where $P_{\lambda_j}$ is the projection onto the eigenspace $\mathcal{H}_j$ corresponding to the eigenvalue $\lambda_j<0$ and $\dim\mathcal{H}_j<\infty$. By counting the finite multiplicity, without loss of generality, we may assume that $\lambda_1\le \lambda_2\le \cdots\le\lambda_N<0$, $H e_j=\lambda_j e_j$ and $P_{\lambda_j}\phi=\langle\phi,e_j\rangle e_j$ for $j=1,...,N$.
\end{remark}

\begin{proof}[Proof of Theorem \ref{Lp-Lp estimate}] Recall that $W_+,W_+^*\in \mathbb B(L^p(\R))$  for all $1<p<\infty$ by Theorem \ref{theorem_1} (1). Since  $f\in H^s(\R)\subset L^\infty(\R)$ for $s>1/2$, we thus obtain by \eqref{Formula of multiplier} that
\begin{align*}
\norm{f(H)}_{L^p\to L^p}
\lesssim \norm{f}_{L^\infty(\R)}\sum_{j=1}^N\norm{P_{\lambda_j}}_{L^p\to L^p}+\norm{f(\Delta^2)}_{L^p\to L^p}.
\end{align*}

In order to deal with the term $f(\Delta^2)$, we let $\widetilde \eta(\xi)=\eta(\xi^4)$ and $m(\xi)=f(\xi^4)$ so that $\eta(\xi^4)f(\xi^4)=\widetilde\eta(\xi)m(\xi)$ and thus $f(\Delta^2)=m(D)$. By H\"{o}rmander's condition \eqref{Hormander_conditions}, we have
$$
\sup_{\delta>0}\|\widetilde \eta(\cdot)m(\delta\cdot)\|_{H^{s}(\mathbb{R})}\le C_s M<\infty
$$
with some $C_s>0$ independent of $m,M$, which implies by the classical H\"{o}rmander Fourier multiplier theorem (see \cite[P. 263]{Stein} or Grafakos \cite[Theorem 6.2.7]{Grafakos_Classical_III}) that
$$
\|f(\Delta^2)\|_{L^p\to L^p}=\|m(D)\|_{L^p\to L^p}\lesssim M+\norm{f}_{L^\infty},\quad 1<p<\infty.
$$

It remains to show $P_{\lambda_j}\in \mathbb B(L^p(\R))$ for each $1\le j\le N$.  In fact, we just need to show the eigenfunction $e_j(x)$ belongs to $L^p$ for all $1\le p\le\infty$ since
\begin{equation}\label{first term}
\|P_{\lambda_j}\phi\|_{L^p}\le |\langle\phi,e_j\rangle|\|e_j\|_{L^p}\le \|e_j\|_{L^p}\|e_j\|_{L^{p'}}\|\phi\|_{L^p}, \ \ 1\le p\le \infty.
\end{equation}
by H\"older's inequality. Note that $P_{\lambda_j} e_j=\lambda_j e_j$, hence by scattering theory (see e.g. H\"ormander \cite[Theorem 14.5.2]{H2}), we can obtain that $e_j$ is a rapidly decreasing eigenfunction, i.e. \begin{align}
\label{eigen}
\<x\>^\ell \partial_x^k e_j\in L^2(\R)\ \text{for all $\ell\in \N$ and  $0\le k\le 2$}.
\end{align}
In particular, $e_j\in L^\infty(\mathbb{R})$ by Sobolev's embedding. Moreover,
H\"older's inequality implies
$\|e_j\|_{L^1}\lesssim \|\<x\>e_j\|_{L^2}<\infty$. Hence $e_j\in L^p(\R)$ for all $1\le p\le\infty$ by interpolation.  
\end{proof}

\begin{remark}
In fact, $P_j\in \mathbb B(L^p(w))$ for any $w\in A_p$ and $1<p<\infty$. Indeed, since $\<x\>^2 e_j\in L^\infty(\R)$ by \eqref{eigen} and the embedding $H^1(\R)\subset L^\infty(\R)$, the kernel $e_j(x)\overline{e_j(y)}$ of $P_j$ satisfies
$
|e_j(x)\overline{e_j(y)}|\lesssim \<x\>^{-2}\<y\>^{-2}\lesssim \<x-y\>^{-2}
$. Hence $P_j\in \mathbb B(L^p(w))$ by Lemma \ref{lemma_2_2}. Therefore, one can also obtain the $L^p(w_p)$-boundedness of $f(H)$ by the same argument as above and Theorem \ref{theorem_1} (2). Namely, if $1<p<\infty$ and $w\in A_p$ is even then
$$
\norm{f(H)}_{L^p(w)\to L^p(w)}\lesssim \norm{f(\Delta^2)}_{L^p(w)\to L^p(w)}+\norm{f}_{L^\infty(\R)}
$$
as long as $f(\Delta^2)\in \mathbb B(L^p(w))$. For instance, if $f$ satisfies \eqref{Hormander_conditions} with $s=1$, then we have $f(\Delta^2)\in \mathbb B(L^p(w))$ for any $w\in A_p$ and $1<p<\infty$ (see Kurtz \cite{Kurtz}). For further results on the weighted boundedness of the Fourier multiplier, we refer to \cite{FHL} and references therein.
 \end{remark}

\appendix

\section{A quick review of Calder\'on--Zygmund operators}
\label{appendix_CZ}
We here give a brief short review of several mapping properties of Calder\'on--Zygmund operators. We refer to textbooks of Grafakos \cite{Grafakos_Classical_III,Grafakos_Modern_III} for general theory.

\subsection{$A_p$-weight}
Let $w\in L^1_{\loc}(\R^n)$ be positive almost everywhere such that $w^{-1}\in L^1_{\loc}(\R^n)$. Then $w$ is said to be of the {\it Muckenhoupt class $A_p$} if
\begin{align*}
[w]_{A_p}&=\sup_{Q}\left[\left(\frac{1}{|Q|}\int_Qw(x)dx\right)\left(\frac{1}{|Q|}\int_Q w(x)^{-\frac{1}{p-1}}dx\right)^{p-1}\right]<\infty,\quad 1<p<\infty,\\
[w]_{A_1}&=\sup_Q\left[\norm{w^{-1}}_{L^\infty(Q)}\left(\frac{1}{|Q|}\int_Q w(x)dx\right)\right]<\infty,\quad p=1,
\end{align*}
where the supremum is taken over all cubes $Q\subset \R^n$. 

Typical examples of $A_p$-weights on $\R^n$ we have in mind are $|x|^a$ and $\<x\>^a$, which belong to $A_p$ if $-n<a<n(p-1)$ for $1<p<\infty$ and if $-n<a\le 0$ for $p=1$.

\subsection{Calder\'on--Zygmund operator} We say that $K$ is a {\it standard kernel} if $K$ satisfies:

\begin{itemize}
\item $|K(x,y)|\lesssim |x-y|^{-n}$ for $x\neq y$, and
\item there exists $\delta>0$ such that, for $x,y,h\in \R^n$ satisfying $|x-y|\ge 2|h|>0$,
$$
|K(x,y)-K(x+h,y)|+|K(x,y)-K(x,y+h)|\lesssim |h|^\delta |x-y|^{-n-\delta}.
$$
\end{itemize}
It is easy to see that $K$ is a standard kernel if $K\in C^1(\R^{2n}\setminus\{(x,y)\ |\ x=y\})$ and
$$
\partial_x^\alpha\partial_y^\beta K(x,y)=O(|x-y|^{-n-|\alpha|-|\beta|}),\quad |\alpha|+|\beta|\le1.
$$
In particular, $\<x-y\>^{-\rho}$ with $\rho>n$ is a standard kernel.

An $L^2$-bounded integral operator $T_K\in \mathbb B(L^2(\R^n))$ with a standard kernel $K$ is called a {\it Calder\'on--Zygmund operator}. Then we have the following theorem (see \cite[Theorems 4.2.2, 4.2.6 and 4.2.7]{Grafakos_Modern_III} for the item (1) and \cite{Hytonen,LOP} for the item (2), respectively):

\begin{theorem}
\label{theorem_CZ_1}
Let $T_K$ be a Calder\'on--Zygmund operator and $1<p<\infty$. Then:
\begin{itemize}
\item[(1)] $T_K\in\mathbb B(L^p(\R))\cap \mathbb B(L^1(\R),L^{1,\infty}(\R))\cap\mathbb B(\H^1(\R),L^1(\R))\cap \mathbb B(L^\infty(\R),\BMO(\R))$.
\item[(2)] $T_K\in \mathbb B(L^p(w_p))\cap \mathbb B(L^1(w_1),L^{1,\infty}(w_1))$ for all $w\in A_p,w_1\in A_1$. Moreover, one has
\begin{align*}
\norm{T_Kf}_{L^p(w_p)}&\lesssim [w_p]_{A_p}^{\max\{1,1/(p-1)\}}\norm{f}_{L^p(w_p)},\\
\norm{T_Kf}_{L^{1,\infty}(w_1)}&\lesssim [w_1]_{A_1}(1+\log[w_1]_{A_1})\norm{f}_{L^1(w_1)},
\end{align*}
with implicit constants being independent of $w_p,w_1$.
\end{itemize}
\end{theorem}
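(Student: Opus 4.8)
The plan is to simply recall the standard Calder\'on--Zygmund machinery: since $T_K\in\mathbb B(L^2(\R))$ is assumed and $K$ is a standard kernel, every assertion is classical, and I would only indicate the key steps and point to the cited references. For the item (1), I would first prove the weak $(1,1)$ bound by the Calder\'on--Zygmund decomposition: given $f\in L^1(\R)$ and $\lambda>0$, split $f=g+b$ at height $\lambda$, where $g$ satisfies $\norm{g}_{L^\infty}\lesssim \lambda$ and $\norm{g}_{L^1}\le\norm{f}_{L^1}$ (so $\norm{g}_{L^2}^2\lesssim\lambda\norm{f}_{L^1}$), and $b=\sum_j b_j$ with each $b_j$ supported in a dyadic cube $Q_j$, having mean zero, and $\sum_j|Q_j|\lesssim\lambda^{-1}\norm{f}_{L^1}$. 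The $L^2$-boundedness of $T_K$ plus Chebyshev controls the $g$-part, while the H\"ormander smoothness of $K$ together with $\int b_j=0$ gives $\int_{(2Q_j)^c}|T_Kb_j|\,dx\lesssim\norm{b_j}_{L^1}$, which sums to the desired estimate on the $b$-part. Marcinkiewicz interpolation between weak $(1,1)$ and $L^2$ then yields $T_K\in\mathbb B(L^p(\R))$ for $1<p\le2$; since the adjoint $T_K^*$ has kernel $\overline{K(y,x)}$, again a standard kernel, duality extends this to $2\le p<\infty$.

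Next I would treat the endpoint spaces. For $\mathbb B(\H^1(\R),L^1(\R))$ I would use the atomic decomposition of $\H^1$ (as in the proof of Lemma \ref{lemma_2_4}): for an $L^\infty$-atom $a$ supported in a ball $B$ with $\int a=0$, Cauchy--Schwarz and the $L^2$-bound give $\norm{T_Ka}_{L^1(2B)}\lesssim1$, while the cancellation of $a$ and the kernel smoothness give $\norm{T_Ka}_{L^1((2B)^c)}\lesssim1$; summing over atoms yields the bound. By the duality $(\H^1)^*=\BMO$ (see \cite{Fefferman}), applying this to $T_K^*$ gives $T_K\in\mathbb B(L^\infty(\R),\BMO(\R))$. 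All of this is textbook material, contained in \cite[Theorems 4.2.2, 4.2.6 and 4.2.7]{Grafakos_Modern_III}, so in the final write-up I would just quote it.

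For the item (2), I would upgrade the above to its quantitative weighted form. The sharp $A_p$ estimate $\norm{T_Kf}_{L^p(w_p)}\lesssim[w_p]_{A_p}^{\max\{1,1/(p-1)\}}\norm{f}_{L^p(w_p)}$ is the resolution of the $A_2$-conjecture: one dominates $T_K$ pointwise by a finite sum of sparse operators and checks that sparse operators satisfy the sharp bound, using the $A_\infty$ property of $w_p$ and a Carleson-type embedding. The weak-type bound $\norm{T_Kf}_{L^{1,\infty}(w_1)}\lesssim[w_1]_{A_1}(1+\log[w_1]_{A_1})\norm{f}_{L^1(w_1)}$ follows from running the Calder\'on--Zygmund decomposition above while carefully tracking the $A_1$ and reverse-H\"older constants of $w_1$. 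These are exactly the theorems of Hyt\"onen \cite{Hytonen} and of Lerner--Ombrosi--P\'erez \cite{LOP}, which I would simply cite. The only genuinely deep input is this sharp quantitative weighted theory in the item (2); since it is furnished verbatim by the cited works, no new argument is required, and the remainder is standard.
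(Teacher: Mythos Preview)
Your proposal is correct and matches the paper's approach exactly: the paper does not prove this theorem at all, but simply cites \cite[Theorems 4.2.2, 4.2.6 and 4.2.7]{Grafakos_Modern_III} for item (1) and \cite{Hytonen,LOP} for item (2), precisely as you suggest in your final write-up. Your sketch of the underlying arguments is accurate but more detailed than what the paper provides.
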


\section*{Acknowledgments}
H. Mizutani is partially supported by JSPS KAKENHI Grant-in-Aid for Scientific Research  (B) \#JP17H02854 and (C) \#JP21K03325. Z. Wan and X. Yao are partially supported by NSFC grants No.11771165 and 12171182. The authors would like to express their thanks to Professor Avy Soffer for his interests and insightful discussions about the topics of higher-order operators. We also thank Dr. SiSi Huang for very careful reading and helpful comments on the first version of the paper. We finally also thank the anonymous referee for insightful comments.


\end{document}